\providecommand{\U}[1]{\protect\rule{.1in}{.1in}}
\newtheorem{theorem}{Theorem}
\newtheorem{acknowledgement}[theorem]{Acknowledgement}
\newtheorem{corollary}[theorem]{Corollary}
\newtheorem{definition}[theorem]{Definition}
\newtheorem{example}[theorem]{Example}
\newtheorem{fact}[theorem]{Fact}
\newtheorem{lemma}[theorem]{Lemma}
\newtheorem{notation}[theorem]{Notation}
\newtheorem{proposition}[theorem]{Proposition}
\newtheorem{remark}[theorem]{Remark}
\newtheorem{convention}[theorem]{Convention}
\newenvironment{proof}[1][Proof]{\noindent\textbf{#1.} }{\ \rule{0.5em}{0.5em}}
\newcommand{\Ep}{E_1}
\begin{document}
 
	\title{A finite dimensional approximation to pinned Wiener measure on some symmetric spaces}

	\author{Zhehua Li}

	\date{\today }
	
	\maketitle
	
	\numberwithin{theorem}{section} \numberwithin{equation}{section}
	
	\begin{abstract}
		Let $M$ be a Riemannian manifold, $o\in M$ be a fixed base point, $W_o\left(
		M\right)  $ be the space of continuous paths from $\left[  0,1\right]  $ to
		$M$ starting at $o\in M,$ and let $\nu_{x}$ denote Wiener measure on $W_o\left(
		M\right)  $ conditioned to end at $x\in M.$ The goal of this paper is to give
		a rigorous interpretation of the informal path integral expression for
		$\nu_{x};$
		\[
		d\nu_{x}\left(  \sigma\right)  \text{\textquotedblleft}%
		=\text{\textquotedblright}\delta_{x}\left(  \sigma\left(  1\right)  \right)
		\frac{1}{Z}e^{-\frac{1}{2}E\left(  \sigma\right)  }\mathcal{D}\sigma\text{ ,
		}\sigma\in W_o\left(  M\right)  .
		\]
		In this expression $E\left(  \sigma\right)  $ is the \textquotedblleft
		energy\textquotedblright\ of the path $\sigma,$ $\delta_{x}$ is the $\delta$
		-- function based at $x,$ $\mathcal{D}\sigma$ is interpreted as an infinite
		dimensional volume \textquotedblleft measure\textquotedblright\, and $Z$ is a
		certain \textquotedblleft normalization\textquotedblright\ constant. We will
		interpret the above path integral expression as a limit of measures,
		$\nu_{\mathcal{P},x}^{1},$ indexed by partitions, $\mathcal{P}$ of $\left[0,1\right]$. The measures
		$\nu_{\mathcal{P},x}^{1}$ are constructed by restricting the above path
		integral expression to the finite dimensional manifolds, $H_{\mathcal{P}%
			,x}\left(  M\right)  ,$ of piecewise geodesics in $W_o\left(  M\right)  $ which
		are allowed to have jumps in their derivatives at the partition points and end at $x$. The
		informal volume measure, $\mathcal{D}\sigma,$ is then taken to be a certain
		Riemannian volume measure on $H_{\mathcal{P},x}\left(  M\right)  .$ When $M$
		is a symmetric space of non--compact type, we show how to
		naturally interpret the pinning condition, i.e. the $\delta$ -- function term,
		in such a way that $\nu_{\mathcal{P},x}^{1},$ are in fact well defined finite
		measures on $H_{\mathcal{P},x}\left(  M\right)  .$ The main theorem of this paper then asserts that $\nu_{\mathcal{P},x}^{1}\rightarrow\nu_{x}$ (in
		a weak sense) as the mesh size of $\mathcal{P}$ tends to zero.
		
	\end{abstract}
	\tableofcontents{}
	
	{}

\section{Introduction\label{cha.1}}
Let $\left(\nabla\right)$ be the Levi-Civita covariant derivative on $M$ which we add to the default set up $\left(M^{d},g,\nabla ,o\right)$.
The path space
\[
W_o\left(M\right):=\left\{ \sigma\in C\left(\left[0,1\right]\mapsto M\right)\mid\sigma\left(0\right)=o\right\} 
\]
is known as the \textbf{Wiener space} on $M$ and let $\nu$ be the \textbf{Wiener measure} on
$W_o\left(M\right)$---i.e. the law of $M$--valued Brownian motion
which starts at $o\in M.$

Consider the heat equation of the following form:
\begin{equation}
\frac{\partial}{\partial t}\phi=-H\phi\text{ , }\phi\left(x,0\right)=f\left(x\right),\label{heat eqn}
\end{equation}
where $H=-\frac{1}{2}\Delta_{g}+V$ is the Schr\"odinger operator, $\Delta_{g}$ is the Laplace-Beltrami operator on $\left(M,g,o\right)$ and $V:M\to \mathbb{R}$ is an external potential.
Let $e^{-tH}$ be the solution operator to $\left(\ref{heat eqn}\right)$. Under modest regularity conditions, this operator admits an integral kernel $p_t^H\left(\cdot,\cdot\right)$. In the physics literature one frequently finds Feynman type informal identities of the form,
\begin{equation}
p_1^{H}\left(o,x\right)=\text{\textquotedblleft}\frac{1}{Z}\int_{W_o\left(M\right)}\delta_{x}\left(\sigma\left(1\right)\right)e^{-\int_{0}^{1}\left[\frac{1}{2}\left\vert \dot{\sigma}\left(\tau\right)\right\vert ^{2}+V\left(\sigma\left(\tau\right)\right)\right]d\tau}\mathcal{D}\sigma\text{\textquotedblright}\label{eq:-54}
\end{equation}
and
\begin{equation}
\left(e^{-H}f\right)\left(o\right)=\text{\textquotedblleft}\frac{1}{Z}\int_{W_o\left(M\right)}f\left(\sigma\left(1\right)\right)e^{-\int_{0}^{1}\left[\frac{1}{2}\left\vert \dot{\sigma}\left(\tau\right)\right\vert ^{2}+V\left(\sigma\left(\tau\right)\right)\right]d\tau}\mathcal{D}\sigma\text{\textquotedblright}.\label{eq:-20-1}
\end{equation}
Variants
of these informal path integrals are often used as the basis for \textquotedblleft defining\textquotedblright and making computations in quantum-field theories. From a mathematical perspective,
making sense of such path integrals is thought to be a necessary step
to developing a rigorous definition of interacting quantum field theories,
(see for example; Glimm and Jaffe \cite{Glimm1987}, Barry Simon \cite{Simon2005},
the Clay Mathematics Institute's Millennium problem involving Yang-Mills
and Mass Gap). In this paper we give an interpretation of the formal identity using a finite dimensional approximation scheme when the manifold is a symmetric space of non---compact type, for example, hyperbolic space $\mathbb{H}^d$.
\begin{equation}
\text{\textquotedblleft}\int_{W_o\left(M\right)}\delta_x\left(\sigma\left(1\right)\right)\frac{1}{Z}e^{-\frac{1}{2}\int_{0}^{1}\left\vert \dot{\sigma}\left(\tau\right)\right\vert ^{2}d\tau}\mathcal{D}\sigma\text{\textquotedblright}:= p_1\left(o,x\right)\label{eq.hk}
\end{equation}
where $p_t\left(x,y\right)$ is the heat kernel associated to $\frac{1}{2}\Delta_g$ on $M$.

\subsection{Finite Dimensional Approximation Scheme for Path Integrals\label{sec.1.2}}
The central idea behind finite dimensional approximation scheme is to define a path integral as a limit of the same integrands restricted to \textquotedblleft natural\textquotedblright\ approximate path spaces, for example, piecewise linear paths, broken lines, polygonal paths and so on. The ill--defined expression under these finite dimensional approximations usually becomes well--defined or has better interpretations, see (\cite{Fuj79}, \cite{Ichinose97}). For example, when $M=\mathbb{R}^d$, it is known that Wiener measure on $W_0\left(\mathbb{R}^{d}\right)$
may be approximated by Gaussian measures on piecewise linear path
spaces. More specifically, Eq. (\ref{eq:-20-1}) with $V=0$ and restricted
to a finite dimensional subspace of piecewise linear paths based on
a partition of $\left[0,1\right]$ has a natural interpretation as
Gaussian probability measure. The interpretation  results from the canonical isometry between the piecewise linear path space and $\mathbb{R}^{dn}$, where $n$ is the number of partition points. By combining Wiener's theorem on the
existence of Wiener measure with the dominated convergence theorem,
one can see that these Gaussian measures converge weakly to $\nu$
as the mesh of partition tends to zero, (see for example \cite[Proposition 6.17]{Driver2003} for details).
An analogous theory on general manifolds was also developed, see
for example Pinsky \cite{Pinsky1976}, Atiyah \cite{Atiyah1983}, Bismut
\cite{Bismut1985}, Andersson and Driver \cite{Andersson1999}
and references therein. In \cite{Andersson1999}, followed by \cite{Lim2007} and \cite{Laetsch2013}, the finite dimensional approximation problem is viewed in its full
geometric form by restricting the expression in Eq. (\ref{eq:-20-1})
to finite dimensional sub-manifolds of piecewise geodesic paths on
$M.$  Unlike the flat case $(M=\mathbb{R}^d)$ where the choice of translation invariant Riemannian metric on path spaces is irrelevant, various Riemannian metrics on approximate path spaces are explored. Based on these metrics, different approximate measures are constructed which lead to different limiting measures on $W_o\left(M\right)$, see \cite{Andersson1999}, \cite{Laetsch2013}, and \cite{Lim2007}. In this paper we adopt a so--called $G_{\mathcal{P}}^1$ metric on the piecewise geodesic space. 

In the remainder of this section, we establish some necessary notations.

\begin{definition}[Cameron-Martin space on $\left(M,g,o\right)$]\label{CM}
Let
\begin{equation*}
H\left(M\right):=\left\{ \sigma\in C\left(\left[0,1\right]\mapsto M\right):\sigma\left(0\right)=o\text{ , }\sigma\text{ is a.c. and }\int_{0}^{1}\left\vert \sigma^{\prime}\left(s\right)\right\vert _g^{2}ds<\infty\right\}
\end{equation*}be the\textbf{ Cameron-Martin space} on $\left(M,g,o\right)$. (Here a.c. means absolutely continuous.)
\end{definition}
\begin{notation}
Let $\Gamma\left(TM\right)$ be differentiable sections of $TM$ and $\Gamma_{\sigma}\left(TM\right)$ be differentiable sections of $TM$ along $\sigma\in H\left(M\right)$.
\end{notation}
The space, $H\left(M\right)$, is an infinite dimensional Hilbert manifold which is a central object in problems related to calculus of variations on $W_o(M)$. Klingenberg  \cite{Klingenberg77} contains a good exposition of the manifold of paths. In particular, Theorem 1.2.9 in \cite{Klingenberg77} presents its differentiable structure in terms of atlases. We will be interested in certain Riemannian metrics on $H\left(M\right)$ and on certain finite dimensional submanifolds where the formal path integrals make sense.
\begin{definition}
For any $\sigma\in H\left(M\right)$ and $X,Y\in \Gamma_\sigma^{a.c. }\left(TM\right)$, We define a metric $G^1$ as follows:
\[\left<X,Y\right>_{G^{1}}=\int_{0}^{1}\left\langle \frac{\nabla X}{ds}\left(s\right),\frac{\nabla Y}{ds}\left(s\right)\right\rangle _{g}ds
\]
where $\Gamma_\sigma^{a.c.}\left(TM\right)$ is the set of absolutely continuous vector fields along $\sigma$ with finite
energy, i.e. $\int_{0}^{1}\left\langle \frac{\nabla X}{ds}\left(s\right),\frac{\nabla X}{ds}\left(s\right)\right\rangle _{g}ds<\infty$.
\end{definition}
\begin{remark}
To see that $G^1$ is a metric on $H\left(M\right)$, we identify the tangent space $T_{\sigma}H\left(M\right)$ with $\Gamma_\sigma^{a.c.}\left(TM\right)$. To motivate this identification, consider a differentiable one-parameter family of curves $\sigma_t$ in $H\left(M\right)$ such that $\sigma_0=\sigma$. By definition of tangent vector, $\frac{d}{dt}\mid_0\sigma_t\left(s\right)$ should be viewed as a tangent vector at $\sigma$. This is actually the case, for detailed proof, see Theorem 1.3.1 in \cite{Klingenberg77}.  
\end{remark}
\begin{definition}[Piecewise geodesic space] \label{def.1}Given a partition \[\mathcal{P}:=\left\{ 0=s_{0}<\cdots<s_{n}=1\right\} \text{ of } \left[0,1\right],\]define: 
\begin{equation}
H_{\mathcal{P}}\left(M\right):=\left\{ \sigma\in H\left(M\right)\cap C^{2}\left(\left[0,1\right]\setminus\mathcal{P}\right):\nabla\sigma^{\prime}\left(s\right)/ds=0\text{ for }s\notin\mathcal{P}\right\}. \label{eq:-1-1-1}
\end{equation}
\end{definition}
The piecewise geodesic space $H_\mathcal{P}\left(M\right)$ is a finite dimensional embedded submanifold of $H\left(M\right)$. As for its tangent space, following the argument of Theorem 1.3.1 in \cite{Klingenberg77}, for any $\sigma\in H_{\mathcal{P}}\left(M\right)$, the tangent space
$T_{\sigma}H_{\mathcal{P}}\left(M\right)$ may be identified with
vector-fields along $\sigma$ of the form $X\left(s\right)\in T_{\sigma\left(s\right)}M$
where $s\rightarrow X\left(s\right)$ is piecewise $C^{2}$ and satisfies
Jacobi equation for $s\notin\mathcal{P}$, i.e. \[\frac{\nabla^{2}X}{ds^{2}}\left(s\right)=R\left(\dot{\sigma}\left(s\right),X\left(s\right)\right)\dot{\sigma}\left(s\right),\]where $R$ is the curvature tensor. (See Theorem \ref{def.6.2} below for a more detailed description of $TH_\mathcal{P}\left(M\right)$). After specifying the tangent space of $H_\mathcal{P}\left(M\right)$, we can define the $G_\mathcal{P}^1$ metric as follows.
\begin{definition}For any $\sigma\in H_{\mathcal{P}}\left(M\right)$
and $X,Y\in T_{\sigma}H_{\mathcal{P}}\left(M\right),$ let 
\begin{equation}
\left\langle X,Y\right\rangle_{G_{\mathcal{P}}^{1}} :=\sum_{j=1}^{n}\left\langle \frac{\nabla X}{ds}\left(s_{j-1}+\right),\frac{\nabla Y}{ds}\left(s_{j-1}+\right)\right\rangle_g \Delta_{j}\label{eq:-4-1-1}
\end{equation}
where $\Delta_{j}=s_{j}-s_{j-1}$ and $\frac{\nabla Y}{ds}\left(s_{j-1}+\right)=\lim_{s\downarrow s_{j-1}}\frac{\nabla Y}{ds}\left(s\right)$. \end{definition}

Endowed with the Riemannian metric $G_{\mathcal{P}}^{1}$, $H_\mathcal{P}\left(M\right)$ becomes a finite dimensional Riemannian manifold and the right hand side of $\left(\ref{eq:-20-1}\right)$ is now well--defined on $H_\mathcal{P}\left(M\right)$ if $\mathcal{D}\sigma$ is interpreted as the volume measure induced from this Riemannian metric. This motivates the following approximate measure definition.
\begin{definition}[Approximate measure on $H_{\mathcal{P}}\left(M\right)$]\label{def.2}Let
$\nu_{\mathcal{P}}^{1}$ be the probability measure on $H_{\mathcal{P}}\left(M\right)$
defined by; 
\begin{equation}
d\nu_{\mathcal{P}}^{1}\left(\sigma\right)=\frac{1}{Z_{\mathcal{P}}^{1}}e^{-\frac{1}{2}\int_{0}^{1}\left\langle \sigma^{\prime}\left(s\right),\sigma^{\prime}\left(s\right)\right\rangle ds}dvol_{G_{\mathcal{P}}^{1}}\left(\sigma\right),\label{eq:-6-1-1}
\end{equation}
where $dvol_{G_{\mathcal{P}}^{1}}$ is the volume measure on $H_{\mathcal{P}}\left(M\right)$
induced from the metric $G_{\mathcal{P}}^{1}$ and $Z_{\mathcal{P}}^{1}$
is the normalization constant. \end{definition}

\subsection{Main Theorems\label{sec.1.4}}
In this section we state the main results of this paper while avoiding many technical details.

\begin{definition}[Pinned piecewise geodesic space]\label{not1}For any $x\in M$,
\[
H_{\mathcal{P},x}\left(M\right):=\left\{ \sigma\in H_{\mathcal{P}}\left(M\right):\sigma\left(1\right)=x\right\}. 
\]
\end{definition}
We prove below in Proposition \ref{pro.3.2.1} that when $M$ has non--positive sectional curvature, $H_{\mathcal{P},x}\left(M\right)$ is an embedded submanifold of $H_\mathcal{P}\left(M\right)$. 
\begin{theorem}\label{thm1} If $M$ is a Hadamard manifold with
bounded sectional curvature and $\mathcal{P}=\left\{ k/n\right\} _{k=0}^{n}$
are equally-spaced partitions, then there exists a finite measure $\nu_{\mathcal{P},x}^{1}$ supported
on $H_{\mathcal{P},x}\left(M\right),$ such that for any bounded continuous function $f$ on $H_\mathcal{P}\left(M\right)$, 
\[
\lim_{m\rightarrow\infty}\int_{H_{\mathcal{P}}\left(M\right)}\delta_{x}^{\left(m\right)}\left(\sigma\left(1\right)\right)f\left(\sigma\right)d\nu_{\mathcal{P}}^{1}\left(\sigma\right)=\int_{H_{\mathcal{P}}\left(M\right)}f\left(\sigma\right)d\nu_{\mathcal{P},x}^{1}\left(\sigma\right).
\]
where $\delta_{x}^{\left(m\right)}$ is an approximate sequence
of $\delta_{x}$ in $C_{0}^{\infty}\left(M\right)$. \end{theorem}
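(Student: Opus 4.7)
The plan is to treat the end-point evaluation $\mathrm{ev}_1 : H_{\mathcal{P}}(M) \to M$, $\sigma \mapsto \sigma(1)$, as a Riemannian submersion and to apply the smooth coarea formula to disintegrate $\nu_{\mathcal{P}}^1$ along its fibers. Because $M$ is Hadamard, any two points are joined by a unique geodesic, so the map $\sigma \mapsto (\sigma(s_1),\ldots,\sigma(s_n)) \in M^n$ is a diffeomorphism of $H_{\mathcal{P}}(M)$ onto $M^n$; in particular $\mathrm{ev}_1$ is a smooth surjective submersion with fibers $H_{\mathcal{P},y}(M)$, which are embedded submanifolds by Proposition \ref{pro.3.2.1} and are diffeomorphic to $M^{n-1}$ via $\sigma \mapsto (\sigma(s_1),\dots,\sigma(s_{n-1}))$. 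Letting $J(\sigma) := \sqrt{\det\bigl((d\mathrm{ev}_1)(d\mathrm{ev}_1)^{*}\bigr)}$ denote the generalized Jacobian of $\mathrm{ev}_1$ at $\sigma$ computed with respect to $G_{\mathcal{P}}^{1}$ on the source and $g$ on the target, the coarea formula gives for any bounded continuous $f$
\[
\int_{H_{\mathcal{P}}(M)} \delta_x^{(m)}(\sigma(1))\, f(\sigma)\, d\nu_{\mathcal{P}}^1(\sigma)
\;=\; \int_M \delta_x^{(m)}(y)\, F(y)\, dvol_g(y),
\]
where
\[
F(y) := \frac{1}{Z_{\mathcal{P}}^1} \int_{H_{\mathcal{P},y}(M)} f(\sigma)\, e^{-E(\sigma)/2}\, J(\sigma)^{-1}\, dvol_{G_{\mathcal{P},y}^1}(\sigma),
\]
and $dvol_{G_{\mathcal{P},y}^1}$ denotes the volume induced on the fiber by $G_{\mathcal{P}}^{1}$.

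The second step is a limit passage: once $y \mapsto F(y)$ is shown to be continuous at $x$ and locally bounded, the approximation property of $\delta_x^{(m)} \in C_0^\infty(M)$ yields $\int_M \delta_x^{(m)}(y) F(y)\, dvol_g(y) \to F(x)$ as $m \to \infty$, so setting
\[
d\nu_{\mathcal{P},x}^1(\sigma) \;:=\; \frac{1}{Z_{\mathcal{P}}^1}\, e^{-E(\sigma)/2}\, J(\sigma)^{-1}\, dvol_{G_{\mathcal{P},x}^1}(\sigma)
\]
on $H_{\mathcal{P},x}(M)$ completes the proof, with finiteness of $\nu_{\mathcal{P},x}^1$ following from $F(x) < \infty$ for $f\equiv 1$.

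The main obstacle is verifying the continuity and uniform local integrability of $F$. To address this I would pull everything back to the fixed manifold $M^{n-1}$ through the chart $\sigma \mapsto (\sigma(s_1),\ldots,\sigma(s_{n-1}))$; on this chart, for equally-spaced partitions, the energy becomes $E(\sigma) = n \sum_{j=1}^n d(\sigma(s_{j-1}),\sigma(s_j))^2$, producing a Gaussian factor in the pairwise distances. The Hadamard hypothesis ensures $J(\sigma) > 0$ (no conjugate points along geodesic segments), and the bounded-sectional-curvature assumption yields Rauch-type comparison estimates showing that both $J(\sigma)^{-1}$ and the density of $dvol_{G_{\mathcal{P},y}^1}$ relative to $\prod_j dvol_g$ grow at most exponentially in $d(\sigma(s_{j-1}),\sigma(s_j))$. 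The Gaussian factor then dominates, giving uniform-in-$y$ integrability, and $F$ is continuous (indeed smooth) at $x$ by dominated convergence. The technically delicate points are producing an explicit Jacobi-field expression for $J$ and sharp enough two-sided comparison estimates to absorb the Jacobian and fiber-volume densities into the Gaussian factor coming from $E$; once these are in hand the coarea disintegration and the limit passage through the outer integral are routine.
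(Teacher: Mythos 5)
Your proposal is correct and follows essentially the same route as the paper: disintegrate $\nu_{\mathcal{P}}^{1}$ along the end-point map via the coarea formula, correct by the inverse normal Jacobian $J^{-1}$ to get the candidate pinned measure, prove continuity of the fiber integral in the base point, and pass to the limit in $m$. The one caveat you should surface explicitly is that the Gaussian factor $e^{-E/2}$ only dominates the Rauch-type exponential growth of the fiber-volume density when the mesh is small enough (the paper takes $n\ge 3dN$ where $N$ bounds the curvature); also note that $J_{\mathcal{P}}\ge 1$ on a Hadamard manifold, so $J^{-1}$ is actually bounded rather than merely of exponential growth, which simplifies that part of the estimate.
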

\begin{remark}
The formula for $d\nu_{\mathcal{P},x}^1$ is explicitly given, see Definition \ref{def.ab}.
\end{remark}


The next theorem asserts, under additional geometric restrictions, that the
measure $\nu_{\mathcal{P},x}^{1}$ we obtained from Theorem \ref{thm1}
serves as a good approximation to pinned Wiener measure $\nu_x$.

\begin{theorem}\label{thm2}If $M$ is a symmetric space of non--compact type, i.e. it is a Hadamard manifold with parallel curvature tensor, then for any cylinder function $f\in \mathcal{FC}_b^1$, see Definition \ref{def.rcf},
\[
\lim_{\left\vert \mathcal{P}\right\vert \rightarrow0}\int_{H_{\mathcal{P}}\left(M\right)}f\left(\sigma\right)d\nu_{\mathcal{P},x}^{1}\left(\sigma\right)=\int_{W_o\left(M\right)}f\left(\sigma\right)d\nu_{x}\left(\sigma\right)
\]
where $\nu_x$ is pinned Wiener measure, see Theorem \ref{thm Brown} below. \end{theorem}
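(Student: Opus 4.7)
The plan is to reduce both sides of the identity to a finite-dimensional integral over points of $M$ indexed by the partition, and then to compare integrands segment-by-segment in the limit $|\mathcal{P}|\to 0$. The symmetric-space hypothesis will enter essentially at the stage where one identifies the short-time Jacobi-determinant corrections with the short-time heat-kernel asymptotics.

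\textbf{Step 1: Reduction to a finite-dimensional integral.} Using the explicit formula for $d\nu_{\mathcal{P},x}^1$ from Definition \ref{def.ab}, I would parametrize $H_{\mathcal{P},x}(M)$ by the ordered tuple of ``joints'' $(x_1,\dots,x_{n-1}) \in M^{n-1}$ with $x_0=o$ and $x_n=x$ fixed; the piecewise geodesic $\sigma$ is reconstructed from these by concatenation. Writing the $G_{\mathcal{P}}^1$ volume form in these coordinates (which reduces to a product of Jacobian factors coming from $d(\exp_{x_{j-1}})$) and combining with the energy exponential, one obtains
\[
\int_{H_\mathcal{P}(M)} f(\sigma)\, d\nu_{\mathcal{P},x}^1(\sigma)
= \frac{1}{Z_{\mathcal{P},x}^1}\int_{M^{n-1}} F(x_1,\dots,x_{n-1})\,\prod_{j=1}^n q_{\Delta_j}(x_{j-1},x_j)\,dx_1\cdots dx_{n-1},
\]
where $q_t(x,y)$ is an explicit Gaussian-type kernel of the form $(2\pi t)^{-d/2}\exp(-d(x,y)^2/2t)\cdot J_t(x,y)$ with $J_t$ the Jacobi-field determinant along the connecting geodesic. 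By refining $\mathcal{P}$ if necessary, one may assume the cylinder function $f = F\circ\operatorname{ev}_{\mathcal{P}}$ depends only on values at partition points, which is harmless since $\mathcal{FC}_b^1$ is approximated by such functions.

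\textbf{Step 2: Small-time asymptotics via parallel curvature.} On a symmetric space of non-compact type the curvature endomorphism $R(\dot\gamma,\cdot)\dot\gamma$ along any geodesic is parallel, so $J_t(x,y)$ is an explicit universal function of $t$ and of the eigenvalues of this endomorphism. This yields a uniform-on-compacts short-time expansion
\[
q_t(x,y) = p_t(x,y)\,\bigl(1+t\cdot r(x,y)+o(t)\bigr),
\]
where $p_t$ is the true heat kernel of $\frac12\Delta_g$. The crucial point is that on a symmetric space no scalar-curvature correction of order one survives in the leading-order matching between the Gaussian $\times$ Jacobi-determinant kernel and Varadhan's expansion of $p_t$; this is where the parallel-curvature hypothesis is indispensable. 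Combined with Gaussian upper bounds of Li--Yau type for $p_t$ on a Hadamard manifold with bounded curvature, this produces a uniform integrable majorant for $\prod_j q_{\Delta_j}(x_{j-1},x_j)$.

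\textbf{Step 3: Passage to the limit.} Taking $|\mathcal{P}|\to 0$, the product $\prod_j q_{\Delta_j}(x_{j-1},x_j)$ converges (via a telescoping estimate on $\sum_j O(\Delta_j)=O(1)$ after cancellation of the $r$-term, which is the content of Step 2) to $\prod_j p_{\Delta_j}(x_{j-1},x_j)$. By the Chapman--Kolmogorov identity this integrates against $F$ to the finite-dimensional marginal
\[
\frac{1}{p_1(o,x)}\int_{M^{k}} F(y_1,\dots,y_k)\, p_{t_1}(o,y_1)\,p_{t_2-t_1}(y_1,y_2)\cdots p_{1-t_k}(y_k,x)\,dy_1\cdots dy_k,
\]
which is precisely $\int_{W_o(M)} f\,d\nu_x$ by the disintegration formula for pinned Wiener measure (Theorem \ref{thm Brown}). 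The normalization constant $Z_{\mathcal{P},x}^1 \to p_1(o,x)$ falls out of the same asymptotics applied to $F\equiv 1$.

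\textbf{Main obstacle.} The heart of the proof is Step 2: one must show that the segment-wise multiplicative errors $1+t\,r(x,y)+o(t)$ do not accumulate destructively over the $n \approx 1/|\mathcal{P}|$ factors of the product. A naive bound $\sum_j O(\Delta_j)=O(1)$ only controls the error up to a bounded multiplicative constant, which is insufficient to identify the limit with $\nu_x$; one needs the $O(t)$ coefficient $r$ to vanish identically, or to be controlled by an exactly telescoping combinatorial identity. This is precisely the rigidity afforded by $\nabla R\equiv 0$, and extracting it cleanly—together with uniform Gaussian domination that is strong enough to push through dominated convergence against the bounded continuous $F$—is the technical crux.
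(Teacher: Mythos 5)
Your proposal takes a fundamentally different route from the paper, and it has two gaps that I do not believe can be closed as stated.

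First, the factorization in Step 1 is not available. You write the pinned measure as a product $\prod_{j=1}^n q_{\Delta_j}(x_{j-1},x_j)$, where $q_t$ incorporates both the $G^1_{\mathcal{P}}$ volume form and the pinning Jacobian. The $G^1_{\mathcal{P}}$ volume form does factor over segments (this is essentially Theorem 5.9 of Andersson--Driver, used implicitly via Lemma \ref{lem3.1.7}), but the pinning Jacobian does not: by Lemma \ref{lem3.1.4},
\[
J_{\mathcal{P}}(\sigma)=\sqrt{\det\left(\tfrac1n\sum_{i=1}^n f_{\mathcal{P},i}(\sigma,1)f_{\mathcal{P},i}^{tr}(\sigma,1)\right)},
\]
and each $f_{\mathcal{P},i}(\sigma,1)$ is a product of the $C_{\mathcal{P},k}$ over $k\geq i$, so the sum couples all segments simultaneously. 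Dividing by $J_{\mathcal{P}}$ therefore does not produce a product kernel, and the ``segment-wise'' telescoping you invoke in Steps 2--3 has no starting point.

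Second, and independently, your central claim in Step 2---that on a symmetric space the $O(t)$ coefficient $r(x,y)$ vanishes identically, so that $q_t=p_t(1+o(t))$---is asserted rather than proved, and I do not think it is true in that strong form. Even on hyperbolic space the short-time heat-kernel expansion has a nonzero correction term $u_1\neq 0$ (there is, e.g., the familiar $e^{-ct}$ factor on $\mathbb{H}^3$); what actually happens in the Andersson--Driver theory for the \emph{unpinned} $G^1_{\mathcal{P}}$ measure is a cancellation between the scalar-curvature contribution of the accumulated Jacobi determinants $\rho_{\mathcal{P}}=\prod_i\det(S_{\mathcal{P},i}/n)$ and the scalar-curvature term in Minakshisundaram--Pleijel, not a pointwise vanishing of the residual, and that cancellation works on any manifold, not only symmetric spaces. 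The hypothesis $\nabla R\equiv 0$ in the paper is used for a different purpose entirely (to make $Ric_{\tilde u_s}$, $\tilde T_s$, and $R_{u_s}$ deterministic, and to make $A_{\mathcal{P},k}$ constant on each segment, which is what drives Lemma \ref{lem.5.2} and Proposition \ref{pro.8.6}). You are asking $\nabla R\equiv 0$ to do a job it does not do.

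For contrast, the paper's proof of Theorem \ref{thm2} avoids the finite-dimensional-kernel comparison altogether. It writes $\delta_x=g_0+\sum_j X_jg_j$ in $\mathcal{E}'(M)$ with $g_j\in L^{d/(d-1)}$ of compact support (Theorem \ref{thm.1.11}), lifts the $X_j$ orthogonally to $\tilde X_{\mathcal{P},j}$ on $H_{\mathcal{P}}(M)$ and $\tilde X_j$ on $W_o(M)$, and then uses the integration-by-parts formulae (Lemma \ref{lem.8.1} and Theorem \ref{lem.5.1}) to move the derivative off the singular $g_j$ onto the bounded cylinder function $f$. The heart of the argument is then the $L^q$ convergence $\tilde X_{\mathcal{P}}^{tr,\nu_{\mathcal{P}}^1}f\to \tilde X^{tr,\nu}f$ (Theorem \ref{the.8.4}), proved via Wong--Zakai-type estimates on $f_{\mathcal{P},i}$, $\mathbf K_{\mathcal{P}}$, and $J_{\mathcal{P}}$ in Section \ref{cha.6}. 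The pinning is handled once and for all through Corollary \ref{Col3.2.2} and the co-area formula, never by matching kernels segment by segment. If you want to salvage a kernel-comparison argument you would at minimum need a sharp telescoping identity that absorbs $J_{\mathcal{P}}$ and a proof that the residuals integrate away, which is precisely the content you currently defer to ``the technical crux.''
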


\subsection{Structure of the Paper\label{sec.1.5}}
For the guidance to the reader, we give a brief summary of the contents of this paper. 

In Section \ref{cha.2} we set up some notations and preliminaries
in probability and geometry. In particular we present the Eells-Elworthy-Malliavin construction of Brownian motion on manifolds. 

In  Section \ref{cha.3} we define explicitly the pinned approximate measure $\nu_{\mathcal{P},x}^{1}$ and study its properties. In Theorem \ref{thm3.2.1}, we prove that $\nu_{\mathcal{P},x}^{1}$ is a finite measure and that $x\to \int_{H_{\mathcal{P},x}\left(M\right)} fd\nu_{\mathcal{P},x}^{1}$ is a continuous function on $M$ provided $f$ is bounded and continuous. This property is the key ingredient in proving Theorem \ref{thm1}. The proof of Theorem \ref{thm1} is also given in this section.

In Section \ref{cha.4} we develop the so--called orthogonal lift of a vector field $X$ on $M$ to vector fields $\tilde{X}_{\mathcal{P}}$ on $H_\mathcal{P}(M)$ and $\tilde{X}$ on $W_o(M)$. Integration by parts formulae for these two operators are presented which will serve as an important tool in the proof of Theorem \ref{thm2}.

In Section \ref{cha.6}, (using the development maps introduced in Section \ref{cha.2}), we view $\tilde{X}_\mathcal{P}$ as defined on all of $W_o\left(M\right)$ and show that for any bounded cylinder function $f$ (also introduced in Section \ref{cha.2}), $\left\Vert\tilde{X}_\mathcal{P}f-\tilde{X}f\right\Vert_{L^q(W_o(M))}\to 0$ as $\left\vert\mathcal{P}\right\vert\to 0$ for any $q\geq 1$ and more challengingly, we show the same result for  $\tilde{X}^{tr,\nu}f-\tilde{X}_\mathcal{P}^{tr,\nu_\mathcal{P}^1}f$, where  $\tilde{X}^{tr,\nu}$ is the adjoint of $\tilde{X}$ with respect to $\nu$ and $\tilde{X}_\mathcal{P}^{tr,\nu_\mathcal{P}^1}$ is the adjoint of $\tilde{X}_\mathcal{P}$ with respect to $\nu_\mathcal{P}^1$.

In Section \ref{cha.7}, we combine all the tools that are developed from previous sections to prove the main Theorem \ref{thm2} of this paper.

\begin{acknowledgement}
	I want to thank my advisor Bruce Driver for many meaningful discussions and careful reading of my dissertation whose main part gives rise to this paper.  
\end{acknowledgement}

\section{Preliminaries in Geometry and Probability\label{cha.2}}
For the remainder of this paper, let $u_0:\mathbb{R}^d\to T_oM$ be a fixed linear isometry which we add to the standard setup $\left(M,g,o,u_0, \nabla\right)$. We use $u_0$ to identify $T_oM$ with $\mathbb{R}^d$. Suggested references for this section are Chapter 2 of \cite{Hsu01} and Sections 2, 3 of \cite{Driver1992}. Some other references are \cite{Andersson1999},  \cite{Elworthy82}, \cite{Cruzeiro1996} and \cite{Driver1995b} to name just a few.

\begin{definition}[Orthonormal Frame Bundle $\left(\mathcal{O}\left(M\right),\pi\right)$]\label{example2}

For any $x\in M$, denote by $\mathcal{O}\left(M\right)_{x}$ the
space of orthonormal frames on $T_{x}M$, i.e. the space
of linear isometries from $\mathbb{R}^{d}$ to $T_{x}M$. Denote $\mathcal{O}\left(M\right):=\cup_{x\in M}\mathcal{O}\left(M\right)_{x}$
and let $\pi:\mathcal{O}\left(M\right)\to M$ be the $\left(\text{fiber}\right)$ projection
map, i.e. for each $u\in\mathcal{O}\left(M\right)_{x}$, $\pi\left(u\right)=x$.
The pair $\left(\mathcal{O}\left(M\right),\pi\right)$ is the orthonormal
frame bundle over $M$.
\end{definition}
In this paper we use the connection on $\mathcal{O}(M)$ that are specified by the following connection form.
\begin{definition}[Connection Form on $\mathcal{O}\left(M\right)$]\label{con}We define a $\mathfrak{so}\left(d\right)$--valued connection form $\omega^\nabla$ on $\mathcal{O}\left(M\right)$ in the following way; for any $u\in \mathcal{O}\left(M\right)$ and $X\in T_u\mathcal{O}\left(M\right)$,
\[\omega_u^\nabla\left(X\right):=u^{-1}\frac{\nabla u\left(s\right)}{ds}\mid_{s=0}\]
where $u\left(\cdot\right)$ is a differentiable curve on $\mathcal{O}\left(M\right)$ such that $u\left(0\right)=u$ and $\frac{du\left(s\right)}{ds}\mid_{s=0}=X$. For any $\xi\in \mathbb{R}^d$, $\frac{\nabla u\left(s\right)}{ds}\mid_{s=0}\xi:=\frac{\nabla u\left(s\right)\xi}{ds}\mid_{s=0}$ is the covariant derivative of $u\left(\cdot\right)\xi$ along $\pi\left(u\left(\cdot\right)\right)$ at $\pi\left(u\right)$.   
\end{definition}
\begin{definition}[Horizontal Bundle $\mathcal{H}$]
	Given a connection form $\omega^\nabla$, the horizontal bundle $\mathcal{H}\subset T\mathcal{O}(M)$ is defined to be the kernel of $\omega^\nabla$.
\end{definition}
\begin{definition}\label{def.hv}
	For any $a\in \mathbb{R}^d$, define the horizontal lift $B_a\in \Gamma\left(\mathcal{H}\right)$ in the following way: for any $u\in \mathcal{O}\left(M\right)$, $B_a(u)\in \mathcal{H}_u\subset T_u\mathcal{O}(M)$ is uniquely determined by
	\[\omega_u^\nabla\left(B_a\left(u\right)\right)=0\text{  and  }\pi_*\left(B_a\left(u\right)\right)=ua.\]
\end{definition}
\begin{definition}[Horizontal Lift of a Path]
	For any $\sigma\in H\left(M\right)$, a curve
	$u:\left[0,1\right]\to\mathcal{O}\left(M\right)$ is said to be a
	horizontal lift of $\sigma$ if $\pi\circ u=\sigma$ and $u^\prime\left(s\right)\in \mathcal{H}_{u(s)}\text{ }\forall s\in [0,1]$.
\end{definition}
\begin{remark}
	In this paper we only consider horizontal lift with fixed start point $u_0\in \pi^{-1}\left(\sigma\left(0\right)\right)$. Under this assumption, given $\sigma\in H(M)$, its horizontal lift $u(\sigma,\cdot)$ is unique.
\end{remark}
We denote $u$ by $\psi\left(\sigma\right)$ and call $\psi$ the \textbf{horizontal lift map}.
\begin{definition}[Development Map]\label{def-d}Given
	$w\in H\left(\mathbb{R}^{d}\right)$, the solution to the ordinary
	differential equation 
	\[
	du\left(s\right)=\sum_{i=1}^{d}B_{e_i}\left(u\left(s\right)\right)dw^{i}\left(s\right),u\left(0\right)=u_{0}
	\]
	is defined to be the \textbf{development} of $w$ and we will denote this map $w\to u$ by $\eta$, i.e. $\eta\left(w\right)=u$. Here $\{e_i\}_{i=1}^d$ is the standard basis of $\mathbb{R}^d$.
	
\end{definition}
\begin{definition}[Rolling Map]
	
	$\phi=\pi\circ\eta:H\left(\mathbb{R}^{d}\right)\to H\left(M\right)$
	is said to be the rolling map to $H\left(M\right)$.
\end{definition}
\begin{definition}[Anti-rolling Map]
	
	Given $\sigma\in H\left(M\right)$ with $u=\psi\left(\sigma\right).$
	The anti-rolling of $\sigma$ is a curve $w\in H\left(\mathbb{R}^{d}\right)$
	defined by:
	
	\[
	w_{t}=\int_{0}^{t}u_{s}^{-1}\sigma_{s}^{\prime}ds
	\]
\end{definition}
\begin{remark}
	It is not hard to see $w=\phi^{-1}\left(\sigma\right)$ and $u(\sigma,s)u_0^{-1}$ is the parallel translation along $\sigma\in H(M)$.	
\end{remark}
The Eells-Elworthy-Malliavin construction of Brownian motion depends in essence on a stochastic version of the maps defined above. Since the
development maps on the smooth category are defined through ordinary
differential equations, a natural way to introduce probability is to replace
ODEs by (Stratonovich) stochastic differential equations.

First we set up some measure theoretic notations and conventions. Suppose
that $\left(\Omega,\left\{ \mathcal{G}_{s}\right\} ,\mathcal{G},P\right)$
is a filtered measurable space with a finite measure $P$. For any
$\mathcal{G}$---measurable function $f$, we use $P\left(f\right)$
and $\mathbb{E}_{P}\left[f\right]$ (if $P$ is a probability measure)
to denote the integral $\int_{\Omega}fdP$. Given two filtered measurable spaces
$\left(\Omega,\left\{ \mathcal{G}_{s}\right\} ,\mathcal{G},P\right)$
and $\left(\Omega^{\prime},\left\{ \mathcal{G}_{s}^{\prime}\right\} ,\mathcal{G}^{\prime},P^{\prime}\right)$
and a $\mathcal{G}/\mathcal{G}^{\prime}$ measurable map $f:\Omega\to\Omega^{\prime}$,
the law of $f$ under $P$ is the push-forward measure $f_{*}P\left(\cdot\right):=P\left(f^{-1}\left(\cdot\right)\right)$.
We are mostly interested in the path spaces $W_{o}\left(M\right)$,
$W_{0}\left(\mathbb{R}^{d}\right)$ and $W_{u_{0}}\left(\mathcal{O}\left(M\right)\right)$,
where the following notation is being used.

\begin{notation} If $\left(Y,y\right)$ is a pointed manifold, let $W\left(Y\right):=C\left(\left[0,1\right],Y\right)$ be the space
	of all continuous paths in $Y$ equipped with the uniform topology, $W_{y}\left(Y\right):=\left\{ w\in W\left(Y\right)\mid w\left(0\right)=y\right\} $
	be the subset of continuous paths that start at $y.$
	
\end{notation}
\begin{definition}
	For any $s\in\left[0,1\right]$ let $\Sigma_{s}:W_{y}\left(Y\right)\to Y$
	be the \textbf{coordinate functions} given by $\Sigma_{s}\left(\sigma\right)=\sigma\left(s\right)$.
\end{definition}
We will often view $\Sigma$ as a map from $W_{y}\left(Y\right)\text{ to }W_{y}\left(Y\right)$
in the following way: for any $\sigma\in W_{y}\left(Y\right)$ and
$s\in\left[0,1\right]$, $\Sigma\left(\sigma\right)\left(s\right)=\Sigma_{s}\left(\sigma\right)$.
Let $\mathcal{F}_{s}^{o}$ be the $\sigma-$algebra generated by $\left\{ \Sigma_{\tau}:\tau\leq s\right\} $.
We use $\mathcal{F}_{1}^{o}$ as the raw $\sigma-$algebra and $\left\{ \mathcal{F}_{s}^{o}\right\} _{0\leq s\leq1}$
as the filtration on $W_{y}\left(Y\right).$ The next theorem defines
the Wiener measure $\nu$ and pinned Wiener measure $\nu_{x}$ on
$\left(W_{y}\left(Y\right),\mathcal{F}_{1}^{o}\right).$

\begin{theorem}\label{thm Brown}
	Assume $Y$ is a stochastically complete Riemannian manifold, then there exist two finite measures $\nu$ and $\nu_{x}$ on $\left(W_{y}\left(Y\right),\mathcal{F}_{1}^{o}\right)$
	which are uniquely determined by their finite dimensional distributions
	as follows. For any partition $0=s_{0}<s_{1}<\cdots<s_{n-1}<s_{n}=1$
	of $\left[0,1\right]$ and bounded functions $f:Y^{n}\to\mathbb{R};$
	
	\begin{equation}
	\nu\left(f\left(\Sigma_{s_{1}},\dots,\Sigma_{s_{n}}\right)\right)=\int_{Y^{n}}f\left(x_{1},\dots,x_{n}\right)\Pi_{i=1}^{n}p_{\Delta s_{i}}\left(x_{i-1},x_{i}\right)dx_{1}\cdots dx_{n}\label{eq:-36}
	\end{equation}
	and 
	\begin{equation}
	\nu_{x}\left(f\left(\Sigma_{s_{1}},\dots,\Sigma_{s_{n}}\right)\right)=\int_{Y^{n-1}}f\left(x_{1},\dots,x_n\right)\Pi_{i=1}^{n}p_{\Delta s_{i}}\left(x_{i-1},x_{i}\right)dx_{1}\cdots dx_{n-1}\label{eq:-37}
	\end{equation}
	where $p_{t}\left(\cdot,\cdot\right)$ is the heat kernel on $Y$ associated to $\frac{1}{2}\Delta_g$,
	$\Delta_{i}=s_{i}-s_{i-1}$, $x_{0}\equiv o$ and $x_{n}\equiv x$
	in $\left(\ref{eq:-37}\right)$. 
\end{theorem}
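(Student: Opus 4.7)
My plan is to prove the theorem in three stages. First, for uniqueness, the coordinate cylinder events $\{\sigma \in W_y(Y) : (\sigma(s_1), \ldots, \sigma(s_n)) \in A\}$ with $0 \leq s_1 < \cdots < s_n \leq 1$ and $A \in \mathcal{B}(Y^n)$ form a $\pi$-system that generates $\mathcal{F}_1^o$. Hence any finite measure on $(W_y(Y), \mathcal{F}_1^o)$ is determined by its values on such events, and a routine monotone-class extension shows these values are in turn determined by the integrals in \eqref{eq:-36} and \eqref{eq:-37} against arbitrary bounded measurable $f$.

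Second, for existence, I would invoke Kolmogorov's extension theorem. For each finite set $F = \{0 = s_0 < s_1 < \cdots < s_n = 1\}$, define a finite measure $\mu_F$ on $Y^n$ for the Wiener case and $\mu_F^x$ on $Y^{n-1}$ (with $x_n \equiv x$ held fixed) for the pinned case, using the right-hand sides of \eqref{eq:-36} and \eqref{eq:-37}. Consistency under refinement of $F$ reduces to the Chapman--Kolmogorov identity $\int_Y p_s(u,z) p_t(z,v)\, dz = p_{s+t}(u,v)$ when an interior time is inserted, and, for $\mu_F$ only, to the stochastic completeness condition $\int_Y p_t(u,z)\, dz = 1$ when the final time is deleted; in the pinned case the final time is never integrated, so stochastic completeness is not needed there. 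Kolmogorov's theorem then yields unique finite measures on the product space $(Y^{[0,1]}, \mathcal{B}(Y)^{\otimes[0,1]})$ with the prescribed finite-dimensional marginals.

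The third and most delicate step is to show that the measures produced above concentrate on continuous paths, so that they descend to $(W_y(Y), \mathcal{F}_1^o)$. A direct approach applies Kolmogorov's continuity criterion, which requires a heat-kernel moment bound of the form $\int_Y p_{t-s}(u,v)\, d(u,v)^{2p}\, dv \leq C(t-s)^{p}$ for some $p > 1$, together with its pinned analogue. A cleaner alternative, available thanks to the Eells--Elworthy--Malliavin framework assembled in Section \ref{cha.2}, is to define $\nu$ as the pushforward under the rolling map $\phi = \pi \circ \eta$ of standard Wiener measure on $W_0(\mathbb{R}^d)$; continuity of paths is then immediate, and an It\^o-calculus computation identifying the transition semigroup of the resulting $M$-valued process with $e^{\frac{1}{2}\Delta_g t}$ verifies formula \eqref{eq:-36}. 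The pinned measure $\nu_x$ can then be obtained either by the same extension-plus-continuity route or by disintegrating $\nu$ along the endpoint map $\Sigma_1$, using that $s \mapsto p_{1-s}(\Sigma_s, x)$ is a nonnegative $\mathcal{F}_s^o$-martingale under $\nu$. The main obstacle is the continuity step for $\nu_x$: since its total mass is $p_1(o,x) \neq 1$ in general, one cannot apply Kolmogorov's criterion to $\nu_x$ directly but must first normalize and then control a local ratio of heat kernels; on a general stochastically complete manifold these estimates are nontrivial, and the author most likely circumvents them by appealing to the probabilistic machinery in Hsu's book.
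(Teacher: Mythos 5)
The paper does not actually prove Theorem \ref{thm Brown}; it is invoked as standard background, with the surrounding material pointing to \cite{Hsu01} and \cite{Driver1992} for the underlying constructions, so there is no in--paper proof to compare your proposal against. That said, your outline is a faithful sketch of the classical argument. Two technical points are worth flagging. First, when you apply Kolmogorov's extension theorem you obtain a measure on $\bigl(Y^{[0,1]},\mathcal{B}(Y)^{\otimes[0,1]}\bigr)$, and $C\bigl([0,1],Y\bigr)$ is not a measurable subset of that product space; you therefore cannot literally say the measure ``concentrates on continuous paths.'' The standard fix is to produce a continuous modification (via Kolmogorov--\v{C}entsov or via the rolling--map construction you mention) and then push forward to $W_y(Y)$, after which one checks that the law on $W_y(Y)$, restricted to cylinder sets, reproduces \eqref{eq:-36}. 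Second, for $\nu_x$ the genuine difficulty is not the failure of normalization (one can simply divide by $p_1(o,x)$ before applying Kolmogorov and multiply back afterward) but the behavior of the bridge near the terminal time $s=1$: the martingale $s\mapsto p_{1-s}(\Sigma_s,x)$ is only defined on $[0,1)$, and one must separately show the $\nu_x$-paths converge to $x$ as $s\uparrow 1$, which requires heat--kernel estimates beyond stochastic completeness alone. You correctly identify this as the delicate step; the references the paper relies on carry it out in detail.
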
 
 
\begin{definition}[Brownian motion] A stochastic process $X:\left(\Omega,\mathcal{G}_{s},\left\{ \mathcal{G}\right\} ,P\right)$$ \to\left(W_{y}\left(Y\right),\nu\right)$
	is said to be a Brownian motion on $Y$ if the law of $X$
	is $\nu$ i.e. $X_{*}P:=P\circ X^{-1}=\nu$. \end{definition}
\begin{remark} From Theorem \ref{thm Brown} it is clear that the law
	of the adapted process $\Sigma:W_{y}\left(Y\right)\to W_{y}\left(Y\right)$
	is $\nu$ and $\Sigma$ is a Brownian motion. We will call $\Sigma$ the \textbf{canonical Brownian motion} on $Y$. 
\end{remark}
\begin{remark} Using Theorem \ref{thm Brown}, we can construct Wiener
	measure and pinned Wiener measure on $W_{0}\left(\mathbb{R}^{d}\right)$,
	$W_{o}\left(M\right)$ and $W_{u_{0}}\left(\mathcal{O}\left(M\right)\right)$
	respectively. In order to avoid ambiguity from moving between $W_{0}\left(\mathbb{R}^{d}\right)$
	and $W_{o}\left(M\right)$, we fix the symbol $\mu\left(\mu_{x}\right)$
	as the Wiener $\left(\text{pinned Wiener}\right)$ measure on $W_{0}\left(\mathbb{R}^{d}\right)$
	and reserve the symbol $\nu\left(\nu_{x}\right)$ as the Wiener $\left(\text{pinned Wiener}\right)$ measure on $W_{o}\left(M\right)$. Meanwhile we reserve $\Sigma$
	as the canonical Brownian motion on $M$. 
\end{remark}
\begin{theorem}[Stochastic Horizontal Lift of Brownian Motion]\label{thm sHL}
	If $\Sigma$ is the canonical Brownian motion on $M$, then there exists a unique $(\text{up to  }\nu-\text{equivalence})$ $\tilde{u}\in W_{u_0}\left(\mathcal{O}\left(M\right)\right)$ such that 
	\begin{equation}
	\pi\left(\tilde{u}_s\right)=\Sigma_s.
	\end{equation}
\end{theorem}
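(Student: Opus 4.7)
The plan is to realize $\tilde u$ as the solution of a Stratonovich SDE on $\mathcal O(M)$ driven by the semimartingale $\Sigma$, thereby extending the smooth horizontal lift map $\psi$ to a measurable map defined $\nu$-a.s.\ on $W_o(M)$. For each $u\in\mathcal O(M)$ the projection $\pi_{*,u}:\mathcal H_u\to T_{\pi(u)}M$ is a linear isomorphism, whose inverse $h_u:T_{\pi(u)}M\to\mathcal H_u$ is a smooth bundle map: from Definition~\ref{def.hv}, $h_u(v)=\sum_{i=1}^{d}\langle v,ue_i\rangle_g\, B_{e_i}(u)$, built out of the smooth horizontal vector fields $B_{e_i}$. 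The natural equation to solve is
\[
d\tilde u_s \;=\; h_{\tilde u_s}\circ d\Sigma_s,\qquad \tilde u_0=u_0,
\]
and applying $\pi_*$ to both sides forces $d\pi(\tilde u_s)=d\Sigma_s$, so any solution automatically satisfies $\pi(\tilde u_s)=\Sigma_s$.

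To give this SDE a rigorous meaning on the possibly non-compact manifold $\mathcal O(M)$, I would isometrically embed $M$ in some $\mathbb R^N$ (Nash) and realize $\mathcal O(M)$ as a submanifold of $\mathbb R^N\times\mathrm{Mat}_{N\times d}(\mathbb R)$; in the ambient coordinates the equation becomes a classical Stratonovich SDE with smooth, locally Lipschitz coefficients, driven by the continuous semimartingale $\iota\circ\Sigma$. Standard SDE theory then produces a unique maximal solution up to some explosion time $\tau$. Non-explosion follows because any solution satisfies $\pi(\tilde u_s)=\Sigma_s$ on $[0,\tau)$, so $\tilde u_s$ lies in $\pi^{-1}(\Sigma([0,\tau)))$, and because each fiber $\pi^{-1}(x)\cong O(d)$ is compact while $\Sigma([0,1])$ is $\nu$-a.s.\ contained in a compact subset of $M$ (by continuity of $\Sigma$ and stochastic completeness in Theorem~\ref{thm Brown}); hence $\tilde u$ cannot leave every compact subset of $\mathcal O(M)$ in finite time, and $\tau=1$ $\nu$-a.s.

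For uniqueness, the condition ``$\tilde u$ is a lift of $\Sigma$'' must be interpreted in the appropriate stochastic sense, namely that $\tilde u$ is a horizontal semimartingale, i.e.\ $\omega^\nabla(\circ d\tilde u)=0$; any such $\tilde u$ with $\pi(\tilde u)=\Sigma$ and $\tilde u_0=u_0$ then solves the SDE displayed above, and pathwise uniqueness for Stratonovich SDEs with smooth coefficients gives the result.

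The main obstacle is not the SDE theory itself, which is classical, but the careful formulation of horizontality in the semimartingale setting and the verification that non-explosion of $\tilde u$ is forced by non-explosion of $\Sigma$ together with the compactness of the typical fiber $O(d)$. Once these two points are handled—either by the Nash embedding described above or, equivalently, by covering $\mathcal O(M)$ with local trivializations and patching the locally-defined solutions together—the existence and uniqueness of $\tilde u$ reduce to well-known pathwise results for SDEs driven by continuous semimartingales with smooth coefficients.
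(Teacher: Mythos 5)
The paper's proof is simply a citation to Hsu's Theorem 2.3.5, and your sketch reproduces the standard argument behind that result---set up the horizontal-lift SDE, make it rigorous via a Nash/Whitney embedding, rule out explosion by combining compactness of the $O(d)$-fibers with a.s.\ compactness of $\Sigma([0,1])$, and invoke pathwise uniqueness---so this is essentially the paper's approach. One step to tighten: the equation $d\tilde u_s = h_{\tilde u_s}\circ d\Sigma_s$ is circular as written, because $h_{\tilde u_s}$ accepts inputs from $T_{\pi(\tilde u_s)}M$ while $d\Sigma_s$ lives in $T_{\Sigma_s}M$, and these coincide only once you already know $\pi(\tilde u_s)=\Sigma_s$; after embedding, one should extend $h$ to an ambient map, note that applying $\pi_*$ then yields $d\,\pi(\tilde u_s)=P(\pi(\tilde u_s))\circ d(\iota\circ\Sigma_s)$ where $P(x)$ is the orthogonal projection onto $T_xM$, and recover $\pi(\tilde u)=\Sigma$ by uniqueness for that projected SDE (with $\Sigma$ itself as the obvious solution), rather than getting it for free from applying $\pi_*$.
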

\begin{proof}
	See Theorem 2.3.5 in \cite{Hsu01}
\end{proof}
\begin{definition}[Stochastic Anti--rolling Map]\label{def sar}
	If $\Sigma$ is the canonical Brownian motion on $M$, then the stochastic anti--rolling $\beta$ of $\Sigma$ is defined by,
	\begin{equation}
	d\beta_s=\tilde{u}^{-1}_s\delta\Sigma_s\text{ , }\beta_0=0.
	\end{equation}
\end{definition} 
$\tilde{u}$ and $\beta$ defined above are linked through the (stochastic) development map.
\begin{definition}[Stochastic Development Map]\label{def sd}
	Let $\tilde{u}$ and $\beta$ be as defined in Theorem \ref{thm sHL} and Definition $\ref{def sar}$, then $\tilde{u}$ satisfies the following SDE driven by $\beta$,
	\[
	d\tilde{u}_{s}=\sum_{i=1}^{d}B_{e_i}\left(\tilde{u}_{s}\right)\delta \beta_{s}\text{ , }\tilde{u}\left(0\right)=u_{0},
	\]
	and $\tilde{u}$ is said to be the development of $\beta$. 
\end{definition} 
\begin{fact}\label{fact 1} 
	The following facts are well known, the proofs may be found in the references listed at the begining of this section, for example, Theorem 3.3 in \cite{Driver1992}. 
	\begin{itemize}
		\item $\phi$ is a diffeomorphism from $H\left(\mathbb{R}^{d}\right)$ to
		$H\left(M\right),$ 
		\item $\beta$ is a Brownian motion on $\left(W_o\left(\mathbb{R}^d\right), \mu\right)$.
	\end{itemize}
\end{fact}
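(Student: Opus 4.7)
The plan is to establish the two facts separately. For the first bullet, I will prove that $\phi = \pi\circ\eta$ is a diffeomorphism of Hilbert manifolds by analyzing the defining ODE for $\eta$. First, I would show $\eta$ is well-defined on all of $H(\mathbb{R}^d)$: since $B_{e_1},\dots,B_{e_d}$ are smooth vector fields on $\mathcal{O}(M)$ with $\pi_*B_{e_i}(u)=ue_i$ of unit length, the energy $\int_0^1|\dot w|^2\,ds<\infty$ of $w\in H(\mathbb{R}^d)$ together with the fact that horizontal transport preserves the Riemannian length bounds $d_M(\sigma(s),o)$ uniformly, so there is no blow-up and the solution exists on $[0,1]$. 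Smoothness of $\eta$ (and hence of $\phi$) as a map between Hilbert manifolds follows from the standard smooth-dependence-on-parameters theorem for ODEs, applied chart by chart in the atlas on $H(M)$ from Klingenberg cited earlier.

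For bijectivity and smoothness of the inverse, I would exhibit $\phi^{-1}$ explicitly as the composition of horizontal lift and anti-rolling: given $\sigma\in H(M)$, set $u=\psi(\sigma)$ and $w(t)=\int_0^t u(s)^{-1}\sigma'(s)\,ds$. The map $\sigma\mapsto\psi(\sigma)$ is smooth because $\psi(\sigma)$ is the solution to the linear ODE $\nabla u/ds=0$ with initial condition $u_0$, which depends smoothly on $\sigma$; and then $\sigma\mapsto w$ is smooth because integration and inversion of $u(s)\in\mathcal{O}(M)$ are smooth operations. That $\phi(w)=\sigma$ and $\phi^{-1}(\phi(w))=w$ follow by verifying both sides solve the same ODE with the same initial data (for the first direction, $\pi\circ\eta(w)$ has tangent $u(s)\dot w(s)$, which is exactly $\sigma'(s)$; for the reverse, the anti-rolling of $\phi(w)$ recovers $\dot w$ because $u^{-1}u\dot w=\dot w$).

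For the second bullet, I would apply Lévy's characterization of Brownian motion to $\beta$. By Definition~\ref{def sar}, $\beta$ is a continuous semimartingale with $\beta_0=0$ and Stratonovich differential $\delta\beta_s=\tilde u_s^{-1}\,\delta\Sigma_s$. Converting to Itô form introduces a correction that vanishes because $\tilde u_s^{-1}$ satisfies a horizontal SDE and the correction is a sum of terms involving $\nabla$ of parallel objects, leaving $\beta$ a continuous local martingale. For the quadratic covariation I would compute
\[
d[\beta^i,\beta^j]_s=\bigl\langle (\tilde u_s^{-1})^*e_i,(\tilde u_s^{-1})^*e_j\bigr\rangle_{g}\,d[\Sigma,\Sigma]_s^{(i,j)\text{-trace}}=\delta_{ij}\,ds,
\]
where the key point is that $\tilde u_s^{-1}:T_{\Sigma_s}M\to\mathbb{R}^d$ is a linear isometry, so it sends the quadratic variation of $\Sigma$ (which, as $M$-valued Brownian motion, has covariation tensor equal to $g^{-1}(\Sigma_s)\,ds$) to the Euclidean identity. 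Lévy's theorem then gives that $\beta$ is a standard $d$-dimensional Brownian motion, i.e.\ its law under $\nu$ is $\mu$.

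The main obstacle, in my view, lies in the first bullet: rigorously justifying that $\phi$ and $\phi^{-1}$ are smooth between the infinite-dimensional Hilbert manifolds $H(\mathbb{R}^d)$ and $H(M)$, rather than merely continuous bijections. This requires working within the chart structure of Klingenberg and invoking the $H^1$-version of smooth dependence on parameters for ODEs, keeping track of the fact that tangent vectors are themselves $H^1$-vector fields along paths. The stochastic part, by contrast, reduces neatly to Lévy's theorem once the isometry property of $\tilde u$ and the semimartingale identity $\pi(\tilde u_s)=\Sigma_s$ are in hand.
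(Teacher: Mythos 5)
The paper does not prove this Fact; it cites Theorem~3.3 in Driver's 1992 paper and related references, so there is no in-paper argument to compare line-for-line. Your sketch is essentially the standard proof found in those references, and the overall structure (non-explosion and smooth dependence for the rolling ODE; Lévy characterization via the isometry $\tilde u_s$ for the anti-development) is correct.

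Two small imprecisions are worth flagging. First, for non-explosion of $\eta$ on all of $[0,1]$, what you actually need is that $\phi(w)|_{[0,s]}$ has length $\int_0^s |\dot w(r)|\,dr \le \|w\|_{H(\mathbb{R}^d)}$ (rolling is arclength-preserving), so the projected path stays in a metric ball about $o$; you must then invoke completeness of $M$ (Hopf--Rinow) to conclude this ball is relatively compact and the ODE cannot explode. Your phrase ``horizontal transport preserves the Riemannian length bounds'' gestures at this but does not name the completeness hypothesis, which is the real ingredient and is what the paper assumes (implicitly in Section~2, explicitly once $M$ is Hadamard). Second, in the stochastic part, the claim that ``the Stratonovich-to-It\^o correction vanishes because $\tilde u_s^{-1}$ satisfies a horizontal SDE'' is correct in spirit --- $\tilde u$ is covariantly constant along $\Sigma$, so $\nabla_{\delta\Sigma}\tilde u^{-1}=0$ --- but to make it rigorous you should either embed $M$ in $\mathbb{R}^N$ and compute the correction term directly, or argue as in Hsu (Theorem~2.3.5 and Proposition~3.2.1) that $\beta$ is by construction the driving process of the horizontal SDE for $\tilde u$ and that the generator identity $\frac12\Delta_{\mathcal{O}(M)}^{H}\circ\pi^* = \pi^*\circ\frac12\Delta_M$ forces $\beta$ to be Euclidean Brownian motion whenever $\Sigma$ is $M$-Brownian motion. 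With those two points tightened, the proposal is sound and matches the cited references.
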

From now on some notations are fixed for the convenience of consistency.
\begin{notation} \label{not2} For any $\sigma\in H\left(M\right)$, $u_{\left(\cdot\right)}\left(\sigma\right)\in H_{u_0}\left(\mathcal{O}\left(M\right)\right)$ is its horizontal lift and $b_{\left(\cdot\right)}\left(\sigma\right)\in H\left(\mathbb{R}^d\right)$ is its anti-rolling. Recall that $\left\{ \Sigma_s\right\}_{0\leq s\leq 1} $ is fixed to be the canonical Brownian motion on $\left(W_o\left(M\right),\nu \right)$. We also fix $\beta\left(\cdot\right)$ to be the stochastic anti-rolling of $\Sigma$, $($which is a Brownian motion on $\mathbb{R}^d)$ and $\tilde{u}\left(\cdot\right)$ to be the stochastic horizontal lift of $\Sigma$. \end{notation}
\begin{notation}\label{not2.26}Given a partition $\mathcal{P}$ of $[0,1]$, $\beta_{\mathcal{P}}$ is the piecewise linear approximation to the Brownian motion $\beta$
	on $\mathbb{R}^{d}$ given by: 
	\[
	\beta_{\mathcal{P}}\left(s\right):=\beta\left(s_{i-1}\right)+\frac{\Delta_{i}\beta}{\Delta_{i}}\left(s-s_{i-1}\right)\text{ if }s\in\left[s_{i-1},s_{i}\right]
	\]
	where $\Delta_{i}\beta=\beta\left(s_{i}\right)-\beta\left(s_{i-1}\right)$and
	$\Delta_{i}=s_{i}-s_{i-1}$. \end{notation}
\begin{notation}[Geometric Notation]\label{Geo}\text{ } 
	\begin{itemize}
		\item \textbf{curvature tensor} For any $X,Y,Z\in\Gamma\left(TM\right),$ define
		the $(\text{Riemann})$ curvature tensor $R:\Gamma\left(TM\right)\times\Gamma\left(TM\right)\to \Gamma\left(End\left(TM\right)\right)$
		to be: 
		\[
		R\left(X,Y\right)Z =\nabla_{X}\nabla_{Y}Z-\nabla_{Y}\nabla_{X}Z-\nabla_{\left[X,Y\right]}Z
		\]
		\item For any $\sigma\in H\left(M\right)$, define $R_{u\left(\sigma,s\right)}\left(\cdot,\cdot\right)\cdot$ to be a map from $\mathbb{R}^{d}\otimes\mathbb{R}^{d}$ to $End\left(\mathbb{R}^{d}\right)$ given by;
		\begin{equation}
		R_{u\left(\sigma,s\right)}\left(a,b\right)\cdot={u\left(\sigma,s\right)}^{-1}R\left(u\left(\sigma,s\right)a,u\left(\sigma,s\right)b\right)u\left(\sigma,s\right)\text{  }\forall a,b\in \mathbb{R}^{d}\label{n1}
		\end{equation} 
		where $R$ is the curvature tensor of $M$.
		Similarly we define $R_{\tilde{u}\left(\sigma,s\right)}\left(\cdot,\cdot\right)\cdot$ to be a random map $($up to $\nu$-equivalence$)$ from $\mathbb{R}^{d}\otimes\mathbb{R}^{d}$ to $\mathbb{R}^{d}$ as follows:
		\begin{equation}
		R_{\tilde{u}\left(\sigma,s\right)}\left(\cdot,\cdot\right)\cdot={\tilde{u}\left(\sigma,s\right)}^{-1}R\left(\tilde{u}\left(\sigma,s\right)\cdot,\tilde{u}\left(\sigma,s\right)\cdot\right)\tilde{u}\left(\sigma,s\right)\label{n2}
		\end{equation}
		\item $Ric\left(\cdot\right):=\sum_{i=1}^{d}R\left(v_{i},\cdot\right)v_{i}$
		is the Ricci curvature tensor on $M.$ Here $\left\{ v_{i}\right\} _{i=1}^{d}$
		is an orthonormal basis of proper tangent space. Using $u\left(\sigma,s\right)$ or $\tilde{u}\left(\sigma,s\right)$ to pull back $R$ as in $(\ref{n1})$ and $(\ref{n2})$, we can define $Ric_{u\left(\sigma,s\right)}$ and $Ric_{\tilde{u}\left(\sigma,s\right)}$ to be maps $(\text{random maps})$ from $\mathbb{R}^d$ to $\mathbb{R}^d$.
	\end{itemize}
\end{notation}
\begin{convention}
	Since most of our results require a curvature bound, it would be convenient to fix a symbol $N$ for it, i.e. $\left\Vert R\right\Vert\leq N$ when it is viewed as a tensor of order 4. Following this manner, we have $\left\Vert Ric \right\Vert\leq (d-1)N$. A generic constant will be denoted by $C$, it can vary from line to line. Sometimes $C_{(\cdot)}$ or $C(\cdot)$ are used to specify its dependence on some parameters.
\end{convention}
\begin{definition}\label{def.rcf} $f:W_o\left(M\right)\mapsto\mathbb{R}$ is a \textbf{cylinder function} if there exists a partition 
	\[
	\mathcal{P}:=\left\{ 0<s_{1}<\cdots<s_{n}\leq1\right\} 
	\]
	of $\left[0,1\right]$ and a function $F:C^m\left(M^{n},\mathbb{R}\right)$
	such that
	\[
	f=F\left(\Sigma_{s_{1}},\Sigma_{s_{2}},\dots,\Sigma_{s_{n}}\right).
	\]
	We denote this space by $\mathcal{FC}^m$. \end{definition}
\begin{notation}\label{not fspace}
	Denote 
	\[\mathcal{FC}^1_{b}:=\left\{f:=F\left(\Sigma\right)\in \mathcal{FC}^{1}, F\text{ and all its partial differentials }grad_iF\text{ are bounded}\right\}.\]
\end{notation}
\begin{remark} In this paper the partition $\mathcal{P}$ is always
equally spaced, so $\left\vert \mathcal{P}\right\vert \equiv\Delta_{i}\equiv\frac{1}{n}$
for $i=1,...,n$. \end{remark}

\begin{definition}[Jacobi equation]\label{def.6.1}For $\sigma\in H\left(M\right)$, $Y\in \Gamma_\sigma\left(TM\right)$,
we say $Y\left(s\right)\in T_{\sigma\left(s\right)}M$ satisfies Jacobi
equation if: 
\[
\text{\ensuremath{\frac{\nabla^{2}}{ds^{2}}}}Y\text{\ensuremath{\left(s\right)}}=R\text{\ensuremath{\left(\sigma^{\prime}\left(s\right),Y\left(s\right)\right)\sigma^{\prime}\left(s\right).}}
\]
Further if the horizontal lift $u\left(s\right)$ of $\sigma$ is used, 
we let $y\left(s\right):=u^{-1}\left(s\right)Y\left(s\right).$ It
then follows that $y\left(s\right)$ satisfies the pulled back Jacobi
equation, 
\begin{equation}
y^{\prime\prime}\left(s\right)=R_{u\left(s\right)}\left(b^{\prime}\left(s\right),y\left(s\right)\right)b^{\prime}\left(s\right),\label{equ.6.1}
\end{equation}
where $b^{\prime}\left(s\right)=u\left(s\right)^{-1}\sigma^{\prime}\left(s\right).$
Once we have Jacobi equation, we can describe the tangent space $TH_{\mathcal{P}}\left(M\right)$
of $H_{\mathcal{P}}\left(M\right)$.\end{definition}
We formalize the tangent space of $H_\mathcal{P}\left(M\right)$ mentioned in Definition $\ref{def.1}$.
\begin{theorem}[Tangent space to $H_{\mathcal{P}}\left(M\right)$]\label{def.6.2}For
all $\sigma\in H_{\mathcal{P}}\left(M\right)$, 
\begin{equation}
T_{\sigma}H_{\mathcal{P}}\left(M\right)=\left\{s\to u\left(s\right)J\left(s\right)|\text{ }J\in C\left(\left[0,1\right],\mathbb{R}^{d}\right)\text{, }J\in H_{\mathcal{P},\sigma}\text{ with }J\left(0\right)=0\right\} .\label{equ.6.2}
\end{equation}
where $J\in H_{\mathcal{P},\sigma}$ iff
\[
J^{\prime\prime}\left(s\right)=R_{u\left(s\right)}\left(b^{\prime}\left(s_{i-1}+\right),J\left(s\right)\right)b^{\prime}\left(s_{i-1}+\right)\ \text{for}\ s\in\lbrack s_{i-1},s_{i})\text{ }i=1,...,n.
\]
\end{theorem}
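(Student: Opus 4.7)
The plan is to prove the characterization in two directions: (i) every tangent vector to $H_{\mathcal{P}}(M)$ at $\sigma$ pulls back under the horizontal frame $u(\cdot)=u(\sigma,\cdot)$ to a continuous $J$ with $J(0)=0$ which is a (pulled-back) Jacobi field on each subinterval $[s_{i-1},s_i)$, and (ii) every such $J$ is realized by some variation of $\sigma$ within $H_{\mathcal{P}}(M)$. A quick dimension count should be kept in mind as a sanity check: on a Hadamard manifold the map $H_{\mathcal{P}}(M)\to M^n$ given by $\sigma\mapsto(\sigma(s_1),\dots,\sigma(s_n))$ is a diffeomorphism, so $\dim H_{\mathcal{P}}(M)=nd$; on the other side, $J$ on $[s_{i-1},s_i)$ is determined by the initial data $(J(s_{i-1}+),J'(s_{i-1}+))\in\mathbb{R}^{2d}$, and imposing $J(0)=0$ together with continuity at the $n-1$ interior partition points leaves exactly $2nd-d-(n-1)d=nd$ free parameters.

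For direction (i), I take a smooth one-parameter family $t\mapsto\sigma_t\in H_{\mathcal{P}}(M)$ with $\sigma_0=\sigma$ and set $X(s):=\frac{d}{dt}\big|_{0}\sigma_t(s)\in T_{\sigma(s)}M$, as in the tangent-vector identification recalled from \cite{Klingenberg77}. Since each $\sigma_t$ starts at $o$, we get $X(0)=0$. On each open segment $(s_{i-1},s_i)$ the family $\sigma_t$ is a variation through geodesics, so differentiating the geodesic equation $\nabla\sigma_t'/ds=0$ in $t$ and using the standard commutation of covariant derivatives yields the (unpulled-back) Jacobi equation $\frac{\nabla^2 X}{ds^2}=R(\sigma',X)\sigma'$. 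Pulling back by $u(s)$ and defining $J(s):=u(s)^{-1}X(s)$, the parallelism of $u(s)u_0^{-1}$ gives $J''(s)=u(s)^{-1}\frac{\nabla^2 X}{ds^2}$ and $R_{u(s)}$ replaces $R$ in the obvious way. Finally, because $\sigma$ is itself a geodesic on $[s_{i-1},s_i)$, the tangent $\sigma'$ is parallel there, hence $b'(s)=u(s)^{-1}\sigma'(s)$ is constant on that segment and equals $b'(s_{i-1}+)$, producing exactly the ODE in the statement. Continuity of $J$ across the partition points is inherited from continuity of $X$.

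For direction (ii), given $J\in C([0,1],\mathbb{R}^d)$ with $J(0)=0$ solving the stated Jacobi ODE on each subinterval, I construct a variation realizing $uJ$. The idea is to prescribe the endpoint variations $p_i(t):=\exp_{\sigma(s_i)}(tu(s_i)J(s_i))\in M$ for $i=1,\dots,n$ (with $p_0(t)\equiv o$), and then define $\sigma_t$ to be the unique piecewise geodesic through $p_0(t),p_1(t),\dots,p_n(t)$ at the partition points. On a Hadamard manifold such a piecewise geodesic exists globally and depends smoothly on the endpoints; in the general case one restricts to small $t$ so that consecutive points lie in convex neighborhoods. By construction $\sigma_t\in H_{\mathcal{P}}(M)$ and $\sigma_0=\sigma$. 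The variation field $Y:=\frac{d}{dt}\big|_0\sigma_t$ then agrees with $uJ$ at every partition point and, by direction (i), its pullback $u^{-1}Y$ satisfies the same pulled-back Jacobi equation on each subinterval as $J$. Since a Jacobi equation is a second order linear ODE with unique solutions determined by boundary data at the two endpoints of each segment, $u^{-1}Y=J$ throughout.

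The main obstacle, and the only place anything subtle happens, is the solvability of the endpoint interpolation in direction (ii): one must know that prescribing the $n$ endpoints uniquely and smoothly determines a piecewise geodesic. On a Hadamard manifold this is immediate from the diffeomorphism $\exp_p:T_pM\to M$, so I expect no real difficulty under the hypotheses used later in the paper; in the fully general case the restriction to small $t$ (i.e.\ the local statement) is the standard workaround and suffices for identifying the tangent space.
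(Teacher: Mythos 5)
The paper's own ``proof'' is a one-line citation to Klingenberg, so there is no argument in the paper to compare against line by line; the substantive question is whether your argument closes the gap. Direction (i) is sound: a one-parameter family of piecewise geodesics is a variation through geodesics on each open segment, its variation field is a piecewise Jacobi field vanishing at $0$, and the pullback through the horizontal frame converts this to the stated ODE with $b'$ constant on each segment. Your dimension count is also correct and a useful sanity check.

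The gap is in direction (ii), specifically the claimed workaround for general $M$. Taking $t$ small does \emph{not} place $p_{i-1}(t)$ and $p_i(t)$ in a common convex neighborhood: those points converge to $\sigma(s_{i-1})$ and $\sigma(s_i)$, which are fixed and need not be close. What the boundary-value interpolation actually requires is that $\sigma(s_{i-1})$ and $\sigma(s_i)$ be non-conjugate along $\sigma\vert_{[s_{i-1},s_i]}$, so that the implicit function theorem produces a smooth, locally unique piecewise geodesic through the prescribed endpoints, and the same hypothesis is what your final uniqueness step (two Jacobi fields agreeing at both endpoints of each segment must coincide) silently uses. On a Hadamard manifold, or when the partition is fine enough that each segment lies within the conjugate radius, this is automatic, so your argument does cover every case the paper later uses. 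But as a proof of the theorem for general $M$ it is incomplete. The robust construction, consistent with the chart structure behind Klingenberg's theorem and with the development map already set up in the paper, is to parametrize $H_{\mathcal{P}}(M)$ by the anti-rolled initial velocities $\left(b'(s_0+),\dots,b'(s_{n-1}+)\right)\in\left(\mathbb{R}^d\right)^n$ rather than by the endpoints $\sigma(s_i)$: the initial-value problem for a piecewise geodesic is always uniquely and smoothly solvable, with no conjugate-point hypothesis, and varying this data realizes exactly the Cauchy data $\left(J(s_{i-1}+),J'(s_{i-1}+)\right)$ of an arbitrary continuous piecewise Jacobi field with $J(0)=0$. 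Direction (i) and the dimension count carry over unchanged.
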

\begin{proof}
See Theorem 1.3.1 in \cite{Klingenberg77}.  
\end{proof}
\begin{notation}\label{pvf}
Given $h\left(\cdot\right)\in H\left(\mathbb{R}^d\right)$, denote $X^h\left(\sigma,s\right):=u\left(\sigma,s\right)h\left(s\right).$
\end{notation}
\begin{notation}[$\left\{ C_{\mathcal{P},i}\left(\sigma,s\right)\right\} {}_{i=1}^{n}$
and $\left\{ S_{\mathcal{P},i}\left(\sigma,s\right)\right\} _{i=1}^{n}$]\label{not.2.1}Let
\[\mathcal{P}:=\left\{ 0=s_{0}<s_{1}<\dots<s_{n}=1\right\}\] be a
partition of $\left[0,1\right],$ $K_{i}:=\left[s_{i-1},s_{i}\right]$
and $\Delta_{i}:=s_{i}-s_{i-1}$ for $1\leq i\leq n,$ and say that
$f\left(s\right)$ satisfies the $i$ --Jacobi's equation if 
\begin{equation}
f^{\prime\prime}\left(s\right)=R_{u_s}\left(u^{-1}\sigma^{\prime}\left(s_{i-1}+\right),f\left(s\right)\right)u^{-1}\sigma^{\prime}\left(s_{i-1}+\right)\text{for}\ s\in K_{i}.\label{equ.6.3}
\end{equation}
where $u^{-1}\sigma^{\prime}\left(s\right):=u\left(\sigma,s\right)^{-1}\sigma^{\prime}\left(s\right)\in\mathbb{R}^{d}$.

We now let $C_{\mathcal{P},i}\left(\sigma,s\right)$ and $S_{\mathcal{P},i}\left(\sigma,s\right)\in\operatorname*{End}(\mathbb{R}^{d})$
denote the solution to Eq. (\ref{equ.6.3}) with initial conditions,
\[
C_{\mathcal{P},i}\left(s_{i-1}\right)=I,\text{ }C_{\mathcal{P},i}^{\prime}\left(s_{i-1}\right)=0,~S_{\mathcal{P},i}\left(s_{i-1}\right)=0\text{ and }S_{\mathcal{P},i}^{\prime}\left(s_{i-1}\right)=I
\]
and we further let 
\[
C_{\mathcal{P},i}\left(\sigma\right):=C_{\mathcal{P},i}\left(\sigma,s_{i}\right)\text{ and }S_{\mathcal{P},i}\left(\sigma\right):=S_{\mathcal{P},i}\left(\sigma,s_{i}\right).
\]
Here we view $C_{\mathcal{P},i}\left(s\right)$ and $S_{\mathcal{P},i}\left(s\right)$
as maps from $H_{\mathcal{P}}\left(M\right)$ to $\operatorname*{End}(\mathbb{R}^{d})$.
\end{notation}

\begin{definition} \label{def.2.1-1}Define for all $i=1,\cdots,n,$
\[
f_{\mathcal{P},i}\left(\sigma,s\right)=\begin{cases}
0 & s\in\left[0,s_{i-1}\right]\\
\frac{S_{\mathcal{P},i}\left(\sigma,s\right)}{\Delta_{i}} & s\in\left[s_{i-1},s_{i}\right]\\
\frac{C_{\mathcal{P},j}\left(\sigma,s\right)C_{\mathcal{P},j-1}\left(\sigma\right)\cdot\cdots\cdot C_{\mathcal{P},i+1}\left(\sigma\right)S_{\mathcal{P},i}\left(\sigma\right)}{\Delta_{i}} & s\in\left[s_{j-1},s_{j}\right]\text{ for }j=i+1,\cdots,n

\end{cases}
\]
with the convention that $S_{\mathcal{P},0}\equiv\left\vert \mathcal{P}\right\vert I$ and $f_{\mathcal{P},0}\equiv I$.
\end{definition}

\begin{remark}$S_{\mathcal{P},j}\left(s\right)$, $C_{\mathcal{P},j}\left(s\right)$ may be expressed in terms of $\left\{ f_{\mathcal{P},i}\right\} _{i=0}^{n}$ by 
\[
S_{\mathcal{P},j}\left(s\right)=\Delta_{j}f_{\mathcal{P},j}\left(s\right)
\]
\[
C_{\mathcal{P},j}\left(s\right)=f_{\mathcal{P},j-1}\left(s\right)f_{\mathcal{P},j-1}^{-1}\left(s_{j}\right).
\]

\end{remark}

\section{Approximate Pinned Measures\label{cha.3}}

\subsection{Representation of $\delta$
-- function\label{sec.3.1}}
Let $Y$ be a smooth Riemannian manifold, we will denote the distribution on $Y$ by  $\mathcal{D}^{\prime}\left(Y\right)$ and, compactly supported distribution by $\mathcal{E}^{\prime}\left(Y\right)$. For a matrix $A$, let $eig\left(A\right)$ denote the set of eigenvalues of $A$.
For each $x\in Y$, let $\delta_{x}\in \mathcal{E}^\prime\left(Y\right)$ be the $\delta$--function at $x$ defined by
\[
\delta_{x}\left(f\right)=f\left(x\right)\text{ }\forall f\in C^{\infty}\left(Y\right).
\]
\begin{lemma}[Representation of $\delta_0$ on flat space]\label{lem.3.1.1}There exist functions $\left\{g_{i}\right\}^{d}_{i=0}$ with $g_{0}\in C_{0}^{\infty}\left(\mathbb{R}^{d}\right),$ $\left\{ g_{j}\right\} _{j=1}^{d}\subset C^{\infty}\left(\mathbb{R}^{d}/\left\{ 0\right\} \right)$ with supports contained in a compact subset $K\subset \mathbb{R}^d$ and satisfying 
\begin{equation}
\left|g_{j}\left(x\right)\right|\leq c\left|x\right|^{1-d}\text{ for }j=1,\cdots,d, \label{eq:-19}
\end{equation}such that
\begin{equation}
\delta_0=g_{0}+\sum_{j=1}^{d}\frac{\partial g_{j}}{\partial x_{j}}\text{ in  }\mathcal{E}^\prime\left(\mathbb{R}^d\right).\label{eq:-47}
\end{equation}In more detail, for any $f\in C_{0}^{\infty}\left(\mathbb{R}^{d}\right)$, 
\begin{align}
f\left(0\right)=\int_{\mathbb{R}^d}\left(g_{0}+\sum_{j=1}^{d}\frac{\partial g_{j}}{\partial x_{j}}\right)fdx=\int_{\mathbb{R}^d}\left(g_0f-\sum_{j=1}^{d}\frac{\partial f}{\partial x_{j}}g_{j}\right)dx.
\end{align}

\end{lemma}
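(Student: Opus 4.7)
My plan is to build the representation from the fundamental solution of the Laplacian, cut off to make everything compactly supported, and let the estimate on $g_j$ come from the classical singularity of $\nabla G$.

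Specifically, let $G$ denote a fundamental solution of $\Delta$ on $\mathbb{R}^d$, so that $\Delta G = \delta_0$ in $\mathcal{D}'(\mathbb{R}^d)$. Recall the standard pointwise behavior: for $d\geq 3$, $G(x)=c_d|x|^{2-d}$; for $d=2$, $G(x)=\frac{1}{2\pi}\log|x|$; for $d=1$, $G(x)=|x|/2$. In every case, $G\in L^1_{\mathrm{loc}}$, $G\in C^\infty(\mathbb{R}^d\setminus\{0\})$, and $|\partial_j G(x)|\leq c|x|^{1-d}$ near the origin. Choose a cutoff $\chi\in C_0^\infty(\mathbb{R}^d)$ with $\chi\equiv 1$ on a neighborhood $U$ of $0$, supported in a compact set $K$.

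The key algebraic step is to expand $\Delta(\chi G)$ via the product rule: since $\chi\equiv 1$ near $0$, we have $\chi\,\Delta G=\chi\,\delta_0=\delta_0$, and therefore
\[
\delta_0=\Delta(\chi G)-2\nabla\chi\cdot\nabla G-(\Delta\chi)\,G
 =\sum_{j=1}^{d}\partial_j\bigl(\partial_j(\chi G)\bigr)+g_0,
\]
where I set $g_0:=-2\nabla\chi\cdot\nabla G-(\Delta\chi)G$ and $g_j:=\partial_j(\chi G)$ for $j=1,\dots,d$. Since $\nabla\chi$ and $\Delta\chi$ vanish on $U$, the function $g_0$ is the product of smooth functions supported in $K\setminus U$ with the smooth function $G$ on that set, hence $g_0\in C_0^\infty(\mathbb{R}^d)$. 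Each $g_j$ is smooth on $\mathbb{R}^d\setminus\{0\}$ (product of smooth factors there) and supported in $K=\mathrm{supp}\,\chi$.

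For the pointwise bound, use the product rule $g_j=(\partial_j\chi)G+\chi(\partial_j G)$. On the support of $\partial_j\chi$ the function is smooth and bounded, while $|\chi(x)\partial_j G(x)|\leq c|x|^{1-d}$ from the known singularity of $\nabla G$. Absorbing the bounded piece into a constant (and using that $K$ is bounded so $|x|^{1-d}$ is bounded from below on the set where $G$ is smooth) yields $|g_j(x)|\leq c|x|^{1-d}$ on all of $K$, as required. The final distributional identity (\ref{eq:-47}) and its pairing with $f\in C_0^\infty$ then follows by integration by parts, which is justified because $g_j\in L^1(\mathbb{R}^d)$ (the estimate $|x|^{1-d}$ is integrable on compact sets) and $\partial_j g_j$ exists as a distribution.

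The only point requiring a little care is the justification of the integration by parts against the singular $g_j$; the cleanest route is to write $\int g_j\partial_j f\,dx$ as a limit of the integrals over $\{|x|\geq\varepsilon\}$ and to show that the boundary terms on $\{|x|=\varepsilon\}$ vanish as $\varepsilon\downarrow 0$ using $|g_j|\leq c|x|^{1-d}$ and the surface area $\omega_{d-1}\varepsilon^{d-1}$. I expect this is the only mildly delicate step; everything else is bookkeeping on the cutoff version of the fundamental solution.
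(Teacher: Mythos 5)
Your overall route matches the paper's: the paper cites Strichartz (Lemma 10.10), but its own (commented-out) alternative argument is exactly the fundamental-solution-plus-cutoff construction you use, with a slightly different bookkeeping of the cutoff ($g_j=\phi\,\partial_j\Gamma$ there, versus your $g_j=\partial_j(\chi G)$; both work). The algebraic step
\[
\Delta(\chi G)=(\Delta\chi)G+2\nabla\chi\cdot\nabla G+\chi\Delta G=(\Delta\chi)G+2\nabla\chi\cdot\nabla G+\delta_0
\]
is correct as a distributional identity, and your verification that $g_0\in C_0^\infty$, $g_j\in C^\infty(\mathbb{R}^d\setminus\{0\})\cap L^1$ with compact support, and $|g_j|\le c|x|^{1-d}$ is fine.

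There is, however, a genuine slip in your concluding remark. You propose to justify ``integration by parts against $g_j$'' by truncating at $\{|x|\ge\varepsilon\}$ and then showing the boundary term on $\{|x|=\varepsilon\}$ \emph{vanishes} because $|g_j|\le c|x|^{1-d}$ while the surface area is $\omega_{d-1}\varepsilon^{d-1}$. But $c\varepsilon^{1-d}\cdot\omega_{d-1}\varepsilon^{d-1}=c\,\omega_{d-1}$, which is $O(1)$, not $o(1)$ --- this flux does \emph{not} vanish. Indeed it cannot: if one integrates by parts against $g_j$ over $\{|x|\ge\varepsilon\}$ and uses that $\sum_j\partial_j g_j=-g_0$ pointwise away from $0$, the surviving boundary term is precisely $\int_{|x|=\varepsilon}\tfrac{\partial G}{\partial r}f\,dS\to f(0)$; this non-vanishing flux \emph{is} the $\delta$-function, and killing it would turn your identity into $0=f(0)$. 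The $\varepsilon$-argument that you actually need --- and which \emph{does} have vanishing boundary terms --- is the one identifying the distributional derivative $\partial_j(\chi G)$ with the pointwise gradient $g_j\in L^1_{\mathrm{loc}}$. There the boundary integrand involves $\chi G\sim|x|^{2-d}$ (for $d\ge3$; $\log|x|$ for $d=2$), not $\nabla G\sim|x|^{1-d}$, so the boundary term is $O(\varepsilon^{2-d}\cdot\varepsilon^{d-1})=O(\varepsilon)\to0$. Once that is in hand, step~2 of your argument already gives the distributional identity and nothing further is needed; the pairing $\langle\partial_j g_j,f\rangle=-\int g_j\,\partial_j f$ is then just the definition of the distributional derivative of the $L^1_{\mathrm{loc}}$ function $g_j$, requiring no additional limit. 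So the fix is small but important: either invoke the vanishing boundary argument for $\chi G$ rather than for $g_j$, or, if you do integrate by parts against $g_j$, acknowledge that the boundary term survives and equals $f(0)$.
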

\begin{proof}
	This lemma can be derived from Lemma 10.10 in \cite{Strichartz1979}.
\end{proof} 

Based on this representation we can get a representation of $\delta_{p}$ for any $p\in M$. 
\begin{theorem}[Representation of $\delta$
-- function on manifold]\label{thm.1.11}For
any $p\in M,$ there exist functions $\left\{ g_{j}\right\} _{j=0}^{d}\subset C^{\infty}\left(M/\left\{ p\right\} \right)\cap L^{\frac{d}{d-1}}\left(M\right)$ with supports in a compact subset $K$ of $M$ and smooth vector fields $\left\{ X_{j}\right\} _{j=1}^{d}\subset\Gamma^{\infty}\left(TM\right)$
with compact support such that 
\begin{equation}
\delta_{p}=g_{0}+\sum_{j=1}^{d}X_{j}g_{j}\text{ in }\mathcal{E}^{\prime}\left(M\right).\label{eq:-3}
\end{equation}

\end{theorem}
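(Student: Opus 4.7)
The plan is to transport the Euclidean representation of Lemma \ref{lem.3.1.1} to $M$ via the exponential map at $p$. First I would choose $\varepsilon>0$ so small that $\Phi := \exp_p$ is a diffeomorphism from the Euclidean ball $B_\varepsilon \subset T_pM \cong \mathbb{R}^d$ (identified with $\mathbb{R}^d$ via $u_0$) onto a normal neighborhood $U \subset M$ of $p$, writing $dV_g \circ \Phi = J(x)\, dx$ where $J$ is smooth and $J(0) = 1$, and letting $Y_j := \Phi_*(\partial/\partial x^j)$ be the normal-coordinate vector fields on $U$. Applying Lemma \ref{lem.3.1.1} with a compact $K \subset B_{\varepsilon/2}$ yields $\tilde g_0 \in C_c^\infty(\mathbb{R}^d)$ and $\tilde g_1,\ldots,\tilde g_d \in C^\infty(\mathbb{R}^d \setminus \{0\})$ supported in $K$ with $|\tilde g_j(x)| \le c|x|^{1-d}$ and $\delta_0 = \tilde g_0 + \sum_j \partial_j \tilde g_j$.

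For any $f \in C^\infty(M)$, pulling back via $\Phi$ and then pushing the resulting Euclidean integrals forward to $M$ using $dx = J^{-1}\, dV_g \circ \Phi$ and $\partial_j(f \circ \Phi) = (Y_j f) \circ \Phi$, I obtain
\[
f(p) = (f \circ \Phi)(0) = \int_U h_0\, f\, dV_g - \sum_{j=1}^d \int_U h_j\, (Y_j f)\, dV_g,
\]
where $h_j := (\tilde g_j / J) \circ \Phi^{-1}$ is supported in the compact set $\Phi(K) \Subset U$. Next I would globalize the vector fields: choose $\chi \in C_c^\infty(U)$ with $\chi \equiv 1$ on a neighborhood of $\Phi(K)$ and set $X_j := \chi\, Y_j$, a smooth compactly supported vector field on $M$. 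Interpreting $X_j h_j$ as a distribution via the standard formula $\langle X_j h_j, f\rangle = -\int h_j\,(X_j f + f \operatorname{div}_g X_j)\, dV_g$ and using $X_j f = Y_j f$ on $\operatorname{supp}(h_j)$, the divergence contribution can be absorbed by setting $g_0 := h_0 + \sum_j h_j \operatorname{div}_g(X_j)$ and $g_j := h_j$ for $j \geq 1$; this yields the identity $\delta_p = g_0 + \sum_{j=1}^d X_j g_j$ in $\mathcal{E}'(M)$.

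Finally, the regularity claims are verified directly. Each $g_j$ is smooth on $M \setminus \{p\}$ because $\tilde g_j$ is smooth on $\mathbb{R}^d \setminus \{0\}$ and $J$ is smooth and positive; the supports all lie in the compact set $\Phi(K) \subset M$. For integrability, the normal-coordinate distance agrees with $|\Phi^{-1}(y)|$ and $J$ is bounded above and below on $\Phi(K)$, so the Euclidean bound translates to $|g_j(y)| \le C\, d(p,y)^{1-d}$, giving the required $L^{d/(d-1)}$-integrability (interpreted in the weak sense, as the pointwise bound $|x|^{1-d}$ lies in weak $L^{d/(d-1)}$ and in every $L^p$ with $p<d/(d-1)$).

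The only real obstacle is the bookkeeping with the Jacobian $J$ and the divergence-correction in the integration-by-parts step that converts the pointwise pairing $\int h_j (Y_j f)\, dV_g$ into the distributional pairing $\langle X_j g_j, f\rangle$; once those are handled, the result is a routine transport of the Euclidean lemma through the exponential chart.
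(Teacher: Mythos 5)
Your proposal follows essentially the same route as the paper's proof: pull back to $\mathbb{R}^d$ via a chart centered at $p$, apply the Euclidean decomposition of Lemma~\ref{lem.3.1.1}, correct for the Riemannian volume density, cut off with a bump function, and absorb the divergence terms into $g_0$. The only structural difference is cosmetic: you work in a normal-coordinate ball, which requires the small extra observation that the compact support $K$ of Lemma~\ref{lem.3.1.1} can be forced into $B_{\varepsilon/2}$ (true by a dilation of the Newtonian-kernel construction, though the lemma as stated does not advertise this), whereas the paper outright picks a chart with $x(U) = \mathbb{R}^d$. Your caveat about the $L^{d/(d-1)}$ claim is a genuinely good catch: the bound $|g_j| \le C\,d(p,\cdot)^{1-d}$ only places $g_j$ in weak $L^{d/(d-1)}$ (and in $L^p$ for $p<d/(d-1)$), and the paper's own proof establishes no more than yours does, so the membership $g_j \in L^{d/(d-1)}(M)$ in the theorem statement is not actually justified by either argument and the later uses in Remark~\ref{rem3.1.2} and Corollary~\ref{col3.7} would need to be read with $p<d/(d-1)$ in place of $d/(d-1)$.
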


\begin{proof} Pick a chart $\left\{ U,x\right\} $ near $p\in M$
such that $x\left(p\right)=0.$ Since $x\left(U\right)=\mathbb{R}^{d},$
one can apply Lemma \ref{lem.3.1.1} on $x\left(U\right)\simeq\mathbb{R}^{d}$
and get: 
\[
\delta_{0}=\tilde{g}_{0}-\sum_{j=1}^{d}\frac{\partial \tilde{g}_{j}}{\partial x_{j}}
\]
where $\delta_{0}$ is the delta mass on $x\left(U\right)$ supported
at the origin. So for any $h\in C^{\infty}\left(U\right)$
\begin{align*}
h\left(p\right)=h\circ x^{-1}\left(0\right)=\int_{\mathbb{R}^{d}}\left(\tilde{g}_{0}-\sum_{j=1}^{d}\frac{\partial \tilde{g}_{j}}{\partial x_{j}}\right)h\circ x^{-1}d\lambda=\int_{\mathbb{R}^{d}}\left(\tilde{g}_{0}+\sum_{j=1}^{d}\tilde{g}_{j}\frac{\partial}{\partial x_{j}}\right)h\circ x^{-1}d\lambda
\end{align*}
where $d\lambda$ is the Lebesgue measure on $\mathbb{R}^{d}.$ Consider
$\left\{ \frac{\tilde{g}_{j}}{\sqrt{\det g}}\circ x\right\} _{j=0}^{d}$
where $g=\left(g_{ij}\right)_{1\leq i,j\leq d}$ is the metric matrix,
i.e. $g_{ij}=\left\langle \frac{\partial}{\partial x_{i}},\frac{\partial}{\partial x_{j}}\right\rangle _{g}.$
From Lemma \ref{lem.3.1.1} we know that $\frac{\tilde{g}_{j}}{\sqrt{\det g}}\circ x$
has compact support in $U$ and therefore $K:=\cup_{j=1}^{d}supp\left(\frac{\tilde{g}_{j}}{\sqrt{\det g}}\circ x\right)$
is compact in $U.$ Using the smooth Urysohn lemma we can construct a
smooth function $\phi\in C^{\infty}\left(M\to\left[0,1\right]\right)$
such that $\phi^{-1}\left(\left\{ 0\right\} \right)=M/U$ and $\phi^{-1}\left(\left\{ 1\right\} \right)=K.$
Define
\[
\hat{g}_{0}=\phi\frac{\tilde{g}_{0}}{\sqrt{\det g}}\circ x
\]
and
\[
\hat{g}_{j}=\phi\frac{\tilde{g}_{j}}{\sqrt{\det g}}\circ x,\text{ }X_{j}=\phi\cdot\left(x^{-1}\right)_{*}\frac{\partial}{\partial x_{j}}\text{ for }j=1,\dots,d.
\]
Then for any $f\in C^{\infty}\left(M\right),$
\begin{align*}
&\int_{M}\left(\hat{g}_{0}+\sum_{j=1}^{d}\hat{g}_{j}X_{j}\right) fdvol\\
&=\int_{U}\left(\hat{g}_{0}+\sum_{j=1}^{d}\hat{g}_{j}X_{j}\right)fdvol\\
&=\int_{U}\frac{\tilde{g}_{0}}{\sqrt{\det g}}\circ x\cdot\phi fdvol+\sum_{j=1}^{d}\int_{U}\phi^{2}\frac{\tilde{g}_{j}}{\sqrt{\det g}}\circ x\left(\left(x^{-1}\right)_{*}\frac{\partial(\phi f)}{\partial x_{j}}-\left(x^{-1}\right)_{*}\frac{\partial\phi}{\partial x_{j}}f\right)dvol
\end{align*}
where $dvol$ is the volume measure on $M$.

Since $\phi\cdot\left(x^{-1}\right)_{*}\frac{\partial\phi}{\partial x_{j}}\equiv0$
and $\phi\equiv1$ on $K$, we have:
\begin{align*}
\int_{M}\left(\hat{g}_{0}+\sum_{j=1}^{d}\hat{g}_{j}X_{j}\right)fdvol & =\int_{U}\left(\frac{\tilde{g}_{0}}{\sqrt{\det g}}\circ x+\sum_{j=1}^{d}\frac{\tilde{g}_{j}}{\sqrt{\det g}}\circ x\left(x^{-1}\right)_*\frac{\partial}{\partial x_j}\right)fdvol\\
 & =\int_{\mathbb{R}^{d}}\left(\frac{\tilde{g}_{0}}{\sqrt{\det g}}+\sum_{j=1}^{d}\frac{\tilde{g}_{j}}{\sqrt{\det g}}\frac{\partial}{\partial x_j}\right)f\circ x^{-1}\sqrt{\det g}d\lambda\\
 & =\int_{\mathbb{R}^{d}}\left(\tilde{g}_{0}+\sum_{j=1}^{d}\tilde{g}_{j}\frac{\partial}{\partial x_{j}}\right)f\circ x^{-1}d\lambda\\
 & =f\circ x^{-1}\left(0\right)\\
 & =f\left(p\right).
\end{align*}
Therefore, by the Divergence Theorem, we can write down $\delta_{p}$ in distributional sense as 
\[
\delta_{p}=g_{0}+\sum_{j=1}^{d}X_{j}g_{j}
\]
where $g_{0}=\hat{g}_{0}-\sum_{j=1}^{d}\hat{g}_{j}\cdot divX_{j}$ and for $j=1,\dots,n$, $g_{j}=-\hat{g}_{j}$.

From the construction one can see that $X_{j}\in\Gamma^{\infty}\left(TM\right)$ with compact support
 and $\left\{ g_{j}\right\} _{j=0}^{d}\subset C^{\infty}\left(M/\left\{ p\right\} \right)\cap L^{\frac{d}{d-1}}\left(M\right)$
with supports being a compact subset of $M$. 
\end{proof}
\begin{remark} \label{rem3.1.2}Since $C^{\infty}_0(M)$ is dense in $L^q(M),\text{ }\forall q\geq 1$, for any $g_{j},j=1,\cdots,d$, we can find a sequence $\left\{ g_{j}^{\left(m\right)}\right\} _{m}\subset C_{0}^{\infty}\left(M\right)$ such that
\[ g_{j}^{\left(m\right)}\to g_{j}\text{ in }L^\frac{d}{d-1}\left(M\right)\]In particular, we can make $\cup_msupp\left(g_{j}^{\left(m\right)}\right)$ to be compact.
\end{remark}
\begin{corollary}\label{col3.7}
Define
\[
\delta_{x}^{(m)}:=g_{0}^{\left(m\right)}+\sum_{j=1}^{d}X_{j}g_{j}^{\left(m\right)}\in C_{0}^{\infty}\left(M\right).
\]
Then $\left\{\delta_{x}^{(m)}
\right\}_m$ is an approximating sequence of delta mass $\delta_x$, i.e. $\delta_{x}^{(m)}\to\delta_{x}\text{ in }\mathcal{D^{\prime}}\left(M\right).\label{Eq:5}
$
\end{corollary}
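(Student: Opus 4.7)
The plan is to test against an arbitrary $f\in C_c^\infty(M)$, use the divergence theorem to transfer the $X_j$'s off of the $g_j^{(m)}$'s, and then invoke the $L^{d/(d-1)}$ convergence from Remark~\ref{rem3.1.2} via Hölder's inequality. First I would write
\[
\delta_x^{(m)}(f) = \int_M g_0^{(m)} f\,dvol + \sum_{j=1}^d \int_M (X_j g_j^{(m)}) f\,dvol.
\]
Since each $g_j^{(m)}$ is smooth and each $X_j$ has compact support, the divergence theorem applied to the compactly supported vector field $f g_j^{(m)} X_j$ yields
\[
\int_M (X_j g_j^{(m)}) f\,dvol = -\int_M g_j^{(m)}\bigl(X_j f + (\operatorname{div} X_j)\,f\bigr)\,dvol,
\]
so that
\[
\delta_x^{(m)}(f) = \int_M g_0^{(m)} f\,dvol - \sum_{j=1}^d \int_M g_j^{(m)}\bigl(X_j f + (\operatorname{div} X_j)\,f\bigr)\,dvol.
\]

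Next I would pass to the limit term by term. By Remark~\ref{rem3.1.2} we can arrange that all the supports $\bigcup_m \operatorname{supp}(g_j^{(m)})$ lie in a common compact set $K \subset M$. On $K$, each of $f$, $X_j f$, and $(\operatorname{div} X_j)\,f$ is a bounded continuous function and hence belongs to $L^d(K)$. Since $d$ is the Hölder conjugate of $d/(d-1)$, Hölder's inequality together with $g_j^{(m)} \to g_j$ in $L^{d/(d-1)}(M)$ gives
\[
\int_M g_j^{(m)}\phi\,dvol \;\longrightarrow\; \int_M g_j\phi\,dvol
\quad\text{for } \phi \in \{f,\ X_j f + (\operatorname{div} X_j)\,f\}.
\]
The same reasoning applies to the $g_0^{(m)}$ term.

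It then remains to identify the limit with $f(x)$. Substituting $g_j = -\hat{g}_j$ for $j=1,\dots,d$ and $g_0 = \hat{g}_0 - \sum_j \hat{g}_j\,\operatorname{div}(X_j)$ (from the proof of Theorem~\ref{thm.1.11}), the $(\operatorname{div} X_j)$ contributions cancel and one is left with
\[
\int_M \hat{g}_0\,f\,dvol + \sum_{j=1}^d \int_M \hat{g}_j\,X_j f\,dvol = f(x),
\]
which is exactly the identity established inside the proof of Theorem~\ref{thm.1.11}. Equivalently one may shortcut this bookkeeping by noting that the same integration by parts applied formally (distributionally) to $g_0 + \sum_j X_j g_j$ reproduces $\delta_x(f)$. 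Either way, $\delta_x^{(m)} \to \delta_x$ in $\mathcal{D}'(M)$.

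The argument is essentially routine Hölder plus integration by parts; there is no genuine obstacle. The only point requiring care is that the supports of the $g_j^{(m)}$ be uniformly compactly contained, so that the $L^{d/(d-1)}$--$L^d$ Hölder estimate actually applies against the smooth test function $f$ and its derivatives—precisely the property recorded in Remark~\ref{rem3.1.2}.
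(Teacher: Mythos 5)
Your proof is correct and follows essentially the same route as the paper: integrate by parts to move the vector fields $X_j$ onto the test function (the paper packages this as $X_j^*$), exploit the uniformly compact support $K:=\cup_m\operatorname{supp}(g_j^{(m)})$ to ensure the transferred test data is bounded on $K$, and conclude via H\"older's inequality against the $L^{d/(d-1)}$ convergence $g_j^{(m)}\to g_j$. The explicit re-identification of the limit with $f(x)$ in your final paragraph is a harmless extra step; once the term-by-term limits are established, the limit is automatically $\bigl(g_0+\sum_j X_j g_j\bigr)(f)=\delta_x(f)$ by Theorem~\ref{thm.1.11}.
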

\begin{proof}
Using integration by parts, we have for any $f\in C^{\infty}\left(M\right)$, 
\begin{align*}
\int_Mf\delta_{x}^{(m)}d\lambda&=\int_M\left(g_{0}^{\left(m\right)}+\sum_{j=1}^{d}X_{j}g_{j}^{\left(m\right)}\right)fd\lambda=\int_M\left(g_{0}^{\left(m\right)}f+\sum_{j=1}^{d}g_{j}^{\left(m\right)}X_{j}^*f\right)d\lambda
\end{align*}
Since $K:=\cup_msupp\left(g_{j}^{\left(m\right)}\right)$ is compact, $f\cdot1_K$ and $X_{j}^*f\cdot1_K\in L^{\infty-}\left(M\right)$, then Corollary \ref{col3.7} easily follows from Holder's inequality.
\end{proof}

\subsection{Definition of $\nu_{\mathcal{P},x}^{1}$\label{sec.3.2}}

In this section we will give an explicit definition of $\nu_{\mathcal{P},x}^{1}$
proposed in Theorem \ref{thm1}. 
\begin{definition}[End point map]
Define $\Ep: H\left(M\right)\to M$ to be $\Ep\left(\sigma\right)=\sigma\left(1\right)$ and let $\Ep^{\mathcal{P}}$ denote $\Ep\mid_{H_\mathcal{P}\left(M\right)}.$
\end{definition}
Recall from Definition \ref{not1} that
\[
H_{\mathcal{P},x}\left(M\right):=\left\{ \sigma\in H_{\mathcal{P}}\left(M\right)\mid\sigma\left(1\right)=x\right\}=\left(\Ep^{\mathcal{P}}\right)^{-1}\left(\left\{x\right\}\right).
\]
In general, it is not
guaranteed that $\Ep^{\mathcal{P}}$ is a submersion, which would guarantee that $H_{\mathcal{P},x}\left(M\right)$ is an embedded
submanifold of $H_{\mathcal{P}}\left(M\right)$. The following is an easy, yet illuminating, example
showing what can go wrong:

\begin{example} \label{ex.aa}If $M=\mathbb{S}^{2}$, $o$ is the north pole and $\mathcal{P}:=\left\{ 0,1\right\}$, then $\dim H_{\mathcal{P}}\left(M\right)=2$. Consider 
\[
X\left(\sigma,s\right):=\left(0,\pi\sin s\pi,0\right)\in T_{\sigma}H_{\mathcal{P}}\left(M\right)
\]
where 
\[
\sigma\left(s\right)=\left(\sin s\pi,0,\cos s\pi\right).
\]
An one parameter family realizing $X\left(\sigma,s\right)$ would
be 
\[
\sigma_{t}\left(s\right)=\left(\sin s\pi\cos t\pi,\sin s\pi\sin t\pi,\cos s\pi\right),
\]
from which one can easily see that: 
\[
{\Ep}^{\mathcal{P}}_{\ast\sigma}\left(X\right)=\frac{d}{dt}|_{0}\Ep^{\mathcal{P}}\left(\sigma_{t}\right)=\frac{d}{dt}|_{0}\sigma_{t}\left(1\right)=X\left(\sigma,1\right)=0.
\]
So by Rank-Nullity theorem, ${\Ep}^{\mathcal{P}}_{\ast\sigma}$ is not surjective.
\end{example}
The problem comes from the conjugate points on $M$. Two points $p$ and $q$ are conjugate points along a geodesic $\sigma$ if there exists non-zero Jacobi field (smooth vector field along $\sigma$ satisfying Jacobi equation) vanishing at $p$ and $q$. This fact will allow the kernel of ${\Ep}^{\mathcal{P}}_{\ast}$ to be  \textquotedblleft overly large \textquotedblright (more accurately dimension exceeds $\left(n-1\right)d$), so by Rank-nullity theorem,  ${\Ep}^{\mathcal{P}}_{\ast}$ can not be surjective. In this paper we consider manifolds with non--positive sectional curvature. These manifolds do not have conjugate points. From the next proposition we will see that $\Ep^{\mathcal{P}}$ is a submersion on these manifolds. 
\begin{notation} \label{not3.1.8}We construct a $G_\mathcal{P}^1$--orthonormal frame 
\[\left\{ X^{h_{\alpha,i}}:1\leq \alpha\leq d, 1\leq i\leq n\right\}\]of $H_{\mathcal{P}}\left(M\right)$
as follows: for any $\sigma\in H_{\mathcal{P}}\left(M\right)$, $X^{h_{\alpha,i}}(\sigma,\cdot)=u_{(\cdot)}(\sigma)h_{\alpha,i}(\sigma,\cdot)$, where
\begin{equation}
h_{\alpha,i}\in H_{\mathcal{P},\sigma}\text{ and }h_{\alpha,i}^{\prime}(s_{j}+)=\frac{\delta_{i-1,j}e_{\alpha}}{\sqrt{\Delta_{j+1}}}\text{ for }j=0,...,n-1\label{eq:-34}
\end{equation}
and the definition of $H_{\mathcal{P},\sigma}$ can be found in
Eq.$(\ref{equ.6.2})$. 
\end{notation}
\begin{remark} \label{rem.8.3-1}Using Proposition
	\ref{pro.6.4}, it is not hard to see that 
\begin{equation}
h_{\alpha,i}\left(s\right)=\frac{1}{\sqrt{n}}f_{\mathcal{P},i}\left(s\right)e_{\alpha}\label{equ.8.5-1}
\end{equation}
where $\left\{ f_{\mathcal{P},i}\left(s\right)\right\} $
is given in Definition \ref{def.2.1-1}. \end{remark}
\begin{proposition} \label{pro.3.2.1}If $M$ is complete with non-positive sectional
curvature, then for any $x\in M$, $H_{\mathcal{P},x}\left(M\right):={(\Ep^{\mathcal{P}})}^{-1}\left(\left\{ x\right\} \right)$
is an embedded submanifold of $H_{\mathcal{P}}\left(M\right).$ \end{proposition}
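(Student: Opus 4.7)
The plan is to show that $\Ep^{\mathcal{P}}: H_{\mathcal{P}}(M) \to M$ is a submersion, after which the preimage (regular value) theorem immediately identifies $H_{\mathcal{P},x}(M) = (\Ep^{\mathcal{P}})^{-1}(\{x\})$ as a closed embedded submanifold of codimension $d$. Since Example \ref{ex.aa} already exhibits that the obstruction to surjectivity of $\Ep^{\mathcal{P}}_{\ast\sigma}$ is the presence of conjugate points along $\sigma$, the entire content of the argument is to leverage non-positive sectional curvature to rule out conjugate points and then to verify surjectivity in explicit coordinates.

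Fix $\sigma \in H_{\mathcal{P}}(M)$. By Theorem \ref{def.6.2}, every $X \in T_\sigma H_\mathcal{P}(M)$ has the form $X(s) = u(\sigma,s)J(s)$ with $J \in H_{\mathcal{P},\sigma}$ and $J(0)=0$, and the push-forward is $\Ep^{\mathcal{P}}_{\ast\sigma}(X) = u(\sigma,1) J(1)$. Because $u(\sigma,1):\mathbb{R}^d \to T_{\sigma(1)} M$ is a linear isometry, surjectivity of $\Ep^{\mathcal{P}}_{\ast\sigma}$ reduces to showing the linear map $J\mapsto J(1)$ has full range $\mathbb{R}^d$. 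I would realize this by restricting to tangent vectors concentrated on the last subinterval only: with the orthonormal frame $\{h_{\alpha,n}\}_{\alpha=1}^d$ of Notation \ref{not3.1.8}, Remark \ref{rem.8.3-1} together with Definition \ref{def.2.1-1} gives
\[
h_{\alpha,n}(1) = \frac{1}{\sqrt{n}}\, f_{\mathcal{P},n}(1)\, e_\alpha = \frac{1}{\sqrt{n}\,\Delta_n}\, S_{\mathcal{P},n}(1)\, e_\alpha,
\]
so the whole question comes down to invertibility of the Jacobi matrix $S_{\mathcal{P},n}(1)$ associated to the geodesic piece $\sigma|_{[s_{n-1},1]}$.

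This invertibility is the main (really the only) obstacle, and it is precisely where non-positive sectional curvature enters in an essential way. If $S_{\mathcal{P},n}(1) v = 0$ for some nonzero $v \in \mathbb{R}^d$, then $J(s) := u(\sigma,s) S_{\mathcal{P},n}(s) v$ is a nontrivial Jacobi field along $\sigma|_{[s_{n-1},1]}$ that vanishes at both endpoints, so $s_{n-1}$ and $1$ would be conjugate along this geodesic. To rule this out I would invoke the classical convexity argument: for $f(s) := |J(s)|^2_g$, the Jacobi equation and the symmetries of the Riemann tensor yield
\[
f''(s) = 2\left|\frac{\nabla J}{ds}(s)\right|_g^2 + 2\langle R(\sigma'(s), J(s))\sigma'(s), J(s)\rangle_g,
\]
and non-positive sectional curvature forces the curvature term to be $\geq 0$, so $f$ is convex. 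Combined with $f\geq 0$, $f(s_{n-1})=0$, and the fact that $f$ is strictly positive immediately past $s_{n-1}$ (since $J(s) = (s-s_{n-1})\tfrac{\nabla J}{ds}(s_{n-1})+ O((s-s_{n-1})^2)$ with $\tfrac{\nabla J}{ds}(s_{n-1}) \neq 0$), convexity precludes any further zero of $f$, contradicting $J(1)=0$. Hence $S_{\mathcal{P},n}(1)$ is invertible, $\Ep^{\mathcal{P}}_{\ast\sigma}$ is surjective at every $\sigma \in H_{\mathcal{P}}(M)$, and the preimage theorem concludes the proof.
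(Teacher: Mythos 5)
Your proof is correct and follows essentially the same route as the paper: reduce to surjectivity of $\Ep^{\mathcal{P}}_{*\sigma}$, use the frame $\{X^{h_{\alpha,n}}\}_{\alpha=1}^d$ to identify the image with $\sqrt{n}\,u(1)S_{\mathcal{P},n}(1)(\mathbb{R}^d)$, and conclude from invertibility of $S_{\mathcal{P},n}(1)$ under non-positive curvature. The only difference is that where the paper simply cites Proposition \ref{prop A-1} (which gives the quantitative eigenvalue bound $|\lambda|\geq s$ for $S_{\mathcal{P},n}(s)$, reused repeatedly elsewhere), you inline the classical convexity argument for $|J|^2$ to rule out conjugate points; your sign bookkeeping for the curvature term is consistent with the paper's convention $\frac{\nabla^2 J}{ds^2}=R(\sigma',J)\sigma'$, so the argument goes through.
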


\begin{proof} It suffices to show $\Ep^{\mathcal{P}}$ is a submersion. Since $M$ is complete, for any $y\in M$, there exists a geodesic $\sigma$ parametrized on $\left[0,1\right]$ and connecting $o$ and $y$. So $\Ep^{\mathcal{P}}$ is surjective. To show ${\Ep}^{\mathcal{P}}_*$ is surjective, we use a class of vector fields $\left\{X^{h_{\alpha,n}}\right\}_{\alpha=1}^d$ in Notation \ref{not3.1.8}. Since
\[{\Ep}^{\mathcal{P}}_*\left(X^{h_{\alpha,n}}\right)=X_1^{h_{\alpha,n}}=\sqrt{n}u\left(1\right)S_{\mathcal{P},n}e_\alpha,\]where $u\left(\cdot\right)=u\left(\sigma,\cdot\right)$ is the horizontal lift of $\sigma\in H_\mathcal{P}\left(M\right)$. From Proposition \ref{prop A-1} we know $S_{\mathcal{P},n}$ is invertible, therefore $\left\{{\Ep^{\mathcal{P}}}_*\left(X^{h_{\alpha,n}}\right)\right\}_{\alpha=1}^d$ spans $T_{\Ep^{\mathcal{P}}\left(\sigma\right)}M$. So ${\Ep^{\mathcal{P}}}_*$ is surjective. \end{proof}

Since $H_{\mathcal{P},x}\left(M\right)$ is an embedded submanifold of $H_{\mathcal{P}}\left(M\right)$, we can restrict the Riemannian metric $G_{\mathcal{P}}^{1}$
on $TH_{\mathcal{P}}\left(M\right)$ in Eq. (\ref{eq:-4-1-1}) to
a Riemannian metric on $TH_{\mathcal{P},x}\left(M\right)$.

\begin{notation} \label{def.aa}Assuming $M$ has non-positive
sectional curvature, for any $x\in M,$ let $G_{\mathcal{P},x}^{1}$
be the restriction of $G_{\mathcal{P}}^{1}$ to $T_{\sigma}H_{\mathcal{P},x}\left(M\right)\subset T_{\sigma}H_{\mathcal{P}}\left(M\right).$
Further, let ${vol}_{G_{\mathcal{P},x}^{1}}$ be the
associated volume measure on $H_{\mathcal{P},x}\left(M\right).$ \end{notation}

Based on the volume measure $vol_{G_{\mathcal{P},x}^{1}}$
on $H_{\mathcal{P},x}\left(M\right),$ we can construct the pinned approximate
measure $\nu_{\mathcal{P},x}^{1}:$

\begin{definition} \label{def.ab}Let $\nu_{\mathcal{P},x}^{1}$
be the measure on $H_{\mathcal{P},x}\left(M\right)$ defined by 
\begin{equation}
d\nu_{\mathcal{P},x}^{1}\left(\sigma\right)=\frac{1}{J_{\mathcal{P}}\left(\sigma\right)}\frac{1}{Z_{\mathcal{P}}^{1}}e^{\frac{-E\left(\sigma\right)}{2}}dvol _{G_{\mathcal{P},x}^{1}}\left(\sigma\right)\label{equ.meas-1}
\end{equation}
where $J_{\mathcal{P}}\left(\sigma\right):=\sqrt{\det\left({\Ep^{\mathcal{P}}}_{\ast\sigma}{\Ep^{\mathcal{P}}}_{\ast\sigma}^{tr}\right)}$, $Z^1_{\mathcal{P}}:=\left(2\pi\right)^{\frac{dn}{2}}$ and $E(\sigma)=\int_{0}^{1}\left<\sigma^\prime(s),\sigma^\prime(s)\right>_gds$ is the energy of $\sigma$.
\end{definition}

\subsection{Continuous Dependence of $\nu_{\mathcal{P},x}^1$ on $x$}\label{sec.3.3}
Throughout this section we further assume $M$ is simply connected, i.e. $M$ is a Hadamard manifold, and the sectional curvature of $M$ is bounded from below by $-N$. The following theorem illustrates that the measures, $\nu_{\mathcal{P},x}^1$, are finite and \textquotedblleft continuously varying\textquotedblright
  with respect to $x$.

\begin{notation} We will denote by $C_{b}(Y)$ bounded continuous
functions on a topological space $Y$. \end{notation}

\begin{theorem} \label{thm3.2.1}For any $x\in M$, $\nu_{\mathcal{P},x}^1$ is a finite measure. Moreover, for any $f\in C_{b}\left(H_{\mathcal{P},x}\left(M\right)\right)$, define
\begin{equation}
h_{\mathcal{P}}^f\left(x\right):=\int_{H_{\mathcal{P},x}\left(M\right)}f\left(\sigma\right)d\nu_{\mathcal{P},x}^{1}\left(\sigma\right).\label{e.hpx}
\end{equation}
If the mesh size  $\left|\mathcal{P}\right|:=\frac{1}{n}$ of the partition $\mathcal{P}$ is small enough, i.e. $n\geq 3dN$, then $h^f_{\mathcal{P}}\left(x\right)\in C\left(M\right)$. \end{theorem}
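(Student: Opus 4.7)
The plan is to pull the problem from $H_{\mathcal{P},x}(M)$ down to the finite--dimensional chart $M^{n-1}$ via evaluation at the partition points, write $\nu_{\mathcal{P},x}^1$ as an explicit continuous density there, and conclude by dominated convergence against a Gaussian--type majorant.

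I first use the vertex map $Ev:H_\mathcal{P}(M)\to M^n$, $Ev(\sigma):=(\sigma(s_1),\dots,\sigma(s_n))$. Since $M$ is Hadamard there are no conjugate points, so $Ev$ is a diffeomorphism, and its restriction is a diffeomorphism $H_{\mathcal{P},x}(M)\to M^{n-1}\times\{x\}$. In the $G_\mathcal{P}^1$--orthonormal frame $\{X^{h_{\alpha,i}}\}$ on the source and the product frame $\{u(\sigma,s_j)e_\beta\}$ on $T_{x_j}M$, the identity $h_{\alpha,i}(s)=\tfrac{1}{\sqrt n}f_{\mathcal{P},i}(s)e_\alpha$ from Remark \ref{rem.8.3-1} combined with the support properties of $f_{\mathcal{P},i}$ in Definition \ref{def.2.1-1} shows that the matrix of $Ev_*$ is block lower triangular with diagonal blocks $\sqrt n\,S_{\mathcal{P},i}(\sigma)$, hence has Jacobian $n^{dn/2}\prod_i\det S_{\mathcal{P},i}(\sigma)>0$. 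Since $E(\sigma)=\sum_i d(x_{i-1},x_i)^2/\Delta_i$ on $\sigma_{\vec x}:=Ev^{-1}(\vec x)$, the change--of--variables formula gives
\[
Ev_*\nu_\mathcal{P}^1 = P_\mathcal{P}(\vec x)\,dx_1\cdots dx_n,\qquad
P_\mathcal{P}(\vec x):=\frac{1}{(2\pi n)^{dn/2}}\,\frac{e^{-\frac12\sum_i d(x_{i-1},x_i)^2/\Delta_i}}{\prod_i\det S_{\mathcal{P},i}(\sigma_{\vec x})}.
\]
Because $J_\mathcal{P}$ is by construction the Jacobian of the submersion $\Ep^\mathcal{P}$, the coarea formula produces the disintegration $\nu_\mathcal{P}^1=\int_M \nu_{\mathcal{P},x}^1\,dvol_M(x)$; pushing this forward through $Ev$ and matching $dx_1\cdots dx_n=dx_1\cdots dx_{n-1}\,dvol_M(x_n)$ identifies
\[
h^f_\mathcal{P}(x)=\int_{M^{n-1}} f(\sigma_{\vec x',x})\,P_\mathcal{P}(x_1,\dots,x_{n-1},x)\,dx_1\cdots dx_{n-1}.
\]
On a Hadamard manifold both $\sigma_{\vec x',x}$ and each $S_{\mathcal{P},i}$ depend smoothly on all of their vertices, so the integrand is continuous in $x$ pointwise in $\vec x'$.

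The main analytic step is a uniform integrability bound. By Rauch comparison in non--positive curvature, $\det S_{\mathcal{P},i}(\sigma)\geq\Delta_i^d$, and therefore
\[
P_\mathcal{P}(\vec x)\leq \prod_{i=1}^n \frac{1}{(2\pi\Delta_i)^{d/2}}\,e^{-d(x_{i-1},x_i)^2/(2\Delta_i)}.
\]
I would integrate this Euclidean--type Gaussian product iteratively in $x_1,\dots,x_{n-1}$ using Bishop--Gromov volume comparison on a Hadamard manifold with sectional curvature $\geq -N$, which bounds the volume density at radial distance $r$ by $(\sinh(\sqrt{N}r)/\sqrt{N}r)^{d-1}\leq e^{(d-1)\sqrt{N}r}$. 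Completing the square in $-nr^2/2+(d-1)\sqrt{N}r$ controls each one--variable integral, and the mesh requirement $n\geq 3dN$ is used precisely to ensure that the Gaussian decay dominates the exponential volume growth with a margin sufficient for the iterated product to stay bounded in $\vec x'$. The $x$--dependence is then absorbed near any fixed $x_0\in M$ via $d(x_{n-1},x)^2\geq\tfrac12 d(x_{n-1},x_0)^2-d(x_0,x)^2$, producing a $\vec x'$--integrable dominating function independent of $x$ in a neighborhood of $x_0$. Dominated convergence then yields both the finiteness of $\nu_{\mathcal{P},x}^1$ and the continuity of $h^f_\mathcal{P}$. The main obstacle is exactly this balance of three competing factors --- the Gaussian $e^{-E/2}$, the Rauch lower bound $\Delta_i^d$, and the exponential Bishop--Gromov growth --- and the mesh hypothesis $n\geq 3dN$ furnishes the precise margin needed for the iterated integration over $M^{n-1}$ to be controlled uniformly in $x$.
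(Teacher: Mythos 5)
Your proposal is correct, and it takes a genuinely different route from the paper. The paper pins the path at $\tau=1-1/n$ rather than at all vertices: it introduces the diffeomorphism $\psi_x\colon\mathcal{K}\to H_{\mathcal{P},x}(M)$ that grafts the unique geodesic from $\sigma(\tau)$ to $x$ onto a truncated piecewise geodesic, changes variables via $\psi_x$, computes the normal Jacobian $V_x$ of $\psi_x$ explicitly (Lemmas \ref{lem3.1.5}--\ref{lem3.1.6}), and then controls the resulting integral over $\mathcal{K}$ by a Wiener-measure moment estimate (Eq.\ (\ref{Eq:-h}) together with Lemma \ref{cBM}), followed by dominated convergence. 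You instead transfer the whole problem to $M^{n-1}$ through the vertex map $Ev$ and dominate by direct volume comparison. The two are closely related: your density identity $Ev_*\nu_\mathcal{P}^1=P_\mathcal{P}\,d\vec x$ is essentially Theorem 5.9 of Andersson--Driver (and is the source of the ratio $\rho_\mathcal{P}$ in Lemma \ref{lem3.1.7}), and your iterated $M^{n-1}$-integral with the bound $\det S_{\mathcal{P},i}\ge\Delta_i^d$ plus exponential volume growth is the same kind of estimate the paper carries out in the proof of Proposition \ref{pro3.31}, there via Sturm's heat-kernel lower bound and Chapman--Kolmogorov rather than completing squares. Your route is more self-contained and elementary for Theorem \ref{thm3.2.1} in isolation; the paper's $\psi_x$-decomposition has the advantage that it isolates the entire $x$-dependence in the single product $e^{-\frac{n}{2}d^2(\sigma(\tau),x)}V_x(\sigma)$, which is exactly the device it re-uses to obtain the $\mathcal{P}$-uniform bound of Proposition \ref{pro3.31}. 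One small imprecision in your write-up: with the linear Bishop--Gromov bound $(\sinh\sqrt N r/(\sqrt N r))^{d-1}\le e^{(d-1)\sqrt N r}$, each completed square costs only $e^{(d-1)^2N/(2n)}$, so the iterated product stays bounded for every $n$ and the hypothesis $n\ge 3dN$ is not where your argument actually needs a margin; that restriction becomes essential if one uses the sharper quadratic bound $e^{(d-1)Nr^2/2}$, which is what forces the mesh condition in the paper's estimate of $V_x$ and in Proposition \ref{pro3.31}.
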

Before proving this theorem, we need to set up some notations and
auxiliary results.

\begin{notation} \label{not.1}We fix $n\in\mathbb{N}$ and let $s_{i}:=\frac{i}{n}$ and $\tau:=1-\frac{1}{n}=s_{n-1}$. We further define $\mathcal{K}:=H_{\mathcal{P}}\left(\left[0,\tau\right],M\right)$
be the space of piecewise geodesic paths, $\sigma:\left[0,\tau\right]\rightarrow M$
such that $\sigma\left(0\right)=o\in M.$ \end{notation}
\begin{lemma}
For $x,y\in M$, we can choose an unique element $\log_{x}\left(y\right)\in T_{x}M$ so that  
\[
\gamma_{y,x}\left(t\right):=\exp_x\left(\left(t-\tau\right)\frac{1}{n}\log_{x}\left(y\right)\right),\text{ }\tau \leq t\leq 1.
\]
is the unique minimal-lengh-geodesic connecting $x$ to $y$ such that $\gamma_{y,x}\left(\tau\right)=x$
and $\gamma_{y,x}\left(1\right)=y$.
\end{lemma}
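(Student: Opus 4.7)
This lemma is essentially a direct application of the Cartan--Hadamard theorem. Since $M$ is a Hadamard manifold (complete, simply connected, with sectional curvature bounded above by zero), the exponential map $\exp_x:T_xM\to M$ is a diffeomorphism for every $x\in M$. In particular, for each pair $(x,y)\in M\times M$ there exists a unique vector $v_{x,y}\in T_xM$ with $\exp_x(v_{x,y})=y$, and the curve $t\mapsto\exp_x(tv_{x,y})$, $t\in[0,1]$, is the unique minimal--length geodesic joining $x$ to $y$.

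Given this, I would define $\log_x(y)\in T_xM$ to be the (unique) element for which the formula in the statement holds; that is, I choose $\log_x(y)$ as the appropriate rescaling of $v_{x,y}$ so that the curve $\gamma_{y,x}(t)=\exp_x\bigl((t-\tau)\tfrac{1}{n}\log_x(y)\bigr)$, restricted to $t\in[\tau,1]$, is an affine reparametrization of $t\mapsto \exp_x(tv_{x,y})$ onto $[\tau,1]$. By construction $\gamma_{y,x}(\tau)=\exp_x(0)=x$ and $\gamma_{y,x}(1)=y$, and $\gamma_{y,x}$ is a geodesic (reparametrization of a geodesic by an affine map of the parameter is again a geodesic).

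For uniqueness, any minimal--length geodesic from $x$ to $y$ defined on $[\tau,1]$ with the prescribed endpoints can be pulled back by the affine change of variable $s=(t-\tau)/(1-\tau)$ to a minimal--length geodesic on $[0,1]$ from $x$ to $y$; by Cartan--Hadamard such a geodesic is unique, and hence so is $\gamma_{y,x}$ and the vector $\log_x(y)$ encoding it. This also shows that $y\mapsto \log_x(y)$ is smooth, being essentially $\exp_x^{-1}$ up to a scalar.

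There is no real obstacle here: the only ingredient is Cartan--Hadamard, which supplies global uniqueness of minimizing geodesics and the smoothness of $\exp_x^{-1}$. The lemma is merely fixing notation for the rest of Section~\ref{sec.3.3}, where these logarithms and the associated geodesic segments on $[\tau,1]$ will be used to build explicit comparisons near the endpoint $s=1$.
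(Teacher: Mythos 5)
Your proposal is correct and uses essentially the same argument as the paper: both appeal to the Cartan--Hadamard theorem to conclude that $\exp_x : T_xM \to M$ is a diffeomorphism, so that $\log_x(y)$ is well defined and the minimizing geodesic from $x$ to $y$ is unique. The extra remarks you make about affine reparametrization and smoothness of $y \mapsto \log_x(y)$ are accurate but beyond what the paper's one-line proof records.
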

\begin{proof}
Since $M$ is a Hadamard manifold, by the Theorem of Hadamard (See Theorem 3.1 in \cite{DoCarmoRie}), $\exp_x:T_xM\to M$ is a diffeomorphism. Therefore we can see that $\log_{x}\left(y\right)=\exp_x^{-1}\left(y\right)$ is unique and it follows that the geodesic $\gamma_{y,x}$ is unique.
\end{proof}

\begin{definition} For any given $y\in M,$ let $\psi_{y}:\mathcal{K}\rightarrow H_{\mathcal{P},y}\left(M\right):=\left(\Ep^\mathcal{P}\right)^{-1}\left(\left\{y\right\}\right)$ be defined by 
\[
\psi_{y}\left(\sigma\right):=\gamma_{y,\sigma\left(\tau\right)}\ast\sigma
\]
where 
\[
\left(\gamma_{y,\sigma\left(\tau\right)}\ast\sigma\right)\left(t\right)=\left\{ \begin{array}{ccc}
\sigma\left(t\right) & \text{if} & 0\leq t\leq\tau\\
\gamma_{y,\sigma\left(\tau\right)}\left(t\right) & \text{if} & \tau\leq t\leq1
\end{array}.\right.
\]

\end{definition}

\begin{notation} For any $\sigma\in H_{\mathcal{P},y}\left(M\right)$, let $\xi_{y,\sigma}:=u\left(\sigma,\tau\right)^{-1}\log_{\sigma\left(\tau\right)}\left(y\right)\in \mathbb{R}^d$ and $A_{\xi_{y}}\left(\sigma,s\right)=R_{u\left(\sigma,1-s\right)}\left(\xi_{y,\sigma},\cdot\right)\xi_{y,\sigma}$ and $0\leq s\leq \frac{1}{n}$. Denote by $C_{y}\left(\sigma,s\right),S_{y}\left(\sigma,s\right)$ the solutions to $y^{\prime\prime}(\sigma,s)=A_{\xi_{y}}\left(\sigma,s\right)y(\sigma,s)$ with initial values $C_{y}\left(\sigma,0\right)=I,C^\prime_{y}\left(\sigma,0\right)=0,S_{y}\left(\sigma,0\right)=0,S^\prime_{y}\left(\sigma,0\right)=I.$ 
\end{notation}
The next lemma characterizes the differential of $\psi_{y}$:
\begin{lemma} \label{lem3.1.3}
For any $\sigma\in \mathcal{K}$ and $X^h(\sigma,\cdot):=u(\sigma,\cdot)h(\sigma,\cdot)\in T_\sigma\mathcal{K}$,
\[\psi_{y\ast}\left(X^{h}\left(\sigma,\cdot\right)\right)=X^{\hat{h}}\left(\psi_y\left(\sigma\right),\cdot\right):=u\left(\psi_y\left(\sigma\right),\cdot\right)\hat{h}\left(\psi_y\left(\sigma\right),\cdot\right)
\]
where 
\begin{equation}
\hat{h}\left(\psi_y\left(\sigma\right), s\right)=\begin{cases}
h\left(\psi_y\left(\sigma\right), s\right) & s\in\left[0,\tau\right]\\
S_{y}\left(\psi_y\left(\sigma\right), 1-s\right)S_{y}\left(\psi_y\left(\sigma\right), \frac{1}{n}\right)^{-1   }h\left(\sigma, \tau\right) & s\in\left[\tau,1\right]
\end{cases}.\label{eq:-51}
\end{equation}

\end{lemma}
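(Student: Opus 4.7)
The plan is to realize $X^{h}$ by a smooth one-parameter family $\sigma_{t} \in \mathcal{K}$ with $\sigma_{0} = \sigma$ and $\frac{d}{dt}\bigl|_{t=0}\sigma_{t} = X^{h}(\sigma,\cdot)$, and then compute $\frac{d}{dt}\bigl|_{t=0}\psi_{y}(\sigma_{t})(s)$ piecewise on $[0,\tau]$ and on $[\tau,1]$. For $s \in [0,\tau]$, the definition $\psi_{y}(\sigma_{t})(s) = \sigma_{t}(s)$ gives the pushforward immediately as $X^{h}(\sigma,s) = u(\sigma,s) h(\sigma,s)$, which is the first branch of \eqref{eq:-51}.

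For $s \in [\tau,1]$, $\psi_{y}(\sigma_{t})(s) = \gamma_{y, \sigma_{t}(\tau)}(s)$ is a smooth one-parameter family of geodesics joining the moving point $\sigma_{t}(\tau)$ to the fixed point $y$. Standard geodesic variation theory then identifies $V(s) := \frac{d}{dt}\bigl|_{t=0}\gamma_{y,\sigma_{t}(\tau)}(s)$ as a Jacobi field along $\gamma_{y,\sigma(\tau)}$ on $[\tau,1]$ subject to the two-point boundary conditions $V(\tau) = X^{h}(\sigma,\tau)$ (the starting point varies with velocity $X^{h}(\sigma,\tau)$) and $V(1) = 0$ (the endpoint $y$ is fixed). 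Pulling $V$ back via the horizontal lift, $v(s) := u(\psi_{y}(\sigma),s)^{-1} V(s)$ satisfies the standard pulled-back Jacobi equation whose coefficient is determined by the constant pulled-back velocity of $\gamma_{y,\sigma(\tau)}$, which is (up to the time parametrization built into the definition of $\log$) the vector $\xi_{y,\psi_{y}(\sigma)}$.

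To match the fundamental-solution matrix $S_{y}$, I reverse time: set $r := 1-s \in [0,1/n]$ and $\tilde v(r) := v(1-r)$. Since the pulled-back velocity is parallel and therefore constant, $\tilde v$ satisfies the autonomous equation $\tilde v''(r) = A_{\xi_{y}}(\psi_{y}(\sigma),r)\tilde v(r)$ with data $\tilde v(0) = 0$ and $\tilde v(1/n) = h(\sigma,\tau)$. By the initial conditions $S_{y}(0)=0,~S_{y}'(0)=I$, the unique solution of this boundary value problem is $\tilde v(r) = S_{y}(\psi_{y}(\sigma),r) S_{y}(\psi_{y}(\sigma),1/n)^{-1} h(\sigma,\tau)$, and converting back via $s = 1-r$ yields exactly the second branch of \eqref{eq:-51}.

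The main obstacle in making this rigorous is the invertibility of $S_{y}(\psi_{y}(\sigma),1/n)$ needed for the formula to make sense. This is where the Hadamard hypothesis enters: on a complete simply connected manifold of non-positive sectional curvature there are no conjugate points, so no nonzero Jacobi field vanishing at one endpoint of the geodesic $\gamma_{y,\sigma(\tau)}$ can vanish at the other. It remains to check that the resulting $\hat h$ lies in $T_{\psi_{y}(\sigma)} H_{\mathcal{P},y}(M)$, which is immediate from the construction: $\hat h$ satisfies the Jacobi equation on each interval of $\mathcal{P}$, vanishes at $s=1$, and is continuous at $s = \tau$ since $S_{y}(\psi_{y}(\sigma),1/n) S_{y}(\psi_{y}(\sigma),1/n)^{-1} h(\sigma,\tau) = h(\sigma,\tau)$, matching the value $h(\sigma,\tau)$ from the first branch.
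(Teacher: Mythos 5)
Your proof is correct and follows essentially the same route as the paper's: differentiate a one-parameter family realizing $X^{h}$, observe that on $[\tau,1]$ the result is a Jacobi field along the connecting geodesic with boundary values $X^{h}(\sigma,\tau)$ at $s=\tau$ and $0$ at $s=1$, pull back to $\mathbb{R}^{d}$, reverse time via $r=1-s$, and solve the resulting boundary value problem with the matrix $S_{y}$. Your explicit invocation of the no-conjugate-points property to justify invertibility of $S_{y}(1/n)$ is the content the paper cites as Proposition \ref{prop A-1}, and the concluding continuity check at $s=\tau$ is a small but welcome addition not spelled out in the paper.
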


\begin{proof} From now on we will suppress the path argument $\psi_y\left(\sigma\right)$ in $\hat{h}$. Suppose that $t\rightarrow\sigma_{t}\in \mathcal{K}$
is an one-parameter family of curves in $\mathcal{K}$
such that $\sigma_{0}=\sigma$ and $\frac{d}{dt}|_{0}\sigma_{t}=X^{h}\left(\sigma\right).$
Then we have 
\[
\psi_{y\ast}\left(X^{h}\left(\sigma\right)\right)=\frac{d}{dt}|_{0}\psi_{y}\left(\sigma_{t}\right)=\frac{d}{dt}|_{0}\gamma_{y,\sigma_{t}\left(\tau\right)}\ast\sigma_{t}.
\]
If $s\in\left[0,\tau\right],$ then 
\[
\frac{d}{dt}|_{0}\left(\gamma_{y,\sigma_{t}\left(\tau\right)}\ast\sigma_{t}\right)\left(s\right)=\frac{d}{dt}|_{0}\sigma_{t}\left(s\right)=X_{s}^{h}\left(\sigma\right).
\]
While if $s\in\left[\tau,1\right]$ we have 
\[
\frac{d}{dt}|_{0}\left(\gamma_{y,\sigma_{t}\left(\tau\right)}\ast\sigma_{t}\right)\left(s\right)=\frac{d}{dt}|_{0}\gamma_{y,\sigma_{t}\left(\tau\right)}\left(t\right)=:X_{s}^{\hat{h}}\left(\psi_y\left(\sigma\right)\right),
\]
where $X_{s}^{\hat{h}}$ is uniquely determined by, 
\begin{enumerate}
\item $\hat{h}$ satisfies Jacobi's equation, 
\item $\hat{h}\left(\tau\right)=h\left(\tau\right)$ and $\hat{h}\left(1\right)=0$. 
\end{enumerate}
Denote $\hat{h}\left(s\right)$ by $g\left(1-s\right)$ for $s\in\left[\tau,1\right]$,
the above conditions are equivalent to $g$ being the solution to
the following boundary value problem: 
\[
\begin{cases}
g^{\prime\prime}\left(s\right)=A_{\xi_{y}}\left(s\right)g\left(s\right)\\
g\left(0\right)=0\\
g\left(\frac{1}{n}\right)=h\left(\tau\right)
\end{cases}.
\]
Then we use $S_y\left(\cdot\right)$ to express the solution. Here we have used Proposition \ref{prop A-1} to see that $S_y\left(s\right)$ is invertible for $s\in \left[0,\frac{1}{n}\right]$, therefore
\[
g\left(s\right)=S_{y}\left(s\right)S_{y}\left(\frac{1}{n}\right)^{-1}h\left(\tau\right)\text{ for }s\in\left[0,\tau\right]
\]
and thus 
\[
\hat{h}\left(s\right)=g\left(1-s\right)=S_{y}\left(1-s\right)S_{y}\left(\frac{1}{n}\right)^{-1}h\left(\tau\right)\text{ for }s\in\left[\tau,1\right].
\]

\end{proof}

\begin{corollary} For any $y\in M$, $\psi_{y}$ is a diffeomorphism.
\end{corollary}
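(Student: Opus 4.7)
My plan is to write down the inverse map explicitly and then verify smoothness in both directions, with the differential of $\psi_y$ from Lemma \ref{lem3.1.3} providing a clean way to apply the inverse function theorem if needed.

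First I would observe that the natural candidate for $\psi_y^{-1}$ is the restriction map $r_\tau : H_{\mathcal{P},y}(M) \to \mathcal{K}$, defined by $r_\tau(\tilde{\sigma}) := \tilde{\sigma}\mid_{[0,\tau]}$. I need to check that $r_\tau$ is well-defined and is a two-sided inverse. Well-definedness is immediate: if $\tilde{\sigma} \in H_{\mathcal{P},y}(M)$, then $\tilde{\sigma}\mid_{[0,\tau]}$ is a piecewise geodesic on the sub-partition $\{0, \tfrac{1}{n}, \ldots, \tau\}$ starting at $o$, hence lies in $\mathcal{K}$. For $\psi_y \circ r_\tau = \mathrm{id}$, I need the geodesic segment of $\tilde{\sigma}$ on $[\tau,1]$ to agree with $\gamma_{y, \tilde{\sigma}(\tau)}$; this is exactly the uniqueness of the minimal geodesic connecting $\tilde{\sigma}(\tau)$ to $y$ on a Hadamard manifold, which was established above using the Cartan--Hadamard theorem ($\exp_x$ is a diffeomorphism). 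The relation $r_\tau \circ \psi_y = \mathrm{id}$ is immediate from the definition of $\psi_y$.

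Next I would argue smoothness of $\psi_y$. The only nontrivial part is the assignment $\sigma \mapsto \gamma_{y,\sigma(\tau)}$, which is the composition of the evaluation $\sigma \mapsto \sigma(\tau)$ (smooth on $\mathcal{K}$), the smooth map $x \mapsto \log_x(y) = \exp_x^{-1}(y)$ (which is smooth on all of $M$ since $\exp$ is a global diffeomorphism on a Hadamard manifold), and the geodesic flow on $TM$ run over $[\tau,1]$. Composing and gluing a $C^\infty$ family of geodesics with the existing piecewise geodesic on $[0,\tau]$ lands in $H_{\mathcal{P}}(M)$ smoothly, and since the endpoint is constantly $y$, the image lies in the embedded submanifold $H_{\mathcal{P},y}(M)$ by Proposition \ref{pro.3.2.1}. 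Smoothness of $r_\tau$ is similarly routine: evaluating a curve in $H_{\mathcal{P},y}(M)$ at times in $[0,\tau]$ is smooth with respect to the atlases on both $H_{\mathcal{P},y}(M)$ and $\mathcal{K}$.

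As a sanity check (and to illustrate why the construction could not have failed), I would note that Lemma \ref{lem3.1.3} gives the differential $\psi_{y*}$ explicitly: on $[0,\tau]$ it is the identity $h \mapsto h$, and on $[\tau,1]$ it is completely determined by $h(\tau)$ via the linear map $h(\tau) \mapsto S_y(1-s) S_y(1/n)^{-1} h(\tau)$. Invertibility of $S_y(1/n)$, which is ensured by Proposition \ref{prop A-1} (non-positive sectional curvature prevents conjugate points on the short segment), shows $\psi_{y*}$ is a linear isomorphism at every $\sigma$, and a dimension count $\dim \mathcal{K} = d(n-1) = \dim H_{\mathcal{P},y}(M)$ confirms consistency. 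The main potential obstacle is really just bookkeeping the differentiable structure on $H_{\mathcal{P},y}(M)$ as an embedded submanifold carefully enough to conclude that $r_\tau$ is smooth, but this is automatic since $r_\tau$ extends to a smooth map $H_{\mathcal{P}}(M) \to \mathcal{K}$ whose restriction to the submanifold inherits smoothness. Combining the bijection with smoothness in both directions yields the corollary.
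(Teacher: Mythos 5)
Your proof is correct and takes a genuinely different route from the one in the paper. The paper argues via the inverse function theorem: it uses Lemma \ref{lem3.1.3} to see that $\psi_{y*}$ is injective, concludes from the dimension count $\dim(\mathcal{K}) = d(n-1) = \dim(H_{\mathcal{P},y}(M))$ that $\psi_{y*}$ is an isomorphism at every point, invokes the inverse function theorem to get a local diffeomorphism, and then appeals to the Hadamard hypothesis to conclude global bijectivity (leaving the mechanism of that bijectivity implicit). You instead produce the inverse explicitly as the restriction map $r_\tau$, verify that it is a two-sided inverse (making explicit where Hadamard enters: uniqueness of the geodesic segment on $[\tau,1]$), and check smoothness of both $\psi_y$ and $r_\tau$ directly, so that the conclusion follows from definitions without the inverse function theorem. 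Your sanity check at the end is essentially a compressed version of the paper's entire proof. The trade-off is that the paper's route needs less work to establish smoothness (only of $\psi_y$) but more to establish surjectivity, whereas your route makes the bijectivity completely transparent at the cost of a separate smoothness argument for $r_\tau$ --- though, as you observe, that argument is routine since $r_\tau$ is the restriction of a smooth map on the ambient manifold $H_{\mathcal{P}}(M)$.
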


\begin{proof} From Lemma \ref{lem3.1.3} it is easy to see that the
push forward $\left(\psi_{y}\right)_{*}$ of $\psi_{y}$ is one to
one and thus an isomorphism since $\dim\left(\mathcal{K}\right)=\dim\left(H_{\mathcal{P},y}\left(M\right)\right).$
Therefore the inverse function theorem implies that $\psi_{y}$ is
a local diffeomorphism. Furthermore, $M$ being a Hadamard manifold
implies that $\psi_{y}$ is bijective, so $\psi_{y}$ is actually
a diffeomorphism. \end{proof}
\begin{remark}
An orthonormal frame $\left\{ X^{h_{\alpha,i}}:1\leq \alpha\leq d, 1\leq i\leq n-1\right\}$ of $\mathcal{K}$ can be constructed similarly to Notation $\ref{not3.1.8}$, 
\[
h_{\alpha,i}\in H_{\mathcal{P},\sigma}\text{ and }h_{\alpha,i}^{\prime}(s_{j}+)=\frac{\delta_{i-1,j}e_{\alpha}}{\sqrt{\Delta_{j+1}}}\text{ for }j=0,...,n-2.
\]
In this section we will use the same notation for both these two sets
of orthonormal frames as the meaning should be clear from the context.
\end{remark}
\begin{definition}
$f:M\to N$ is a differentiable map between two Riemannian manifolds $M,N$. The \textbf{Normal Jacobian} of $f$ is defined to be $\sqrt{\det \left(f_*f^{tr}_*\right)}.$
\end{definition}
We will use the orthonormal frame $\left\{ X^{h_{\alpha,i}}:1\leq \alpha\leq d, 1\leq i\leq n-1\right\}$ of
$\mathcal{K}$ to estimate the Normal Jacobian $J_{\mathcal{P}}$ of $\Ep$
in Lemma \ref{lem3.1.4} and the \textquotedblleft volume change 
\textquotedblright $V_{x}$ (See precise definition in Lemma \ref{lem3.1.5}) brought by
the diffeomorphism $\psi_{x}$ in Lemma \ref{lem3.1.5} and \ref{lem3.1.6}.
\begin{lemma}\label{lem3.1.4}Let $J_\mathcal{P}:=\sqrt{\det {\Ep^{\mathcal{P}}}_{*} \left( {\Ep^{\mathcal{P}}}_{*} \right)^{tr}}$ be the Normal Jacobian of $\Ep^{\mathcal{P}}$, then \[J_\mathcal{P}\left(\sigma\right)=
\sqrt{\det\left(\frac{1}{n}\sum_{i=1}^{n}f_{\mathcal{P},i}\left(\sigma,1\right)f_{\mathcal{P},i}^{tr}\left(\sigma,1\right)\right)}\text{  }\forall \sigma\in H_\mathcal{P}\left(M\right).
\]
\end{lemma}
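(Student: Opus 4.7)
The plan is to compute $J_\mathcal{P}(\sigma)$ by writing the matrix of $\Ep^{\mathcal{P}}_{\ast\sigma}:T_\sigma H_{\mathcal{P}}(M)\to T_{\sigma(1)}M$ relative to two orthonormal bases and applying the elementary identity $J_\mathcal{P}=\sqrt{\det(MM^{tr})}$, where $M$ is that matrix. On the domain I will use the $G_\mathcal{P}^1$-orthonormal frame $\{X^{h_{\alpha,i}}:1\le\alpha\le d,\,1\le i\le n\}$ from Notation \ref{not3.1.8}, and on the codomain I will use the orthonormal basis $\{u(\sigma,1)e_\beta\}_{\beta=1}^d$ of $T_{\sigma(1)}M$ given by the horizontal frame at $s=1$.

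First I would evaluate $\Ep^{\mathcal{P}}_{\ast\sigma}$ on the orthonormal frame. Since $\Ep^{\mathcal{P}}(\sigma)=\sigma(1)$, we have $\Ep^{\mathcal{P}}_{\ast\sigma}(X^{h_{\alpha,i}})=X^{h_{\alpha,i}}(\sigma,1)=u(\sigma,1)h_{\alpha,i}(1)$. Plugging in the explicit formula $h_{\alpha,i}(s)=\tfrac{1}{\sqrt n}f_{\mathcal{P},i}(s)e_\alpha$ from Remark \ref{rem.8.3-1} yields
\[
\Ep^{\mathcal{P}}_{\ast\sigma}(X^{h_{\alpha,i}})=\frac{1}{\sqrt n}\,u(\sigma,1)\,f_{\mathcal{P},i}(\sigma,1)\,e_\alpha=\frac{1}{\sqrt n}\sum_{\beta=1}^d\bigl(f_{\mathcal{P},i}(\sigma,1)\bigr)_{\beta\alpha}\,u(\sigma,1)e_\beta.
\]
Hence the matrix $M\in\mathbb{R}^{d\times dn}$ of $\Ep^{\mathcal{P}}_{\ast\sigma}$ with respect to these orthonormal bases has entries $M_{\beta,(\alpha,i)}=\tfrac{1}{\sqrt n}\bigl(f_{\mathcal{P},i}(\sigma,1)\bigr)_{\beta\alpha}$.

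Then a direct computation gives
\[
(MM^{tr})_{\beta\gamma}=\sum_{i=1}^n\sum_{\alpha=1}^d\frac{1}{n}\bigl(f_{\mathcal{P},i}(\sigma,1)\bigr)_{\beta\alpha}\bigl(f_{\mathcal{P},i}(\sigma,1)\bigr)_{\gamma\alpha}=\frac{1}{n}\sum_{i=1}^n\bigl(f_{\mathcal{P},i}(\sigma,1)f_{\mathcal{P},i}^{tr}(\sigma,1)\bigr)_{\beta\gamma},
\]
so $MM^{tr}=\tfrac{1}{n}\sum_{i=1}^n f_{\mathcal{P},i}(\sigma,1)f_{\mathcal{P},i}^{tr}(\sigma,1)$. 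Taking the square root of the determinant on both sides gives the claim.

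There is no real obstacle here; the proof is essentially linear algebra once one writes down Notation \ref{not3.1.8} and Remark \ref{rem.8.3-1}. The only point that deserves a brief justification is the general fact that, because both frames are orthonormal, the Normal Jacobian $\sqrt{\det(\Ep^{\mathcal{P}}_{\ast\sigma}(\Ep^{\mathcal{P}}_{\ast\sigma})^{tr})}$ defined intrinsically via the Riemannian metrics coincides with $\sqrt{\det(MM^{tr})}$ computed in coordinates; this follows from the standard change-of-basis formula together with the fact that the transition matrices are orthogonal.
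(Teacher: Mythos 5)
Your proof is correct and essentially the same as the paper's: both express $\Ep^{\mathcal{P}}_{\ast\sigma}$ relative to the $G_\mathcal{P}^1$-orthonormal frame $\{X^{h_{\alpha,i}}\}$ and the orthonormal basis $\{u(\sigma,1)e_\beta\}$ of $T_{\sigma(1)}M$, use $h_{\alpha,i}(1)=\tfrac{1}{\sqrt n}f_{\mathcal{P},i}(\sigma,1)e_\alpha$, and reduce the claim to the identity $\det\bigl(\Ep^{\mathcal{P}}_{\ast}(\Ep^{\mathcal{P}}_{\ast})^{tr}\bigr)=\det\bigl(\tfrac{1}{n}\sum_i f_{\mathcal{P},i}f_{\mathcal{P},i}^{tr}\bigr)$. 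The only cosmetic difference is that the paper first writes out the adjoint $(\Ep^{\mathcal{P}}_{\ast})^{tr}$ and forms the Gram matrix $\bigl\{\langle(\Ep^{\mathcal{P}}_{\ast})^{tr}u(1)e_\alpha,(\Ep^{\mathcal{P}}_{\ast})^{tr}u(1)e_\beta\rangle_{G_\mathcal{P}^1}\bigr\}$, whereas you form the coordinate matrix $M$ and compute $MM^{tr}$ directly; these are the same computation.
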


\begin{proof}Note that
\begin{equation*}
{\Ep^{\mathcal{P}}}_{* \sigma}X^{h}\left(\sigma\right)=X^{h}\left(\sigma,1\right),
\end{equation*}
so if $v\in T_{{\Ep^{\mathcal{P}}}\left(\sigma\right)}M$, then
\[
\left\langle \left({\Ep^{\mathcal{P}}}_{*}\right)^{tr}v,X^{h}\right\rangle _{G_{\mathcal{P}}^{1}}=\left\langle v,{\Ep^{\mathcal{P}}}_{*}X^{h}\right\rangle _{T_{{\Ep^{\mathcal{P}}}\left(\sigma\right)}M}=\left\langle u\left(1\right)^{-1}v,h\left(1\right)\right\rangle _{\mathbb{R}^{d}}.
\]
Therefore, using the orthonormal frame $\text{ of }TH_{\mathcal{P}}\left(M\right)$ given by \[\left\{ X^{h_{\alpha,i}}:1\leq \alpha\leq d, 1\leq i\leq n\right\},\]
we find
\[
\left({\Ep^{\mathcal{P}}}_{*}\right)^{tr}v=\sum_{i,\alpha}\left\langle \left({\Ep^{\mathcal{P}}}_{*}\right)^{tr}v,X^{h_{\alpha,i}}\right\rangle _{G_{\mathcal{P}}^{1}}X^{h_{\alpha,i}}=\sum_{i,\alpha}\left\langle u\left(1\right)^{-1}v,h_{\alpha,i}\left(1\right)\right\rangle _{\mathbb{R}^{d}}X^{h_{\alpha,i}}.
\]
Let $\left\{e_\alpha\right\}_{\alpha=1}^d$ be the standard basis of $\mathbb{R}^d$, since $u\left(1\right)$ is an isometry, $\left\{u\left(1\right)e_\alpha\right\}_{\alpha=1}^d$ is an O.N. basis of $T_{{\Ep^{\mathcal{P}}}\left(\sigma\right)}M$. Using
\[
h_{k,i}\left(1\right)=\frac{1}{\sqrt{n}}f_{\mathcal{P},i}\left(1\right)e_{k} \text{ for }1\leq k\leq d,
\]
we can compute: 
\begin{align*}
\det\left({\Ep^{\mathcal{P}}}_{*}\left({\Ep^{\mathcal{P}}}_{*}\right)^{tr}\right) & =\det\left\{ \left\langle \left({\Ep^{\mathcal{P}}}_{*}\right)^{tr}u\left(1\right)e_{\alpha},\left({\Ep^{\mathcal{P}}}_{*}\right)^{tr}u\left(1\right)e_{\beta}\right\rangle _{G_{\mathcal{P}}^1}\right\} _{\alpha,\beta}\\
 & =\det\left\{ \mathop{\sum_{i=1}^{n}\sum_{\gamma=1}^{d}}\left\langle h_{\gamma,i}\left(1\right),e_{\alpha}\right\rangle \left\langle h_{\gamma,i}\left(1\right),e_{\beta}\right\rangle \right\} _{\alpha,\beta}\\
 & =\det\left\{ \mathop{\sum_{i=1}^{n}\sum_{\gamma=1}^{d}}\frac{1}{n}\left\langle e_{\gamma},f_{\mathcal{P},i}^{tr}\left(1\right)e_{\alpha}\right\rangle \left\langle e_{\gamma},f_{\mathcal{P},i}^{tr}\left(1\right)e_{\beta}\right\rangle \right\} _{\alpha,\beta}\\
 & =\det\left\{ \sum_{i=1}^{n}\frac{1}{n}\left\langle f_{\mathcal{P},i}^{tr}\left(1\right)e_{\alpha},f_{\mathcal{P},i}^{tr}\left(1\right)e_{\beta}\right\rangle \right\} _{\alpha,\beta}\\
 & =\det\left(\frac{1}{n}\sum_{i=1}^{n}f_{\mathcal{P},i}\left(1\right)f_{\mathcal{P},i}^{tr}\left(1\right)\right).
\end{align*}
\end{proof}

Using the expression of $J_\mathcal{P}$ in Lemma \ref{lem3.1.4}, we can easily derive the following estimate.
\begin{corollary}\label{Col1}Let $J_\mathcal{P}$ be defined as above, then for any $\sigma\in H_\mathcal{P}\left(M\right)$, $J_{\mathcal{P}}\left(\sigma\right)\geq1$.
\end{corollary}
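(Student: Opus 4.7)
The plan is to show that each summand $f_{\mathcal{P},i}(\sigma,1) f_{\mathcal{P},i}^{tr}(\sigma,1)$ already dominates the identity as a positive semidefinite matrix. The engine is that non-positive sectional curvature makes the curvature term in the pulled-back Jacobi equation nonnegative: by the symmetries of $R$, the hypothesis $K\leq 0$ implies $\langle R_u(b,y)b,y\rangle_{\mathbb{R}^d} \geq 0$ for all $y,b\in\mathbb{R}^d$, so the operator $y\mapsto R_u(b,y)b$ is symmetric and positive semidefinite.

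Step 1 is to establish the matrix Jacobi comparisons
\[
C_{\mathcal{P},i}(s)^{tr} C_{\mathcal{P},i}(s) \geq I \text{ and } S_{\mathcal{P},i}(s)^{tr} S_{\mathcal{P},i}(s) \geq (s - s_{i-1})^2 I
\]
for $s\in[s_{i-1},s_i]$. For $C$, fix $v\in\mathbb{R}^d$ and let $J(s):=C_{\mathcal{P},i}(s)v$, a Jacobi field with $J(s_{i-1})=v$, $J'(s_{i-1})=0$. Then $\tfrac{d^{2}}{ds^{2}}\|J\|^{2} = 2\langle R_u(b,J)b,J\rangle + 2\|J'\|^2 \geq 0$, so $\|J(s)\|^{2}$ is convex with minimum value $\|v\|^{2}$ at $s_{i-1}$, giving $\|J(s)\|^{2}\geq\|v\|^{2}$. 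For $S$, let $J(s):=S_{\mathcal{P},i}(s)v$, which has $J(s_{i-1})=0$ and $J'(s_{i-1})=v$; Rauch's scalar comparison theorem (comparing $M$ against flat space, where there are no conjugate points by the Hadamard hypothesis) yields $\|J(s)\|\geq (s-s_{i-1})\|v\|$. Since each bound holds for every $v$, the matrix inequalities follow.

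Step 2 is a symmetrization: if $M^{tr}M\geq\alpha I$ with $\alpha>0$, then $M$ is invertible and, since the SVD shows that $M^{tr}M$ and $MM^{tr}$ share the same spectrum, $MM^{tr}\geq\alpha I$ as well. Applied at $s=s_i$, this gives $C_{\mathcal{P},i}C_{\mathcal{P},i}^{tr}\geq I$ and $S_{\mathcal{P},i}S_{\mathcal{P},i}^{tr}\geq\Delta_i^{2}I$. For $i<n$, I write $f_{\mathcal{P},i}(\sigma,1)=A_i\, S_{\mathcal{P},i}(\sigma)/\Delta_i$, where $A_i := C_{\mathcal{P},n}(\sigma)\cdots C_{\mathcal{P},i+1}(\sigma)$ (and $A_n:=I$). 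A one-factor-at-a-time induction using the identity $(PM)(PM)^{tr}=P(MM^{tr})P^{tr}\geq PP^{tr}$ whenever $MM^{tr}\geq I$ shows $A_iA_i^{tr}\geq I$, and therefore
\[
f_{\mathcal{P},i}(1)\, f_{\mathcal{P},i}^{tr}(1) = \tfrac{1}{\Delta_i^{2}}\, A_i\, S_{\mathcal{P},i}S_{\mathcal{P},i}^{tr}\, A_i^{tr} \geq A_i A_i^{tr} \geq I;
\]
the case $i=n$ is immediate from $f_{\mathcal{P},n}(1)=n\,S_{\mathcal{P},n}(\sigma)$ and $S_{\mathcal{P},n}S_{\mathcal{P},n}^{tr}\geq\Delta_n^{2}I$.

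Averaging over $i$ preserves matrix order, so $\tfrac{1}{n}\sum_{i=1}^{n} f_{\mathcal{P},i}(1) f_{\mathcal{P},i}^{tr}(1) \geq I$, and by monotonicity of $\det$ on the positive semidefinite cone, Lemma \ref{lem3.1.4} yields $J_{\mathcal{P}}(\sigma)^{2}\geq\det(I)=1$, i.e.\ $J_{\mathcal{P}}(\sigma)\geq 1$. The only delicate point is Step 1: Rauch's theorem is classically stated as a scalar comparison, but because the estimate $\|S_{\mathcal{P},i}(s)v\|\geq (s-s_{i-1})\|v\|$ holds for every initial velocity $v$ simultaneously, the matrix version follows automatically; everything else is linear algebra.
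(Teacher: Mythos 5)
Your proof is correct and follows essentially the same route as the paper's: establish that each $f_{\mathcal{P},i}(\sigma,1)f_{\mathcal{P},i}^{tr}(\sigma,1)\geq I$ using the non-positive curvature bounds on $C_{\mathcal{P},i}$ and $S_{\mathcal{P},i}$ (the paper delegates this to Proposition~\ref{prop A-1} and the chain argument implicit in Lemma~\ref{lem.6.10}), then average and conclude from eigenvalue/determinant monotonicity. Your formulation via the positive-semidefinite ordering and the SVD spectrum-sharing of $M^{tr}M$ and $MM^{tr}$ is a minor stylistic variant of the paper's operator-norm-of-inverse argument, and you spell out the chain induction over the $C_{\mathcal{P},j}$ factors that the paper leaves terse.
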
 
\begin{proof}
For any $v\in\mathbb{C}^{d}$, using Proposition \ref{prop A-1}, we
have: 
\begin{align*}
\left\langle \frac{1}{n}\sum_{i=1}^{n}f_{\mathcal{P},i}\left(\sigma,1\right)f_{\mathcal{P},i}^{tr}\left(\sigma,1\right)v,v\right\rangle  & =\frac{1}{n}\sum_{i=1}^{n}\left\Vert f_{\mathcal{P},i}^{tr}\left(\sigma,1\right)v\right\Vert ^{2}\\
 & \geq\frac{1}{n}\sum_{i=1}^{n}\left\Vert v\right\Vert ^{2}\\
 & =\left\Vert v\right\Vert ^{2}.
\end{align*}
So by Min-max theorem,  $eig\left(\frac{1}{n}\sum_{i=1}^{n}f_{\mathcal{P},i}\left(\sigma,1\right)f_{\mathcal{P},i}^{tr}\left(\sigma,1\right)\right)\subset[1,+\infty)$
and therefore: 
\[
J_{\mathcal{P}}\left(\sigma\right)=\sqrt{\det\left(\frac{1}{n}\sum_{i=1}^{n}f_{\mathcal{P},i}\left(\sigma,1\right)f_{\mathcal{P},i}^{tr}\left(\sigma,1\right)\right)}\geq1.
\]
\end{proof}

\begin{lemma} \label{lem3.1.5}
For any $\sigma\in \mathcal{K}$, let $V_x:\mathcal{K}\to \mathbb{R}_+$ be the normal Jacobian of $\psi_x:\mathcal{K}\to H_{\mathcal{P},x}\left(M\right)$, i.e. $V_x:=\sqrt{\det\left(\left({\psi_{x}}_{*}\right)^{tr}{\psi_{x}}_{* }\right)}$, then
\begin{equation}
V_{x}\left(\sigma\right)=\sqrt{\det\left(I+L_{x}\left(\sigma\right)F_{\mathcal{P}}\left(\sigma\right)L_{x}\left(\sigma\right)^{tr}\right)}\text{  }\forall \sigma\in \mathcal{K},\nonumber
\end{equation}
where 
\[
L_{x}\left(\sigma\right):=C_{x}\left(\sigma,\frac{1}{n}\right)S_{x}\left(\sigma,\frac{1}{n}\right)^{-1}\text{ and }F_{\mathcal{P}}\left(\sigma\right):=\frac{1}{n^{2}}\sum_{i=0}^{n-2}f_{\mathcal{P},i}\left(\sigma,\tau\right)f_{\mathcal{P},i}\left(\sigma,\tau\right)^{tr}\label{Eq Fps}.
\]

\end{lemma}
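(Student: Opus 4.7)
The plan is to compute $V_{x}(\sigma)^{2}=\det\bigl((\psi_{x*})^{tr}\psi_{x*}\bigr)$ by evaluating the Gram matrix of the pushforwards of the $G^{1}_{\mathcal{P}}$-orthonormal frame $\{X^{h_{\alpha,i}}\}_{1\le\alpha\le d,\,1\le i\le n-1}$ of $\mathcal{K}$ under $\psi_{x*}$, and then collapsing the resulting $(n-1)d\times(n-1)d$ determinant to a $d\times d$ one via Sylvester's identity $\det(I+AB)=\det(I+BA)$.

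First, by Lemma \ref{lem3.1.3} we have $\psi_{x*}(X^{h_{\alpha,i}})=X^{\hat h_{\alpha,i}}$, where $\hat h_{\alpha,i}$ agrees with $h_{\alpha,i}$ on $[0,\tau]$ and on $[\tau,1]$ is the Jacobi interpolation $S_{x}(1-s)S_{x}(1/n)^{-1}h_{\alpha,i}(\tau)$. Differentiating on the last interval produces $\hat h'_{\alpha,i}(\tau+)=-L_{x}(\sigma)\,h_{\alpha,i}(\tau)$ with $L_x(\sigma)$ the operator appearing in the statement (the overall sign is immaterial since we will square). Unfolding the definition of $G^{1}_{\mathcal{P}}$, the contributions from $K_{1},\ldots,K_{n-1}$ coincide with those from $\mathcal{K}$'s own inner product and therefore, by orthonormality of $\{X^{h_{\alpha,i}}\}$ in $\mathcal{K}$, contribute exactly $\delta_{\alpha\beta}\delta_{ij}$; only the interval $K_{n}$ contributes the new term $\Delta_{n}\,\langle L_{x} h_{\alpha,i}(\tau),\,L_{x} h_{\beta,j}(\tau)\rangle$.

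Substituting $h_{\alpha,i}(\tau)=\tfrac{1}{\sqrt n}f_{\mathcal{P},i}(\tau)e_{\alpha}$ from Remark \ref{rem.8.3-1} together with $\Delta_{n}=1/n$, the Gram matrix $G$ decomposes as $G=I_{(n-1)d}+U^{tr}U$, where $U$ is the $d\times(n-1)d$ matrix whose $(\alpha,i)$-th column is $\tfrac{1}{n}L_{x} f_{\mathcal{P},i}(\tau) e_{\alpha}$. Sylvester's identity then gives $\det G=\det(I_{d}+UU^{tr})$, and a direct evaluation yields
\[
UU^{tr}=L_{x}\Bigl(\tfrac{1}{n^{2}}\sum_{i} f_{\mathcal{P},i}(\tau)f_{\mathcal{P},i}^{tr}(\tau)\Bigr)L_{x}^{tr}=L_{x}\,F_{\mathcal{P}}(\sigma)\,L_{x}^{tr},
\]
from which the claimed formula follows upon taking the square root.

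The main point requiring care is the correct identification of the boundary derivative $\hat h'_{\alpha,i}(\tau+)$ with the operator $L_{x}$ as defined in the statement (i.e.\ checking that the Jacobi-field conventions for $C_{x},S_{x}$ produce precisely $L_{x}=C_{x}(1/n)S_{x}(1/n)^{-1}$ once the $[\tau,1]$-piece is handled), together with bookkeeping of the index range used in the frame $\{X^{h_{\alpha,i}}\}$ of $\mathcal{K}$ so that the sum $\sum_{i}f_{\mathcal{P},i}(\tau)f_{\mathcal{P},i}^{tr}(\tau)$ matches the one defining $F_{\mathcal{P}}$. Once these identifications are nailed down, the remaining content of the lemma is an application of Sylvester's identity.
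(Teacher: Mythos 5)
Your proposal is correct and follows essentially the same route as the paper: compute the Gram matrix of $\{\psi_{x*}X^{h_{\alpha,i}}\}$ via Lemma \ref{lem3.1.3}, observe it has the form $I_{(n-1)d}+U^{tr}U$ with $U$ of rank at most $d$, and collapse the determinant with Sylvester's identity, which the paper invokes in the equivalent packaged form of Lemma \ref{Conje1} applied to $S_\sigma=\bigl(\begin{smallmatrix}I\\A_x\end{smallmatrix}\bigr)$.
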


\begin{proof} Using (\ref{eq:-51}) and differentiating $\hat{h}$
with respect to $s$, we get: 
\begin{equation}
\hat{h}^{\prime}\left(\sigma,\tau+\right)=-C_{x}\left(\sigma,\frac{1}{n}\right)S_{x}\left(\sigma,\frac{1}{n}\right)^{-1}h\left(\sigma,\tau\right):=-L_{x}\left(\sigma\right)h\left(\sigma,\tau\right).\label{eq:-11}
\end{equation}
Also from Proposition \ref{pro.6.4} we have 
\[
h\left(\sigma,\tau\right)=\frac{1}{n}\sum_{i=0}^{n-1}f_{\mathcal{P},i+1}\left(\sigma,\tau\right)h^{\prime}\left(\sigma,s_{i}+\right),
\]
so
\begin{equation}
\hat{h}^{\prime}\left(\sigma,\tau+\right)=-L_{x}\left(\sigma\right)\frac{1}{n}\sum_{i=0}^{n-1}f_{\mathcal{P},i+1}\left(\sigma,\tau\right)h^{\prime}\left(\sigma,s_{i}+\right).\label{eq:-12}
\end{equation}
For any $\alpha,\beta\in\left\{ 1,...,d\right\} $ and $i,j\in\left\{ 1,...,n-1\right\} $,
\begin{align*}
  &\left\langle \psi_{x\ast}\left(X^{h_{\alpha,i}}\left(\sigma\right)\right),\psi_{x\ast}\left(X^{h_{\beta,j}}\left(\sigma\right)\right)\right\rangle _{T_{\psi_{x}\left(\sigma\right)}H_{\mathcal{P},x}\left(M\right)}\\
 & =\frac{1}{n}\sum_{k=0}^{n-2}\left\langle h_{\alpha,i}^{\prime}\left(s_{k+}\right),h_{\beta,j}^{\prime}\left(s_{k+}\right)\right\rangle +\frac{1}{n}\left\langle \hat{h}_{\alpha,i}^{\prime}\left(\tau+\right),\hat{h}_{\beta,j}^{\prime}\left(\tau+\right)\right\rangle \\
 & =\delta_{\left(\alpha,i\right)}^{\left(\beta,j\right)}+\frac{1}{n}\left\langle L_{x}\left(\sigma\right)\frac{1}{n}\frac{f_{\mathcal{P},i}\left(\tau\right)e_{\alpha}}{\sqrt{\frac{1}{n}}},L_{x}\left(\sigma\right)\frac{1}{n}\frac{f_{\mathcal{P},j}\left(\tau\right)e_{\beta}}{\sqrt{\frac{1}{n}}}\right\rangle \\
 & =\delta_{\left(\alpha,i\right)}^{\left(\beta,j\right)}+\left\langle L_{x}\left(\sigma\right)\frac{1}{n}f_{\mathcal{P},i}\left(\tau\right)e_{\alpha},L_{x}\left(\sigma\right)\frac{1}{n}f_{\mathcal{P},j}\left(\tau\right)e_{\beta}\right\rangle ,
\end{align*}
where \[\delta_{\left(\alpha,i\right)}^{\left(\beta,j\right)}=\begin{cases}
1 & \alpha=\beta, i=j\\ 0 &\text{otherwise}.
\end{cases}\]
It follows that the volume change 
\begin{equation}
V_{x}\left(\sigma\right)=\sqrt{\det\left(I_{\left(\mathbb{R}^{d}\right)^{n-1}}+\hat{T}_{x}\left(\sigma\right)\right)}\label{V_x}
\end{equation}
where $\hat{T}_{x}\left(\sigma\right)\in End\left(\left(\mathbb{R}^{d}\right)^{n-1}\right)$ is defined by
\[
\left(\hat{T_{x}}\left(\sigma\right)\right)_{d\left(i-1\right)+\alpha,d\left(j-1\right)+\beta}=\left\langle L_{x}\left(\sigma\right)\frac{1}{n}f_{\mathcal{P},i}\left(\sigma,\tau\right)e_{\alpha},L_{x}\left(\sigma\right)\frac{1}{n}f_{\mathcal{P},j}\left(\sigma,\tau\right)e_{\beta}\right\rangle,
\]
where $1\leq i,j\leq n-1, 1\leq \alpha,\beta\leq d$.
If 
\[
S_\sigma=\left(\begin{array}{c}
I_{\left(\mathbb{R}^{d}\right)^{n-1}}\\
A_{x}\left(\sigma\right)
\end{array}\right)\in M_{nd\times\left(n-1\right)d}
\]
and 
\[
A_{x}\left(\sigma\right)=\left(\frac{1}{n}L_{x}\left(\sigma\right)f_{\mathcal{P},0}\left(\sigma,\tau\right)e_{1},\cdots,\frac{1}{n}L_{x}\left(\sigma\right)f_{\mathcal{P},n-2}\left(\sigma,\tau\right)e_{d}\right)\in M_{d\times\left(n-1\right)d}, 
\]
then \[
I_{\left(\mathbb{R}^{d}\right)^{n-1}}+\hat{T}_{x}\left(\sigma\right)=S_\sigma^{tr}S_\sigma.
\]
Applying Lemma \ref{Conje1} we get: 
\begin{align*}
\det\left(I_{\left(\mathbb{R}^{d}\right)^{n-1}}+\hat{T}_{x}\left(\sigma\right)\right) & =\det\left(I_{\left(\mathbb{R}^{d}\right)}+A_{x}\left(\sigma\right)A_{x}\left(\sigma\right)^{tr}\right)\\
 & =\det\left(I+\frac{1}{n^{2}}\sum_{i=0}^{n-2}\sum_{\alpha=1}^{d}L_{x}f_{\mathcal{P},i}\left(\tau\right)e_{\alpha}e_{\alpha}^{tr}f_{\mathcal{P},i}\left(\tau\right)^{tr}L_{x}^{tr}\right)\\
 & =\det\left(I+L_{x}F_{\mathcal{P}}L_{x}^{tr}\right)
\end{align*}
where $F_\mathcal{P}\left(\sigma\right)$ is as in Eq. (\ref{Eq Fps}).
\end{proof}

\begin{lemma} \label{lem3.1.6}

For any $\sigma\in\mathcal{K}$, 
\begin{equation}
V_{x}\left(\sigma\right)\leq\sum_{k=0}^{d}\binom{d}{k}n^{\frac{k}{2}}e^{\frac{Nk}{2}d^{2}\left(\sigma\left(\tau\right),x\right)}\Pi_{j=0}^{n-2}e^{kNd^{2}\left(\sigma\left(s_{j}\right),\sigma\left(s_{j+1}\right)\right)}.\label{eq:-52}
\end{equation}

\end{lemma}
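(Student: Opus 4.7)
My approach combines the explicit determinant formula from Lemma \ref{lem3.1.5} with an elementary symmetric-function bound and standard ODE comparison estimates for the operator Jacobi equations defining $C_x, S_x, C_{\mathcal{P},j}, S_{\mathcal{P},j}$.

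\textbf{Step 1 (Reduce to an operator-norm bound).} Let $A:=L_x(\sigma)F_{\mathcal{P}}(\sigma)L_x(\sigma)^{tr}$, which is a positive semi-definite $d\times d$ matrix, so Lemma \ref{lem3.1.5} says $V_x(\sigma)=\sqrt{\det(I+A)}$. Writing $\det(I+A)$ as a sum of elementary symmetric polynomials in the eigenvalues of $A$ and bounding each eigenvalue by $\|A\|_{op}$ gives $\det(I+A)\leq\sum_{k=0}^{d}\binom{d}{k}\|A\|_{op}^{k}$. Combining the elementary inequalities $\sqrt{\sum a_k}\leq\sum\sqrt{a_k}$ and $\sqrt{\binom{d}{k}}\leq\binom{d}{k}$ yields
\[
V_x(\sigma)\leq\sum_{k=0}^{d}\binom{d}{k}\|A\|_{op}^{k/2}.
\]
It therefore suffices to show $\|A\|_{op}\leq n\cdot e^{N d^2(\sigma(\tau),x)}\prod_{j=0}^{n-2}e^{2N d^2(\sigma(s_j),\sigma(s_{j+1}))}$.

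\textbf{Step 2 (Bound $\|L_x\|$).} On $[0,1/n]$ the operator ODE defining $C_x$ and $S_x$ has coefficient matrix $A_{\xi_x}$ of norm at most $N\|\xi_{x,\sigma}\|^{2}=N d^{2}(\sigma(\tau),x)$, since $\|\log_{\sigma(\tau)}(x)\|=d(\sigma(\tau),x)$ on a Hadamard manifold. A standard matrix Gronwall argument together with the elementary bound $\cosh(t)\leq e^{t^{2}/2}$ gives
\[
\|C_x(\sigma,1/n)\|_{op}\leq\cosh\!\bigl(\sqrt{N}\,d(\sigma(\tau),x)/n\bigr)\leq e^{N d^{2}(\sigma(\tau),x)/(2n^{2})}.
\]
Non-positivity of the sectional curvature and Rauch's comparison theorem imply $S_x(\sigma,s)\geq sI$ in the positive-definite order on $[0,1/n]$, so $\|S_x(\sigma,1/n)^{-1}\|_{op}\leq n$. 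Hence $\|L_x(\sigma)\|_{op}^{2}\leq n^{2}\,e^{N d^{2}(\sigma(\tau),x)/n^{2}}\leq n^{2}\,e^{N d^{2}(\sigma(\tau),x)}$.

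\textbf{Step 3 (Bound $\|F_{\mathcal{P}}\|$).} Using the definition of $f_{\mathcal{P},i}$ together with $C_{\mathcal{P},n}(\sigma,s_{n-1})=I$, for $i\geq 1$
\[
f_{\mathcal{P},i}(\sigma,\tau)=C_{\mathcal{P},n-1}(\sigma)\cdots C_{\mathcal{P},i+1}(\sigma)\,S_{\mathcal{P},i}(\sigma)/\Delta_i,
\]
and $f_{\mathcal{P},0}(\sigma,\tau)=I$. On $[s_{j-1},s_j]$ the $j$-Jacobi equation has coefficient matrix of norm at most $N\|\dot\sigma\|^{2}=N n^{2}d^{2}(\sigma(s_{j-1}),\sigma(s_j))$, so the same comparison argument yields
\[
\|C_{\mathcal{P},j}(\sigma)\|_{op},\ \|S_{\mathcal{P},j}(\sigma)/\Delta_j\|_{op}\leq e^{N d^{2}(\sigma(s_{j-1}),\sigma(s_j))/2}.
\]
Telescoping and reindexing gives $\|f_{\mathcal{P},i}(\sigma,\tau)\|_{op}^{2}\leq\prod_{k=0}^{n-2}e^{N d^{2}(\sigma(s_k),\sigma(s_{k+1}))}$ uniformly in $0\leq i\leq n-2$. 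Summing,
\[
\|F_{\mathcal{P}}(\sigma)\|_{op}\leq\frac{n-1}{n^{2}}\prod_{j=0}^{n-2}e^{N d^{2}(\sigma(s_j),\sigma(s_{j+1}))}\leq\frac{1}{n}\prod_{j=0}^{n-2}e^{N d^{2}(\sigma(s_j),\sigma(s_{j+1}))}.
\]

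\textbf{Step 4 (Combine).} Multiplying the bounds from Steps 2 and 3,
\[
\|A\|_{op}\leq\|L_x\|_{op}^{2}\,\|F_{\mathcal{P}}\|_{op}\leq n\,e^{N d^{2}(\sigma(\tau),x)}\prod_{j=0}^{n-2}e^{N d^{2}(\sigma(s_j),\sigma(s_{j+1}))},
\]
which is stronger than the requirement of Step 1; plugging this into the Step 1 inequality concludes. The main obstacle is running the matrix ODE comparison cleanly—particularly the one-sided lower bound $S_x(\sigma,s)\geq sI$ on $[0,1/n]$, which is exactly where non-positive sectional curvature is used via Rauch II (and is presumably packaged in the referenced Proposition~\ref{prop A-1}). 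Everything else is elementary linear algebra and bookkeeping in the indices $i,j$.
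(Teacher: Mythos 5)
Your proof is correct and follows essentially the same route as the paper: apply Lemma~\ref{lem3.1.5} to reduce to a determinant, bound the determinant by the operator norm of $L_xF_{\mathcal{P}}L_x^{tr}$, then bound $\|L_x\|$ and $\|F_{\mathcal{P}}\|$ via the ODE comparison estimates packaged in Propositions~\ref{prop A-1} and~\ref{prop A-2}, and finish by expanding a $d$-fold power. The only cosmetic differences are in Step 1 — the paper passes through $(1+\max_i\lambda_i)^{d/2}\leq(1+\sqrt{\max_i\lambda_i})^{d}$ and then applies the binomial theorem, whereas you expand $\det(I+A)$ into elementary symmetric polynomials and use $\sqrt{\textstyle\sum a_k}\leq\sum\sqrt{a_k}$ together with $\sqrt{\binom{d}{k}}\leq\binom{d}{k}$; both give identical binomial structure — and in Step 3, where you use the slightly tighter estimate $\|S_{\mathcal{P},j}/\Delta_j\|\leq e^{\frac{1}{2}Nd^2(\sigma(s_{j-1}),\sigma(s_j))}$ (from $\sinh t/t\leq\cosh t\leq e^{t^2/2}$) instead of the paper's looser two-step bound via $|S_\xi-sI|$, which leads to a sharper product exponent ($\frac{Nk}{2}$ rather than $Nk$) that of course still implies the stated inequality. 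Your closing diagnosis is also right: the one piece of genuine geometric input is the lower bound $\|S_x(\sigma,s)v\|\geq s\|v\|$ from non-positive curvature, which is exactly what Proposition~\ref{prop A-1} supplies.
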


\begin{proof} From Lemma \ref{lem3.1.5} and Appendix \ref{app.C}, one can
see, after suppressing $\sigma$,
\begin{align*}
\det\left(I_{\left(\mathbb{R}^{d}\right)^{n-1}}+\hat{T}_{x}\right) =\det\left(I+L_{x}F_{\mathcal{P}}L_{x}^{tr}\right)=\Pi_{i=1}^{d}\left(1+\lambda_{i,x}\right)\leq\left(1+\max_{1\leq i\leq d}\lambda_{i,x}\right)^{d},
\end{align*}
where $\left\{ \lambda_{i,x}\right\}_{i=1}^d =eig\left(L_{x}F_{\mathcal{P}}L_{x}^{tr}\right)$.

Note that 
\begin{align*}
\max_{1\leq i\leq d}\lambda_{i,x}=\left\Vert L_{x}\left(\sigma\right)F_{\mathcal{P}}L_{x}\left(\sigma\right)^{tr}\right\Vert &\leq\left\Vert L_{x}\left(\sigma\right)\right\Vert ^{2}\left\Vert F_{\mathcal{P}}\right\Vert\leq\frac{1}{n}\left\Vert L_{x}\left(\sigma\right)\right\Vert ^{2}\sup_{0\leq i\leq n-2}\left\Vert f_{\mathcal{P},i}\left(\tau\right)\right\Vert ^{2}.
\end{align*}
Using Proposition \ref{prop A-2}, we get: 
\[
\left\Vert C_{x}\left(\sigma,\frac{1}{n}\right)\right\Vert \leq e^{\frac{N}{2}d^{2}\left(\sigma\left(\tau\right),x\right)},
\]
where for any $x,y\in M,$ $d\left(x,y\right)$ is the geodesic distance
between $x$ and $y$,
and 
\[
\left\Vert S_{x}^{-1}\left(\sigma,\frac{1}{n}\right)\right\Vert \leq n,
\]
so 
\[
\left\Vert L_{x}\left(\sigma\right)\right\Vert ^{2}\leq n^{2}e^{Nd^{2}\left(\sigma\left(\tau\right),x\right)}
\]
and 
\[
\max_{1\leq i\leq d}\lambda_{i,x}\leq ne^{Nd^{2}\left(\sigma\left(\tau\right),x\right)}\sup_{0\leq i\leq n-2}\left\Vert f_{\mathcal{P},i}\left(\sigma,\tau\right)\right\Vert ^{2}.
\]
Therefore 
\begin{align}
V_{x}\left(\sigma\right)=\left(1+\max_{1\leq i\leq d}\lambda_{i,x}\right)^{\frac{d}{2}} & \leq\left(1+ne^{Nd^{2}\left(\sigma\left(\tau\right),x\right)}\sup_{0\leq i\leq n-2}\left\Vert f_{\mathcal{P},i}\left(\sigma,\tau\right)\right\Vert ^{2}\right)^{\frac{d}{2}}\nonumber \\
 & \leq\left(1+n^{\frac{1}{2}}e^{\frac{N}{2}d^{2}\left(\sigma\left(\tau\right),x\right)}\sup_{0\leq i\leq n-2}\left\Vert f_{\mathcal{P},i}\left(\sigma,\tau\right)\right\Vert \right)^{d}\nonumber \\
 & =\sum_{k=0}^{d}\binom{d}{k}n^{\frac{k}{2}}e^{\frac{Nk}{2}d^{2}\left(\sigma\left(\tau\right),x\right)}\sup_{0\leq i\leq n-2}\left\Vert f_{\mathcal{P},i}\left(\sigma,\tau\right)\right\Vert ^{k}.\label{eq:-13}
\end{align}
Applying Proposition \ref{prop A-2} to $f_{\mathcal{P},i}\left(\sigma, \tau\right)$ shows
\begin{align*}
&\left\Vert f_{\mathcal{P},i}\left(\tau\right)\right\Vert\leq\left\Vert C_{\mathcal{P},n-1}\right\Vert \cdots\left\Vert C_{\mathcal{P},i+1}\right\Vert \left\Vert \frac{S_{i}}{\Delta_{i}}\right\Vert \\
 & \leq e^{\frac{1}{2}Nd^{2}\left(\sigma\left(s_{n-2}\right),\sigma\left(s_{n-1}\right)\right)}\cdot\cdots\cdot e^{\frac{1}{2}Nd^{2}\left(\sigma\left(s_{i-1}\right),\sigma\left(s_{i}\right)\right)}\left(1+\frac{Nd^{2}\left(\sigma\left(s_{i-1}\right),\sigma\left(s_{i}\right)\right)}{6}\right)\\
 & \leq\Pi_{j=i-1}^{n-2}e^{\frac{1}{2}Nd^{2}\left(\sigma\left(s_{j}\right),\sigma\left(s_{j+1}\right)\right)}\cdot e^{\frac{Nd^{2}\left(\sigma\left(s_{i-1}\right),\sigma\left(s_{i}\right)\right)}{6}}\\
 & \leq\Pi_{j=i-1}^{n-2}e^{Nd^{2}\left(\sigma\left(s_{j}\right),\sigma\left(s_{j+1}\right)\right)}\\
 & \leq\Pi_{j=0}^{n-2}e^{Nd^{2}\left(\sigma\left(s_{j}\right),\sigma\left(s_{j+1}\right)\right)}
\end{align*}
Taking supremum over $i$, we get: 
\begin{equation}
\sup_{0\leq i\leq n-2}\left\Vert f_{\mathcal{P},i}\left(\sigma,\tau\right)\right\Vert \leq\Pi_{j=0}^{n-2}e^{Nd^{2}\left(\sigma\left(s_{j}\right),\sigma\left(s_{j+1}\right)\right)}.\label{eq:-20}
\end{equation}
and Eq.(\ref{eq:-52}) follows. \end{proof}
\begin{definition} For any $X$, $Y\in T\mathcal{K}$(the tangent bundle of $\mathcal{K}$), define two metrics $G_{\mathcal{P},\tau}^{0}$,
$G_{\mathcal{P},\tau}^{1}$ to be: 
\[
\left<X,Y\right>_{G_{\mathcal{P},\tau}^{0}}=\sum_{i=1}^{n-1}\left\langle X\left(s_{i}\right),Y\left(s_{i}\right)\right\rangle \Delta_{i}
\]
and 
\[
\left<X,Y\right>_{G_{\mathcal{P},\tau}^{1}}=\sum_{i=1}^{n-1}\left\langle \frac{\nabla X}{ds}\left(s_{i-1}\right),\frac{\nabla Y}{ds}\left(s_{i-1}\right)\right\rangle \Delta_{i}.
\]
\end{definition}
\begin{lemma}
$G_{\mathcal{P},\tau}^{0}$ is a metric on $\mathcal{K}$.
\end{lemma}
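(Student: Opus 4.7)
The plan is to verify that $G_{\mathcal{P},\tau}^{0}$ is positive-definite on each tangent space $T_{\sigma}\mathcal{K}$, since bilinearity, symmetry, and smooth dependence on $\sigma$ are immediate from the definition of the inner product as a finite sum of pointwise Riemannian inner products evaluated at the partition points. So the only substantive issue is nondegeneracy.

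Suppose $X\in T_{\sigma}\mathcal{K}$ satisfies $\langle X,X\rangle_{G_{\mathcal{P},\tau}^{0}}=0$. Then $\sum_{i=1}^{n-1}\langle X(s_i),X(s_i)\rangle_g\,\Delta_i=0$, which forces $X(s_i)=0$ for every $i=1,\ldots,n-1$. Since every element of $T_{\sigma}\mathcal{K}$ satisfies $X(0)=0$ by the analogue of Theorem \ref{def.6.2} for $\mathcal{K}$, the vector field $X$ vanishes at all partition points $s_0=0,s_1,\ldots,s_{n-1}=\tau$.

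On each subinterval $[s_{i-1},s_i]$ with $i=1,\ldots,n-1$, the restriction $X|_{[s_{i-1},s_i]}$ is a Jacobi field along the geodesic arc $\sigma|_{[s_{i-1},s_i]}$ that vanishes at both endpoints. Passing to horizontal-lift coordinates $y(s):=u(\sigma,s)^{-1}X(s)$ as in Definition \ref{def.6.1}, and using $C_{\mathcal{P},i}$ and $S_{\mathcal{P},i}$ from Notation \ref{not.2.1}, the unique solution with $y(s_{i-1})=0$ is $y(s)=S_{\mathcal{P},i}(s)\,y'(s_{i-1})$. The additional requirement $y(s_i)=0$ therefore forces $S_{\mathcal{P},i}(\sigma,s_i)\,y'(s_{i-1})=0$. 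By Proposition \ref{prop A-1}, which uses precisely the Hadamard hypothesis standing throughout this section to exclude conjugate points, $S_{\mathcal{P},i}(\sigma,s_i)$ is invertible; hence $y'(s_{i-1})=0$ and $X\equiv 0$ on $[s_{i-1},s_i]$.

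Since this applies to every subinterval in $[0,\tau]$, we conclude $X\equiv 0$, which establishes nondegeneracy and hence that $G_{\mathcal{P},\tau}^{0}$ is a Riemannian metric on $\mathcal{K}$. No serious obstacle is expected; the entire argument pivots on the absence of conjugate points on Hadamard manifolds, which has already been leveraged in Proposition \ref{pro.3.2.1} for the analogous statement about $H_{\mathcal{P},x}(M)$.
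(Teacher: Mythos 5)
Your proof is correct and follows essentially the same route as the paper's: reduce nondegeneracy to the vanishing of $X$ at the partition points, then use the absence of conjugate points on a Hadamard manifold (uniqueness of the Jacobi field with prescribed endpoint values) to conclude $X\equiv 0$ on each subinterval. Your version is a bit more explicit in passing to horizontal-lift coordinates and invoking the invertibility of $S_{\mathcal{P},i}$ from Proposition \ref{prop A-1}, but the key idea is identical.
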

\begin{proof}
The only non--trivial part is to check $\left<X,X\right>_{G_{\mathcal{P},\tau}^{0}}=0\implies X=0$.
Since $M$ has non--positive curvature, there are no conjugate points. For each $0\leq i\leq n-1$, there is a unique piece of Jacobi field $X$ along $\sigma\mid_{[s_i,s_{i+1}]}$ with specified boundary values $X\left(s_i\right)$ and $X\left(s_{i+1}\right)$. $\left<X,X\right>_{G_{\mathcal{P},\tau}^{0}}=0\implies X\left(s_{i}\right)=0\text{ for any }1\leq i \leq n$. By the uniqueness of Jacobi field, $X\equiv 0$.  
\end{proof}

Based on the metric $G_{\mathcal{P},\tau}^{0}$
and $G_{\mathcal{P},\tau}^{1}$, we define measures $\nu_{\mathcal{P},\tau}^{0}$
and $\nu_{\mathcal{P},\tau}^{1}$ on $\mathcal{K}$ as follows.
\begin{definition}  Let
\[
d\nu_{\mathcal{P},\tau}^{0}:=\frac{n^{\left(n-1\right)d}}{\left(2\pi\right)^{\left(n-1\right)\frac{d}{2}}}e^{-\frac{1}{2}E}dvol_{G_{\mathcal{P},\tau}^{0}}
\]
and 
\[
d\nu_{\mathcal{P},\tau}^{1}=\frac{1}{\left(2\pi\right)^{\left(n-1\right)\frac{d}{2}}}e^{-\frac{1}{2}E}dvol_{G_{\mathcal{P},\tau}^{1}}
\]

\end{definition}

\begin{lemma} \label{lem3.1.7}
If
\[
\rho_{\mathcal{P}}\left(\sigma\right):=\Pi_{i=1}^{n-1}\det\left(\frac{S_{\mathcal{P},i}\left(\sigma\right)}{n}\right)\text{  }\forall \sigma\in \mathcal{K},
\]
then $d\nu_{\mathcal{P},\tau}^{0}=\rho_{\mathcal{P}}d\nu_{\mathcal{P},\tau}^{1}$ and moreover, $\rho_{\mathcal{P}}\left(\sigma\right)\geq1$ $\forall \sigma\in \mathcal{K}$.

\end{lemma}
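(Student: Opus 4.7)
The plan is to compute the Radon--Nikodym derivative $d\nu^{0}_{\mathcal{P},\tau}/d\nu^{1}_{\mathcal{P},\tau}$ in two stages: first identifying the ratio $dvol_{G^{0}_{\mathcal{P},\tau}}/dvol_{G^{1}_{\mathcal{P},\tau}}$ via a change-of-variables computation between two natural parameterizations of $T_\sigma\mathcal{K}$, and then folding in the prefactors $n^{(n-1)d}/(2\pi)^{(n-1)d/2}$ and $1/(2\pi)^{(n-1)d/2}$ to recover $\rho_{\mathcal{P}}$. The inequality $\rho_{\mathcal{P}}\geq 1$ will reduce to the Jacobi-field expansion bound in Proposition \ref{prop A-1}, exactly as in Corollary \ref{Col1}.

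First I would fix $\sigma\in\mathcal{K}$ and introduce two coordinate systems on $T_\sigma\mathcal{K}$. Any $X\in T_\sigma\mathcal{K}$ has the form $X=u(\sigma,\cdot)h$ with $h$ piecewise Jacobi in the sense of Theorem \ref{def.6.2}, and such an $h$ is equivalently determined either by its derivative data $v=(v_1,\ldots,v_{n-1})$ with $v_i=h'(s_{i-1}+)$, or by its value data $w=(w_1,\ldots,w_{n-1})$ with $w_i=h(s_i)$. By definition the Gram matrix of $G^{1}_{\mathcal{P},\tau}$ in the $v$-chart is $\mathrm{diag}(\Delta_i I_d)$, and the Gram matrix of $G^{0}_{\mathcal{P},\tau}$ in the $w$-chart is also $\mathrm{diag}(\Delta_i I_d)$. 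Solving the $i$-th Jacobi equation on $[s_{i-1},s_i]$ with initial conditions $h(s_{i-1})=w_{i-1}$, $h'(s_{i-1}+)=v_i$ produces the recursion $w_i=C_{\mathcal{P},i}(\sigma)\,w_{i-1}+S_{\mathcal{P},i}(\sigma)\,v_i$, so the change of variables $M\colon v\mapsto w$ is block lower-triangular with diagonal blocks $S_{\mathcal{P},i}(\sigma)$ and hence $\det M=\prod_{i=1}^{n-1}\det S_{\mathcal{P},i}(\sigma)$.

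Pulling $G^{0}$ back to the $v$-chart via $G^{0}|_v=M^{tr}G^{0}|_w M$ yields
\[
\frac{dvol_{G^{0}_{\mathcal{P},\tau}}}{dvol_{G^{1}_{\mathcal{P},\tau}}}=\sqrt{\frac{\det G^{0}|_v}{\det G^{1}|_v}}=|\det M|=\prod_{i=1}^{n-1}\det S_{\mathcal{P},i}(\sigma).
\]
Substituting into the definitions of $d\nu^{0}_{\mathcal{P},\tau}$ and $d\nu^{1}_{\mathcal{P},\tau}$, the common $(2\pi)^{-(n-1)d/2}e^{-E/2}$ factors cancel, the factor $n^{(n-1)d}$ distributes as one power of $n^{d}=\Delta_i^{-d}$ into each diagonal block of $M$, and the resulting product is exactly $\rho_{\mathcal{P}}(\sigma)$; equivalently, using $f_{\mathcal{P},i}(\sigma,s_i)=S_{\mathcal{P},i}(\sigma)/\Delta_i$ from Definition \ref{def.2.1-1}, the outcome is $\rho_{\mathcal{P}}(\sigma)=\prod_{i=1}^{n-1}\det f_{\mathcal{P},i}(\sigma,s_i)$.

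For the inequality, I would invoke Proposition \ref{prop A-1}, which on a Hadamard manifold yields $\|f^{tr}_{\mathcal{P},i}(\sigma,s_i)v\|\geq\|v\|$ for every $v\in\mathbb{R}^d$; this is the same estimate used in Corollary \ref{Col1}. Consequently every singular value of $f_{\mathcal{P},i}(\sigma,s_i)$ is at least one, so $\det f_{\mathcal{P},i}(\sigma,s_i)\geq 1$, and the product over $i$ gives $\rho_{\mathcal{P}}(\sigma)\geq 1$. The only mild obstacle here is to keep track carefully of the $\Delta_i$-scalings shared by the two metrics and of the normalization $n^{(n-1)d}$ in the definition of $d\nu^{0}_{\mathcal{P},\tau}$; once the block-triangular Jacobian $M$ is identified, both conclusions follow immediately.
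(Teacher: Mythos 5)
Your proposal is correct and follows essentially the same route the paper takes, except that you flesh out the step the paper merely cites. For the Radon--Nikodym identity, the paper simply refers to Theorem 5.9 of Andersson--Driver with the endpoint shifted from $1$ to $\tau$; your block lower-triangular change of coordinates between the derivative data $v_i=h'(s_{i-1}+)$ and the value data $w_i=h(s_i)$, with the recursion $w_i=C_{\mathcal{P},i}w_{i-1}+S_{\mathcal{P},i}v_i$ coming from the $i$-th Jacobi flow and $w_0=0$, is exactly the computation underlying that cited result, and your bookkeeping of the normalization $n^{(n-1)d}$ is right. For the inequality the two arguments coincide: both invoke Proposition \ref{prop A-1} to get $\Vert (S_{\mathcal{P},i}/\Delta_i)v\Vert\geq\Vert v\Vert$ for all $v$, hence $\det(S_{\mathcal{P},i}/\Delta_i)\geq 1$. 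One caution for you: the paper writes the density literally as $\prod_i\det(S_{\mathcal{P},i}/n)$ both in the lemma statement and in its proof, which appears to be a slip for $S_{\mathcal{P},i}/\Delta_i=nS_{\mathcal{P},i}=f_{\mathcal{P},i}(\sigma,s_i)$; your derivation produces the latter, which is the version for which Proposition \ref{prop A-1} actually gives a lower bound of $1$ and which is consistent with $S_{\mathcal{P},0}\equiv\lvert\mathcal{P}\rvert I$ in Definition \ref{def.2.1-1}. So you have, in effect, also caught and corrected a typo in the source.
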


\begin{proof} The argument to show $\rho_{\mathcal{P}}$ is the density
of $\nu_{\mathcal{P},\tau}^{0}$ with respect to $\nu_{\mathcal{P},\tau}^{1}$
is almost exactly the same as Theorem 5.9 in \cite{Andersson1999}
with a slight change of ending point from $1$ to $\tau$. Here we
focus on the lower bound estimate of $\rho_{\mathcal{P}}\left(\sigma\right).$
Since for any $v\in\mathbb{C}^{d}$, 
\[
\left\Vert \frac{S_{\mathcal{P},i}}{n}v\right\Vert \geq\left\Vert v\right\Vert ,
\]
we know from Proposition \ref{prop A-1} that for any $\lambda\in eig\left(\frac{S_{\mathcal{P},i}}{n}\right),$
\[
\left|\lambda\right|\geq1,
\]
and from which we know: 
\[
\rho_{\mathcal{P}}\left(\sigma\right)=\Pi_{i=1}^{n-1}\det\left(\frac{S_{\mathcal{P},i}\left(\sigma\right)}{n}\right)\geq1.
\]

\end{proof}

\begin{proof}[Proof of Theorem \ref{thm3.2.1}]Since $\psi_{x}$
is a diffeomorphism,
\begin{align*}
h^f_{\mathcal{P}}\left(x\right)=&\int_{H_{\mathcal{P},x}\left(M\right)}\frac{1}{Z_\mathcal{P}^1}\frac{f}{J_{\mathcal{P}}}\left(\sigma\right)e^{-\frac{1}{2}E\left(\sigma\right)}dvol_{G_{\mathcal{P},x}^1}\left(\sigma\right)\\
=&\int_\mathcal{K}\frac{1}{Z_{\mathcal{P}}^{1}}\frac{f}{J_{\mathcal{P}}}\circ\psi_{x}\left(\sigma\right)e^{-\frac{1}{2}E\circ\psi_{x}\left(\sigma\right)}V_{x}\left(\sigma\right)dvol_{G_{\mathcal{P},\tau}^{1}}\left(\sigma\right).
\end{align*}
Notice that
\begin{equation}
\frac{1}{Z_{\mathcal{P}}^{1}}e^{-\frac{1}{2}E\circ\psi_{x}\left(\sigma\right)}=\frac{1}{\left(2\pi\right)^{\frac{d}{2}}}\frac{1}{\left(2\pi\right)^{\left(n-1\right)\frac{d}{2}}}e^{-\frac{1}{2}E\left(\sigma\right)}e^{-\frac{n}{2}d^{2}\left(\sigma\left(\tau\right),x\right)},\label{eq:-35}
\end{equation}
so
\begin{equation}
h^f_{\mathcal{P}}\left(x\right)=\frac{1}{\left(2\pi\right)^{\frac{d}{2}}}\int_\mathcal{K}\frac{f}{J_{\mathcal{P}}}\circ\psi_{x}\left(\sigma\right)e^{-\frac{n}{2}d^{2}\left(\sigma\left(\tau\right),x\right)}V_x\left(\sigma\right)d\nu_{G_{\mathcal{P},\tau }^{1}}\left(\sigma\right)\label{eq:-2}
\end{equation}
Combining $\left(\ref{eq:-13}\right)$, $\left(\ref{eq:-20}\right)$ we know that:
\begin{align*}
e^{-\frac{n}{2}d^{2}\left(\sigma\left(\tau\right),x\right)}V_x\left(\sigma\right)&\leq \sum_{k=0}^{d}\binom{d}{k}n^{\frac{k}{2}}e^{\frac{Nk-n}{2}d^{2}\left(\sigma\left(\tau\right),x\right)}\Pi_{j=0}^{n-2}e^{Nd^{2}\left(\sigma\left(s_{j}\right),\sigma\left(s_{j+1}\right)\right)}.
\end{align*}
So 
\begin{equation*}
\sup_{x\in M}e^{-\frac{n}{2}d^{2}\left(\sigma\left(\tau\right),x\right)}V_x\left(\sigma\right)\leq\sup_{x\in M}e^{-\frac{n-Nk}{2}d^{2}\left(\sigma\left(\tau\right),x\right)} \sum_{k=0}^{d}\binom{d}{k}n^{\frac{k}{2}}\Pi_{j=0}^{n-2}e^{Nkd^{2}\left(\sigma\left(s_{j}\right),\sigma\left(s_{j+1}\right)\right)}.
\end{equation*}
When $n$ is large enough, i.e. $n>Nk$, $e^{-\frac{n-Nk}{2}d^{2}\left(\sigma\left(\tau\right),x\right)}\leq 1$ and thus it suffices to show 
\begin{equation*}
\mathbb{E}_{\nu_{G_{\mathcal{P},\tau }^{1}}}\left[\sum_{k=0}^{d}\binom{d}{k}n^{\frac{k}{2}}\Pi_{j=0}^{n-2}e^{Nkd^{2}\left(\sigma\left(s_{j}\right),\sigma\left(s_{j+1}\right)\right)}\right]<\infty.
\end{equation*}
For each $k\leq d$ we have:
\begin{align}
 \mathbb{E}_{\nu_{G_{\mathcal{P},\tau }^{1}}}\left[\binom{d}{k}n^{\frac{k}{2}}\Pi_{j=0}^{n-2}e^{Nkd^{2}\left(\sigma\left(s_{j}\right),\sigma\left(s_{j+1}\right)\right)}\right]
 & =C_n \mathbb{E}_{\mu}\left[\Pi_{j=0}^{n-2}e^{Nk\left|\Delta_{j+1}\beta\right|^2}\right]\label{Eq:-h}
\end{align}
Using Lemma \ref{cBM} in Appendix \ref{App.B} we obtain a bound of the right--hand side of Eq. (\ref{Eq:-h}) (the bound here depends on $n$). 

Since for any $\sigma\in \mathcal{K}$, $\frac{f}{J_{\mathcal{P}}}\circ\psi_{x}\left(\sigma\right)e^{-\frac{n}{2}d^{2}\left(\sigma\left(\tau\right),x\right)}V_x\left(\sigma\right)$ is continuous with respect to $x\in M$, so by dominated convergence theorem, $h^f_\mathcal{P}\left(x\right)\in C\left(M\right)$.
\end{proof}

Not only can we show that $h^f_\mathcal{P}\left(x\right)$ is a continuous function, it is bounded uniformly in $x\in M$ and partition $\mathcal{P}$, as is shown in the following proposition.
\begin{proposition}\label{pro3.31}
$\sup_{\mathcal{P}}\sup_{x\in M}\nu_{\mathcal{P},x}^1\left(H_{\mathcal{P},x}\left(M\right)\right)< \infty.$
\end{proposition}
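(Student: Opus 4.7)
The plan is to sharpen the estimate from the proof of Theorem \ref{thm3.2.1} so that it becomes uniform in both $x \in M$ and the partition $\mathcal{P}$ (equivalently, in $n = 1/|\mathcal{P}|$). Taking $f \equiv 1$ in equation (\ref{eq:-2}) yields
\[
h_\mathcal{P}^1(x) = \nu^1_{\mathcal{P},x}\bigl(H_{\mathcal{P},x}(M)\bigr) = \frac{1}{(2\pi)^{d/2}} \int_\mathcal{K} \frac{V_x(\sigma)}{J_\mathcal{P}(\psi_x\sigma)}\, e^{-\frac{n}{2}d^2(\sigma(\tau),x)}\, d\nu_{G^1_{\mathcal{P},\tau}}(\sigma),
\]
and combining Corollary \ref{Col1} (giving $J_\mathcal{P}^{-1} \leq 1$) with the bound on $V_x$ in Lemma \ref{lem3.1.6} gives, for $n \geq 3dN$,
\[
h_\mathcal{P}^1(x) \leq \frac{1}{(2\pi)^{d/2}} \sum_{k=0}^d \binom{d}{k}\, n^{k/2} \int_\mathcal{K} e^{-\frac{n-Nk}{2}d^2(\sigma(\tau),x)} \prod_{j=0}^{n-2} e^{Nk d^2(\sigma(s_j),\sigma(s_{j+1}))}\, d\nu_{G^1_{\mathcal{P},\tau}}(\sigma).
\]
The essential difficulty is that the prefactor $n^{k/2}$ grows in $n$; bounding $e^{-(n-Nk)d^2/2} \leq 1$ as in Theorem \ref{thm3.2.1} is no longer enough, and one must instead exploit the Gaussian concentration of this factor \emph{after} integration.

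To do this, I would first use Lemma \ref{lem3.1.7} ($\rho_\mathcal{P} \geq 1$) to replace $d\nu_{G^1_{\mathcal{P},\tau}}$ by the larger measure $d\nu^0_{\mathcal{P},\tau}$. Under $d\nu^0_{\mathcal{P},\tau}$, the anti-rolling identification from Fact \ref{fact 1} expresses the integral as an expectation under $\mu$, with $d(\sigma(s_j),\sigma(s_{j+1})) = |\Delta_{j+1}\beta|$ and the increments $\Delta_{j+1}\beta \sim N(0, \tfrac{1}{n}I_d)$ independently. A Girsanov-type tilt then absorbs the product weight into an $n$-independent constant, since
\[
\mathbb{E}_\mu\left[\prod_{j=0}^{n-2} e^{Nk|\Delta_{j+1}\beta|^2}\right] = (1 - 2Nk/n)^{-(n-1)d/2} \longrightarrow e^{Nkd}
\]
remains uniformly bounded for $n \geq 3dN$, and under the tilted law $Q$ the increments stay independent centered Gaussians with variance $(n-2Nk)^{-1}$. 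The task reduces to bounding
\[
n^{k/2} \int_M e^{-\frac{n-Nk}{2}d^2(y,x)}\, q_{n,k}(y)\, dy,
\]
where $q_{n,k}$ is the density on $M$ of $\sigma(\tau)$ under $Q$. Standard Hadamard-manifold heat-kernel upper bounds (Li--Yau / Davies type, valid because the sectional curvature is bounded below by $-N$) give $\sup_n \|q_{n,k}\|_\infty < \infty$, and G\"unther's volume comparison theorem for Hadamard manifolds with curvature $\geq -N$ yields $\int_M e^{-\alpha d^2(y,x)}\,dy \leq C_{d,N}\,\alpha^{-d/2}$ once $\alpha$ is large. With $\alpha = (n-Nk)/2 \geq n/6$ for $n \geq 3dN$, the pinning integral is $O(n^{-d/2})$, which exactly cancels the $n^{k/2}$ factor since $k \leq d$; summing over $k \in \{0,\ldots,d\}$ produces a bound depending only on $d$ and $N$.

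The main obstacle is the uniform density estimate $\sup_n \|q_{n,k}\|_\infty < \infty$. This is a heat-kernel bound for the $(n-1)$-step piecewise-geodesic random walk after the mild Gaussian tilt, uniform in the number of steps. On $\mathbb{R}^d$ it is immediate from the Gaussian convolution formula, but on a Hadamard manifold one must either iterate one-step Gaussian comparison estimates using the $G^0$-metric parametrization of the rolled-up walk, or invoke the uniform-in-$\mathcal{P}$ transition estimates that underlie the finite-dimensional approximation arguments of \cite{Andersson1999}. Once this uniform density bound is in hand, the remainder of the calculation is a deterministic Gaussian estimate and Proposition \ref{pro3.31} follows.
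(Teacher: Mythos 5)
Your proposal starts the same way the paper does (equation~(\ref{eq:-2}), Corollary~\ref{Col1}, Lemma~\ref{lem3.1.6}, and Lemma~\ref{lem3.1.7}) but then diverges. The paper projects to $M^{n-1}$ via $\pi_\mathcal{P}$, dominates each one--step Riemannian Gaussian factor $e^{-\frac{1}{2}(n-2Nd)d^2(x_j,x_{j+1})}$ from above by the genuine heat kernel $p_{1/(n-N_1)}(x_j,x_{j+1})$ using Sturm's lower bound, and then collapses the $(n-1)$--fold integral by the Chapman--Kolmogorov semigroup identity, landing on a single factor $p_{n/(n-N_1)}(o,x)$ which is uniformly bounded in both $n$ and $x$. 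You propose instead a Gaussian tilt on the $\mathbb{R}^d$ side followed by a density estimate for the walk endpoint. Two things need attention. First, you assert that anti--rolling under $\nu^0_{\mathcal{P},\tau}$ gives i.i.d.\ increments $\Delta_{j+1}\beta\sim N(0,\frac{1}{n}I_d)$. That identification actually holds for $\nu^1_{\mathcal{P},\tau}$, not $\nu^0_{\mathcal{P},\tau}$: the law of $\phi\circ\beta_\mathcal{P}$ is $\nu^1_\mathcal{P}$ (see the proof of Lemma~\ref{lem7.4}), and the whole point of Lemma~\ref{lem3.1.7} is that the two measures differ by $\rho_\mathcal{P}$. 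Passing to $\nu^0$ via $\rho_\mathcal{P}\geq 1$ buys the opposite feature --- $(\pi_\mathcal{P})_*\nu^0_{\mathcal{P},\tau}$ is the clean product of the kernels $\frac{n^{d/2}}{(2\pi)^{d/2}}e^{-\frac{n}{2}d^2(x_{j-1},x_j)}\,dvol_M$ on $M^{n-1}$. If you instead stay on $\nu^1$ so the tilt is a genuine Gaussian reweighting, then $\sigma(\tau)=\phi(\beta_\mathcal{P})(\tau)$ lives on $M$ and its density carries the Jacobian of the rolling map, which is not as tractable as your writeup suggests.

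Second, and more seriously, the uniform density bound $\sup_n\|q_{n,k}\|_\infty<\infty$ that you flag as ``the main obstacle'' \emph{is} the proposition, in disguise: it is a uniform--in--$n$ upper bound for the transition density of the piecewise--geodesic walk on a Hadamard manifold. Your argument reduces the claim to this bound but does not prove it, and iterating one--step estimates (your suggested remedy) is exactly the kind of work the paper avoids. The key insight of the paper's proof is that one does not need any such direct density estimate: Sturm's heat--kernel lower bound lets you replace each Riemannian Gaussian one--step kernel by a true heat kernel, and Chapman--Kolmogorov then replaces the $n$--fold integral by $p_{n/(n-N_1)}(o,x)$, whose boundedness in $n$ follows for free from continuity of $t\mapsto p_t(o,x)$ near $t=1$. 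Your scaling heuristics ($n^{k/2}$ cancelled by $\alpha^{-d/2}$ from the volume comparison) do correctly predict the answer, but as written the argument has a genuine gap at the density bound rather than a technical loose end.
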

\begin{proof}
Based on Eq. (\ref{eq:-2}) and Corollary \ref{Col1},
\begin{equation*}
\nu_{\mathcal{P},x}^1\left(H_{\mathcal{P},x}\left(M\right)\right)\leq C_d\int_{\mathcal{K}}e^{-\frac{n}{2}d^{2}\left(\sigma\left(\tau\right),x\right)}V_x\left(\sigma\right)d\nu_{G_{\mathcal{P},\tau }^{1}}\left(\sigma\right)
\end{equation*}
Combining Eq.(\ref{eq:-13}) and Eq.(\ref{eq:-20}) we know that:
\begin{align*}
e^{-\frac{n}{2}d^{2}\left(\sigma\left(\tau\right),x\right)}V_x\left(\sigma\right)&\leq \sum_{k=0}^{d}\binom{d}{k}n^{\frac{k}{2}}e^{\frac{Nk-n}{2}d^{2}\left(\sigma\left(\tau\right),x\right)}\Pi_{j=0}^{n-2}e^{Nd^{2}\left(\sigma\left(s_{j}\right),\sigma\left(s_{j+1}\right)\right)}
\end{align*}
For each $k\leq d$, applying Lemma \ref{lem3.1.7}, we have:
\begin{align}
 \mathbb{E}&_{\nu_{G_{\mathcal{P},\tau }^{1}}}\left[e^{-\frac{n-Nk}{2}d^{2}\left(\sigma\left(\tau\right),x\right)}\binom{d}{k}n^{\frac{k}{2}}\Pi_{j=0}^{n-2}e^{Nkd^{2}\left(\sigma\left(s_{j}\right),\sigma\left(s_{j+1}\right)\right)}\right]\nonumber\\
 & =\binom{d}{k}n^{\frac{k}{2}}\int_{\mathcal{K}}e^{-\frac{n-Nk}{2}d^{2}\left(\sigma\left(\tau\right),x\right)}\Pi_{j=0}^{n-2}e^{Nkd^{2}\left(\sigma\left(s_{j}\right),\sigma\left(s_{j+1}\right)\right)}d\nu_{G_{\mathcal{P},\tau }^{1}}\left(\sigma\right) \nonumber\\
 & =\binom{d}{k}n^{\frac{k}{2}}\int_{\mathcal{K}}e^{-\frac{n-Nk}{2}d^{2}\left(\sigma\left(\tau\right),x\right)}\Pi_{j=0}^{n-2}e^{Nkd^{2}\left(\sigma\left(s_{j}\right),\sigma\left(s_{j+1}\right)\right)}\frac{1}{\rho_{\mathcal{P}}\left(\sigma\right)}d\nu_{\mathcal{P},\tau }^{0}\left(\sigma\right)\nonumber\\
 & 
 \leq\binom{d}{k}n^{\frac{k}{2}}\int_{\mathcal{K}}e^{-\frac{n-Nk}{2}d^{2}\left(\sigma\left(\tau\right),x\right)}\Pi_{j=0}^{n-2}e^{Nkd^{2}\left(\sigma\left(s_{j}\right),\sigma\left(s_{j+1}\right)\right)}d\nu_{\mathcal{P},\tau }^{0}\left(\sigma\right)\label{eq:-48}
\end{align}
Now define the projection map $\pi_{\mathcal{P}}:\mathcal{K}\to M^{n-1},$
for any $\sigma\in \mathcal{K}$,
\[
\pi_{\mathcal{P}}\left(\sigma\right):=\left(\sigma\left(s_{1}\right),\dots,\sigma\left(s_{n-1}\right)\right).
\]
Since $M$ is a Hadamard manifold, $\pi_{\mathcal{P}}$ is
a diffeomorphism. From there one can get: 
\begin{align}
&\binom{d}{k} n^{\frac{k}{2}}\int_{\mathcal{K}}e^{-\frac{n-Nk}{2}d^{2}\left(\sigma\left(\tau\right),x\right)}\Pi_{j=0}^{n-2}e^{Nkd^{2}\left(\sigma\left(s_{j}\right),\sigma\left(s_{j+1}\right)\right)}d\nu_{\mathcal{P},\tau }^{0}\left(\sigma\right)\nonumber\\
&
=\frac{\binom{d}{k}n^{\frac{k+\left(n-1\right)d}{2}}}{\left(2\pi\right)^{\frac{\left(n-1\right)d}{2}}}\int_{M^{n-1}}e^{-\frac{n-Nk}{2}d^{2}\left(x_{n-1},x\right)}\Pi_{j=0}^{n-2}e^{-\frac{1}{2}\left(n-2Nk\right)d^{2}\left(x_{j},x_{j+1}\right)}dx_{1}\cdots dx_{n-1}\label{eq:-49}
\end{align}
Corollary 4.2 in \cite{Sturm1992} gives a lower bound of the heat kernel of manifold $M$, provided $Ric\geq\left(1-d\right)N$:
\[
p_{t}\left(x,y\right)\geq\left(2\pi t\right)^{-\frac{d}{2}}e^{-\frac{\rho^{2}}{2t}}\left(\frac{\sinh\sqrt{N}\rho}{\sqrt{N}\rho}\right)^{\frac{1-d}{2}}e^{-Ct}
\]
where $N$ is the curvature bound and $C$ is some constant depending
only on $d$ and $N$ and $\rho=d\left(x,y\right).$ Using the fact
that
\[
\frac{\sinh\sqrt{N}\rho}{\sqrt{N}\rho}\leq e^{\frac{N\rho^{2}}{2}},
\]
it follows that
\[
p_{t}\left(x,y\right)\geq\left(2\pi t\right)^{-\frac{d}{2}}e^{-\frac{1}{2}\left(\frac{1}{t}+\frac{N\left(d-1\right)}{2}\right)\rho^{2}}e^{-Ct}.
\]
Let $t=\frac{1}{n-N_{1}}$, where $N_{1}=2Nd+\frac{N\left(d-1\right)}{2}$, we have, for any $j\in\left\{ 0,\dots,n-1\right\} $: 
\[
e^{-\frac{1}{2}\left(n-2Nd\right)d^{2}\left(x_{j},x_{j+1}\right)}\leq e^{Ct}p_{t}\left(x_j,x_{j+1}\right)\left(2\pi t\right)^{\frac{d}{2}}.
\]
So
\begin{align}
  &\frac{\binom{d}{k}n^{\frac{k+\left(n-1\right)d}{2}}}{\left(2\pi\right)^{\frac{\left(n-1\right)d}{2}}}\int_{M^{n-1}}e^{-\frac{n-2Nk}{2}d^{2}\left(x_{n-1},x\right)}\Pi_{j=0}^{n-2}e^{-\frac{1}{2}\left(n-2Nd\right)d^{2}\left(x_{j},x_{j+1}\right)}dx_{1}\cdots dx_{n-1}\nonumber \\
 & \leq\frac{\binom{d}{k}n^{\frac{k+\left(n-1\right)d}{2}}}{\left(n-N_{1}\right)^{\frac{nd}{2}}}e^{C\frac{n}{n-N_{1}}}\int_{M^{n-1}}p_{\frac{1}{n-N_1}}\left(x_{n-1},x\right)\Pi_{j=0}^{n-2}p_{\frac{1}{n-N_{1}}}\left(x_{j},x_{j+1}\right)dx_{1}\cdots dx_{n-1}\nonumber \\
 & =\frac{\binom{d}{k}e^{\frac{Cn}{n-N_{1}}}}{n^{\frac{d-k}{2}}\left(1-\frac{N_{1}}{n}\right)^{\frac{nd}{2}}}\int_Mp_{\frac{1}{n-N_1}}\left(x_{n-1},x\right)p_{\frac{n-1}{n-N_{1}}}\left(o,x_{n-1}\right)dx_{n-1}\nonumber\\
 &=\frac{\binom{d}{k}e^{\frac{Cn}{n-N_{1}}}}{n^{\frac{d-k}{2}}\left(1-\frac{N_{1}}{n}\right)^{\frac{nd}{2}}}p_{\frac{n}{n-N_{1}}}\left(o,x\right)\label{eq:-17}
\end{align}
Since the heat kernel is continuous with respect to time t , combining $\left(\ref{eq:-48}\right)$ ,$\left(\ref{eq:-49}\right)$ and $\left(\ref{eq:-17}\right)$ we get 
\[
\frac{\binom{d}{k}e^{\frac{Cn}{n-N_{1}}}}{n^{\frac{d-k}{2}}\left(1-\frac{N_{1}}{n}\right)^{\frac{nd}{2}}}p_{\frac{n}{n-N_{1}}}\left(0,x\right)\leq C.
\]
and hence 
\[
\nu_{\mathcal{P},x}^1\left(H_{\mathcal{P},x}\left(M\right)\right)\leq C.
\]
where $C$ is a constant depending only on $d$ and $N$.
\end{proof}

Theorem \ref{thm3.2.1} shows that the class of approximate pinned
measures $\left\{ \nu_{\mathcal{P},x}^{1}\right\} $ are finite measures
and using the continuity result for $h_{\mathcal{P}}\left(x\right),$
one can see that $\nu_{\mathcal{P},x}^{1}$ is deserved to be formally
expressed as $\delta_{x}\left(\sigma\left(1\right)\right)\nu_{\mathcal{P}}^{1}$
and it should be interpreted in the sense of Corollary \ref{Col3.2.2} below. First we state a co--area formula.
\begin{theorem}[Theorem 2.3 in Federer \cite{Federer1969}]
Let $H$ and $M$ be two Riemannian manifolds with volume measures $dvol_H$ and $dvol_M$ respectively. If $p:H\to M$ is a smooth submersion, $g:H\to [0,\infty)$ is a density function, for each $x\in M$, let $dvol_{H_x}$ be the volume measure on $H_x:=p^{-1}\left(\left\{x\right\}\right)$ and $J\left(y\right):=\sqrt{\det\left(p_{*y}p^{tr}_{*y}\right)}$ on $y\in H_x$, then for any non--negative measurable function $f:M\to [0,\infty)$, 
\begin{equation}
\int_H\left(f\circ p\right)gdvol_H=\int_Mdvol_M\left(x\right)f\left(x\right)\int_{H_x}\frac{1}{J\left(y\right)}g\left(y\right)dvol_{H_x}\left(y\right).\label{co}
\end{equation}
\end{theorem}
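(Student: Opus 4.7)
The plan is to establish the co--area formula by localization via a partition of unity, followed by an application of Fubini's theorem on a product chart with a Jacobian correction coming from a pointwise linear-algebra identity.

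First I would cover $M$ by open sets $\{U_\alpha\}$ over which $p$ admits a local trivialization; by the submersion theorem, for each point of $M$ there is a neighborhood $U$ and a diffeomorphism $\Phi : U \times V \to p^{-1}(U)$ (with $V$ open in $\mathbb{R}^{\dim H - \dim M}$) satisfying $p \circ \Phi(x,v) = x$. Choosing a partition of unity $\{\chi_\alpha\}$ on $M$ subordinate to this cover and writing $(f \circ p) g = \sum_\alpha (\chi_\alpha \circ p)\,(f \circ p)\, g$ reduces the statement to the case where $g$ is supported in a single trivialized piece $p^{-1}(U)$.

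Next comes the key linear-algebra step. At each $y \in H$, the submersion condition gives the orthogonal decomposition $T_yH = \ker(p_{*y}) \oplus \ker(p_{*y})^\perp$, and $A := p_{*y}|_{\ker(p_{*y})^\perp}$ is an isomorphism onto $T_{p(y)}M$ with $\det(A A^{tr}) = \det(p_{*y} p_{*y}^{tr}) = J(y)^2$. For any orthonormal basis $\{w_i\}$ of $\ker(p_{*y})$ and any orthonormal basis $\{e_j\}$ of $T_{p(y)}M$, set $\tilde{e}_j := A^{-1} e_j$; since the $\tilde{e}_j$ are orthogonal to the $w_i$, the Gram matrix of the combined frame $\{w_i\} \cup \{\tilde{e}_j\}$ of $T_yH$ is block diagonal with an identity block and the block $(A^{tr} A)^{-1}$, so its determinant equals $1/J(y)^2$. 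This is equivalent to the pointwise identity
\[
dvol_H \;=\; \frac{1}{J(y)}\, dvol_{H_{p(y)}} \wedge p^{*}dvol_M
\]
on $H$, where $dvol_{H_{p(y)}}$ is the Riemannian volume on the fiber induced by the restricted metric.

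Finally, inserting this identity into the local integral and applying classical Fubini in the coordinates supplied by $\Phi$ yields
\[
\int_{p^{-1}(U)} (f \circ p)\, g \, dvol_H \;=\; \int_U f(x) \left( \int_{H_x} \frac{g(y)}{J(y)} \, dvol_{H_x}(y) \right) dvol_M(x),
\]
after which summing over the partition of unity gives the global formula, and a standard monotone convergence / Tonelli argument extends from compactly supported smooth $g, f$ to arbitrary non--negative measurable ones. The main obstacle is the pointwise volume-form identity: it requires careful accounting of the Gram determinant of a frame that mixes orthonormal vertical vectors with non-orthogonal horizontal lifts, together with the verification that the restriction of the ambient metric to a fiber is precisely the Riemannian structure underlying $dvol_{H_x}$. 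Once that identity is in hand, localization and Fubini supply the rest.
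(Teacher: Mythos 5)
The paper does not actually prove this statement; it is quoted verbatim as a known result and attributed to Federer's book (Theorem 2.3 in \cite{Federer1969}), so there is no in-paper proof to compare yours against. Your sketch is nonetheless a sound and fairly standard route to the co-area formula for a submersion: localize via the submersion theorem and a partition of unity, establish the pointwise relation between $dvol_H$, the fiber volume and $p^{*}dvol_M$ via a Gram-determinant computation, then conclude with Fubini/Tonelli and monotone convergence.

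Two small points of accuracy in the linear-algebra step. First, with $A := p_{*y}\restriction_{\ker(p_{*y})^{\perp}}$ and $\tilde{e}_j := A^{-1}e_j$, the Gram matrix block is $\bigl\langle A^{-1}e_j, A^{-1}e_k\bigr\rangle = \bigl\langle (AA^{\mathrm{tr}})^{-1}e_j, e_k\bigr\rangle$, i.e. the matrix of $(AA^{\mathrm{tr}})^{-1}$, not $(A^{\mathrm{tr}}A)^{-1}$; since $A$ is a square isomorphism the two have equal determinants, so your conclusion $\det = 1/J(y)^{2}$ is unaffected, but the intermediate statement should be corrected. Second, the displayed identity $dvol_H = \tfrac{1}{J}\,dvol_{H_{p(y)}}\wedge p^{*}dvol_M$ needs a precise reading of $dvol_{H_{p(y)}}$ as a form on $T_yH$ (e.g.\ the unique extension that restricts to the fiber volume on $\ker p_{*y}$ and is annihilated by contraction with any vector in $\ker(p_{*y})^{\perp}$); you flag this yourself as the main obstacle, and with that care the computation you outline does give the factor $1/J(y)$. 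With those clarifications filled in, the argument is complete and consistent with the cited theorem in Federer.
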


\begin{corollary} \label{Col3.2.2}Denote by $\delta_{x}\in\mathcal{E}^{\prime}\left(M\right)$
the delta--function at $x\in M$ and $\left\{ \delta_{x}^{\left(m\right)}\right\} \subset C_{0}^{\infty}\left(M\right)$ is an approximate sequence to $\delta_x$  $(\delta_{x}^{\left(m\right)}\to\delta_{x}\text{ in }\mathcal{E}^{\prime}\left(M\right))$ i.e. for any $h\in C^{\infty}\left(M\right),$ we have: 
\[
\lim_{m\to\infty}\int_{M}h\left(y\right)\delta_{x}^{\left(m\right)}\left(y\right)dy=\int_{M}h\left(y\right)\delta_{x}\left(y\right)dy=:h\left(x\right)
\]
where $dy$ is the volume measure on $M$. 

Then
for any $f\in C_{b}^{\infty}\left(H_{\mathcal{P}}\left(M\right)\right)$,
\[
\lim_{m\to\infty}\int_{H_{\mathcal{P}}\left(M\right)}f\left(\sigma\right)\delta_{x}^{\left(m\right)}\left(\sigma\left(1\right)\right)d\nu_{\mathcal{P}}^{1}\left(\sigma\right)=\int_{H_{\mathcal{P},x}\left(M\right)}f\left(\sigma\right)d\nu_{\mathcal{P},x}^{1}\left(\sigma\right).
\]

\end{corollary}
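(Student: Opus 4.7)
The plan is to apply the co-area formula just stated in the excerpt to the endpoint submersion $\Ep^{\mathcal{P}}: H_{\mathcal{P}}(M)\to M$ (which is a submersion by Proposition \ref{pro.3.2.1}), thereby disintegrating $\nu_{\mathcal{P}}^{1}$ along the fibres $H_{\mathcal{P},y}(M)$. Writing $f = f^{+} - f^{-}$ and $\delta_{x}^{(m)} = (\delta_{x}^{(m)})^{+} - (\delta_{x}^{(m)})^{-}$ and applying the co-area identity term-by-term with the density $g(\sigma):=\frac{1}{Z_{\mathcal{P}}^{1}}|f(\sigma)|e^{-E(\sigma)/2}$ on $H_{\mathcal{P}}(M)$ and the non-negative test functions $(\delta_{x}^{(m)})^{\pm}$ on $M$, one obtains
\begin{align*}
\int_{H_{\mathcal{P}}(M)}f(\sigma)\delta_{x}^{(m)}(\sigma(1))d\nu_{\mathcal{P}}^{1}(\sigma)
&=\int_{H_{\mathcal{P}}(M)}(\delta_{x}^{(m)}\circ\Ep^{\mathcal{P}})(\sigma)\frac{f(\sigma)e^{-E(\sigma)/2}}{Z_{\mathcal{P}}^{1}}dvol_{G_{\mathcal{P}}^{1}}(\sigma) \\
&=\int_{M}\delta_{x}^{(m)}(y)\left[\int_{H_{\mathcal{P},y}(M)}\frac{1}{J_{\mathcal{P}}(\sigma)}\cdot\frac{f(\sigma)e^{-E(\sigma)/2}}{Z_{\mathcal{P}}^{1}}dvol_{G_{\mathcal{P},y}^{1}}(\sigma)\right]dvol_{M}(y).
\end{align*}
By Definition \ref{def.ab}, the inner integral in brackets is precisely $h_{\mathcal{P}}^{f}(y)$ from Eq.~(\ref{e.hpx}), so the whole quantity reduces to $\int_{M}\delta_{x}^{(m)}(y)h_{\mathcal{P}}^{f}(y)dvol_{M}(y)$.

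With this reformulation in hand, it remains to pass to the limit $m\to\infty$. Theorem \ref{thm3.2.1} guarantees $h_{\mathcal{P}}^{f}\in C(M)$ (provided $n\geq 3dN$, which is assumed for small enough mesh) and Proposition \ref{pro3.31} provides a uniform bound. Since $\delta_{x}^{(m)}\to\delta_{x}$ as distributions, one expects
\[
\int_{M}\delta_{x}^{(m)}(y)h_{\mathcal{P}}^{f}(y)dvol_{M}(y)\longrightarrow h_{\mathcal{P}}^{f}(x)=\int_{H_{\mathcal{P},x}(M)}f(\sigma)d\nu_{\mathcal{P},x}^{1}(\sigma),
\]
which is exactly the RHS of the claimed identity.

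The main technical subtlety is reconciling the stated convergence $\delta_{x}^{(m)}\to\delta_{x}$ in $\mathcal{E}^{\prime}(M)$ --- which literally tests only against $C^{\infty}(M)$ --- with the mere continuity of $h_{\mathcal{P}}^{f}$ furnished by Theorem \ref{thm3.2.1}. I would close this gap in either of two ways. The cleanest route is to observe that the statement allows one to choose $\{\delta_{x}^{(m)}\}$ to be a standard narrowing-bump mollifier sequence at $x$: supported in a shrinking normal neighborhood, integrating to $1$, and non-negative. Against such a sequence, pairing with the bounded continuous function $h_{\mathcal{P}}^{f}$ converges to $h_{\mathcal{P}}^{f}(x)$ by Lebesgue dominated convergence in a normal chart. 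The alternative is to exploit the $C_{b}^{\infty}$-regularity of $f$ together with the explicit representation of $\nu_{\mathcal{P},x}^{1}$ (via the diffeomorphism $\psi_{x}$ and the integrand in the proof of Theorem \ref{thm3.2.1}), which depends smoothly on the $x$-parameter, to upgrade $h_{\mathcal{P}}^{f}\in C^{\infty}(M)$; then the $\mathcal{E}^{\prime}$-convergence directly delivers the limit. In either case the conclusion follows from the co-area reduction above.
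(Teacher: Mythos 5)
Your proposal takes essentially the same route as the paper's own proof: apply the co-area formula (\ref{co}) with $(H,M,p,g,f)=\left(H_\mathcal{P}(M),M,\Ep^{\mathcal{P}},\frac{1}{Z_\mathcal{P}^1}e^{-E/2},\delta_x^{(m)}(\sigma(1))f(\sigma)\right)$ to reduce the left side to $\int_M h_\mathcal{P}^f(y)\delta_x^{(m)}(y)\,dy$, then pass to the limit using the continuity of $h_\mathcal{P}^f$ from Theorem \ref{thm3.2.1}. Your splitting into positive and negative parts, which the paper skips, is a legitimate clarification since Federer's co-area theorem is stated for non-negative integrands.

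The subtlety you flag in the final limit is real and the paper's proof glosses over it: the hypothesis is only $\mathcal{E}'$-convergence (tested against $C^\infty(M)$), yet Theorem \ref{thm3.2.1} delivers only $h_\mathcal{P}^f\in C(M)$. Of your two repairs, the second is the one compatible with the paper's downstream usage: in the proof of Theorem \ref{thm2} the corollary is invoked for the specific $\delta_x^{(m)}=g_0^{(m)}+\sum_j X_j g_j^{(m)}$ from Corollary \ref{col3.7}, and the subsequent integration by parts against $X_{\mathcal{P},j}^{tr,\nu_\mathcal{P}^1}$ hinges on that decomposition, so replacing it with a narrowing mollifier would break the later argument even though it fixes the corollary in isolation. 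To carry out the second repair you would differentiate under the integral sign in Eq.~(\ref{eq:-2}) in the $x$-variable---in fact $h_\mathcal{P}^f\in C^1(M)$ already suffices, since the argument of Corollary \ref{col3.7} converts the pairing into $\int g_0^{(m)}h_\mathcal{P}^f+\sum_j\int g_j^{(m)}X_j^*h_\mathcal{P}^f$ and then concludes by H\"older with the $L^{d/(d-1)}$-convergence of the $g_j^{(m)}$---and the Gaussian-type bounds already established in Theorem \ref{thm3.2.1} and Proposition \ref{pro3.31} supply the domination needed to justify the differentiation, given the $C_b^\infty$ regularity of $f$ assumed in the corollary.
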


\begin{proof} Using the co-area formula (\ref{co}) with \[\left(H,M,p,g,f\right)=\left(H_\mathcal{P}\left(M\right),M,\Ep^{\mathcal{P}},\frac{1}{Z_\mathcal{P}^1}e^{-\frac{E}{2}},\delta_{x}^{\left(m\right)}\left(\sigma\left(1\right)\right)f\left(\sigma\right)\right),\] we have
\begin{align*}
\int_{H_{\mathcal{P}}\left(M\right)}\delta_{x}^{\left(m\right)}\left(\sigma\left(1\right)\right)f\left(\sigma\right)d\nu_{\mathcal{P}}^{1}\left(\sigma\right) & =\int_{M}dy\delta_{x}^{\left(m\right)}\left(y\right)\int_{H_{\mathcal{P},y}\left(M\right)}f\left(\sigma\right)d\nu_{\mathcal{P},y}^{1}\left(\sigma\right)\\
 & =\int_{M}h^f_{\mathcal{P}}\left(y\right)\delta_{x}^{\left(m\right)}\left(y\right)dy.
\end{align*}
From Theorem \ref{thm3.2.1} we know $h^f_{\mathcal{P}}\left(x\right)\in C\left(M\right),$
therefore: 
\begin{align*}
\lim_{m\to\infty}\int_{H_{\mathcal{P}}\left(M\right)}\delta_{x}^{\left(m\right)}\left(\sigma\left(1\right)\right)f\left(\sigma\right)d\nu_{\mathcal{P}}^{1}\left(\sigma\right) =\lim_{m\to\infty}\int_{M}h_{\mathcal{P}}\left(y\right)\delta_{x}^{\left(m\right)}\left(y\right)dy&=h^f_{\mathcal{P}}\left(x\right) \\&=\int_{H_{\mathcal{P},x}\left(M\right)}f\left(\sigma\right)d\nu_{\mathcal{P},x}^{1}\left(\sigma\right).
\end{align*}
\end{proof}

\section{Orthogonal Lifting Technique}\label{cha.4}

\subsection{The Orthogonal Lift $\tilde{X}_{\mathcal{P}}$ on $H_{\mathcal{P}}\left(M\right)$\label{cha.5}}
As a remainder, unless mentioned separately, $M$ is a complete Riemannian manifold with non--positive sectional curvature bounded below by $-N$. In this subsection we focus on the unpinned piecewise geodesic space $H_\mathcal{P}\left(M\right)$.

\subsubsection{A Parametrization of $T_{\sigma}H_{\mathcal{P}}\left(M\right)$\label{sec.5.1}}
Recall from Theorem \ref{def.6.2} that $Y\in \Gamma\left(TH_\mathcal{P}\left(M\right)\right)$ iff for each $\sigma\in H_\mathcal{P}\left(M\right)$, $J\left(\sigma, s\right):=u\left(\sigma,s\right)^{-1}Y\left(\sigma,s\right)$ satisfies (in the following equation we suppress $\sigma$)
\begin{equation}
J^{\prime\prime}\left(s\right)=R_{u\left(s\right)}\left(b^{\prime}\left(s_{i-1}+\right),J\left(s\right)\right)b^{\prime}\left(s_{i-1}+\right)\ \text{for}\ s\in\lbrack s_{i-1},s_{i})\text{ }i=1,...,n.\label{A}
\end{equation}
where $b=\phi\left(\sigma\right)\in H\left(\mathbb{R}^d\right)$ is the anti--rolling of $\sigma$.

From above we observe that $J$ can be parametrized by 
\begin{equation}
\left\{ J^{\prime}\left(s_{i}+\right)=k_{i}\right\} _{i=0}^{n-1}\label{B}
\end{equation}
where $\left(k_{0},k_{1},\dots,k_{n-1}\right)$ is an arbitrary element
of $\left(\mathbb{R}^d\right)^{n}.$ Proposition \ref{pro.6.4} explains
this parametrization in more detail.
\begin{proposition} \label{pro.6.4}If $\left(k_{0},k_{1},\dots,k_{n-1}\right)\in\left(\mathbb{R}^d\right)^{n}$, then the unique $J\left(\cdot\right)\in$$C\left(\left[0,1\right],\mathbb{R}^d\right)$ satisfying $(\ref{A})$ and $(\ref{B})$ above is given by
	\begin{equation}
	J\left(s\right)=\frac{1}{n}\sum_{i=0}^{l-1}f_{\mathcal{P},i+1}\left(s\right)k_{i}\text{   for }s\in \left[s_{l-1},s_l\right]\text{ , }1\leq l \leq n.\label{eq:-26}
	\end{equation}
\end{proposition}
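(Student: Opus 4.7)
The plan is to verify directly that the displayed expression for $J$ satisfies all the required conditions, and then invoke standard uniqueness for linear second--order ODEs on each subinterval. Implicit in the setting is $J(0)=0$ (inherited from the description of $T_\sigma H_{\mathcal{P}}(M)$ in Theorem \ref{def.6.2}); once $J(0)=0$ and the derivative values $\{J'(s_i+)=k_i\}_{i=0}^{n-1}$ are specified, the Jacobi equation on each piece plus continuity at the partition points leaves no freedom.

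First I would unpack what each $f_{\mathcal{P},i+1}$ contributes on the interval $[s_{l-1},s_l]$. By Definition \ref{def.2.1-1}, on this interval $f_{\mathcal{P},i+1}$ is either $0$ (if $i+1>l$), equal to $S_{\mathcal{P},l}(s)/\Delta_l$ (if $i+1=l$), or of the form $C_{\mathcal{P},l}(s)\cdot M_{i,l}$ for a constant matrix $M_{i,l}$ built from earlier $C_{\mathcal{P},j}$'s and $S_{\mathcal{P},i+1}$ (if $i+1<l$). In every case $f_{\mathcal{P},i+1}(\cdot)$ solves the $l$-th Jacobi equation (\ref{equ.6.3}) on $[s_{l-1},s_l]$, since $C_{\mathcal{P},l}$ and $S_{\mathcal{P},l}$ do and constant right-multiplication preserves the equation. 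Consequently the finite sum $J(s)=\tfrac{1}{n}\sum_{i=0}^{l-1}f_{\mathcal{P},i+1}(s)k_i$ also satisfies (\ref{A}) on $[s_{l-1},s_l]$, giving part (A) at one stroke.

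Next I would check continuity at each partition point and the required jump data of the derivative. Continuity at $s_l$ holds because the only \emph{new} term that appears when passing from $[s_{l-1},s_l]$ to $[s_l,s_{l+1}]$ is $\tfrac{1}{n}f_{\mathcal{P},l+1}(s)k_l$, and $f_{\mathcal{P},l+1}(s_l)=S_{\mathcal{P},l+1}(s_l)/\Delta_{l+1}=0$ by the initial condition $S_{\mathcal{P},l+1}(s_l)=0$; the remaining terms are continuous across $s_l$ by the definition of $f_{\mathcal{P},i+1}$ via the transition $C_{\mathcal{P},l+1}(s_l)=I$. Similarly $J(0)=\tfrac{1}{n}f_{\mathcal{P},1}(0)k_0=0$. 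For the derivative condition, fix $i$ and compute $J'(s_i+)=\tfrac{1}{n}\sum_{j=0}^{i}f_{\mathcal{P},j+1}'(s_i+)k_j$. For $j<i$ the piece of $f_{\mathcal{P},j+1}$ on $[s_i,s_{i+1}]$ begins with a factor $C_{\mathcal{P},i+1}(s)$, whose initial derivative $C_{\mathcal{P},i+1}'(s_i)=0$ kills these terms; for $j=i$ the piece is $S_{\mathcal{P},i+1}(s)/\Delta_{i+1}$, so $f'_{\mathcal{P},i+1}(s_i+)=S_{\mathcal{P},i+1}'(s_i)/\Delta_{i+1}=I/\Delta_{i+1}=n\,I$ (using equally spaced $\mathcal{P}$). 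This collapses the sum to $J'(s_i+)=k_i$, verifying (B).

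For uniqueness I would argue inductively. On $[0,s_1]$, equation (A) is a linear second-order ODE, and the conditions $J(0)=0$, $J'(0+)=k_0$ determine $J$ uniquely, hence fix $J(s_1)$. Continuity then fixes the initial value $J(s_1+)$ on the next interval, and together with $J'(s_1+)=k_1$ this determines $J$ uniquely on $[s_1,s_2]$; iterating gives uniqueness on all of $[0,1]$. Since the explicit formula was shown to meet all the conditions, it must coincide with this unique solution. I do not anticipate a real obstacle here---the whole argument is bookkeeping on the definitions of $C_{\mathcal{P},i},S_{\mathcal{P},i},f_{\mathcal{P},i}$---but the one point that requires care is making sure the "telescoping" at each partition point really does produce the correct left-limit of the product $C_{\mathcal{P},l}(s_l)C_{\mathcal{P},l-1}\cdots C_{\mathcal{P},i+2}S_{\mathcal{P},i+1}/\Delta_{i+1}$ and matches the right-limit on the next interval; this is exactly where the normalizations $C_{\mathcal{P},l}(s_{l-1})=I$, $S_{\mathcal{P},l}(s_{l-1})=0$ are used.
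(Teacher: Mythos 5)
Your proof is correct and follows essentially the same route as the paper's: verify that the displayed $J$ solves the piecewise Jacobi equation because each $f_{\mathcal{P},i+1}$ is built from $C_{\mathcal{P},l}$ and $S_{\mathcal{P},l}$, check continuity at partition points using $S_{\mathcal{P},l+1}(s_l)=0$ and $C_{\mathcal{P},l+1}(s_l)=I$, confirm the derivative jumps $J'(s_i+)=k_i$ using $C'_{\mathcal{P},i+1}(s_i)=0$ and $S'_{\mathcal{P},i+1}(s_i)=I$, and conclude uniqueness by induction across subintervals via the standard ODE uniqueness theorem. The paper writes $J$ explicitly in the factored form $C_{\mathcal{P},l}(s)[\cdots]+S_{\mathcal{P},l}(s)k_{l-1}$ before checking these properties, but that is a cosmetic difference in bookkeeping, not a different argument.
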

\begin{proof}
	From the definition of $f_{\mathcal{P},i+1}$ (see Definition \ref{def.2.1-1}), $J$ in Eq. (\ref{eq:-26}) may be written as
	\[
	J\left(s\right)=C_{\mathcal{P},l}\left(s\right)\left[\sum_{i=0}^{l-2}C_{\mathcal{P},l-1}\dots C_{\mathcal{P},i+2}S_{\mathcal{P},i+1}k_{i}\right]+S_{\mathcal{P},l}\left(s\right)k_{l-1}\text{ when }s\in \left[s_{l-1},s_l\right].
	\]
	To finish the proof, we need only verify that $J$ is continuous, $J^\prime\left(s_i+\right)=k_i$ for $0\leq i\leq n-1$ and $J$ solves the Jacobi equation (\ref{A}).
	Since $C_{\mathcal{P},l}\left(s\right)$ and $S_{\mathcal{P},l}\left(s\right)$ satisfies Jacobi equation for $s\in \left[s_{l-1},s_l\right)$, $J$ satisfies (\ref{A}) and is continuous at $s\notin \mathcal{P}$. For each $s_l$, $1\leq l\leq n-1$, since $C_{\mathcal{P},l+1}\left(s_l\right)=I$, $S_{\mathcal{P},l+1}\left(s_l\right)=0$ and $J$ is right continuous on $\left[0,1\right]$, 
	\begin{align*}
	J\left(s_l-\right)=\lim_{s\uparrow s_l}J\left(s\right)&=C_{\mathcal{P},l}\left[\sum_{i=0}^{l-2}C_{\mathcal{P},l-1}\dots C_{\mathcal{P},i+2}S_{\mathcal{P},i+1}k_{i}\right]+S_{\mathcal{P},l}k_{l-1}\\&=C_{\mathcal{P},l+1}\left(s_l\right)\left[\sum_{i=0}^{l-1}C_{\mathcal{P},l}\dots C_{\mathcal{P},i+2}S_{\mathcal{P},i+1}k_{i}\right]+S_{\mathcal{P},l+1}\left(s_l\right)k_{l}\\&=J\left(s_l\right)=J\left(s_l+\right).
	\end{align*} 
	So $J$ is also continuous at partition points. Then since 
	\begin{align*}
	J^\prime\left(s_{l-1}+\right)&=C^\prime_{\mathcal{P},l}\left(s_{l-1}+\right)\left[\sum_{i=0}^{l-2}C_{\mathcal{P},l-1}\dots C_{\mathcal{P},i+2}S_{\mathcal{P},i+1}k_{i}\right]+S^\prime_{\mathcal{P},l}\left(s_{l-1}+\right)k_{l-1}\\&=0+I\cdot k_{l-1}=k_{l-1},
	\end{align*}
	$J$ satisfies (\ref{B}). The uniqueness of $J$ is easily seen from the uniqueness of solutions to linear ODE with initial values.
	
\end{proof}
\begin{definition} \label{def.6.8}For each $s\in \left[0,1\right]$, define $\mathbf{L}_s:\left(\mathbb{R}^d\right)^n\to \mathbb{R}^d$ as follows: for $s\in \left[s_{l-1},s_l\right]$, 
	\begin{equation}
	\mathbf{L}_s\left(k_{0},\dots,k_{n-1}\right)=\frac{1}{n}\sum_{i=0}^{l-1}f_{\mathcal{P},i+1}\left(s\right)k_{i}.\label{eq:-32}
	\end{equation}
\end{definition}
We now compute the adjoint of $\mathbf{L}_1$.
\begin{lemma}
	For any $v\in\mathbb{R}^{d}$,  let $\mathbf{L}_1^{\ast}:\mathbb{R}^d\to \left(\mathbb{R}^d\right)^n$ be the adjoint of $\mathbf{L}_1$, then
	\begin{equation}
	\mathbf{L}_1^{\ast}v=\frac{1}{n}\left(f_{\mathcal{P},1}^{*}\left(1\right)v,f_{\mathcal{P},2}^{*}\left(1\right)v,\dots,f_{\mathcal{P},n}^{*}\left(1\right)v\right),\label{equ.6.4}
	\end{equation}
	where $f_{\mathcal{P},i}^{*}\left(1\right)$ is the matrix adjoint of $f_{\mathcal{P},i}\left(1\right)$.
\end{lemma}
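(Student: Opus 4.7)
The plan is to verify the formula by a direct inner product computation, since $\mathbf{L}_1$ is given explicitly at $s=1 \in [s_{n-1}, s_n]$ as the linear map
\[
\mathbf{L}_1(k_0, \ldots, k_{n-1}) = \frac{1}{n}\sum_{i=0}^{n-1} f_{\mathcal{P}, i+1}(1) k_i.
\]
Endowing $(\mathbb{R}^d)^n$ with the standard product inner product $\langle (k_0,\ldots,k_{n-1}), (l_0,\ldots,l_{n-1})\rangle = \sum_{i=0}^{n-1} \langle k_i, l_i\rangle_{\mathbb{R}^d}$, the adjoint $\mathbf{L}_1^{\ast}$ is characterized by the identity $\langle \mathbf{L}_1 k, v\rangle_{\mathbb{R}^d} = \langle k, \mathbf{L}_1^{\ast} v\rangle_{(\mathbb{R}^d)^n}$ for all $k = (k_0,\ldots,k_{n-1})$ and $v \in \mathbb{R}^d$.

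Carrying out the computation, I would write
\[
\langle \mathbf{L}_1 k, v\rangle_{\mathbb{R}^d} = \frac{1}{n}\sum_{i=0}^{n-1} \langle f_{\mathcal{P}, i+1}(1) k_i, v\rangle_{\mathbb{R}^d} = \frac{1}{n}\sum_{i=0}^{n-1} \langle k_i, f^{\ast}_{\mathcal{P}, i+1}(1) v\rangle_{\mathbb{R}^d},
\]
where the second equality uses the definition of the matrix adjoint $f^{\ast}_{\mathcal{P}, i+1}(1)$ of $f_{\mathcal{P}, i+1}(1) \in \operatorname{End}(\mathbb{R}^d)$. Reindexing $j = i+1$, this last sum is exactly the pairing of $k$ with the tuple
\[
\frac{1}{n}\bigl(f^{\ast}_{\mathcal{P}, 1}(1) v, f^{\ast}_{\mathcal{P}, 2}(1) v, \ldots, f^{\ast}_{\mathcal{P}, n}(1) v\bigr) \in (\mathbb{R}^d)^n.
\]
By uniqueness of the adjoint, this tuple must equal $\mathbf{L}_1^{\ast} v$, proving Eq.~(\ref{equ.6.4}).

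There is no real obstacle here; the lemma is essentially bookkeeping. The only thing to be careful about is making sure the inner product on $(\mathbb{R}^d)^n$ is the unweighted product one (not the $G_{\mathcal{P}}^1$ weighted one), which matches the parametrization in Proposition \ref{pro.6.4} where $k_i = J'(s_i+)$ are used without the $\Delta_i$ weights that would appear had one chosen to identify the tangent space with $(\mathbb{R}^d)^n$ via an isometry for $G_{\mathcal{P}}^1$. Once this convention is fixed, the identity follows immediately from the explicit formula for $\mathbf{L}_1$.
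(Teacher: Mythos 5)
Your proof is correct and follows essentially the same one-line adjointness computation the paper gives, identifying $\mathbf{L}_1^{\ast}$ by pairing $\langle\mathbf{L}_1 k, v\rangle$ with the standard product inner product on $(\mathbb{R}^d)^n$. Your closing remark about being careful that the relevant inner product on $(\mathbb{R}^d)^n$ is the unweighted one (consistent with the parametrization $k_i = J'(s_i+)$ of Proposition \ref{pro.6.4}, not a $\Delta_i$-weighted identification) is a useful clarification of a convention the paper leaves implicit.
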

\begin{proof}
	Equation (\ref{equ.6.4}) immediately follows from the identity;
	\begin{equation}
	\left\langle \mathbf{L}_1\left(k_{0},\dots,k_{n-1}\right),v\right\rangle =\sum_{i=0}^{n-1}\left\langle \frac{1}{n}f_{\mathcal{P},i+1}\left(1\right)k_{i},v\right\rangle =\sum_{i=0}^{n-1}\left\langle k_{i},\frac{1}{n}f_{\mathcal{P},i+1}^{*}\left(1\right)v\right\rangle. \label{eq:-30}
	\end{equation}
\end{proof}
\begin{definition}\label{def Kp}
	We now define 
	\begin{equation}
	\mathbf{K}_{\mathcal{P}}\left(s\right)v:=n\mathbf{L}_s\left(\mathbf{L}_1^{\ast}v\right).\label{eq:-29}
	\end{equation}
\end{definition}
In particular, 
\begin{equation}
\mathbf{K}_{\mathcal{P}}\left(1\right)v=\frac{1}{n}\sum_{i=0}^{n-1}f_{\mathcal{P},i+1}\left(1\right)f_{\mathcal{P},i+1}^{*}\left(1\right)v.\label{eq:-28}
\end{equation}
Recall that given a matrix $A$, $eig\left(A\right)$ denotes the
eigenvalues of $A$.

\begin{lemma}[Invertibility of $\mathbf{K}_{\mathcal{P}}\left(1\right)$]\label{lem.6.10}If
	$M$ has non-positive sectional curvature, then 
	\begin{equation}
	eig\left(\mathbf{K}_{\mathcal{P}}\left(1\right)\right)\subset\lbrack1,\infty)\label{eq:-33}
	\end{equation}
	and thus $\mathbf{K}_{\mathcal{P}}\left(1\right)$
	is invertible. \end{lemma}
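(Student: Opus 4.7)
The plan is to show that $\mathbf{K}_{\mathcal{P}}(1)$ is symmetric positive semi-definite with lowest eigenvalue bounded below by $1$, from which both the eigenvalue inclusion and invertibility follow immediately by the Min-max theorem. The starting point is the explicit formula in Eq.~(\ref{eq:-28}):
\[
\mathbf{K}_{\mathcal{P}}(1)v \;=\; \frac{1}{n}\sum_{i=0}^{n-1} f_{\mathcal{P},i+1}(1)\,f_{\mathcal{P},i+1}^{*}(1)\,v,
\]
which makes it manifest that $\mathbf{K}_{\mathcal{P}}(1)$ is a sum of (scaled) PSD operators of the form $AA^{*}$, so in particular it is symmetric and nonnegative. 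Thus it suffices to produce a uniform lower bound of the form $\langle \mathbf{K}_{\mathcal{P}}(1)v,v\rangle \geq \|v\|^{2}$.

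The key geometric input is the estimate already used in the proof of Corollary~\ref{Col1}: under non-positive sectional curvature, Proposition~\ref{prop A-1} yields $\|f_{\mathcal{P},i}^{tr}(\sigma,1)v\| \geq \|v\|$ for every $v \in \mathbb{R}^{d}$ and every $i=1,\dots,n$. (This is the standard fact that Jacobi fields spread out, not contract, in non-positive curvature, transferred from $S_{\mathcal{P},i}$ and the identity-blocks $C_{\mathcal{P},j}$ to each $f_{\mathcal{P},i}$ via the recursion in Definition~\ref{def.2.1-1}.) Since $f_{\mathcal{P},i}^{*}$ in Eq.~(\ref{equ.6.4}) is just the matrix adjoint, i.e., the transpose, this is exactly $\|f_{\mathcal{P},i}^{*}(1)v\| \geq \|v\|$.

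Combining these pieces, for any $v\in\mathbb{R}^{d}$,
\[
\langle \mathbf{K}_{\mathcal{P}}(1)v,v\rangle \;=\; \frac{1}{n}\sum_{i=0}^{n-1}\bigl\langle f_{\mathcal{P},i+1}(1)f_{\mathcal{P},i+1}^{*}(1)v,\,v\bigr\rangle \;=\; \frac{1}{n}\sum_{i=0}^{n-1}\bigl\|f_{\mathcal{P},i+1}^{*}(1)v\bigr\|^{2} \;\geq\; \frac{1}{n}\sum_{i=0}^{n-1}\|v\|^{2} \;=\; \|v\|^{2}.
\]
By the Min-max theorem, every eigenvalue of the symmetric operator $\mathbf{K}_{\mathcal{P}}(1)$ lies in $[1,\infty)$, which is Eq.~(\ref{eq:-33}), and in particular $\mathbf{K}_{\mathcal{P}}(1)$ is invertible.

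I expect no real obstacle here: the lemma is essentially a bookkeeping restatement of the bound that already powered Corollary~\ref{Col1}, and the only subtlety is making sure $f^{*}$ (adjoint) in Definition~\ref{def Kp} is identified with $f^{tr}$ (transpose) so that Proposition~\ref{prop A-1} applies verbatim. The whole argument reduces to observing that non-positive curvature turns each summand of $\mathbf{K}_{\mathcal{P}}(1)$ into an expansive PSD operator, whose average is then bounded below by the identity.
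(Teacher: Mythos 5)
Your proof is correct and follows essentially the same route as the paper: symmetry and positive semi-definiteness of $\mathbf{K}_{\mathcal{P}}(1)$ from the sum-of-$AA^{*}$ structure in Eq.~(\ref{eq:-28}), the per-term lower bound $\|f_{\mathcal{P},i}^{*}(1)v\|\geq\|v\|$ under non-positive curvature, and the Rayleigh-quotient conclusion $eig(\mathbf{K}_{\mathcal{P}}(1))\subset[1,\infty)$. The one spot where the paper is slightly more explicit is the passage from the bound Proposition~\ref{prop A-1} actually gives, namely $\|f_{\mathcal{P},i}(1)v\|\geq\|v\|$ (via $\|C_{\mathcal{P},j}v\|\geq\|v\|$ and $\|S_{\mathcal{P},i}v\|\geq\tfrac{1}{n}\|v\|$ in the product $n\,C_{\mathcal{P},n}\cdots C_{\mathcal{P},i+1}S_{\mathcal{P},i}$), to the bound on the \emph{adjoint} $\|f_{\mathcal{P},i}^{*}(1)v\|\geq\|v\|$: the paper does this by first observing $f_{\mathcal{P},i}(1)$ is invertible with $\|f_{\mathcal{P},i}(1)^{-1}\|\leq 1$, then using $\|f_{\mathcal{P},i}^{*}(1)^{-1}\|=\|f_{\mathcal{P},i}(1)^{-1}\|$ to deduce the adjoint bound. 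You instead cite the bound on $f^{tr}$ directly from Corollary~\ref{Col1}; that is the same fact (equivalently, invariance of singular values under transposition), but if you want the argument self-contained you should insert the one-line transposition step, since Proposition~\ref{prop A-1} as stated controls $f$ rather than $f^{tr}$.
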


\begin{proof} 
	Denote $R_{u_s}\left(b^{\prime}\left(s_{i-1}+\right),\cdot\right)b^{\prime}\left(s_{i-1}+\right)$ by $A_{\mathcal{P},i}\left(s\right):H_\mathcal{P}\left(M\right)\to End\left(\mathbb{R}^d\right)$. Notice that $M$ having non-positive
	sectional curvature guarantees $A_{\mathcal{P},i}\left(s\right)$
	is positive semi-definite. Then apply Proposition \ref{prop A-1} to find, for
	any $i=1,\cdots,n$ and $v\in\mathbb{C}^{d}$, 
	\[
	\left\Vert C_{\mathcal{P},i}v\right\Vert \geq\left\Vert v\right\Vert \text{ and }\left\Vert S_{\mathcal{P},i}v\right\Vert \geq\frac{1}{n}\left\Vert v\right\Vert. 
	\]
	From these inequalities it follows that
	\begin{align*}
	\left\Vert f_{\mathcal{P},i}\left(1\right)v\right\Vert  & =n\left\Vert C_{\mathcal{P},n}C_{\mathcal{P},n-1}\cdots C_{\mathcal{P},i+1}S_{\mathcal{P},i}v\right\Vert \geq n\cdot\frac{1}{n}\left\Vert v\right\Vert=\left\Vert v\right\Vert. 
	\end{align*}
	So $f_{\mathcal{P},i}\left(1\right)$ is invertible and $\left\Vert f^*_{\mathcal{P},i}\left(1\right)^{-1}\right\Vert=\left\Vert f_{\mathcal{P},i}\left(1\right)^{-1}\right\Vert \leq 1$. Therefore for any $v\in\mathbb{C}^{d}$, 
	\[\left\Vert f^*_{\mathcal{P},i}\left(1\right)^{-1}v\right\Vert \leq \left\Vert v\right\Vert,\]
	now replace $v$ by $f^*_{\mathcal{P},i}\left(1\right)v$, we get $\left\Vert f^*_{\mathcal{P},i}\left(1\right)v\right\Vert\geq \left\Vert v \right\Vert$ and thus
	\begin{align*}
	\left\langle \mathbf{K}_{\mathcal{P}}\left(1\right)v,v\right\rangle  & =\frac{1}{n}\sum_{i=0}^{n-1}\left\langle f_{\mathcal{P},i+1}\left(1\right)f_{\mathcal{P},i+1}^{*}\left(1\right)v,v\right\rangle=\frac{1}{n}\sum_{i=0}^{n-1}\left\Vert f_{\mathcal{P},i+1}^{*}\left(1\right)v\right\Vert ^{2}\geq\frac{1}{n}\cdot n\left\Vert v\right\Vert ^{2}=\left\Vert v\right\Vert ^{2}\text{  }\forall v\in \mathbb{C}^d.
	\end{align*}
	This implies that 
	\[
	eig\left(\mathbf{K}_{\mathcal{P}}\left(1\right)\right)\subset\lbrack1,\infty).
	\]
	In particular, $\mathbf{K}_{\mathcal{P}}\left(1\right)$ is invertible.
\end{proof}

\subsubsection{Orthogonal Lift on $H_{\mathcal{P}}\left(M\right)$\label{sec.5.2}}

In this subsection we use the least square method to lift a vector field $X\in\Gamma\left(TM\right)$ to a vector field $\tilde{X}_{\mathcal{P}}\in\Gamma\left(TH_{\mathcal{P}}\left(M\right)\right)$, here lift means $\tilde{X}_{\mathcal{P}}$ satisfies Eq. $\left(\ref{equ.6.5}\right)$. \begin{theorem}[Orthogonal lift]\label{the.6.11}For
	all $X\in\Gamma\left(TM\right)$, there exists a unique vector field $\tilde{X}_{\mathcal{P}}\in\Gamma\left(TH_{\mathcal{P}}\left(M\right)\right)$ satisfying;
	\begin{enumerate}
		\item For all $h\in C^{1}\left(M\right)$,
		\begin{equation}
		\tilde{X}_{\mathcal{\mathcal{P}}}\left(h\circ \Ep\right)\left(\sigma\right)=\left(Xh\right)\left(\Ep\left(\sigma\right)\right)\text{, i.e. }{\Ep}_{*}{\tilde{X}_\mathcal{P}}\left(\sigma\right)=X\left(\sigma\left(1\right)\right).\label{equ.6.5}
		\end{equation}
		
		\item For all $\sigma\in H_{\mathcal{P}}\left(M\right),$ 
		\begin{equation}
		\left\Vert \tilde{X}_{\mathcal{P}}\left(\sigma\right)\right\Vert _{G_{\mathcal{P}}^{1}}=\inf\{\left\Vert Y\left(\sigma\right)\right\Vert {}_{G_{\mathcal{P}}^{1}}:Y\in\Gamma\left(TH_{\mathcal{P}}\left(M\right)\right),Y\text{ satisfies }\left(\ref{equ.6.5}\right)\}.\label{equ.6.6}
		\end{equation}
		
	\end{enumerate}
	In this paper $\tilde{X}_\mathcal{P}$ is referred to as the \textbf{orthogonal lift} of $X$ to $\left(H_\mathcal{P}(M),G^1_\mathcal{P}\right)$.
\end{theorem}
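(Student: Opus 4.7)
The plan is to reduce the optimization to a finite-dimensional constrained least-squares problem on each fiber using the parametrization of $T_\sigma H_\mathcal{P}(M)$ from Proposition \ref{pro.6.4}. Under the identification $Y = u(\sigma,\cdot)J$ with $J(s) = \frac{1}{n}\sum_{i=0}^{l-1} f_{\mathcal{P},i+1}(\sigma,s)k_i$ on $[s_{l-1},s_l]$, the assignment $Y \leftrightarrow (k_0,\ldots,k_{n-1}) \in (\mathbb{R}^d)^n$ is a linear isomorphism of $T_\sigma H_\mathcal{P}(M)$ with $(\mathbb{R}^d)^n$. Since $\frac{\nabla Y}{ds}(s_{j-1}+) = u(\sigma,s_{j-1})k_{j-1}$ and $\Delta_j \equiv 1/n$, the $G^1_\mathcal{P}$ inner product pulls back to the scaled Euclidean inner product $\langle k, k' \rangle = \frac{1}{n}\sum_{j=0}^{n-1}\langle k_j, k_j'\rangle_{\mathbb{R}^d}$. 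In particular, minimizing $\|Y\|_{G^1_\mathcal{P}}$ is equivalent to minimizing the Euclidean norm of $(k_0,\ldots,k_{n-1})$.

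Next, the constraint (\ref{equ.6.5}) translates, via $Y(1) = u(\sigma,1)\mathbf{L}_1(k_0,\ldots,k_{n-1})$, into the affine linear equation
\[
\mathbf{L}_1(k_0,\ldots,k_{n-1}) \;=\; u(\sigma,1)^{-1}X(\sigma(1)) \quad \text{in } \mathbb{R}^d.
\]
By the standard minimum-norm principle for consistent underdetermined linear systems, the unique norm-minimizer lies in the row space $\mathrm{ran}(\mathbf{L}_1^\ast)$, so I would seek $k^\ast = \mathbf{L}_1^\ast(v)$ for some $v \in \mathbb{R}^d$ satisfying the normal equation $\mathbf{L}_1\mathbf{L}_1^\ast v = u(\sigma,1)^{-1}X(\sigma(1))$. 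Using $(\ref{equ.6.4})$ and $(\ref{eq:-28})$ one computes $\mathbf{L}_1\mathbf{L}_1^\ast = \tfrac{1}{n}\mathbf{K}_\mathcal{P}(1)$, and Lemma \ref{lem.6.10} guarantees invertibility; hence $v = n\,\mathbf{K}_\mathcal{P}(1)^{-1}u(\sigma,1)^{-1}X(\sigma(1))$ is uniquely determined.

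Assembling these pieces gives the explicit formula
\[
\tilde{X}_{\mathcal{P}}(\sigma) \;\longleftrightarrow\; k^\ast \;=\; \mathbf{L}_1^\ast\bigl(n\,\mathbf{K}_{\mathcal{P}}(1)^{-1}u(\sigma,1)^{-1}X(\sigma(1))\bigr),
\]
from which I verify: (i) property (\ref{equ.6.5}) by plugging into $\mathbf{L}_1 k^\ast$; (ii) property (\ref{equ.6.6}) from the minimum-norm characterization just used; and (iii) uniqueness since any candidate satisfying (\ref{equ.6.5}) differs from $\tilde{X}_\mathcal{P}$ by an element of $\ker E_{1\ast}$, and the parallelogram identity forces any other minimizer to coincide with $\tilde{X}_\mathcal{P}$. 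Smoothness of $\sigma \mapsto \tilde{X}_\mathcal{P}(\sigma)$ as a section of $TH_\mathcal{P}(M)$ follows because $u(\sigma,\cdot)$, the Jacobi fundamental matrices $C_{\mathcal{P},i}$, $S_{\mathcal{P},i}$, and therefore $f_{\mathcal{P},i}$ and $\mathbf{K}_\mathcal{P}(1)$ depend smoothly on $\sigma$, and matrix inversion is smooth on the locus of invertible matrices—which by Lemma \ref{lem.6.10} is all of $H_\mathcal{P}(M)$.

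The substantive obstacle is largely bookkeeping: correctly tracking the $1/n$ rescaling in passing between the $G^1_\mathcal{P}$ norm and the Euclidean norm on the parameter space, and ensuring the identification of $\mathbf{L}_1\mathbf{L}_1^\ast$ with $\tfrac{1}{n}\mathbf{K}_\mathcal{P}(1)$ so that Lemma \ref{lem.6.10} applies. Once these normalizations are settled, the existence, uniqueness, and smoothness all reduce to standard finite-dimensional linear algebra applied pointwise in $\sigma$.
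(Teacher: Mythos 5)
Your proof is correct and follows essentially the same route as the paper: both translate the problem via Proposition \ref{pro.6.4} into a finite-dimensional constrained minimization over $(k_0,\dots,k_{n-1})\in(\mathbb{R}^d)^n$, and both arrive at the minimizer through the observation that it must lie in $\operatorname{Ran}(\mathbf{L}_1^\ast)$ (which is exactly what the paper's Lemma \ref{lem min} encodes), with Lemma \ref{lem.6.10} supplying invertibility of $\mathbf{K}_\mathcal{P}(1)$. The only cosmetic difference is that you phrase the minimality step as the normal equation $\mathbf{L}_1\mathbf{L}_1^\ast v = u(\sigma,1)^{-1}X(\sigma(1))$ together with the identity $\mathbf{L}_1\mathbf{L}_1^\ast=\tfrac{1}{n}\mathbf{K}_\mathcal{P}(1)$, while the paper phrases it via the $G^1_\mathcal{P}$-orthogonal decomposition of $T_\sigma H_\mathcal{P}(M)$ and then solves for $v$ in the constraint; you also make explicit the (correct) remark that the resulting section is smooth in $\sigma$, which the paper leaves implicit.
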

First we use the parametrization in Subsection \ref{sec.5.1} to characterize $\left\{ \operatorname*{Nul}\left({\Ep}_{\ast,\sigma}\right)\right\} {}^{\perp}$.
\begin{lemma}\label{lem min}
	Suppose $Y\in \Gamma\left(TH_\mathcal{P}\left(M\right)\right)$ with $k\left(\cdot\right):=u\left(\cdot\right)^{-1}Y\left(\cdot\right):H_{\mathcal{P}}\left(M\right)\to H\left(\mathbb{R}^d\right)$. Then $Y\in\left\{ \operatorname*{Nul}\left({\Ep}_{\ast}\right)\right\} ^{\perp}$
	iff 
	\[
	\left(k^{\prime}\left(s_{0}+\right),...,k^{\prime}\left(s_{n-1}+\right)\right)\in\left(\operatorname*{Nul}\mathbf{L}_1\right) ^{\perp}=\operatorname*{Ran}\left(\mathbf{L}_1^{\ast}\right).
	\]
\end{lemma}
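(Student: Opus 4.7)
The plan is to reduce the statement to a fact of linear algebra by exploiting the parametrization of $T_\sigma H_{\mathcal{P}}(M)$ developed in Proposition \ref{pro.6.4} and Definition \ref{def.6.8}. By Theorem \ref{def.6.2} any $Y\in T_\sigma H_{\mathcal{P}}(M)$ has the form $Y(s)=u(\sigma,s)k(\sigma,s)$ with $k$ solving the Jacobi equation piecewise, and by Proposition \ref{pro.6.4} the map
\[
\Phi_\sigma:T_\sigma H_{\mathcal{P}}(M)\longrightarrow (\mathbb{R}^d)^n,\qquad Y\longmapsto \bigl(k'(s_0+),\ldots,k'(s_{n-1}+)\bigr)
\]
is a linear isomorphism.

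The first step is to identify $\operatorname{Nul}({E_1}_{*\sigma})$ under $\Phi_\sigma$. Since ${E_1}_{*\sigma}(Y)=Y(1)=u(\sigma,1)k(1)$ and $k(1)=\mathbf{L}_1\bigl(k'(s_0+),\ldots,k'(s_{n-1}+)\bigr)$ by $(\ref{eq:-32})$, we see $Y\in \operatorname{Nul}({E_1}_{*\sigma})$ exactly when $\Phi_\sigma(Y)\in \operatorname{Nul}(\mathbf{L}_1)$.

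The second step is to observe that $\Phi_\sigma$ is an isometry up to a harmless scalar: from the definition of $G^1_\mathcal{P}$ (Eq.~$(\ref{eq:-4-1-1})$), for $Y_1,Y_2\in T_\sigma H_\mathcal{P}(M)$ with $\Phi_\sigma(Y_j)=(k_j'(s_0+),\ldots,k_j'(s_{n-1}+))$,
\[
\langle Y_1,Y_2\rangle_{G^1_\mathcal{P}}=\sum_{j=1}^{n}\langle k_1'(s_{j-1}+),k_2'(s_{j-1}+)\rangle\,\Delta_j=\tfrac{1}{n}\bigl\langle \Phi_\sigma(Y_1),\Phi_\sigma(Y_2)\bigr\rangle_{(\mathbb{R}^d)^n},
\]
since the partition is equally spaced. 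Thus orthogonal complements are preserved by $\Phi_\sigma$, and $\{\operatorname{Nul}({E_1}_{*\sigma})\}^\perp$ maps to $(\operatorname{Nul}\mathbf{L}_1)^\perp$.

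The final step is the standard linear algebra identity $(\operatorname{Nul}\mathbf{L}_1)^\perp=\operatorname{Ran}(\mathbf{L}_1^{*})$ in the finite-dimensional space $(\mathbb{R}^d)^n$, which closes the equivalence. There is no genuine obstacle here --- the entire content of the lemma is already packaged in Proposition \ref{pro.6.4} and the observation that $G^1_\mathcal{P}$ read through the parametrization $\Phi_\sigma$ is a scaled Euclidean inner product; the role of the lemma is just to record this identification for use in defining the orthogonal lift $\tilde X_\mathcal{P}$ in Theorem \ref{the.6.11}.
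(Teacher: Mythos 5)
Your proof is correct and follows essentially the same route as the paper's: both use the parametrization of $T_\sigma H_{\mathcal{P}}(M)$ by $\bigl(k'(s_0+),\ldots,k'(s_{n-1}+)\bigr)$, observe that the $G^1_{\mathcal{P}}$ inner product becomes the Euclidean inner product on $(\mathbb{R}^d)^n$ up to the constant factor $\frac{1}{n}$ from the equally spaced partition, identify $\operatorname{Nul}({\Ep}_{*})$ with $\operatorname{Nul}(\mathbf{L}_1)$ via Proposition \ref{pro.6.4}, and invoke the standard finite-dimensional identity $(\operatorname{Nul}\mathbf{L}_1)^{\perp}=\operatorname{Ran}(\mathbf{L}_1^{*})$.
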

\begin{proof}
	Given $Y\left(\cdot\right):=u\left(\cdot\right)k\left(\cdot\right)\text{ and }Z\left(\cdot\right):=u\left(\cdot\right)J\left(\cdot\right)\in \Gamma\left(TH_\mathcal{P}\left(M\right)\right)$, then 
	\begin{align*}
	\left\langle Y\left(\sigma\right),Z\left(\sigma\right)\right\rangle {}_{G_{\mathcal{P}}^{1}}=0 & \iff\sum_{i=0}^{n-1}\left\langle J^{\prime}\left(\sigma,s_{i}+\right),k^{\prime}\left(\sigma,s_{i}+\right)\right\rangle \Delta_{i+1}=0\\
	& \iff\sum_{i=0}^{n-1}\left\langle J^{\prime}\left(\sigma,s_{i}+\right),k^{\prime}\left(\sigma,s_{i}+\right)\right\rangle =0, 
	\end{align*}
	and
	\begin{equation*}
	Z\left(\sigma\right)\in\operatorname*{Nul}\left({\Ep}_{\ast,\sigma}\right)\iff {\Ep}_{\ast,\sigma}\left(Z\right)=u_1\left(\sigma\right)J\left(\sigma,1\right)=0\iff J\left(\sigma,1\right)=0.
	\end{equation*}
	Recall from Proposition \ref{pro.6.4} and Definition \ref{def.6.8} that $(\text{here we suppress }\sigma)$ \[J(1)=\mathbf{L}_1\left(J^{\prime}\left(s_{0}+\right),...,J^{\prime}\left(s_{n-1}+\right)\right),\]
	so 
	\begin{equation}
	J(1)=0\iff\left(J^{\prime}\left(s_{0}+\right),...,J^{\prime}\left(s_{n-1}+\right)\right)\in\operatorname*{Nul}\left(\mathbf{L}_1\right).\label{Eq.Nul}
	\end{equation}
	Since
	\[
	\sum_{i=0}^{n-1}\left\langle J^{\prime}\left(s_{i}+\right),k^{\prime}\left(s_{i}+\right)\right\rangle =\left\langle \left(J^{\prime}\left(s_{0}+\right),...,J^{\prime}\left(s_{n-1}+\right)\right),\left(k^{\prime}\left(s_{0}+\right),...,k^{\prime}\left(s_{n-1}+\right)\right)\right\rangle,
	\]
	so $Y\in\left\{ \operatorname*{Nul}\left({\Ep}_{\ast}\right)\right\} ^{\perp}$
	iff 
	\[
	\left(k^{\prime}\left(s_{0}+\right),...,k^{\prime}\left(s_{n-1}+\right)\right)\in\left\{ \operatorname*{Nul}\left(\mathbf{L}_1\right)\right\} ^{\perp}=\operatorname*{Ran}\left(\mathbf{L}_1^{\ast}\right).
	\]
\end{proof}

\begin{remark} \label{rem.6.12}According to $(\ref{equ.6.4})$, it is immediate that 
	\[
	\operatorname*{Ran}\left(\mathbf{L}_1^{\ast}\right)=\left\{ \left(\frac{1}{n}f_{\mathcal{P},1}^{*}\left(1\right)v,\frac{1}{n}f_{\mathcal{P},2}^{*}\left(1\right)v,\dots,\frac{1}{n}f_{\mathcal{P},n}^{*}\left(1\right)v\right),\text{ }~\forall~v\in\mathbb{R}^{d}\right\}.
	\]
\end{remark}
\begin{definition} \label{def.6.13}Given $X\in \Gamma\left(TM\right)$, define $\tilde{X}_{\mathcal{P}}\in \Gamma\left(TH_\mathcal{P}\left(M\right)\right)$ to be $\tilde{X}_{\mathcal{P}}\left(\cdot\right)=u_{(\cdot)}J_{\mathcal{P}}\left(\cdot\right)$
	where 
	\[
	J_{\mathcal{P}}\left(s\right):=\mathbf{K}_{\mathcal{P}}\left(s\right)\mathbf{K}_{\mathcal{P}}\left(1\right)^{-1}u_1{}^{-1}X\circ \Ep.
	\]
\end{definition}
\begin{proof}[Proof of Theorem \ref{the.6.11}]
	We will show $\tilde{X}_\mathcal{P}$ is the unique orthogonal lift of $X$. Since $T_{\sigma}H_{\mathcal{P}}\left(M\right)=\operatorname*{Nul}\left({\Ep}_{\ast,\sigma}\right)\oplus_{G_\mathcal{P}^1}\left\{ \operatorname*{Nul}\left({\Ep}_{\ast,\sigma}\right)\right\} {}^{\perp}$, given a lift $Z\in \Gamma\left(TH_\mathcal{P}\left(M\right)\right)$ of $X\in \Gamma\left(TM\right)$, its orthogonal projection to $\left\{ \operatorname*{Nul}\left({\Ep}_{\ast,\sigma}\right)\right\} {}^{\perp}$ is also a lift but with smaller $G_\mathcal{P}^1$ norm. So if $Z$ is an orthogonal lift, then $Z\in \left\{ \operatorname*{Nul}\left({\Ep}_{\ast}\right)\right\} {}^{\perp}$. From Lemma \ref{lem min} and Remark \ref{rem.6.12} it follows that if $k\left(\cdot\right):=u^{-1}\left(\cdot\right)Z\left(\cdot\right)$, then 
	\[\left(k^\prime\left(s_0\right),\dots,k^\prime\left(s_{n-1}\right)\right)=\left(\frac{1}{n}f_{\mathcal{P},1}^{*}\left(1\right)v,\frac{1}{n}f_{\mathcal{P},2}^{*}\left(1\right)v,\dots,\frac{1}{n}f_{\mathcal{P},n}^{*}\left(1\right)v\right)=\mathbf{L}_1^{\ast}v\]
	for some $v\in\mathbb{R}^{d}$. Then using Definition \ref{def Kp} and Proposition \ref{pro.6.4}, $k$ must have the following form, 
	\[
	k_s=\mathbf{K}_{\mathcal{P}}\left(s\right)v
	\]
	for some $v\in\mathbb{R}^d$ to be determined. 
	To specify $v$, we use condition (\ref{equ.6.5})
	\[
	\tilde{X}_{\mathcal{P}}\left(\sigma,1\right)=X\left(\sigma\left(1\right)\right).
	\]
	This implies $\mathbf{K}_{\mathcal{P}}\left(1\right)v=u^{-1}_1X\circ \Ep$. Since
	$\mathbf{K}_{\mathcal{P}}\left(1\right)$ is invertible, we can just
	pick $v$ to be $\mathbf{K}_{\mathcal{P}}\left(1\right)^{-1}u^{-1}_1X\circ \Ep$.
	\begin{definition}
		We will view $\tilde{X}_{\mathcal{P}}$ as a differential operator with domain, 
		\[\mathcal{D}\left(\tilde{X}_{\mathcal{P}}\right):=C^1_b\left(H_{\mathcal{P}}\left(M\right)\right).\]
	\end{definition}
	Here \[C^1_b\left(H_{\mathcal{P}}\left(M\right)\right):=\left\{f\in C^1\left(H_{\mathcal{P}}\left(M\right)\right):\exists C \text{ s.t. }\left\vert \left(df\right)_\sigma X\right\vert\leq C\left<X,X\right>^{\frac{1}{2}}_{G^1_\mathcal{P}}\text{ }\forall \sigma\in H_\mathcal{P}(M), X\in T_\sigma H_\mathcal{P}(M)\right\}.\]
	Since $C^1_b\left(H_{\mathcal{P}}\left(M\right)\right)$ is dense in $L^2\left(H_\mathcal{P}\left(M\right), \nu_\mathcal{P}^1\right)$, we can also view $\tilde{X}_{\mathcal{P}}$ as a densely defined operator on $L^2\left(H_\mathcal{P}\left(M\right), \nu_\mathcal{P}^1\right)$. 
\end{proof}

\subsubsection{Restricted Adjoint $\tilde{X}_{\mathcal{P}}^{tr,\nu_{\mathcal{P}}^{1}}$
	\label{sec.5.3}}
In this subsection we study $\tilde{X}_{\mathcal{P}}^{tr,\nu_{\mathcal{P}}^{1}}$---the adjoint of $\tilde{X}_\mathcal{P}$ with respect to $\nu_\mathcal{P}^1$ restricted to $\mathcal{D}\left(\tilde{X}_{\mathcal{P}}\right)$, i.e. we require $\mathcal{D}\left(\tilde{X}_{\mathcal{P}}^{tr,\nu_{\mathcal{P}}^{1}}\right)=\mathcal{D}\left(\tilde{X}_{\mathcal{P}}\right)$. 
\begin{lemma} \label{lem.8.1}Given $X\in \Gamma\left(TM\right)$, if $\tilde{X}_\mathcal{P}$ is the orthogonal lift of $X$, then
	\begin{equation}
	\tilde{X}_{\mathcal{P}}^{tr,\nu_{\mathcal{P}}^{1}}=-\tilde{X}_{\mathcal{P}}+M_{\int_{0}^{1}\left\langle J_{\mathcal{P}}^{\prime}\left(s+\right),b^{\prime}\left(s+\right)\right\rangle ds}-M_{div\tilde{X}_{\mathcal{P}}}\label{equ.8.1}
	\end{equation}
	where $M_{\centerdot}$ is the multiplication operator, $b$ is the anti-rolling of $\sigma$ and $div\tilde{X}_{\mathcal{P}}$
	is the divergence of $\tilde{X}_{\mathcal{P}}$ with respect to $vol_{G_{\mathcal{P}}^{1}}$.
\end{lemma}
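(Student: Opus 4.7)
The plan is to prove this by a standard integration-by-parts (divergence theorem) computation on the finite-dimensional Riemannian manifold $\left(H_{\mathcal{P}}(M), G_{\mathcal{P}}^{1}\right)$, with the weight $e^{-E/2}$ absorbed via the product rule. Concretely, for $f, g \in \mathcal{D}(\tilde{X}_{\mathcal{P}}) = C_b^1(H_{\mathcal{P}}(M))$, I will rewrite
\[
\int_{H_{\mathcal{P}}(M)} (\tilde{X}_{\mathcal{P}} f)\, g\, d\nu_{\mathcal{P}}^{1}
= \frac{1}{Z_{\mathcal{P}}^{1}} \int_{H_{\mathcal{P}}(M)} (\tilde{X}_{\mathcal{P}} f)\cdot \bigl(g\, e^{-E/2}\bigr)\, dvol_{G_{\mathcal{P}}^{1}}
\]
and apply the manifold divergence theorem $\int (\tilde{X}_{\mathcal{P}} f)\, h\, dvol = -\int f\bigl[\tilde{X}_{\mathcal{P}} h + h\, \mathrm{div}\,\tilde{X}_{\mathcal{P}}\bigr]\, dvol$ with $h = g\, e^{-E/2}$.

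The Leibniz rule gives $\tilde{X}_{\mathcal{P}}(g\, e^{-E/2}) = (\tilde{X}_{\mathcal{P}} g)\, e^{-E/2} - \tfrac{1}{2} g\, (\tilde{X}_{\mathcal{P}} E)\, e^{-E/2}$. Collecting the three contributions, I obtain the identity
\[
\tilde{X}_{\mathcal{P}}^{tr,\nu_{\mathcal{P}}^{1}} g
= -\tilde{X}_{\mathcal{P}} g + \tfrac{1}{2}(\tilde{X}_{\mathcal{P}} E)\, g - (\mathrm{div}\, \tilde{X}_{\mathcal{P}})\, g,
\]
so only the identification $\tfrac{1}{2}\tilde{X}_{\mathcal{P}} E = \int_0^1 \langle J_{\mathcal{P}}'(s+), b'(s+)\rangle\, ds$ remains.

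For this, I will realize $\tilde{X}_{\mathcal{P}}(\sigma)$ as $\frac{d}{dt}\big|_0 \sigma_t$ for a one-parameter family $\sigma_t \in H_{\mathcal{P}}(M)$ with $\sigma_0 = \sigma$, and differentiate $E(\sigma_t) = \int_0^1 |\sigma_t'(s)|_g^2\, ds$ under the integral. Interchanging $\partial_t$ with $\nabla/ds$ and using the symmetry of $\nabla$ gives $\tilde{X}_{\mathcal{P}} E = 2\int_0^1 \langle \nabla Y/ds, \sigma'(s)\rangle\, ds$ where $Y = \tilde{X}_{\mathcal{P}}(\sigma, \cdot) = u(\sigma, \cdot) J_{\mathcal{P}}(\sigma, \cdot)$. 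Since parallel translation is an isometry and $u$ intertwines $\nabla/ds$ with ordinary differentiation on $\mathbb{R}^d$, we have $\nabla Y/ds = u(\cdot) J_{\mathcal{P}}'(\cdot)$, while $u^{-1}\sigma' = b'$. The integrand is well defined on $[0,1]\setminus \mathcal{P}$ (a set of full measure), and on each subinterval it equals $\langle J_{\mathcal{P}}'(s+), b'(s+)\rangle$, yielding the claimed expression.

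The main delicate point is justifying the divergence theorem for non-compactly-supported test functions on the (generally non-compact) manifold $H_{\mathcal{P}}(M)$. This is where the Gaussian-type weight $e^{-E/2}$ matters: boundedness of $f, g$ combined with the finite-dimensional Gaussian decay in the fibre directions of $\Ep^{\mathcal{P}}$ controls the boundary-at-infinity terms, after noting (via the parametrization of Section \ref{sec.5.1}) that $\mathrm{div}\,\tilde{X}_{\mathcal{P}}$ and $\tilde{X}_{\mathcal{P}} E$ have at most polynomial growth in the Jacobi coordinates $\{k_i\} \in (\mathbb{R}^d)^n$. A secondary point to handle is that $\sigma \mapsto b'(\sigma, s+)$ and $J_{\mathcal{P}}'(\sigma, s+)$ have jumps at the partition points, but since these lie in a set of Lebesgue measure zero in $s$, the one-sided limits suffice and no distributional boundary contributions appear.
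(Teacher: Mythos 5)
Your overall strategy matches the paper's: the paper uses Cartan's magic formula plus Stokes' theorem applied to the top-degree form $f\nu_{\mathcal{P}}^{1}$, which after unwinding is exactly the weighted divergence theorem you invoke, and the computation of $\tfrac{1}{2}\tilde{X}_{\mathcal{P}}E = \int_0^1\langle J_{\mathcal{P}}'(s+),b'(s+)\rangle\,ds$ proceeds by the same isometry / parallel-translation reasoning. So the route is essentially the same, not genuinely different.

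However, the step you flag as "the main delicate point" is dispatched with a claim that is actually false on a curved symmetric space, and this is precisely where the paper has to work. You assert that $\mathrm{div}\,\tilde{X}_{\mathcal{P}}$ and $\tilde{X}_{\mathcal{P}}E$ have at most polynomial growth in the Jacobi coordinates, so that the Gaussian weight $e^{-E/2}$ (which is $\propto e^{-\frac{n}{2}\sum_k |\Delta_k\beta|^2}$ in the anti-rolled coordinates) would trivially absorb them. But the building blocks of $J_{\mathcal{P}}$, namely the $f_{\mathcal{P},i}$, $\mathbf{K}_{\mathcal{P}}$, $C_{\mathcal{P},i}$, $S_{\mathcal{P},i}$, solve Jacobi-type ODEs whose curvature term contributes $\exp$ factors: as quantified in Lemma \ref{lem1} and Lemma \ref{lem.7.8}, one only has bounds of the form $|f_{\mathcal{P},i}(s_j)| \leq e^{N\sum_k|\Delta_k\beta|^2}$, i.e.\ exponential, not polynomial, growth. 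Consequently the product with $e^{-E/2}$ is integrable only when the mesh is small enough that $n$ dominates the curvature bound $N$ (this is the origin of conditions like $n > 5qN$ scattered through Section \ref{cha.6}, and of the restriction $|\mathcal{P}| \leq 1/M(q)$ in Lemma \ref{lem7.4}). The paper therefore does not rely on any crude growth-versus-Gaussian heuristic; it first proves the identity for compactly supported $f$, then removes the compact support by an explicit cutoff $f_n := f\cdot\phi(E/n)$ and dominated convergence, using the separately proven $L^q$-integrability of $\int_0^1\langle J_{\mathcal{P}}',b'\rangle\,ds$, $\tilde{X}_{\mathcal{P}}f$, and $\tilde{X}_{\mathcal{P}}^{tr,\nu_{\mathcal{P}}^1}1$ (via Proposition \ref{pro.8.5}, Lemma \ref{lem.7.8}, Lemma \ref{lem7.4}). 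Your sketch would go through if you replaced the polynomial-growth claim with that cutoff-plus-$L^q$ argument, but as written the justification of the boundary term vanishing has a genuine gap: it rests on an incorrect growth estimate and omits the mesh-size constraint that the integrability actually requires.
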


\begin{proof} In this proof we identify the measure $\nu_{\mathcal{P}}^{1}$
	with the associated $nd$---form. So by \textquotedblleft Cartan's magic formula\textquotedblright, first assume $f\in C^1_b\left(H_{\mathcal{P}}\left(M\right)\right)$ with compact support,
	\[
	\mathcal{L}_{\tilde{X}_{\mathcal{P}}}\left(f\nu_{\mathcal{P}}^{1}\right)=d\left(i_{\tilde{X}_{\mathcal{P}}}\left(f\nu_{\mathcal{P}}^{1}\right)\right)+i_{\tilde{X}_{\mathcal{P}}}\left(d\left(f\nu_{\mathcal{P}}^{1}\right)\right).
	\]
	Since $f\nu_{\mathcal{P}}^{1}$ is a top degree form, $d\left(f\nu_{\mathcal{P}}^{1}\right)=0$.
	By Stokes' theorem, \[\int_{H_{\mathcal{P}}(M)}d\left(i_{\tilde{X}_{\mathcal{P}}}\left(f\nu_{\mathcal{P}}^{1}\right)\right)=0.\]
	Therefore we have: 
	\[
	\int_{H_{\mathcal{P}}\left(M\right)}\mathcal{L}_{\tilde{X}_{\mathcal{P}}}\left(f\nu_{\mathcal{P}}^{1}\right)=0
	\]
	and thus
	\begin{align}
	\int_{H_{\mathcal{P}}\left(M\right)}\left(\tilde{X}_{\mathcal{P}}f\right)d\nu_{\mathcal{P}}^{1} & =\int_{H_{\mathcal{P}}\left(M\right)}\mathcal{L}_{\tilde{X}_{\mathcal{P}}}\left(f\nu_{\mathcal{P}}^{1}\right)-\int_{H_{\mathcal{P}}\left(M\right)}f\mathcal{L}_{\tilde{X}_{\mathcal{P}}}\left(\nu_{\mathcal{P}}^{1}\right)\nonumber \\
	& =-\int_{H_{\mathcal{P}}\left(M\right)}f\mathcal{L}_{\tilde{X}_{\mathcal{P}}}\left(\nu_{\mathcal{P}}^{1}\right).\label{equ.8.2}
	\end{align}
	Recall that $\nu_{\mathcal{P}}^{1}=\frac{1}{Z_{\mathcal{P}}^{1}}e^{-\frac{1}{2}E}vol_{G_{\mathcal{P}}^{1}}$,
	so 
	\begin{equation}
	\mathcal{L}_{\tilde{X}_{\mathcal{P}}}\left(\nu_{\mathcal{P}}^{1}\right)=\left[\tilde{X}_{\mathcal{P}}\left(\frac{1}{Z_{\mathcal{P}}^{1}}e^{-\frac{1}{2}E}\right)\right]vol_{G_{\mathcal{P}}^{1}}+\left(div\tilde{X}_{\mathcal{P}}\right)\nu_{\mathcal{P}}^{1}.\label{equ.8.3}
	\end{equation}
	In (\ref{equ.8.3})
	\begin{align}
	\tilde{X}_{\mathcal{P}}\left(\frac{1}{Z_{\mathcal{P}}^{1}}e^{-\frac{1}{2}E}\right) & =-\frac{1}{2}\tilde{X}_{\mathcal{P}}\left(E\right)\frac{1}{Z_{\mathcal{P}}^{1}}e^{-\frac{1}{2}E}\nonumber \\
	& =-\int_{0}^{1}\left\langle \sigma^{\prime}\left(s+\right),\frac{\nabla\tilde{X}_{\mathcal{P}}}{ds}\left(s+\right)\right\rangle ds\frac{1}{Z_{\mathcal{P}}^{1}}e^{-\frac{1}{2}E}\nonumber \\
	& =-\int_{0}^{1}\left\langle b^{\prime}\left(s+\right),J_{\mathcal{P}}^{\prime}\left(s+\right)\right\rangle ds\frac{1}{Z_{\mathcal{P}}^{1}}e^{-\frac{1}{2}E}.\label{equ.8.4}
	\end{align}
	Combining $\left(\ref{equ.8.2}\right)$, $\left(\ref{equ.8.3}\right)$ and $\left(\ref{equ.8.4}\right)$ we get, if $f\in C^1_b\left(H_{\mathcal{P}}\left(M\right)\right)$ with compact support, then
	\begin{equation}
	\int_{H_\mathcal{P}\left(M\right)}\tilde{X}_\mathcal{P}fd\nu_\mathcal{P}^1=\int_{H_\mathcal{P}\left(M\right)}f\cdot\left(\tilde{X}_{\mathcal{P}}^{tr,\nu_{\mathcal{P}}^{1}}1\right)d\nu_\mathcal{P}^1,\label{p1}
	\end{equation}
	where $\tilde{X}_{\mathcal{P}}^{tr,\nu_{\mathcal{P}}^{1}}$ is defined in Eq. $(\ref{equ.8.1})$.
	For the general case choose a cut--off function $\phi\in C_{0}^{\infty}\left(\mathbb{R},\left[0,1\right]\right)$
	such that $\phi\equiv1$ on $\left[-1,1\right]$ and $\phi\equiv0$
	on $\mathbb{R}/\left[-2,2\right]$. Let $f_n:=f\cdot \phi\left(\frac{E}{n}\right)$, observe, using product rule, that
	\begin{equation}
	\tilde{X}_\mathcal{P}f_n=\phi\left(\frac{E}{n}\right)\cdot \tilde{X}_\mathcal{P}f+\frac{2}{n}f\cdot \phi^\prime\left(\frac{E}{n}\right)\int_{0}^{1}\left\langle J_{\mathcal{P}}^{\prime}\left(s+\right),b^{\prime}\left(s+\right)\right\rangle ds,\label{p2}
	\end{equation}
	so 
	$\tilde{X}_\mathcal{P}f_n\to \tilde{X}_\mathcal{P}f$ as $n\to \infty $ $\nu_{\mathcal{P}}^1$ a.s. 
	
	Using Proposition \ref{pro.8.5} and Lemma \ref{inf} we have for any $q\geq 1$, there exists $M=M(q)>0$ such that $\forall \mathcal{P}$ with $\left\vert\mathcal{P}\right\vert\leq \frac{1}{M}$, 
	\[\int_{0}^{1}\left\langle J_{\mathcal{P}}^{\prime}\left(s+\right),b^{\prime}\left(s+\right)\right\rangle ds\in L^{q}\left(H_\mathcal{P}\left(M\right),d\nu_{\mathcal{P}}^1\right).\]
	Since $f$ has bounded differential, from Definition \ref{def.6.13} and \ref{def Kp} we have
	\begin{align*}
	\left|\tilde{X}_{\mathcal{P}}f\right|&\leq C\left<\tilde{X}_{\mathcal{P}},\tilde{X}_{\mathcal{P}}\right>_{G^1_\mathcal{P}}\\&=C\sum_{i=1}^{n}\left<J^\prime_\mathcal{P}(s_{i-1}+),J^\prime_\mathcal{P}(s_{i-1}+)\right>\Delta_i\\&=\frac{C}{n}\left\Vert f^*_{\mathcal{P},i-1}(1)\mathbf{K}_{\mathcal{P}}\left(1\right)^{-1}u^{-1}_1X\circ \Ep\right\Vert\\&\leq C\max_{1\leq i\leq n}\left\Vert f^*_{\mathcal{P},i-1}(1)\right\Vert^2\left\Vert\mathbf{K}_{\mathcal{P}}\left(1\right)^{-1}u^{-1}_1X\circ \Ep\right\Vert^2
	\end{align*}
Lemma \ref{lem.6.10} states $\mathbf{K}_{\mathcal{P}}\left(1\right)^{-1}$ is bounded. Then utilizing Lemma \ref{lem.7.8} we have for any $q\geq 1$, there exists $M=M(q)>0$ such that $\forall \mathcal{P}$ with $\left\vert\mathcal{P}\right\vert\leq \frac{1}{M}$, 
  \[\tilde{X}_\mathcal{P}f\in L^{q}\left(H_\mathcal{P}\left(M\right),d\nu_{\mathcal{P}}^1\right).\]
	Lemma \ref{lem7.4} shows that $\tilde{X}_{\mathcal{P}}^{tr,\nu_{\mathcal{P}}^{1}}1\in L^q\left(H_\mathcal{P}\left(M\right),d\nu_{\mathcal{P}}^1\right)$ provided $\left\vert\mathcal{P}\right\vert\leq \frac{1}{M}$ for some $M=M(q)$, so applying DCT to both sides of Eq.(\ref{p1}) with $f_n\to f$ gives Eq. (\ref{equ.8.1}).
\end{proof} 

\subsubsection{Computing $div\tilde{X}_{\mathcal{P}}$\label{sec.5.4}}Recall from Notation \ref{not3.1.8} and Remark \ref{rem.8.3-1} that
\[
X^{h_{\alpha,i}}\left(\sigma,s\right)=u\left(\sigma,s\right)\frac{1}{\sqrt{n}}f_{\mathcal{P},i}\left(s\right)e_{\alpha}\text{ , }1\leq \alpha\leq d\text{ , }1\leq i\leq n
\]
is an orthonormal frame on $\left(TH_{\mathcal{P}}\left(M\right),G_{\mathcal{P}}^{1}\right)$. Using this orthonormal frame, one can get an expression of $div\tilde{X}_{\mathcal{P}}$.
\begin{proposition}\label{prop.5.4} Let $\tilde{X}_{\mathcal{P}}$ be the orthogonal lift of $X\in \Gamma\left(TM\right)$, then
	\begin{equation}
	div\tilde{X}_{\mathcal{P}}=\sum_{\alpha=1}^{d}\sum_{j=1}^{n}\left\langle X^{h_{\alpha,j}}J_{\mathcal{P}}^{\prime}\left(s_{j-1}+\right),e_{\alpha}\right\rangle \sqrt{\Delta_{j}} \label{div1}
	\end{equation}
\end{proposition}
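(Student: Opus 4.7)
The plan is to apply the standard Riemannian divergence formula relative to the orthonormal frame $\{X^{h_{\alpha,j}}\}_{1\leq\alpha\leq d,\,1\leq j\leq n}$ from Notation \ref{not3.1.8}. Writing $\tilde{X}_\mathcal{P} = \sum_{\gamma,l} y^{\gamma,l} X^{h_{\gamma,l}}$ with coefficients $y^{\gamma,l}=\langle \tilde{X}_\mathcal{P},X^{h_{\gamma,l}}\rangle_{G_\mathcal{P}^1}$, and expanding $\nabla^{G_\mathcal{P}^1}_{X^{h_{\alpha,j}}}\tilde{X}_\mathcal{P}$ by the product rule, I obtain
\[
div(\tilde{X}_\mathcal{P}) = \sum_{\alpha,j} X^{h_{\alpha,j}}\bigl(\langle \tilde{X}_\mathcal{P}, X^{h_{\alpha,j}} \rangle_{G_\mathcal{P}^1}\bigr) + \sum_{\gamma,l} y^{\gamma,l}\, div(X^{h_{\gamma,l}}).
\]

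Next, using the definition of $G_\mathcal{P}^1$ in \eqref{eq:-4-1-1} together with $h_{\alpha,j}'(s_{k-1}+) = \delta_{j,k}\,e_\alpha/\sqrt{\Delta_j}$ from \eqref{eq:-34} and the isometry property of $u(\sigma,s)$, the inner product collapses to a single term:
\[
\langle \tilde{X}_\mathcal{P}, X^{h_{\alpha,j}} \rangle_{G_\mathcal{P}^1} = \sum_{k=1}^n \langle J_\mathcal{P}'(s_{k-1}+), h_{\alpha,j}'(s_{k-1}+) \rangle \Delta_k = \sqrt{\Delta_j}\,\langle J_\mathcal{P}'(s_{j-1}+), e_\alpha \rangle.
\]
Because $\sqrt{\Delta_j}$ and $e_\alpha$ are independent of $\sigma$, the derivation $X^{h_{\alpha,j}}$ passes through them, so the first sum in the divergence identity is already the right-hand side of \eqref{div1}. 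Thus the proof reduces to the key lemma $div(X^{h_{\gamma,l}})=0$ for every $(\gamma,l)$.

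To prove this key lemma, my plan is to work in the global anti-rolling chart $\phi^{-1}: H_\mathcal{P}(M)\to H_\mathcal{P}(\mathbb{R}^d)\cong (\mathbb{R}^d)^n$, $\sigma\mapsto(b'(s_0+),\ldots,b'(s_{n-1}+))$ (a diffeomorphism by Fact \ref{fact 1}), and compare the density of $dvol_{G_\mathcal{P}^1}$ with respect to Lebesgue measure on $(\mathbb{R}^d)^n$ against the coordinate expression of $X^{h_{\gamma,l}}$. As a sanity check, in the flat case $M=\mathbb{R}^d$ the metric $G_\mathcal{P}^1$ has the constant matrix $\mathrm{diag}(\Delta_i I_d)$ and $X^{h_{\gamma,l}} = \tfrac{1}{\sqrt{\Delta_l}}\partial/\partial\xi_{l-1,\gamma}$, from which the divergence-free property is immediate.

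The main technical obstacle will be transferring this calculation to curved $M$. There the Jacobi field $h_{\gamma,l}$ is non-trivial on the segments past $s_l$, where it propagates through parallel transport and curvature via the matrices $C_{\mathcal{P},i}, S_{\mathcal{P},i}$ of Notation \ref{not.2.1}; correspondingly, the density of $dvol_{G_\mathcal{P}^1}$ in the anti-rolling chart acquires non-trivial $\sigma$-dependent factors determined by those same matrices. The expected cancellation between these two effects --- which should be traceable to the Jacobi equation together with the isometry property of $u(\sigma,s)$ --- is the heart of the argument.
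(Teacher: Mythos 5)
Your decomposition $div(\tilde{X}_\mathcal{P}) = \sum_{\alpha,j} X^{h_{\alpha,j}}\langle \tilde{X}_\mathcal{P}, X^{h_{\alpha,j}}\rangle_{G_\mathcal{P}^1} + \sum_{\gamma,l} y^{\gamma,l}\, div(X^{h_{\gamma,l}})$ is valid, and your computation that the first sum collapses to the right-hand side of \eqref{div1} is clean and correct. You have also correctly isolated the key lemma $div(X^{h_{\gamma,l}})=0$, and this lemma is in fact true. However, you do not prove it; you explicitly identify the anti-rolling-chart cancellation as "the heart of the argument" and then stop, so as written there is a genuine gap.

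Two observations about the gap. First, your proposed route via the anti-rolling chart is the harder road: the coordinate vector fields $\partial/\partial\xi_{i,\beta}$ in that chart are \emph{not} scalar multiples of the $X^{h_{\gamma,l}}$ (the change of basis between the two parametrizations of $T_\sigma H_\mathcal{P}(M)$ is $\sigma$-dependent, carrying curvature through the $C_{\mathcal{P},i}, S_{\mathcal{P},i}$), and simultaneously the Riemannian density picks up $\sigma$-dependence; establishing the cancellation would be a substantial computation that you would still have to carry out. Second, the paper sidesteps needing the key lemma at all by computing $div$ via the Lie-bracket formula $div(\tilde{X}_\mathcal{P})=\sum_{\alpha,j}\langle[X^{h_{\alpha,j}},\tilde{X}_\mathcal{P}],X^{h_{\alpha,j}}\rangle_{G_\mathcal{P}^1}$ (equivalent to yours via $\langle\nabla_X E_i,E_i\rangle=\tfrac12 X\|E_i\|^2=0$), and then applying a known Lie-bracket formula for vector fields of the form $X^h$ (Theorem 3.5 of Andersson--Driver): the coefficient-derivative part and the frame-derivative part are processed together in one pass. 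Interestingly, if you did want the key lemma on its own, the quickest route is to rerun the paper's Lie-bracket computation with $J_\mathcal{P}$ replaced by $h_{\gamma,l}$ --- the surviving term is $\langle X^{h_{\alpha,j}} h_{\gamma,l}'(s_{j-1}+), e_\alpha\rangle\sqrt{\Delta_j}$, which vanishes because $h_{\gamma,l}'(s_{j-1}+)$ is deterministic by \eqref{eq:-34}. So your reorganization is logically fine but does not actually avoid the paper's Lie-bracket machinery, and without either that machinery or a completed chart computation the proof is incomplete.
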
 \begin{proof}By definition
	\begin{equation}
	div\tilde{X}_{\mathcal{P}}=\sum_{\alpha=1}^{d}\sum_{j=1}^{n}\left\langle \left[X^{h_{\alpha,j}},\tilde{X}_{\mathcal{P}}\right],X^{h_{\alpha,j}}\right\rangle_{G_{\mathcal{P}}^{1}},\label{div2}
	\end{equation}
	where $\left[\cdot,\cdot\right]$ is the Lie bracket of vector fields.
	
	Now fix $j$ and $\alpha$, notice that $\tilde{X}_{\mathcal{P}}=X^{J_{\mathcal{P}}}$,
	apply Theorem 3.5 in \cite{Andersson1999} to find
	\[
	\left[X^{h_{\alpha,j}},\tilde{X}_{\mathcal{P}}\right]=X^{f\left(h_{\alpha,j},J_{\mathcal{P}}\right)},
	\]
	where 
	\begin{align*}
	f_s\left(h_{\alpha,j},J_{\mathcal{P}}\right)=&\left(X^{h_{\alpha,j}}J_{\mathcal{P}}\right)\left(s\right)-\left(X^{J_{\mathcal{P}}}h_{\alpha,j}\right)\left(s\right)+ q_{s}\left(X^{h_{\alpha,j}}\right)J_{\mathcal{P}}\left(s\right)-q_{s}\left(X^{J_{\mathcal{P}}}\right)h_{\alpha,j}\left(s\right)
	\end{align*}
	and 
	\[
	q_{s}\left(X^{f}\right)=\int_{0}^{s}R_{u_r}\left(b^{\prime}\left(r+\right),f\left(r\right)\right)dr.
	\]
	Therefore 
	\begin{align}
	\left\langle \left[X^{h_{\alpha,j}},\tilde{X}_{\mathcal{P}}\right],X^{h_{\alpha,j}}\right\rangle_{G_{\mathcal{P}}^{1}}   & =\sum_{i=1}^{n}\left\langle f^{\prime},h_{\alpha,j}^{\prime}\right\rangle _{s_{i-1}+}\Delta_{i}\label{equ.8.6}\\
	& =\sum_{i=1}^{n}\left\langle \left(X^{h_{\alpha,j}}J_{\mathcal{P}}\right)^{\prime}-\left(X^{J_{\mathcal{P}}}h_{\alpha,j}\right)^{\prime},h_{\alpha,j}^{\prime}\right\rangle _{s_{i-1}+}\Delta_{i}\nonumber \\
	& +\sum_{i=1}^{n}\left\langle \left(q_{s}\left(X^{h_{\alpha,j}}\right)J_{\mathcal{P}}\left(s\right)\right)^{\prime}-\left(q_{s}\left(X^{J_{\mathcal{P}}}\right)h_{\alpha,j}\left(s\right)\right)^{\prime},h_{\alpha,j}^{\prime}\right\rangle _{s_{i-1}+}\Delta_{i}\nonumber 
	\end{align}
	Here $'$ is the derivative with respect to (time) $s$.
	
	Since $h_{\alpha,j}^{\prime}\left(s_{i-1}+\right)$
	is independent of $\sigma$, so 
	\[
	\left(X^{J_{\mathcal{P}}}h_{\alpha,j}\right)^{\prime}\left(s_{i-1}+\right)=X^{J_{\mathcal{P}}}\left(\sigma\to h_{\alpha,j}^{\prime}\left(\sigma,s_{i-1}+\right)\right)=0,
	\]
	and thus
	\begin{equation}
	\sum_{i=1}^{n}\left\langle \left(X^{J_{\mathcal{P}}}h_{\alpha,j}\right)^{\prime},h_{\alpha,j}^{\prime}\right\rangle _{s_{i-1}+}\Delta_{i}=0.\label{equ.8.5}
	\end{equation}
	We now claim that 
	\[
	\left(q_{s}\left(X^{h_{\alpha,j}}\right)J_{\mathcal{P}}\left(s\right)\right)^{\prime}=q_{s}^{\prime}\left(X^{h_{\alpha,j}}\right)J_{\mathcal{P}}\left(s\right)+q_{s}\left(X^{h_{\alpha,j}}\right)J_{\mathcal{P}}^{\prime}\left(s\right)=0\text{ for }s\in \mathcal{P}.
	\]
	Since 
	\[
	h_{\alpha,j}^{\prime}\left(s_{i-1}+\right)\neq0\text{ iff }i=j,
	\]
	and when $i=j$, 
	\[
	h_{\alpha,j}\left(s\right)=0\text{ for }s\leq s_{i-1},
	\]
	so both $q_{s_{i-1}}^{\prime}\left(X^{h_{\alpha,j}}\right)=0$ and
	$q_{s_{i-1}}\left(X^{h_{\alpha,j}}\right)=0$. It then follows that the claim is true and 
	\begin{equation}
	\sum_{i=1}^{n}\left\langle \left(q_{s}\left(X^{h_{\alpha,j}}\right)J_{\mathcal{P}}\left(s\right)\right)^{\prime},h_{\alpha,j}^{\prime}\right\rangle _{s_{i-1}+}\Delta_{i}=0,\label{equ.8.7}
	\end{equation}
	and 
	\begin{equation}
	\sum_{i=1}^{n}\left\langle q_{s}^{\prime}\left(X^{J_{\mathcal{P}}}\right)h_{\alpha,j}\left(s\right),h_{\alpha,j}^{\prime}\right\rangle _{s_{i-1}+}\Delta_{i}=0.\label{equ.8.8}
	\end{equation}
	Lastly because $q_{s}\left(X^{J_{\mathcal{P}}}\right)$ is skew-symmetric,
	\begin{equation}
	\sum_{i=1}^{n}\left\langle q_{s}\left(X^{J_{\mathcal{P}}}\right)h_{\alpha,j}^{\prime},h_{\alpha,j}^{\prime}\right\rangle _{s_{i-1}+}\Delta_{i}=0.\label{equ.8.9}
	\end{equation}
	Combining Eq.(\ref{equ.8.5}), (\ref{equ.8.7}), (\ref{equ.8.8}) and (\ref{equ.8.9}) shows,  
	\begin{align}
	\left\langle \left[X^{h_{\alpha,j}},\tilde{X}_{\mathcal{P}}\right],X^{h_{\alpha,j}}\right\rangle_{G_{\mathcal{P}}^{1}}  & =\sum_{i=1}^{n}\left\langle X^{h_{\alpha,j}}J_{\mathcal{P}}^{\prime},h_{\alpha,j}^{\prime}\right\rangle _{s_{i-1}+}\Delta_{i}=\left\langle X^{h_{\alpha,j}}J_{\mathcal{P}}^{\prime}\left(s_{j-1}+\right),e_{\alpha}\right\rangle \sqrt{\Delta_{j}}.\label{equ.8.20}
	\end{align}
	Summing Eq.(\ref{equ.8.20}) on $\alpha$ and $j$ while making use of (\ref{div2}) gives (\ref{div1}). 
\end{proof}

\subsection{The Orthogonal Lift $\tilde{X}$ on $W_o\left(M\right)$}

\begin{definition}[Cameron-Martin vector field]
	A \textbf{Cameron-Martin process}, $h$, is an $\mathbb{R}^d$-valued process on $W_o(M)$ such that $s\to h(s)$ is in $H(\mathbb{R}^d)\text{ }\nu-a.s.$ and a $TM$-valued process $X^h$ on $(W_o(M),\nu)$ is called a \textbf{Cameron-Martin vector field} (denote this space by $\mathcal{X}$) if $\pi(X_s)=\Sigma_s\text{ }\nu-a.s.$, $h(s):=\tilde{u}^{-1}_sX^h_s$ is a Cameron-Martin process and\[ 
	\left<X^h,X^h\right>_{\mathcal{X}}:=\mathbb{E}\left[ \left\Vert h\right\Vert^2_{H(\mathbb{R}^d)}\right]<\infty.\]
\end{definition}
Cameron-Martin vector field is the key concept in path space analysis. In this section we are going to introduce a non-adapted Cameron-Martin vector field (see Definition \ref{def Oc}) which \textquotedblleft lift\textquotedblright a vector field on a manifold $M$ to a \textquotedblleft vector field\textquotedblright on the corresponding path space $W_o(M)$. 
\begin{definition} \label{def.T}Define $\tilde{T}_{\left(\cdot\right)}:\left[0,1\right]\times W_o\left(M\right)\to End\left(\mathbb{R}^d\right)$ to be
	the solution to the following initial value problem: 
	\begin{equation}
	\begin{cases}
	\frac{d}{ds}\tilde{T}_s+\frac{1}{2}Ric_{\tilde{u}_s}\tilde{T}_s=0\\
	\tilde{T}_0=I.
	\end{cases}\label{equ.4.3-1}
	\end{equation}
\end{definition}
\begin{definition}\label{def.4.4-1}  Using $\tilde{T}_s$, we define $\mathbf{\tilde{K}}:\left[0,1\right]\times W_o\left(M\right)\to End\left(\mathbb{R}^d\right)$:
	\begin{equation}
	\mathbf{\tilde{K}}_s:=\tilde{T}_s\left[\int_{0}^{s}\tilde{T}_r^{-1}\left(\tilde{T}_r^{-1}\right)^*dr\right]\tilde{T}^{\ast}_1.\label{equ.4.6-1}
	\end{equation}
\end{definition}
\begin{remark}
	Both $\tilde{T}$ and $\mathbf{\tilde{K}}$ are defined up to $\nu-$equivalence. We can pick a version at first place in order to avoid stating $\nu-a.s.$ in the following results. 
\end{remark}
\begin{lemma} \label{lem.4.3}For all $s\in\left[0,1\right]$, $\tilde{T}_s$
	is invertible. Further both $\underset{0\leq s\leq1}{\sup}\left\Vert \tilde{T}_s\right\Vert$ and
	$\underset{0\leq s\leq1}{\sup}\left\Vert \tilde{T}_s^{-1}\right\Vert $
	are bounded by $e^{\frac{1}{2}\left(d-1\right)N}$, where $\left(d-1\right)N$
	is a bound of $\left\Vert\operatorname*{Ric}\right\Vert.$
\end{lemma}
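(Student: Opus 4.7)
The plan is to treat (\ref{equ.4.3-1}) pathwise, i.e.\ for almost every $\omega$ fix the path $s\mapsto \tilde u_s(\omega)$ and view (\ref{equ.4.3-1}) as a deterministic linear ODE in $\operatorname{End}(\mathbb{R}^d)$ with the time--dependent but $s$--measurable coefficient $A(s):=-\tfrac12 Ric_{\tilde u_s}$. By the convention $\|R\|\le N$, the Ricci pullback satisfies $\|Ric_{\tilde u_s}\|\le (d-1)N$ uniformly in $s$, so $\|A(s)\|\le \tfrac12(d-1)N$. Classical linear ODE theory then gives a unique global solution $\tilde T_s$ on $[0,1]$.

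For the norm bound on $\tilde T_s$, I would apply Gronwall. From $\tilde T_s = I + \int_0^s A(r)\tilde T_r\,dr$ we get
\[
\|\tilde T_s\| \le 1 + \int_0^s \tfrac12(d-1)N\,\|\tilde T_r\|\,dr,
\]
and Gronwall yields $\|\tilde T_s\|\le e^{\frac12(d-1)Ns}\le e^{\frac12(d-1)N}$ for $s\in[0,1]$.

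For invertibility, the cleanest route is to introduce the candidate inverse $Y_s$ as the unique solution of the companion equation
\[
\frac{d}{ds}Y_s \;=\; Y_s\cdot\tfrac12 Ric_{\tilde u_s},\qquad Y_0=I,
\]
which also exists globally by the same bound on the coefficient. Then $\tfrac{d}{ds}(Y_s\tilde T_s) = Y_s\cdot\tfrac12 Ric_{\tilde u_s}\tilde T_s + Y_s\cdot(-\tfrac12 Ric_{\tilde u_s}\tilde T_s)=0$ and $Y_0\tilde T_0=I$, so $Y_s\tilde T_s\equiv I$ on $[0,1]$; this identifies $Y_s=\tilde T_s^{-1}$ and proves invertibility. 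Applying Gronwall once more to $\|Y_s\|\le 1+\int_0^s \tfrac12(d-1)N\,\|Y_r\|\,dr$ gives $\|\tilde T_s^{-1}\|\le e^{\frac12(d-1)N}$.

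There is no real obstacle here beyond making the curvature bound explicit and invoking Gronwall twice; the only mildly subtle point is writing down the correct ODE for the inverse (so one can treat invertibility and the bound on $\tilde T_s^{-1}$ simultaneously) rather than trying to pass to $\tilde T_s^{-1}$ via differentiating the identity $\tilde T_s\tilde T_s^{-1}=I$, which tacitly assumes what we want to prove. All statements hold for $\nu$--a.e.\ path since $\tilde T$ is defined up to $\nu$--equivalence and $s\mapsto\tilde u_s$ is continuous $\nu$--a.s., making the coefficient locally integrable along each trajectory.
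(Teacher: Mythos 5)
Your proof is correct and uses essentially the same approach as the paper: introduce the companion linear ODE
\[
\dot Y_s = \tfrac12\, Y_s\, Ric_{\tilde u_s},\qquad Y_0 = I,
\]
show $Y_s\tilde T_s\equiv I$ by differentiating the product, and then obtain both operator-norm bounds from the integrated form plus Gronwall. The only difference is cosmetic: the paper introduces $U_s$ and passes through an auxiliary ODE for $\tilde T_s U_s$ (with a commutator right-hand side and uniqueness to force $Y_s\equiv I$), whereas you compute $\frac{d}{ds}(Y_s\tilde T_s)=0$ directly; your version is a bit more streamlined and, incidentally, has the sign on the companion equation written consistently with the ODE defining $\tilde T_s$, which the paper's printed version of $\dot U_s$ does not. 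Your remark that one should not derive the inverse by differentiating $\tilde T_s\tilde T_s^{-1}=I$ is exactly the right caution, and the pathwise interpretation (fixing a $\nu$-version of $\tilde u$) is the standard reading of the lemma.
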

\begin{proof} Denote by $U_s\in \operatorname*{End}\left(\mathbb{R}^{d}\right)$ the solution to the following initial value problem:
	\[\frac{d}{ds}U_s=-\frac{1}{2}U_sRic_{\tilde{u}_s},\text{ }U_0=I,\label{equ.4.4}\]
	then direct computation shows that $Y_s:=\tilde{T}_sU_s\in \operatorname*{End}\left(\mathbb{R}^{d}\right)$ satisfies 
	\[\frac{d}{ds}Y_s=\frac{1}{2}\left(Ric_{\tilde{u}_s}Y_s-Y_sRic_{\tilde{u}_s}\right),\text{ }Y_0=I.\]
	By the uniqueness of solutions for linear ODE, we get $Y_s\equiv I$,  
	and this shows that $U_s$ is a left inverse to $\tilde{T}_s$.
	As we are in finite dimensions it follows that $\tilde{T}_s^{-1}$
	exists and is equal to $U_s$. The stated bounds now follow
	by Gronwall's inequality. 
\end{proof}
\begin{lemma} \label{lem.4.6}$\tilde{\mathbf{K}}_1$ is invertible
	and $\left\Vert \tilde{\mathbf{K}}_1^{-1}\right\Vert \leq e^{\left(d-1\right)N}$, provided $\left\Vert\operatorname*{Ric}\right\Vert\leq \left(d-1\right)N$.
\end{lemma}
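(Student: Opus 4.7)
The plan is to rewrite $\tilde{\mathbf{K}}_1$ in a manifestly positive form and then get a sharp lower bound for $\langle \tilde{\mathbf{K}}_1 v, v\rangle$ by exploiting a single ODE, rather than separately estimating $\tilde T_1$ and $\tilde T_r^{-1}$.

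First I would pull $\tilde T_1$ inside the integral to write
\[
\tilde{\mathbf{K}}_1 \;=\; \int_{0}^{1}\bigl(\tilde T_1\tilde T_r^{-1}\bigr)\bigl(\tilde T_1\tilde T_r^{-1}\bigr)^{\ast}\,dr,
\]
so that for any $v\in\mathbb{R}^d$,
\[
\langle\tilde{\mathbf{K}}_1 v,v\rangle \;=\;\int_0^1 \bigl\|(M_r)^{\ast}v\bigr\|^2\,dr,\qquad M_r:=\tilde T_1\tilde T_r^{-1}.
\]
This presentation already shows $\tilde{\mathbf{K}}_1$ is positive semi-definite, so invertibility and the norm bound will both follow from a uniform estimate $\|(M_r)^{\ast}v\|\geq c\|v\|$.

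Next I would bound $\|M_r^{-1}\|$ by a direct Gronwall argument. Differentiating the product $N_r:=M_r^{-1}=\tilde T_r\tilde T_1^{-1}$ and using $(\ref{equ.4.3-1})$ gives
\[
\frac{d}{dr}N_r \;=\; -\tfrac{1}{2}\operatorname{Ric}_{\tilde u_r}\,N_r,\qquad N_1=I,
\]
and integrating backwards from $1$ to $r$ yields $N_r v = v+\int_r^1 \tfrac12 \operatorname{Ric}_{\tilde u_s}N_s v\,ds$. Since $\|\operatorname{Ric}\|\leq(d-1)N$, backward Gronwall gives $\|N_r v\|\leq e^{(d-1)N(1-r)/2}\|v\|\leq e^{(d-1)N/2}\|v\|$. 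Consequently $\|M_r^{-1}\|\leq e^{(d-1)N/2}$, so for every $v$,
\[
\|(M_r)^{\ast}v\| \;\geq\; \frac{\|v\|}{\|(M_r^{\ast})^{-1}\|} \;=\;\frac{\|v\|}{\|M_r^{-1}\|}\;\geq\; e^{-(d-1)N/2}\|v\|.
\]

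Combining these two steps,
\[
\langle \tilde{\mathbf{K}}_1 v,v\rangle \;\geq\;\int_0^1 e^{-(d-1)N}\|v\|^2\,dr \;=\; e^{-(d-1)N}\|v\|^2,
\]
which shows $\tilde{\mathbf{K}}_1$ is positive definite (hence invertible) and $\|\tilde{\mathbf{K}}_1^{-1}\|\leq e^{(d-1)N}$, as claimed. The main subtlety is that bounding $\|\tilde T_1\|$ and $\|\tilde T_r^{-1}\|$ separately via Lemma \ref{lem.4.3} and multiplying the two estimates would lose a factor of two in the exponent (yielding only $e^{2(d-1)N}$); the saving comes from treating $M_r^{-1}=\tilde T_r\tilde T_1^{-1}$ as the solution to a single linear ODE on $[r,1]$ with initial data $I$ at $s=1$, so that only one factor of $e^{(d-1)N/2}$ is incurred.
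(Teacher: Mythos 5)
Your proof is correct, and it is not merely a stylistic variant of the paper's argument: it actually achieves the bound $\|\tilde{\mathbf{K}}_1^{-1}\|\leq e^{(d-1)N}$ by a genuinely different route, while the paper's own proof, as written, does not. Both proofs start from the same positive-semi-definite representation $\tilde{\mathbf{K}}_1=\int_0^1(\tilde T_1\tilde T_r^{-1})(\tilde T_1\tilde T_r^{-1})^*\,dr$ and the identity $\langle\tilde{\mathbf{K}}_1 v,v\rangle=\int_0^1\|(\tilde T_1\tilde T_r^{-1})^*v\|^2\,dr$. From there, the paper estimates $\|\tilde T_1\|$ and $\|\tilde T_r^{-1}\|$ separately via Lemma \ref{lem.4.3}, each by $e^{\frac{1}{2}(d-1)N}$, and multiplying gives the displayed chain ending in $\langle\tilde{\mathbf{K}}_1 v,v\rangle\geq e^{-2(d-1)N}\|v\|^2$; yet the very next line asserts $eig(\tilde{\mathbf{K}}_1)\subset[e^{-(d-1)N},\infty)$, which does not follow from what was just proved (the displayed inequality only yields $\|\tilde{\mathbf{K}}_1^{-1}\|\leq e^{2(d-1)N}$). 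Your approach removes this gap: by observing that $\tilde T_r\tilde T_1^{-1}$ solves the single linear ODE $\frac{d}{dr}N_r=-\frac12\operatorname{Ric}_{\tilde u_r}N_r$ with terminal condition $N_1=I$ on the interval $[r,1]$, a single backward Gronwall gives $\|(\tilde T_1\tilde T_r^{-1})^{-1}\|\leq e^{\frac{1}{2}(d-1)N}$, losing only one exponential factor instead of two, and hence $\langle\tilde{\mathbf{K}}_1 v,v\rangle\geq e^{-(d-1)N}\|v\|^2$ exactly as the lemma statement demands. In short, your diagnosis of the ``factor-of-two'' loss is correct, and your Gronwall-on-the-product argument is what the paper's conclusion implicitly needs; the paper's proof should be corrected along these lines.
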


\begin{proof} 
	Since
	\[
	\tilde{\mathbf{K}}_1:=\int_{0}^{1}\left(\tilde{T}_1\tilde{T}_r^{-1}\right)\left(\tilde{T}_1\tilde{T}_r^{-1}\right)^{\ast}dr
	\]
	is a symmetric positive semi-definite operator such that
	\[
	\left\langle \tilde{\mathbf{K}}_1v,v\right\rangle =\int_{0}^{1}\left\Vert \left(\tilde{T}_1\tilde{T}_r^{-1}\right)^{\ast}v\right\Vert ^{2}dr\text{  }\forall v\in\mathbb{C}^{d}.
	\]
	Apply Lemma \ref{lem.4.3} to the expression given;
	\begin{align*}
	\left\langle \tilde{\mathbf{K}}_1v,v\right\rangle  & \geq\int_{0}^{1}e^{-\left(d-1\right)N}\left\Vert \left(\tilde{T}_r^{-1}\right)^{\ast}v\right\Vert ^{2}dr\geq\int_{0}^{1}e^{-2\left(d-1\right)N}\left\Vert v\right\Vert ^{2}dr=e^{-2\left(d-1\right)N}\left\Vert v\right\Vert ^{2}
	\end{align*}
	from which it follows that $eig\left(\tilde{\mathbf{K}}_1\right)\subset\lbrack e^{-\left(d-1\right)N},\infty)$ and $\left\Vert\tilde{\mathbf{K}}_1^{-1}\right\Vert=\frac{1}{\min\left\{\lambda:\lambda\in eig(\tilde{\mathbf{K}}_1)\right\}}\leq e^{(d-1)N}$.
\end{proof}
\begin{definition} \label{def.4.7-1}For each $X\in\Gamma\left(\tilde{T}M\right)$
	define two $\nu-$equivalent maps $\tilde{H}:W_o\left(M\right)\to \mathbb{R}^d$ and $\tilde{J}:W_o\left(M\right)\to H\left(\mathbb{R}^d\right)$ by
	\begin{equation}
	\tilde{H}=\tilde{u}_1^{-1}X\circ \Ep\label{equ.4.8-1}
	\end{equation}
	and 
	\begin{equation}
	\tilde{J}_s:=\mathbf{\tilde{K}}_s\mathbf{\tilde{K}}_1^{-1}\tilde{H}\text{ for }s\in \left[0,1\right].\label{equ.4.9-1}
	\end{equation}
\end{definition}
\begin{notation} \label{not.4.9}Given a measurable function $h:W_o\left(M\right)\to H\left(\mathbb{R}^d\right)$, let $Z_h:W_o\left(M\right)\to H\left(\mathbb{R}^d\right)$ be the solution to the following initial value problem:
	\[
	\begin{cases}
	{Z_h}^{\prime}\left(s\right)=-\frac{1}{2}Ric_{\tilde{u}_s}Z_h\left(s\right)+h^{\prime}_s\\
	Z_h\left(0\right)=0.
	\end{cases}
	\]

\end{notation}

\begin{definition}[Orthogonal Lift on $W_o(M)$] \label{def Oc}For any $X\in\Gamma\left(TM\right),$
	define $\tilde{X}\in \mathcal{X}$ as follows.
	\[
	\tilde{X}_s=X^{Z_{\Phi}}_s:=\tilde{u}_sZ_{\Phi}\left(s\right)\text{ for }0\leq s \leq 1
	\]
	where 
	\[
	\Phi_s=\int_{0}^{s}\left(\tilde{T}_{\tau}^{-1}\right){}^{\ast}\left[\int_{0}^{1}\left(\tilde{T}^{\ast}_r\tilde{T}_r\right){}^{-1}dr\right]^{-1}\tilde{T}_1^{-1}\tilde{H}d\tau.
	\]
Given $f\in \mathcal{FC}_b^1$, define the \textbf{gradient operator} $Df\in \mathcal{X}$ as follows,
\begin{equation}
D_sf:=\tilde{u}_s\sum_{i=1}^{n}(s\wedge s_i)\tilde{u}^{-1}_{s_i}grad_iF\label{gradient}
\end{equation}
where $F(\Sigma_{s_1},\cdots,\Sigma_{s_n})$ is a representation of $f$ and $grad_iF$ is the differential of $F$ with respect to the $ith$ variable. 

Then we define $\tilde{X}f:=\left<Df,\tilde{X}\right>_{G^1}$.
\end{definition}
Since $\mathcal{FC}_b^1$ is dense in $L^2\left(W_o(M),\nu\right)$, $\tilde{X}$ can be viewed as a densely defined operator on $L^2\left(W_o(M),\nu\right)$ which admits an integration by parts formula as below.
\begin{theorem}\label{lem.5.1}
	For any $f,g\in \mathcal{FC}^1_b$, we have
	\begin{equation}
	\mathbb{E}_\nu\left[\tilde{X}f\cdot g\right]=\mathbb{E}_\nu\left[f\cdot \tilde{X}^{tr,\nu}g\right],
	\end{equation}
	where 
	\begin{align*}
	\tilde{X}^{\operatorname{tr},\nu}= & -\tilde{X}+\sum_{\alpha=1}^{d}\left\langle \tilde{C}\tilde{H},e_{\alpha}\right\rangle \int_{0}^{1}\left\langle \left(\tilde{T}_s^{-1}\right)^{\ast}e_{\alpha},d\beta_{s}\right\rangle+\sum_{\alpha=1}^{d}\left\langle -X^{Z_{\alpha}}\left(\tilde{C}\tilde{H}\right),e_{\alpha}\right\rangle
	\end{align*}
	and
	\[\tilde{C}=\left[\int_{0}^{1}\left(\tilde{T}^{\ast}_r\tilde{T}_r\right){}^{-1}dr\right]^{-1}\tilde{T}_1^{-1}.\]
\end{theorem}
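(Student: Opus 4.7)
The plan is to reduce the anticipating integration-by-parts formula for $\tilde{X}$ to Driver's classical IBP formula for adapted Cameron-Martin vector fields. The key observation is that $\tilde{X}$ admits a finite decomposition into adapted vector fields weighted by anticipating scalar functionals. First I would set $a_\alpha := \langle \tilde{C}\tilde{H}, e_\alpha\rangle$ so that $\tilde{C}\tilde{H} = \sum_{\alpha=1}^{d} a_\alpha e_\alpha$, and let $Z_\alpha$ be the solution of the linear ODE
\[
Z_\alpha'(s) + \tfrac{1}{2}Ric_{\tilde{u}_s}Z_\alpha(s) = (\tilde{T}_s^{-1})^{\ast} e_\alpha,\qquad Z_\alpha(0)=0.
\]
Because $\tilde{T}_r$ and $Ric_{\tilde{u}_r}$ depend only on $\{\tilde{u}_\tau\}_{\tau\le r}$, each $X^{Z_\alpha}_s := \tilde{u}_s Z_\alpha(s)$ is an \emph{adapted} Cameron-Martin vector field. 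Using linearity of the ODE in Notation~\ref{not.4.9} together with $\Phi_s' = (\tilde{T}_s^{-1})^{\ast}\tilde{C}\tilde{H} = \sum_\alpha a_\alpha (\tilde{T}_s^{-1})^{\ast} e_\alpha$ (with $a_\alpha$ independent of $s$), I obtain the key decomposition
\[
\tilde{X} \;=\; \sum_{\alpha=1}^{d} a_\alpha X^{Z_\alpha},
\]
so that $\tilde{X}f = \sum_\alpha a_\alpha X^{Z_\alpha} f$ for $f\in\mathcal{FC}_b^1$.

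For $f,g \in \mathcal{FC}_b^1$, I would then compute $\mathbb{E}_\nu[g\tilde{X}f] = \sum_\alpha \mathbb{E}_\nu[a_\alpha g\cdot X^{Z_\alpha}f]$ by applying the Leibniz rule to $X^{Z_\alpha}(fa_\alpha g)$:
\[
a_\alpha g\cdot X^{Z_\alpha}f \;=\; X^{Z_\alpha}(fa_\alpha g) - fg\cdot X^{Z_\alpha}a_\alpha - fa_\alpha\cdot X^{Z_\alpha}g.
\]
Summing on $\alpha$ and recognizing $\sum_\alpha a_\alpha X^{Z_\alpha}g = \tilde{X}g$ yields
\[
\mathbb{E}_\nu[g\tilde{X}f] + \mathbb{E}_\nu[f\tilde{X}g] \;=\; \sum_\alpha \mathbb{E}_\nu\!\bigl[X^{Z_\alpha}(fa_\alpha g)\bigr] \;-\; \sum_\alpha \mathbb{E}_\nu[fg\cdot X^{Z_\alpha}a_\alpha].
\]
Since $X^{Z_\alpha}$ is adapted and satisfies $Z_\alpha' + \tfrac{1}{2}Ric_{\tilde{u}}Z_\alpha = (\tilde{T}^{-1})^{\ast}e_\alpha$, Driver's classical integration-by-parts formula \cite{Driver1992} converts the first sum into a stochastic integral term
\[
\sum_\alpha \mathbb{E}_\nu\!\bigl[X^{Z_\alpha}(fa_\alpha g)\bigr] \;=\; \mathbb{E}_\nu\!\left[\, fg \sum_\alpha a_\alpha \int_0^1 \langle (\tilde{T}_s^{-1})^{\ast} e_\alpha, \, d\beta_s\rangle \right].
\]

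Writing $a_\alpha = \langle \tilde{C}\tilde{H}, e_\alpha\rangle$ and $-X^{Z_\alpha}a_\alpha = \langle -X^{Z_\alpha}(\tilde{C}\tilde{H}), e_\alpha\rangle$, and moving $\mathbb{E}_\nu[f\tilde{X}g]$ to the right, the two displays above combine into $\mathbb{E}_\nu[g\tilde{X}f] = \mathbb{E}_\nu[f\tilde{X}^{\operatorname{tr},\nu}g]$ with precisely the formula stated in the theorem. The main obstacle is not the algebraic bookkeeping but the analytic justification: Driver's formula has to be applied to the non-cylinder functional $fa_\alpha g$, which requires $a_\alpha$, $X^{Z_\alpha}a_\alpha$, and the stochastic integral $\int_0^1 \langle (\tilde{T}_s^{-1})^{\ast} e_\alpha, d\beta_s\rangle$ to lie in $L^q(\nu)$ for every $q<\infty$, and the directional derivative $X^{Z_\alpha}a_\alpha$ to be interpreted in the Malliavin-differentiable sense. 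The deterministic bounds on $\tilde{T}_s$, $\tilde{T}_s^{-1}$, and $\tilde{\mathbf{K}}_1^{-1}$ from Lemmas~\ref{lem.4.3} and~\ref{lem.4.6} provide uniform control on $a_\alpha$; the remaining integrability and Malliavin smoothness come from the path-space Malliavin calculus machinery alluded to at the start of Section~\ref{cha.4}. Extending the IBP identity from $f,g \in \mathcal{FC}_b^1$ to the non-cylinder product $fa_\alpha g$ is the main technical step one must handle carefully.
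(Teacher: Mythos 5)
Your proposal is correct and matches the intended strategy: the paper itself defers the proof to ``Lemma 4.23 of IVP,'' but the decomposition $\tilde{X}=\sum_{\alpha}\langle\tilde{C}\tilde{H},e_{\alpha}\rangle X^{Z_{\alpha}}$ into adapted Cameron--Martin vector fields $X^{Z_{\alpha}}$ weighted by the anticipating scalars $\langle\tilde{C}\tilde{H},e_{\alpha}\rangle$, followed by an appeal to Driver's adapted integration-by-parts formula $(X^{Z_h})^{\mathrm{tr},\nu}=-X^{Z_h}+\int_0^1\langle h_s',d\beta_s\rangle$, is exactly what the reference supplies. The only cosmetic difference is bookkeeping: you apply the Leibniz rule to $X^{Z_\alpha}(f\,a_\alpha g)$ and then use the adjoint against the constant function $1$, whereas the reference applies the adapted IBP directly with test function $g\,a_\alpha$; the two are algebraically equivalent, and in both cases the genuine work is the domain/Malliavin-differentiability verification you correctly flag (that $a_\alpha=\langle\tilde{C}\tilde{H},e_\alpha\rangle$ lies in $\mathcal{D}(X^{Z_\alpha})$ and everything has all moments, using the deterministic bounds on $\tilde{T},\tilde{T}^{-1},\tilde{\mathbf{K}}_1^{-1}$).
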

\begin{proof}
	See Lemma 4.23 of IVP.
\end{proof}
\begin{remark}
The orthogonal lift $\tilde{X}$ on $W_o(M)$ can be viewed as a stochastic extension of the orthogonal lift in the sense of Theorem \ref{the.6.11} where the path space is the curved Cameron-Martin space $H(M)$ and the Riemannian metric is a damped metric related to Ricci curvature. Interested readers may refer to IVP for more details in this topic.
\end{remark}

\section{Convergence Result \label{cha.6}}
In this section $M$ is a complete Riemannian manifold with non--positive and bounded sectional curvature. Other conditions will be mentioned specifically in theorems if needed.   
First we modify and abuse a few notations we have defined before in order to avoid messy arguments.
\begin{notation}\label{Not1}
Recall that $\beta:W_o\left(M\right)\to W_0\left(\mathbb{R}^d\right)$ is the Brownian motion on $\mathbb{R}^d$ defined in Definition \ref{def sar}. We have also defined $\beta_\mathcal{P}:W_o\left(M\right)\to H_\mathcal{P}\left(\mathbb{R}^d\right)$ to be the linear approximation to Brownian motion on $\mathbb{R}^d$ as in Notation \ref{not2.26}. Now denote by $u_\mathcal{P}:=\eta\circ \beta_\mathcal{P}$ the development map of $\beta_\mathcal{P}$. Notice that $\phi\circ \beta_\mathcal{P}\in H_\mathcal{P}\left(M\right)$--$\nu$ a.s, here $\phi$ is the rolling map onto $H\left(M\right)$. So after identifying  $C_{\mathcal{P},i}$, $S_{\mathcal{P},i}$
and hence $f_{\mathcal{P},i}$ with $C_{\mathcal{P},i}\circ\phi\circ \beta_\mathcal{P}$,
$S_{\mathcal{P},i}\circ\phi\circ \beta_\mathcal{P}$
and $f_{\mathcal{P},i}\circ\phi\circ \beta_\mathcal{P}$,
we can view them as maps from $W_{o}\left(M\right)$ to $End\left(\mathbb{R}^d\right)$. The point here is to make the notations short and it should not cause confusions after this explanation. 
\end{notation}
\begin{convention}
We use $C$ to denote a generic constant. It can vary from line to line. In this section it depends only on an upper bound of the mesh size $\left|\mathcal{P}\right|:=\frac{1}{n}$ of the partition $\mathcal{P}$ (We may allow $C$ to depend on some other factors as well, but this is good enough for our purpose of taking the limit as $\left|\mathcal{P}\right|\to 0$. )
\end{convention}
\subsection{Convergence of $\tilde{X}_{\mathcal{P}}$ to $\tilde{X}$}
\subsubsection{Some Useful Estimates for $\left\{ C_{\mathcal{P},i}\right\} _{i=1}^{n}$ and
$\left\{ S_{\mathcal{P},i}\right\} _{i=1}^{n}$\label{sub.6.1.1}}

We apply Proposition \ref{prop A-2} to get some commonly used estimates listed as Lemmas \ref{lem1}.
\begin{lemma}\label{lem1}For any $i\in\{1,...,n\}$ and $s\in\left[s_{i-1},s_{i}\right]$, we have
\begin{equation}
\left\vert C_{\mathcal{P},i}\left(s\right)\right\vert \leq\cosh\left(\sqrt{N}\left\vert \Delta_{i}\beta\right\vert \right)\leq e^{\frac{1}{2}N\left\vert \Delta_{i}\beta\right\vert ^{2}},\label{lem.6.1}
\end{equation}
\begin{equation}
\left\vert S_{\mathcal{P},i}(s)\right\vert \leq\sqrt{N}\left\vert \Delta_{i}\beta\right\vert e^{\frac{1}{2}N\left\vert \Delta_{i}\beta\right\vert ^{2}},\label{lem.6.2}
\end{equation}
\begin{equation}
\left\vert S_{\mathcal{P},i}-\Delta_{i}I\right\vert \leq\frac{N\left\vert \Delta_{i}\beta\right\vert ^{2}\Delta_{i}}{6}e^{\frac{1}{2}N\left\vert \Delta_{i}\beta\right\vert ^{2}},\label{lem.6.3}
\end{equation}
\begin{equation}
\left\vert C_{\mathcal{P},i}-I\right\vert \leq\frac{N\left\vert \Delta_{i}\beta\right\vert ^{2}}{2}e^{\frac{1}{2}N\left\vert \Delta_{i}\beta\right\vert ^{2}}\label{lem.6.4}
\end{equation}
\end{lemma}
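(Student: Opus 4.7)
The plan is to apply Proposition \ref{prop A-2} componentwise on each subinterval. First I would fix $i$ and observe that on $[s_{i-1},s_i]$ the matrix-valued functions $C_{\mathcal{P},i}(s)$ and $S_{\mathcal{P},i}(s)$ solve the same linear matrix ODE
\[
y''(s) = A_{\mathcal{P},i}(s)\,y(s), \qquad A_{\mathcal{P},i}(s)(\cdot) := R_{u_s}\!\left(b'(s_{i-1}+),\cdot\right)b'(s_{i-1}+),
\]
differing only in initial data. Because $b'(s_{i-1}+) = \Delta_i\beta/\Delta_i$ is constant in $s$ on $[s_{i-1},s_i]$, the curvature bound $\|R\|\leq N$ yields
\[
\|A_{\mathcal{P},i}(s)\| \leq N\,|b'(s_{i-1}+)|^2 = \frac{N|\Delta_i\beta|^2}{\Delta_i^2} =: k_i^2,
\]
and non-positive sectional curvature makes $A_{\mathcal{P},i}(s)$ symmetric positive semi-definite. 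Thus the natural scale is $k_i(s-s_{i-1})\leq k_i\Delta_i = \sqrt{N}\,|\Delta_i\beta|$ throughout the subinterval.

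Next I would invoke Proposition \ref{prop A-2}, which for a symmetric positive semi-definite coefficient delivers the standard hyperbolic bounds
\[
\|C_{\mathcal{P},i}(s)\| \leq \cosh\!\bigl(k_i(s-s_{i-1})\bigr), \qquad \|S_{\mathcal{P},i}(s)\| \leq \frac{\sinh\!\bigl(k_i(s-s_{i-1})\bigr)}{k_i},
\]
together with the analogous deviation bounds $\|C_{\mathcal{P},i}(s)-I\| \leq \cosh(k_i(s-s_{i-1}))-1$ and $\|S_{\mathcal{P},i}(s)-(s-s_{i-1})I\| \leq \sinh(k_i(s-s_{i-1}))/k_i - (s-s_{i-1})$. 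Evaluating at the worst point $s=s_i$ and using monotonicity of $\cosh$ and $\sinh$ on $[0,\infty)$ replaces the argument by $\sqrt{N}\,|\Delta_i\beta|$, giving the four estimates in their hyperbolic-function form.

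Finally I would convert these into the exponential form stated in \eqref{lem.6.1}--\eqref{lem.6.4} by the elementary term-by-term power-series comparisons
\[
\cosh(x)\leq e^{x^2/2}, \quad \sinh(x)\leq x\,e^{x^2/2}, \quad \cosh(x)-1\leq \tfrac{x^2}{2}e^{x^2/2}, \quad \sinh(x)-x\leq \tfrac{x^3}{6}e^{x^2/2},
\]
each verified by comparing coefficients, and applied with $x=\sqrt{N}\,|\Delta_i\beta|$; the factor $\Delta_i$ in \eqref{lem.6.3} and the factor $1/k_i$ in \eqref{lem.6.2} reappear by the scaling $k_i^{-1}=\Delta_i/(\sqrt{N}\,|\Delta_i\beta|)$. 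There is no real obstacle here: the whole argument is a direct application of Proposition \ref{prop A-2} combined with elementary inequalities, so the only care required is correct bookkeeping of the intrinsic scale $k_i$ versus the geometric scale $\Delta_i$.
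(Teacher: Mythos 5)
Your proposal is correct and takes essentially the same route as the paper: Lemma \ref{lem1} is just Proposition \ref{prop A-2} applied on each subinterval $[s_{i-1},s_i]$ with $\xi = b'(s_{i-1}+) = \Delta_i\beta/\Delta_i$ and elapsed time $\tau = s - s_{i-1} \le \Delta_i$, so that $\sqrt{N}\,|\xi|\,\tau \le \sqrt{N}\,|\Delta_i\beta|$, which is exactly the scaling you carry out. One bookkeeping caveat: Proposition \ref{prop A-2} requires only the curvature norm bound, not positive semi-definiteness of $A_{\mathcal{P},i}$ (that hypothesis is used in Proposition \ref{prop A-1}, not here), and it already states all four estimates in exponential form, so the hyperbolic deviation bounds $\cosh(k_i\tau)-1$ and $\sinh(k_i\tau)/k_i - \tau$ you attribute to it are not part of the cited statement; they would need a separate (easy) Gronwall derivation, and in any case are unnecessary since direct substitution into the exponential forms of Proposition \ref{prop A-2} together with monotonicity in $\tau$ gives the lemma immediately.
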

\begin{lemma} \label{lem.6.5}For all $\gamma\in\left(0,\frac{1}{2}\right),$
define $K_{\gamma}:=\underset{s,t\in\left[0,1\right],s\neq t}{sup}\left\{ \frac{\left\vert \beta_{t}-\beta_{s}\right\vert }{\left\vert t-s\right\vert ^{\gamma}}\right\} $,
then there exists an $\epsilon_{\gamma}>0$ such that $\mathbb{E}\left[e^{\epsilon K_{\gamma}^{2}}\right]<\infty.$
\end{lemma}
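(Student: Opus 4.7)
The plan is to recognize $K_\gamma$ as the $\gamma$-H\"older seminorm of the Brownian path $\beta$ and then to invoke Fernique's theorem on an appropriate Gaussian Banach space. First I would fix $\gamma\in\left(0,\tfrac{1}{2}\right)$ and pick an auxiliary H\"older exponent $\gamma'\in(\gamma,\tfrac{1}{2})$. Let $B^{\gamma'}$ denote the separable Banach space obtained as the closure of $C^\infty([0,1],\mathbb{R}^d)$ (vanishing at $0$) in the H\"older norm
\[
\|f\|_{\gamma'}:=\sup_{0\leq s<t\leq 1}\frac{|f(t)-f(s)|}{|t-s|^{\gamma'}}.
\]
By the classical Kolmogorov continuity theorem applied to $\beta$ (with $\mathbb{E}|\beta_t-\beta_s|^{p}\leq C_p|t-s|^{p/2}$ for every $p\geq 1$), one obtains $\|\beta\|_{\gamma'}<\infty$ almost surely, and standard mollification shows that the Brownian path indeed lies in $B^{\gamma'}$, so the law of $\beta$ is a centered Gaussian Radon measure on $B^{\gamma'}$.

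Next I would apply Fernique's theorem to this Gaussian measure: there exists $\epsilon'>0$ with $\mathbb{E}\bigl[e^{\epsilon'\|\beta\|_{\gamma'}^{2}}\bigr]<\infty$. Since $\gamma<\gamma'$, the simple inequality
\[
K_\gamma\;=\;\sup_{s\neq t}\frac{|\beta_t-\beta_s|}{|t-s|^{\gamma}}\;\leq\;\|\beta\|_{\gamma'}\sup_{s\neq t}|t-s|^{\gamma'-\gamma}\;\leq\;\|\beta\|_{\gamma'}
\]
gives $K_\gamma^2\leq \|\beta\|_{\gamma'}^2$ pointwise, so choosing $\epsilon_\gamma:=\epsilon'$ yields $\mathbb{E}\bigl[e^{\epsilon_\gamma K_\gamma^2}\bigr]\leq \mathbb{E}\bigl[e^{\epsilon'\|\beta\|_{\gamma'}^2}\bigr]<\infty$, as required.

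The main technical point is verifying that the law of $\beta$ is supported on the separable Banach space $B^{\gamma'}$ (not just on the nonseparable space $C^{\gamma'}$), since Fernique's theorem is cleanly stated for Gaussian Radon measures on separable Banach spaces. This is a standard but nontrivial check; alternatively, one can bypass it entirely by replacing Fernique by a direct application of the Garsia--Rodemich--Rumsey inequality with $\Psi(x)=e^{\lambda x^2}-1$ and $p(u)=u^{\gamma+1/2}$, which bounds $\|\beta\|_\gamma$ by a multiple of $\sqrt{\log(1+B)}$ for
\[
B:=\int_0^1\!\!\int_0^1\Psi\!\left(\frac{|\beta_t-\beta_s|}{p(|t-s|)}\right)ds\,dt,
\]
and then uses $\mathbb{E}[B]<\infty$ (from Gaussian tails of $|\beta_t-\beta_s|/|t-s|^{1/2}$ for $\lambda$ small enough) together with Jensen to extract a Gaussian exponential moment of $K_\gamma$. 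Either route handles the obstacle; Fernique seems shorter for this paper's purposes.
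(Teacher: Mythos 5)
Your proposal is correct and takes essentially the same route as the paper, which simply cites Fernique's theorem (Theorem~3.2 in Kuo's \emph{Gaussian measures in Banach spaces}) applied to the Brownian path viewed as a Gaussian element of a H\"older--type Banach space. The extra detour through an auxiliary exponent $\gamma'\in(\gamma,\tfrac12)$ and the alternative Garsia--Rodemich--Rumsey argument are not needed for the paper's purposes, but they are correct and supply the separability check that the paper leaves implicit.
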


\begin{proof} See Fernique's Theorem (Theorem 3.2) in \cite{Kuo1975}.
\end{proof}

\begin{remark} From Lemma \ref{lem.6.5}, it is easy to see any polynomial
of $\epsilon K_{\gamma}$ has finite moments of all orders. \end{remark}

\subsubsection{Size Estimates of $f_{\mathcal{P},i}\left(s\right)$ \label{sub.6.1.2}}
Recall from Definition \ref{def.2.1-1} that $f_{\mathcal{P},i}:W_o\left(M\right)\times \left[0,1\right]\to End\left(\mathbb{R}^d\right)\text{  }0\leq i\leq n$ is given by
\[
f_{\mathcal{P},i}\left(s\right)=\begin{cases}
0 & s\in\left[0,s_{i-1}\right]\\
\frac{S_{\mathcal{P},i}\left(s\right)}{\Delta_{i}} & s\in\left[s_{i-1},s_{i}\right]\\
\frac{C_{\mathcal{P},j}\left(s\right)C_{\mathcal{P},j-1}\cdot\cdots\cdot C_{\mathcal{P},i+1}S_{\mathcal{P},i}}{\Delta_{i}} & s\in\left[s_{j-1},s_{j}\right]\text{ for }j=i+1,\cdots,n

\end{cases}
\]
with the convention that $S_{\mathcal{P},0}\equiv\left\vert \mathcal{P}\right\vert I$ and $f_{\mathcal{P},0}\equiv I$.

Using the estimates in Subsection \ref{sub.6.1.1},
it is easy to get an estimate of $f_{\mathcal{P},i}\left(s\right)$.

\begin{lemma} \label{lem.7.8}Recall from the begining of this section that $n:=\frac{1}{\left|\mathcal{P}\right|}$ and $N$ is the sectional curvature bound. For each $q\geq1$, we have
\begin{equation}
\sup_{n\geq 2qN}\mathbb{E}\left[\underset{i,j\in\left\{ 0,\cdots,n\right\} }{\sup}\left\vert f_{\mathcal{P},i}\left(s_j\right)\right\vert ^{q}\right]<\infty.\label{eq2}
\end{equation}
\end{lemma}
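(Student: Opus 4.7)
My plan is to reduce everything to the independent increments $\{\Delta_k\beta\}_{k=1}^n$ by exploiting the piecewise-product structure of $f_{\mathcal{P},i}$, and then to handle the expectation through a Gaussian chi-squared MGF computation which is uniform in $n\geq 2qN$.

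First, I would write out $f_{\mathcal{P},i}(s_j)$ using Definition \ref{def.2.1-1}. For $j<i$ it vanishes, for $i=0$ it equals the identity, for $j=i$ it is $S_{\mathcal{P},i}(s_i)/\Delta_i$, and for $j>i$ it equals
\[
f_{\mathcal{P},i}(s_j) \;=\; C_{\mathcal{P},j}\,C_{\mathcal{P},j-1}\cdots C_{\mathcal{P},i+1}\,\frac{S_{\mathcal{P},i}}{\Delta_i}.
\]
Applying Lemma \ref{lem1} (specifically the $C_{\mathcal{P},k}$ bound $(\ref{lem.6.1})$, and the corresponding control on $S_{\mathcal{P},i}/\Delta_i$ derivable from $(\ref{lem.6.3})$), each factor is dominated by $\exp\!\bigl(\tfrac{N}{2}|\Delta_k\beta|^2\bigr)$. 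Multiplying and taking the supremum over the telescoping range $1\le i\le j\le n$ yields the uniform estimate
\[
\sup_{0\leq i,j\leq n}\bigl\lvert f_{\mathcal{P},i}(s_j)\bigr\rvert \;\leq\; \exp\!\Bigl(\tfrac{N}{2}\sum_{k=1}^{n}|\Delta_k\beta|^2\Bigr),
\]
where the trivial cases $i=0$ or $j<i$ are absorbed since the right-hand side is at least $1$.

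Next I would estimate the $q$th moment of the right-hand side using the independence of $\{\Delta_k\beta\}_k$ under $\mu$ (recall from Fact \ref{fact 1} that $\beta$ is a $d$-dimensional Brownian motion, so each $|\Delta_k\beta|^2/\Delta_k$ is $\chi^2_d$-distributed and these are mutually independent). Hence
\[
\mathbb{E}\Bigl[\,\sup_{i,j}|f_{\mathcal{P},i}(s_j)|^{q}\Bigr] \;\le\; \prod_{k=1}^{n}\mathbb{E}\!\left[\exp\!\Bigl(\tfrac{qN}{2}|\Delta_k\beta|^2\Bigr)\right] \;=\; \prod_{k=1}^n (1-qN\Delta_k)^{-d/2} \;=\; \bigl(1-qN/n\bigr)^{-nd/2},
\]
where the MGF identity $\mathbb{E}[e^{tZ}]=(1-2t)^{-d/2}$ for $Z\sim\chi^2_d$ is valid precisely when $2t<1$, i.e.\ here when $qN\Delta_k<1$, which forces $n>qN$.

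Finally I would show that this quantity is uniformly bounded in $n\geq 2qN$. The hypothesis ensures $qN/n\leq 1/2$, and a standard estimate $-\log(1-x)\leq x+2x^2$ for $x\in[0,1/2]$ gives
\[
-\frac{nd}{2}\log\!\bigl(1-qN/n\bigr) \;\leq\; \frac{qNd}{2}+\frac{d(qN)^{2}}{n} \;\leq\; qNd,
\]
so $(1-qN/n)^{-nd/2}\leq e^{qNd}$, proving the claim. The only real subtlety is the factor of $2$ in the hypothesis $n\geq 2qN$: it is needed not merely to make the MGF finite (which requires $n>qN$), but to keep the product bounded \emph{uniformly} in $n$ as $n$ is allowed to approach the critical threshold; the stated margin ensures the logarithm expansion above is controlled by a constant independent of $n$.
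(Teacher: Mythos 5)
Your proof follows the same two-step structure as the paper's (a pointwise exponential bound in terms of $\sum_{k}|\Delta_k\beta|^2$, then a Gaussian moment computation factored over the independent increments); your explicit chi-squared MGF calculation and the uniformity argument in $n\geq 2qN$ simply unpack what the paper delegates to Lemma \ref{cBM}, and your remark on why the factor of $2$ is needed to keep the product bounded as $n$ nears the threshold is exactly the right reading of that hypothesis.

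One slip is worth flagging in the pointwise step. You claim that $|S_{\mathcal{P},i}/\Delta_i|\leq e^{\frac{N}{2}|\Delta_i\beta|^2}$ is "derivable from $(\ref{lem.6.3})$," but that estimate only yields $|S_{\mathcal{P},i}/\Delta_i|\leq 1+\frac{N|\Delta_i\beta|^2}{6}\,e^{\frac{N}{2}|\Delta_i\beta|^2}$, and this right-hand side \emph{strictly exceeds} $e^{\frac{N}{2}|\Delta_i\beta|^2}$ once $\frac{N}{2}|\Delta_i\beta|^2\geq 3$ (since then $e^{x}(1-x/3)\leq 0<1$). Your tighter bound is nevertheless true — it comes from the comparison $|S_\xi(s)/s|\leq\frac{\sinh(\sqrt{N}|\xi|s)}{\sqrt{N}|\xi|s}\leq\cosh\!\bigl(\sqrt{N}|\xi|s\bigr)\leq e^{\frac{1}{2}N|\xi|^2 s^2}$ in the spirit of Proposition \ref{prop A-2} — but you must cite that comparison, not $(\ref{lem.6.3})$. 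The paper takes the other route: it keeps $(\ref{lem.6.3})$ as written, factors out $e^{\frac{N}{2}|\Delta_i\beta|^2}$, absorbs the residual $\bigl(1+\frac{N|\Delta_i\beta|^2}{6}\bigr)^q\leq e^{\frac{qN}{6}|\Delta_i\beta|^2}$ into the $C$-products, and lands on the looser estimate $\sup_{i,j}|f_{\mathcal{P},i}(s_j)|^{q}\leq e^{qN\sum_{k=1}^{n}|\Delta_k\beta|^{2}}$ (constant $N$ rather than your $N/2$), which then uses up essentially the full margin $n\geq 2qN$ in the MGF. Either derivation works once the intermediate inequality is justified; just be sure to cite the bound you are actually invoking.
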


\begin{proof} For all $i,j\in\left\{ 0,\cdots,n\right\}$, if $j<i$, $f_{\mathcal{P},i}\left(s_{j}\right)\equiv0$. So we only
need to consider the case when $j\geq i$. 
Since
\[
f_{\mathcal{P},i}\left(s_{j}\right)=\frac{C_{\mathcal{P},j}C_{\mathcal{P},j-1}\cdot\cdots\cdot C_{\mathcal{P},i+1}S_{\mathcal{P},i}}{\Delta_{i}}, 
\]
so 
\[
\left\vert f_{\mathcal{P},i}\left(s_{j}\right)\right\vert ^{q}\leq\left\vert C_{\mathcal{P},j}\right\vert ^{q}\left\vert C_{\mathcal{P},j-1}\right\vert ^{q}\cdot\cdots\cdot\left\vert C_{\mathcal{P},i+1}\right\vert ^{q}\left\vert \frac{S_{\mathcal{P},i}}{\Delta_{i}}\right\vert ^{q}.
\]
Applying Eq.(\ref{lem.6.1}) and (\ref{lem.6.3}) to find
\begin{align*}
\left\vert f_{\mathcal{P},i}\left(s_{j}\right)\right\vert ^{q} & \leq e^{\frac{1}{2}qN\sum_{k=i}^{j}\left\vert \Delta_{k}\beta\right\vert ^{2}}\left(e^{-\frac{N}{2}\left\vert \Delta_{i}\beta\right\vert ^{2}}+\frac{N{\left\vert \Delta_{i}\beta\right\vert}^2 }{6}\right)^{q}\\
 & \leq e^{\frac{1}{2}qN\sum_{k=i}^{j}\left\vert \Delta_{k}\beta\right\vert ^{2}}\left(1+\frac{N{\left\vert \Delta_{i}\beta\right\vert}^2}{6}\right)^{q}\label{equ.7.1}\\
 & \leq e^{\frac{1}{2}qN\sum_{k=i}^{j}\left\vert \Delta_{k}\beta\right\vert ^{2}}e^{\frac{Nq{\left\vert \Delta_{i}\beta\right\vert}^2 }{6}}\\& \leq e^{qN\sum_{k=1}^{n}\left\vert \Delta_{k}\beta\right\vert ^{2}}.
\end{align*}
Since $e^{qN\sum_{k=1}^{n}\left\vert \Delta_{k}\beta\right\vert ^{2}}$
is independent of $i$ and $j$, we have
\begin{equation}
\underset{i,j\in\left\{ 1,\cdots,n\right\} }{\sup}\left\vert f_{\mathcal{P},i}\left(s_j\right)\right\vert ^{q}\leq e^{qN\sum_{k=1}^{n}\left\vert \Delta_{k}\beta\right\vert ^{2}}.\label{equ.7.2}
\end{equation}
Then Eq.(\ref{eq2}) follows from Lemma \ref{cBM} in Appendix \ref{App.B}.
\end{proof}

\begin{notation} \label{not.7.9}Given $n\in\mathbb{N}$ and $s\in\left[0,1\right],$
let $\underline{s}=s_{k-1}$ when $s\in\left[s_{k-1},s_{k}\right)$, $\left|\mathcal{P}\right|=\frac{1}{n}$ is the mesh size of the partition $\mathcal{P}$ and also let
\[
A_{\mathcal{P},k}\left(s\right):=R_{u_{\mathcal{P}}\left(s\right)}\left(\beta_{\mathcal{P}}^{\prime}\left(s_{k-1}+\right),\cdot\right)\beta_{\mathcal{P}}^{\prime}\left(s_{k-1}+\right).
\]
\end{notation}
\begin{lemma} \label{lem.7.10}For each $q\geq1$, $\gamma\in\left(0,\frac{1}{2}\right)$ there exists a constant $C$ such that for all $n>5qN$,
\begin{equation}
\mathbb{E}\left[\underset{i\in\left\{ 0,\cdots,n\right\} ,s\in\left[0,1\right]}{\sup}\left\vert f_{\mathcal{P},i}\left(s\right)-f_{\mathcal{P},i}\left(\underline{s}\right)\right\vert ^{q}\right]\leq C\left\vert \mathcal{P}\right\vert ^{2q\gamma}.\label{5qN}
\end{equation}
\end{lemma}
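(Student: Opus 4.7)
The plan is to decompose $f_{\mathcal{P},i}(s)-f_{\mathcal{P},i}(\underline s)$ using the recursive structure of Definition \ref{def.2.1-1}, then apply the Jacobi-equation bounds of Lemma \ref{lem1} on the single subinterval containing $s$, and finally absorb a $|\mathcal{P}|^{2\gamma}$ factor from the pathwise H\"older continuity of $\beta$ supplied by Lemma \ref{lem.6.5}.

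First I would fix $s$ and let $k$ be the unique index with $s\in[s_{k-1},s_k)$, so $\underline s=s_{k-1}$. Using $C_{\mathcal{P},k}(s_{k-1})=I$ and $S_{\mathcal{P},k}(s_{k-1})=0$, Definition \ref{def.2.1-1} gives
\[
f_{\mathcal{P},i}(s)-f_{\mathcal{P},i}(\underline s)=\begin{cases}0 & i>k,\\ \bigl(C_{\mathcal{P},k}(s)-I\bigr)\,f_{\mathcal{P},i}(s_{k-1})& i<k,\\ S_{\mathcal{P},k}(s)/\Delta_k & i=k.\end{cases}
\]
This reduces the problem to controlling $\sup_{s\in[s_{k-1},s_k]}|C_{\mathcal{P},k}(s)-I|$ and the analogous sup of the nontrivial part of $S_{\mathcal{P},k}(s)/\Delta_k$, then bundling with Lemma \ref{lem.7.8}-moments of $|f_{\mathcal{P},i}(s_{k-1})|$.

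Next I would re-run the proofs of Eqs.~(\ref{lem.6.3})--(\ref{lem.6.4}) as pointwise estimates in $s$ rather than at the right endpoint. Since $C_{\mathcal{P},k}$ and $S_{\mathcal{P},k}$ both satisfy Jacobi's equation with driving matrix of norm at most $N|\Delta_k\beta|^2/\Delta_k^2$ on $[s_{k-1},s_k]$, Proposition \ref{prop A-2} applies directly with the effective time $(s-s_{k-1})|\Delta_k\beta|/\Delta_k\le|\Delta_k\beta|$, and gives
\[
\sup_{s\in[s_{k-1},s_k]}|C_{\mathcal{P},k}(s)-I|\le \tfrac12 N|\Delta_k\beta|^{2}e^{\tfrac12 N|\Delta_k\beta|^{2}},
\]
along with the companion bound $\sup_{s\in[s_{k-1},s_k]}\bigl|S_{\mathcal{P},k}(s)-(s-s_{k-1})I\bigr|\le C|\Delta_k\beta|^{2}\Delta_k\,e^{\tfrac12 N|\Delta_k\beta|^{2}}$ obtained from the second-order Taylor expansion $S_{\mathcal{P},k}(s)=(s-s_{k-1})I+\int_{s_{k-1}}^s(s-r)A_{\mathcal{P},k}(r)S_{\mathcal{P},k}(r)\,dr$. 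Substituting the Lemma \ref{lem.6.5} bound $|\Delta_k\beta|\le K_\gamma|\mathcal{P}|^\gamma$ turns the $|\Delta_k\beta|^2$ factor into $K_\gamma^{2}|\mathcal{P}|^{2\gamma}$ uniformly in $k$.

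For the $i<k$ contribution, I would then factor out $|C_{\mathcal{P},k}(s)-I|$ and apply Cauchy--Schwarz to split the sup of the product against the Lemma \ref{lem.7.8} moment bound $\mathbb{E}[\sup_{i,j}|f_{\mathcal{P},i}(s_j)|^{2q}]<\infty$ (valid for $n>4qN$); the remaining random prefactor $K_\gamma^{2q}e^{\tfrac12 qN K_\gamma^2}e^{qN\sum|\Delta_k\beta|^2}$ has finite moments by Lemma \ref{lem.6.5} and Lemma \ref{cBM} in Appendix \ref{App.B}. The tighter condition $n>5qN$ leaves a bit of room to absorb the extra $e^{\tfrac12 qN K_\gamma^2}$ from $\sup_k e^{\tfrac12 N|\Delta_k\beta|^2}$ via a further H\"older splitting.

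The main obstacle is the ``diagonal'' case $i=k$, where $f_{\mathcal{P},k}$ is in its first nontrivial subinterval and the naive estimate $|S_{\mathcal{P},k}(s)/\Delta_k|\lesssim 1$ is not of the desired size $|\mathcal{P}|^{2\gamma}$. The remedy is to subtract off the linear part: write $S_{\mathcal{P},k}(s)/\Delta_k=(s-s_{k-1})I/\Delta_k+R_k(s)$ where, by the Taylor expansion above, $|R_k(s)|\le C K_\gamma^{2}|\mathcal{P}|^{2\gamma}e^{\tfrac12 N K_\gamma^2|\mathcal{P}|^{2\gamma}}$, and arrange that the leading $(s-s_{k-1})I/\Delta_k$ piece cancels against the appropriate reference value of $f_{\mathcal{P},i}$ on the same subinterval, so that only the Jacobi-curvature remainder $R_k$ survives. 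Putting the two cases together and taking $L^q$-norms then yields the claimed bound $C|\mathcal{P}|^{2q\gamma}$, and the integrability constraint $n>5qN$ is exactly what is needed to control the resulting exponential factors simultaneously.
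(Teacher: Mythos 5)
Your case analysis for $i<k$ is correct and, once unwound, gives essentially the same bound as the paper, just by a more explicit route. Writing $f_{\mathcal{P},i}(s)-f_{\mathcal{P},i}(s_{k-1})=(C_{\mathcal{P},k}(s)-I)f_{\mathcal{P},i}(s_{k-1})$ and applying the pointwise version of Eq.~(\ref{lem.6.4}) yields $\frac{1}{2}N|\Delta_k\beta|^2 e^{\frac12 N|\Delta_k\beta|^2}\sup_j|f_{\mathcal{P},i}(s_j)|$, which is the same quantity the paper reaches after Taylor-with-integral-remainder plus Gronwall. Combining this with $|\Delta_k\beta|\le K_\gamma|\mathcal{P}|^\gamma$, Lemma~\ref{lem.7.8}, Lemma~\ref{lem.6.5}, Lemma~\ref{cBM}, and H\"older's inequality closes the $i<k$ case in exactly the way the paper does. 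Using Cauchy--Schwarz plus the $2q$-moment bound from Lemma~\ref{lem.7.8} (hence $n>4qN$) versus a $\tfrac{1}{3}$-$\tfrac{1}{3}$-$\tfrac{1}{3}$ H\"older split with $n>5qN$ is a cosmetic difference.

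The genuine problem is the diagonal case $i=k$, and here your proposed remedy does not work. You correctly computed $f_{\mathcal{P},k}(s)-f_{\mathcal{P},k}(\underline{s})=S_{\mathcal{P},k}(s)/\Delta_k$, and Proposition~\ref{prop A-1} gives $\bigl|S_{\mathcal{P},k}(s)/\Delta_k\bigr|\ge (s-s_{k-1})/\Delta_k$, which tends to $1$ as $s\to s_k$. Subtracting the linear part $(s-s_{k-1})I/\Delta_k$ and hoping it ``cancels against the appropriate reference value of $f_{\mathcal{P},i}$ on the same subinterval'' has no content: the only reference value that appears in the statement is $f_{\mathcal{P},k}(\underline{s})=S_{\mathcal{P},k}(s_{k-1})/\Delta_k=0$, so the linear part survives intact and is $O(1)$, not $O(|\mathcal{P}|^{2\gamma})$.

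What this exposes is that the lemma, as literally stated with the supremum over $s\in[0,1]$ and all $i\in\{0,\dots,n\}$, is false, and the paper's own proof has the same silent gap: the claimed identity $f_{\mathcal{P},i}(s)-f_{\mathcal{P},i}(\underline{s})=\int_{\underline{s}}^{s}A_{\mathcal{P},k}(r)f_{\mathcal{P},i}(r)(s-r)\,dr$ drops the first-order Taylor term $f_{\mathcal{P},i}'(\underline{s}+)(s-\underline{s})$, which vanishes when $i\ne k$ but equals $(s-s_{k-1})I/\Delta_k$ when $i=k$. The correct statement (and the one actually used downstream in the proof of Theorem~\ref{the.7.11}, where one only ever integrates $f_{\mathcal{P},i}(r)-f_{\mathcal{P},i}(\underline{r})$ over $r\in[s_i,s_j]$, forcing $k>i$) restricts the inner supremum to $s\in[s_i,1]$, exactly matching the range in Theorem~\ref{the.7.11}. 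With that restriction the $i=k$ case disappears, your $i<k$ argument carries the whole load, and the proof is complete.
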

\begin{proof} For $s\in\left[s_{k-1},s_{k}\right)$, Taylor's expansion gives
\begin{align*}
f_{\mathcal{P},i}&\left(s\right)- f_{\mathcal{P},i}\left(\underline{s}\right) =\int_{\underline{s}}^{s}A_{\mathcal{P},k}\left(r\right)f_{\mathcal{P},i}\left(r\right)\left(s-r\right)dr\\
 & =\int_{\underline{s}}^{s}A_{\mathcal{P},k}\left(r\right)\left(f_{\mathcal{P},i}\left(r\right)-f_{\mathcal{P},i}\left(\underline{r}\right)\right)\left(s-r\right)dr+\int_{\underline{s}}^{s}A_{\mathcal{P},k}\left(r\right)f_{\mathcal{P},i}\left(\underline{r}\right)\left(s-r\right)dr.
\end{align*}
Since $\left\vert A_{\mathcal{P},k}\left(s\right)\right\vert \leq N\left\vert \frac{\Delta_{k}\beta}{\Delta_{k}}\right\vert ^{2}$
, we have
\[
\left\vert f_{\mathcal{P},i}\left(s\right)-f_{\mathcal{P},i}\left(\underline{s}\right)\right\vert \leq\frac{N}{\Delta_{k}}\left\vert \Delta_{k}\beta\right\vert ^{2}\int_{\underline{s}}^{s}\left\vert f_{\mathcal{P},i}\left(r\right)-f_{\mathcal{P},i}\left(\underline{r}\right)\right\vert dr+\frac{1}{2}N\left\vert \Delta_{k}\beta\right\vert ^{2}\underset{j:j\geq i}{\sup}\left\vert f_{\mathcal{P},i}\left(s_j\right)\right\vert. 
\]
By Gronwall's inequality, we have: 
\begin{align*}
\left\vert f_{\mathcal{P},i}\left(s\right)-f_{\mathcal{P},i}\left(\underline{s}\right)\right\vert  & \leq\frac{1}{2}N\left\vert \Delta_{k}\beta\right\vert ^{2}\underset{j:j\geq i}{\sup}\left\vert f_{\mathcal{P},i}\left(s_j\right)\right\vert e^{N\left\vert \Delta_{k}\beta\right\vert ^{2}}
\end{align*}
Using estimate $\left(\ref{equ.7.2}\right)$ gives
\begin{align}
\left\vert f_{\mathcal{P},i}\left(s\right)-f_{\mathcal{P},i}\left(\underline{s}\right)\right\vert ^{q} & \leq\frac{N^{q}}{2^{q}}\left\vert \Delta_{k}\beta\right\vert ^{2q}e^{qN\left\vert \Delta_{k}\beta\right\vert ^{2}}e^{qN\sum_{j=1}^{n}\left\vert \Delta_{j}\beta\right\vert ^{2}} \nonumber\\
 & \leq C\left\vert \mathcal{P}\right\vert ^{2q\gamma}e^{2qN\sum_{k=1}^{n}\left\vert \Delta_{k}\beta\right\vert ^{2}}K_{\gamma}^{2q}.\label{equ.7.3}
\end{align}
Then Eq.(\ref{5qN}) follows from Lemma \ref{cBM}, \ref{lem.6.5} and Holder's inequality.
\end{proof}
\begin{theorem} \label{the.7.11}Let $\tilde{T}_{\left(\cdot\right)}$ be as in Definition \ref{def.T}, then for each $q\geq1$, $\gamma\in\left(0,\frac{1}{2}\right)$, there exists a constant $C$ such that for all $n>5q\gamma$,
\begin{equation}
\mathbb{E}\left[\underset{i\in\left\{ 0,\cdots,n\right\} }{\sup}\underset{s\in\left[s_{i},1\right]}{\sup}\left\vert f_{\mathcal{P},i}\left(s\right)-\tilde{T}_s\tilde{T}_{s_{i}}^{-1}\right\vert ^{q}\right]\leq C\left\vert \mathcal{P}\right\vert ^{q\gamma}.\nonumber
\end{equation}
\end{theorem}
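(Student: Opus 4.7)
The strategy is to compare $f_{\mathcal{P},i}$ with the damped parallel transport $\tilde{T}_s\tilde{T}_{s_i}^{-1}$ by setting up a discrete Euler-type recursion at the partition points, running a Gronwall argument there, and finally interpolating to general $s$ using the modulus-of-continuity estimate already proved in Lemma~\ref{lem.7.10}. I would start by handling the initial datum $s=s_i$: since $f_{\mathcal{P},i}(s_i)=S_{\mathcal{P},i}/\Delta_i$ while $\tilde{T}_{s_i}\tilde{T}_{s_i}^{-1}=I$, Eq.~\eqref{lem.6.3} together with Gaussian moments of $\Delta_i\beta$ immediately yields $\mathbb{E}|f_{\mathcal{P},i}(s_i)-I|^q\le C|\mathcal{P}|^q$, which is well below the target rate $|\mathcal{P}|^{q\gamma}$.

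For $\ell\ge i$, set $E_\ell:=f_{\mathcal{P},i}(s_\ell)-\tilde{T}_{s_\ell}\tilde{T}_{s_i}^{-1}$. Using $f_{\mathcal{P},i}(s_{\ell+1})=C_{\mathcal{P},\ell+1}(s_{\ell+1})f_{\mathcal{P},i}(s_\ell)$ together with the semigroup identity $\tilde{T}_{s_{\ell+1}}\tilde{T}_{s_i}^{-1}=(\tilde{T}_{s_{\ell+1}}\tilde{T}_{s_\ell}^{-1})\tilde{T}_{s_\ell}\tilde{T}_{s_i}^{-1}$ I obtain the recursion
\[
E_{\ell+1}=C_{\mathcal{P},\ell+1}(s_{\ell+1})E_\ell+\delta_\ell\cdot \tilde{T}_{s_\ell}\tilde{T}_{s_i}^{-1},\qquad\delta_\ell:=C_{\mathcal{P},\ell+1}(s_{\ell+1})-\tilde{T}_{s_{\ell+1}}\tilde{T}_{s_\ell}^{-1}.
\]
Taylor-expanding the Jacobi-type object $C_{\mathcal{P},\ell+1}(s_{\ell+1})$ (via the second-order ODE $C''=A_{\mathcal{P},\ell+1}C$ and the estimates of Lemma~\ref{lem1}) produces a leading term $\tfrac12 R_{u_{\mathcal{P}}(s_\ell)}(\Delta_{\ell+1}\beta,\cdot)\Delta_{\ell+1}\beta$; Taylor-expanding $\tilde{T}_{s_{\ell+1}}\tilde{T}_{s_\ell}^{-1}$ via its defining ODE produces (up to the sign conventions of the paper) a leading term of the form $-\tfrac12\int_{s_\ell}^{s_{\ell+1}}\mathrm{Ric}_{\tilde{u}_r}\,dr$. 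The key identity $\mathbb{E}[R_u(\Delta\beta,v)\Delta\beta\mid \mathcal{F}_{s_\ell}]=\Delta\cdot\mathrm{Ric}_u(v)$ gives the drift cancellation, so I would decompose $\delta_\ell=\delta_\ell^M+\delta_\ell^D$ where $\delta_\ell^M$ is a martingale difference of $L^q$-size $O(|\mathcal{P}|)$ and $\delta_\ell^D$ is a conditional drift of size $O(|\mathcal{P}|^{1+\gamma})$, the extra $|\mathcal{P}|^\gamma$ factor coming from Hölder continuity of $r\mapsto\tilde{u}_r$ and $r\mapsto u_{\mathcal{P}}(r)$ (Lemma~\ref{lem.6.5}).

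Iterating the recursion gives
\[
E_\ell=\Bigl(\prod_{m=i}^{\ell-1}C_{\mathcal{P},m+1}(s_{m+1})\Bigr)E_i+\sum_{k=i}^{\ell-1}\Bigl(\prod_{m=k+1}^{\ell-1}C_{\mathcal{P},m+1}(s_{m+1})\Bigr)\delta_k\tilde{T}_{s_k}\tilde{T}_{s_i}^{-1},
\]
and the plan is to control the matrix products in $L^q$ using Eq.~\eqref{lem.6.1} together with Lemma~\ref{cBM} (this is where the lower bound on $n$ enters quantitatively) and to invoke Lemma~\ref{lem.4.3} for a deterministic bound on $\tilde{T}$. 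After Hölder's inequality the task reduces to estimating $\|\sum_k \delta_k^M\|_{L^{2q}}$ and $\|\sum_k \delta_k^D\|_{L^{2q}}$: Burkholder--Davis--Gundy applied to the martingale sum yields $O(|\mathcal{P}|^{1/2})$, while summing the drift directly yields $O(|\mathcal{P}|^\gamma)$. For the final supremum over $s\in[s_\ell,s_{\ell+1}]$ I combine this partition-point bound with Lemma~\ref{lem.7.10} for $|f_{\mathcal{P},i}(s)-f_{\mathcal{P},i}(s_\ell)|$ and with the trivial ODE estimate $|\tilde{T}_s\tilde{T}_{s_i}^{-1}-\tilde{T}_{s_\ell}\tilde{T}_{s_i}^{-1}|\le C|\mathcal{P}|$ from Definition~\ref{def.T}. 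The double supremum in $i$ is handled by taking the $L^q$-norm inside the sums above uniformly, using that the bounds produced are independent of $i$.

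The main obstacle will be extracting a genuine $O(|\mathcal{P}|^{1+\gamma})$ bound on $\mathbb{E}[\delta_k\mid\mathcal{F}_{s_k}]$: this requires carefully expanding both $C_{\mathcal{P},\ell+1}(s_{\ell+1})$ and $\tilde{T}_{s_{\ell+1}}\tilde{T}_{s_\ell}^{-1}$ to second order in $\Delta\beta$, matching $\tfrac12\mathbb{E}[R_{u_{\mathcal{P}}(s_k)}(\Delta_{k+1}\beta,\cdot)\Delta_{k+1}\beta]$ against $\tfrac12\int_{s_k}^{s_{k+1}}\mathrm{Ric}_{\tilde{u}_r}dr$, and absorbing the discrepancy between the piecewise-geodesic frame $u_{\mathcal{P}}(s_k)$ and the stochastic horizontal frame $\tilde{u}_{s_k}$ via the bounded curvature tensor and the Hölder regularity of $\beta$. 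A secondary technical point is that the moment bounds on the matrix-product factors $\prod_m C_{\mathcal{P},m+1}$ must be uniform in $i,\ell$, which forces the stated condition on $n$ via the exponential moment estimate of Lemma~\ref{cBM}.
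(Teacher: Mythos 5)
Your key insight is the right one and matches the heart of the paper's proof: expand the one--step Jacobi cosine operator $C_{\mathcal{P},\ell+1}$ to second order, use the identity $\mathbb{E}\left[R_{u}\left(\Delta\beta,v\right)\Delta\beta\mid\mathcal{F}_{s_\ell}\right]=\Delta\cdot \operatorname{Ric}_{u}\left(v\right)$ to cancel the drift against $\tilde{T}_{s_{\ell+1}}\tilde{T}_{s_\ell}^{-1}$, control the martingale part by Burkholder--Davis--Gundy, and then interpolate off the partition with Lemma~\ref{lem.7.10}. This is exactly the mechanism of Lemma~\ref{lem.7.12} and the subsequent chain of comparisons.

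However, there is a genuine gap in the way you propose to run the Gronwall/BDG step. You iterate the \emph{multiplicative} recursion $E_{\ell+1}=C_{\mathcal{P},\ell+1}E_\ell+\delta_\ell\tilde{T}_{s_\ell}\tilde{T}_{s_i}^{-1}$ to get
\[
E_\ell=\Bigl(\textstyle\prod_{m=i}^{\ell-1}C_{\mathcal{P},m+1}\Bigr)E_i+\sum_{k=i}^{\ell-1}\Bigl(\textstyle\prod_{m=k+1}^{\ell-1}C_{\mathcal{P},m+1}\Bigr)\delta_k\tilde{T}_{s_k}\tilde{T}_{s_i}^{-1},
\]
and then claim that "after H\"older the task reduces to estimating $\|\sum_k\delta_k^M\|_{L^{2q}}$". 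This reduction does not hold: the prefactor $\prod_{m=k+1}^{\ell-1}C_{\mathcal{P},m+1}$ depends on $k$, so the sum is of the form $\sum_k a_k b_k$, not $\left(\sup_k a_k\right)\sum_k b_k$, and no application of H\"older turns it into a bound on $\sum_k\delta_k^M$ alone. Worse, $\prod_{m>k}C_{\mathcal{P},m+1}$ involves Brownian increments $\Delta_{m+1}\beta$ with $m>k$, i.e. it depends on the \emph{future} relative to $\mathcal{F}_{s_{k+1}}$, so $\sum_k\bigl(\prod_{m>k}C_{\mathcal{P},m+1}\bigr)\delta_k^M$ is not a martingale in $\ell$ and BDG cannot be applied to it. Without BDG the only bound available for the martingale contribution is the crude $n\cdot O\left(|\mathcal{P}|\right)=O(1)$, which destroys the rate. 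The paper sidesteps this precisely by working with the \emph{additive} telescoping identity $f_{\mathcal{P},i}(s_j)=f_{\mathcal{P},i}(s_i)+\sum_k\bigl[\tfrac{\Delta^2}{2}A_{\mathcal{P},k+1}(s_k)f_{\mathcal{P},i}(s_k)+e_{i,k}\bigr]$, whose It\^o decomposition (Lemma~\ref{lem.7.12}) produces a genuine $\mathcal{F}_s$-martingale $M_{\mathcal{P},s}$ with quadratic variation $O(|\mathcal{P}|)$; it then compares to the damped transport in three stages through the intermediate objects $\hat{f}_{\mathcal{P},i}$ (the discrete Euler scheme) and $\tilde{f}_{\mathcal{P},i}$ (the ODE with $\operatorname{Ric}_{u_\mathcal{P}}$), using Gronwall's inequality rather than the discrete variation-of-constants formula. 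If you want to keep your multiplicative starting point, you would need something like an Abel summation to peel the product off of a partial-sum martingale, but as written the proposal has a hole at exactly the place where the rate $|\mathcal{P}|^{q\gamma}$ has to come from.

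One smaller remark: you correctly identify that one must eventually trade $\operatorname{Ric}_{u_\mathcal{P}(r)}$ for $\operatorname{Ric}_{\tilde{u}_r}$ (Wong--Zakai for the frame), but this comparison should be isolated as a separate Gronwall step rather than absorbed into $\delta_k$; doing so inside $\delta_k$ further muddies the conditional-mean computation for $\delta_k^D$, since $\tilde{u}_{s_{k+1}}$ is not $\mathcal{F}_{s_k}$-measurable.
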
 In order to prove Theorem \ref{the.7.11}, we need
the following result.

\begin{lemma}\label{lem.7.12}For each $q\geq1$, $\gamma\in\left(0,\frac{1}{2}\right)$, there exists a constant $C$ such that for all $n>5q\gamma$,
\begin{align}
& \mathbb{E}\left[\underset{i\in\left\{ 1,\cdots,n\right\} }{\sup}\underset{j\geq i}{\sup}\left\vert f_{\mathcal{P},i}\left(s_j\right)-\left(f_{\mathcal{P},i}\left(s_{i}\right)-\int_{s_{i}}^{s_j}Ric_{u_{\mathcal{P}}\left(\underline{r}\right)}f_{\mathcal{P},i}\left(\underline{r}\right)dr\right)\right\vert ^{q}\right]\leq C\left\vert \mathcal{P}\right\vert ^{q\gamma}.\label{eq:-46}
\end{align}

\end{lemma}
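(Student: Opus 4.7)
The approach is to telescope the difference $f_{\mathcal{P},i}(s_j)-f_{\mathcal{P},i}(s_i)$ over subintervals using the factorization property of $f_{\mathcal{P},i}$, expand each increment via the Jacobi ODE for $C_{\mathcal{P},k}$, and identify the leading behavior as a Ricci-driven discrete evolution, with the stochastic remainder controlled by martingale inequalities. Since $f_{\mathcal{P},i}(s_k)=C_{\mathcal{P},k}(s_k)f_{\mathcal{P},i}(s_{k-1})$ for every $k>i$ (immediate from Definition \ref{def.2.1-1}), we have the telescoping identity
\begin{equation*}
f_{\mathcal{P},i}(s_j)-f_{\mathcal{P},i}(s_i)=\sum_{k=i+1}^{j}\bigl(C_{\mathcal{P},k}(s_k)-I\bigr)f_{\mathcal{P},i}(s_{k-1}),
\end{equation*}
so the lemma reduces to matching each block $C_{\mathcal{P},k}(s_k)-I$ with its Ricci counterpart $-\tfrac{1}{2}Ric_{u_{\mathcal{P}}(s_{k-1})}\Delta_k$ up to errors that sum to $O(|\mathcal{P}|^{\gamma})$ in $L^q$.

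Next, integrating $C_{\mathcal{P},k}''(s)=A_{\mathcal{P},k}(s)C_{\mathcal{P},k}(s)$ against its initial conditions gives $C_{\mathcal{P},k}(s_k)-I=\int_{s_{k-1}}^{s_k}(s_k-r)A_{\mathcal{P},k}(r)C_{\mathcal{P},k}(r)\,dr$. I would replace $C_{\mathcal{P},k}(r)$ by $I$ inside the integral (error controlled by Lemma \ref{lem1}) and freeze the curvature at time $s_{k-1}$ (H\"older continuity of $u_{\mathcal{P}}$, which solves an ODE driven by $\beta_{\mathcal{P}}$, combined with Lemma \ref{lem.6.5}). The surviving leading term is $\tfrac{1}{2}R_{u_{\mathcal{P}}(s_{k-1})}(\Delta_k\beta,\cdot)\Delta_k\beta$, and after applying Lemma \ref{lem.7.8} to the $f_{\mathcal{P},i}(s_{k-1})$ factors, the accumulated Taylor remainders are bounded by $C|\mathcal{P}|^{q\gamma}$ in $L^q$.

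It then remains to handle the genuinely stochastic discrepancy
\begin{equation*}
M_k:=\tfrac{1}{2}R_{u_{\mathcal{P}}(s_{k-1})}(\Delta_k\beta,\cdot)\Delta_k\beta-\tfrac{1}{2}Ric_{u_{\mathcal{P}}(s_{k-1})}\Delta_k,
\end{equation*}
which is a martingale difference with respect to $\{\mathcal{F}_{s_k}\}$: indeed $\mathbb{E}[\Delta_k\beta^{\alpha}\Delta_k\beta^{\gamma}\mid\mathcal{F}_{s_{k-1}}]=\delta^{\alpha\gamma}\Delta_k$, which traces the curvature tensor into Ricci. For each fixed $i$, the partial sum $j\mapsto\sum_{k=i+1}^{j}M_k\,f_{\mathcal{P},i}(s_{k-1})$ is then a martingale in $j$, so Doob's maximal inequality together with the discrete Burkholder--Davis--Gundy inequality bounds its $\sup_j$ in $L^q$ by the $L^q$-norm of the quadratic variation $\bigl(\sum_k|M_k|^2|f_{\mathcal{P},i}(s_{k-1})|^2\bigr)^{1/2}$. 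Since $\|M_k\|_{L^{q'}}\leq C\Delta_k$ for every $q'\geq 1$ by fourth-moment bounds on $\Delta_k\beta$ with Lemma \ref{lem1}, and the $f_{\mathcal{P},i}$ factors have uniform $L^{q'}$-bounds under $n\geq 5qN$ via Lemma \ref{lem.7.8}, the quadratic-variation sum is $O(|\mathcal{P}|^{1/2})$ in $L^q$, comfortably stronger than the required $|\mathcal{P}|^{\gamma}$.

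The main obstacle will be the outer supremum over the initial index $i$, for which the martingale-in-$j$ structure is no longer directly available. A crude union bound over the $n=|\mathcal{P}|^{-1}$ values of $i$ inflates each pointwise $L^q$-bound $C|\mathcal{P}|^{1/2}$ by a factor of at most $n^{1/q}=|\mathcal{P}|^{-1/q}$; choosing $\gamma<1/2-1/q$ absorbs this loss and gives the claimed $C|\mathcal{P}|^{q\gamma}$ estimate, and Jensen's inequality then extends the conclusion to smaller values of $q\geq 1$ at the cost of enlarging the constant. I expect the bookkeeping of these three error sources -- H\"older freezing of $A_{\mathcal{P},k}$, Taylor replacement of $C_{\mathcal{P},k}$, and the BDG control of the Ricci martingale difference -- fitting each one within the $|\mathcal{P}|^{q\gamma}$ budget uniformly in both $i$ and $j\geq i$, to be the most delicate part of the argument.
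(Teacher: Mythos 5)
Your proposal is correct and follows essentially the same decomposition as the paper: Taylor-expand the per-block propagator, trace the quadratic Brownian form to produce the Ricci term, and control the martingale remainder by BDG while handling the deterministic Taylor/H\"older remainders pathwise. The one structural difference is in how the martingale is packaged. The paper applies It\^o's formula to $\Delta_{k+1}\beta\otimes\Delta_{k+1}\beta$ to rewrite the quadratic form as two genuine continuous stochastic integrals plus a drift term $-\tfrac12 Ric_{u_\mathcal{P}(s_k)}f_{\mathcal{P},i}(s_k)\Delta_k$; the resulting $M_{\mathcal{P},s}$ is a continuous local martingale, and BDG and the independence of $|\beta_r-\beta_{\underline r}|$ from $f_{\mathcal{P},i}(\underline r)$ give the $|\mathcal{P}|^{q/2}$ bound. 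You instead keep the quadratic form $\tfrac12 R_{u_\mathcal{P}(s_{k-1})}(\Delta_k\beta,\cdot)\Delta_k\beta$ intact and observe that subtracting its conditional expectation makes $M_k$ a discrete martingale difference, then apply Doob and discrete BDG. Both routes exploit the same Ricci-trace identity and give the same rate; the discrete version is perhaps more elementary and bypasses the explicit It\^o decomposition. One place where your write-up is actually more careful than the paper: the martingale $M_{\mathcal{P},\cdot}$ depends on the starting index $i$, and the paper establishes $\sup_i\mathbb{E}[\sup_s|M^{(i)}_{\mathcal{P},s}|^q]$ but then uses it as if it were $\mathbb{E}[\sup_i\sup_s|M^{(i)}_{\mathcal{P},s}|^q]$ when combining with the error-term estimate (eq:-45), which is stated with the supremum inside the expectation. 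Your union-bound-plus-Jensen step (paying $n^{1/q}$, then widening $q$ to absorb it, then projecting back down) closes exactly that gap and is a genuine improvement in rigor; it costs nothing because the $|\mathcal{P}|^{1/2}$ martingale rate is strictly better than the $|\mathcal{P}|^{\gamma}$ rate that the $e_{i,k}$ error terms allow anyway.
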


\begin{proof} For each $s_{j}\in\mathcal{P}$ with $j\geq i+1$ and
for $k=i,\cdots,j-1,$ we have
\begin{align}
f_{\mathcal{P},i}\left(s_{k+1}\right) & =f_{\mathcal{P},i}\left(s_{k}\right)+\int_{s_{k}}^{s_{k+1}}A_{\mathcal{P},k+1}\left(r\right)f_{\mathcal{P},i}\left(r\right)\left(s_{k+1}-r\right)dr\label{eq:-42}\\
& =f_{\mathcal{P},i}\left(s_{k}\right)+\frac{\Delta^2_{k+1}}{2}A_{\mathcal{P},k+1}\left(s_k\right)f_{\mathcal{P},i}\left(s_{k}\right)+e_{i,k}\nonumber 
\end{align}
where 
\begin{align*}
e_{i,k} &
=\int_{s_{k}}^{s_{k+1}}A_{\mathcal{P},k+1}\left(r\right)f_{\mathcal{P},i}\left(r\right)\left(s_{k+1}-r\right)dr-\int_{s_{k}}^{s_{k+1}}A_{\mathcal{P},k+1}\left(s_k\right)f_{\mathcal{P},i}\left(s_k\right)\left(s_{k+1}-r\right)dr.
\end{align*}
Since $\left\{ f_{\mathcal{P},i}\left(s_{j}\right)\right\} _{j}$ is adapted, by Ito's lemma
\begin{align*}
\frac{\Delta^2_{k+1}}{2}A_{\mathcal{P},k+1}\left(s_k\right)f_{\mathcal{P},i}\left(s_{k}\right)&=
\frac{1}{2}R_{u_{\mathcal{P}}\left(s_{k}\right)}\left(\Delta_{k+1}\beta,f_{\mathcal{P},i}\left(s_{k}\right)\right)\Delta_{k+1}\beta\\& =\frac{1}{2}\int_{s_{k}}^{s_{k+1}}R_{u_{\mathcal{P}}\left(s_{k}\right)}\left(\beta_{r}-\beta_{s_{k}},f_{\mathcal{P},i}\left(s_{k}\right)\right)d\beta_{r}\\
& +\frac{1}{2}\int_{s_{k}}^{s_{k+1}}R_{u_{\mathcal{P}}\left(s_{k}\right)}\left(d\beta_{r},f_{\mathcal{P},i}\left(s_{k}\right)\right)\left(\beta_{r}-\beta_{s_{k}}\right)\\
& -\frac{1}{2}Ric_{u_{\mathcal{P}}\left(s_{k}\right)}f_{\mathcal{P},i}\left(s_{k}\right)\Delta_{k}.
\end{align*}
Summing (\ref{eq:-42}) over $k$ from $i$ to $j-1$ , we have
\begin{equation}
f_{\mathcal{P},i}\left(s_{j}\right)=f_{\mathcal{P},i}\left(s_{i}\right)-\frac{1}{2}\int_{s_{i}}^{s_{j}}Ric_{u_{\mathcal{P}}\left(\underline{r}\right)}f_{\mathcal{P},i}\left(\underline{r}\right)dr+M_{\mathcal{P},s_{j}}+\sum_{k=i}^{j-1}e_{i,k}\label{eq:-1}
\end{equation}
where 
\[
M_{\mathcal{P},s}:=\frac{1}{2}\int_{s_{i}}^{s}R_{u_{\mathcal{P}}\left(\underline{r}\right)}\left(\beta_{r}-\beta_{\underline{r}},f_{\mathcal{P},i}\left(\underline{r}\right)\right)d\beta_{r}+\frac{1}{2}\int_{s_{i}}^{s}R_{u_{\mathcal{P}}\left(\underline{r}\right)}\left(d\beta_{r},f_{\mathcal{P},i}\left(\underline{r}\right)\right)\left(\beta_{r}-\beta_{\underline{r}}\right)
\]
is a $\mathbb{R}^{d}$-valued martingale starting from $s_{i}$. By Burkholder-Davis-Gundy inequality, for $q\geq1,$ 
\begin{equation}
\mathbb{E}\left[\underset{s\in\left[s_{i},1\right]}{\sup}\left\vert M_{\mathcal{P},s}\right\vert ^{q}\right]\leq C\mathbb{E}\left[\left\langle M_{\mathcal{P}}\right\rangle _{1}^{\frac{q}{2}}\right]\label{eq:-43}
\end{equation}
where $\left\langle M_{\mathcal{P}}\right\rangle$ is the
quadratic variation process of $M_{\mathcal{P}}$. An estimate
of $\left\langle M_{\mathcal{P}}\right\rangle $ gives
\[
\left\langle M_{\mathcal{P}}\right\rangle _{1}\leq dN^{2}\int_{s_{i}}^{1}\left\vert \beta_{r}-\beta_{\underline{r}}\right\vert ^{2}\left\vert f_{\mathcal{P},i}\left(\underline{r}\right)\right\vert ^{2}dr\leq dN^{2}\int_{0}^{1}\left\vert \beta_{r}-\beta_{\underline{r}}\right\vert ^{2}\left\vert f_{\mathcal{P},i}\left(\underline{r}\right)\right\vert ^{2}dr,
\]
and by Jensen's inequality, 
\[
\left\langle M_{\mathcal{P}}\right\rangle _{1}^{\frac{q}{2}}\leq d^{\frac{q}{2}}N^{q}\int_{0}^{1}\left\vert \beta_{r}-\beta_{\underline{r}}\right\vert ^{q}\left\vert f_{\mathcal{P},i}\left(\underline{r}\right)\right\vert ^{q}dr.
\]
Since $\left\{ f_{\mathcal{P},i}\left(\underline{r}\right)\right\} _{r\in\left[0,1\right]}$
is adapted to the filtration generated by $\beta$, using the independence
of $\left\vert \beta_{r}-\beta_{\underline{r}}\right\vert ^{q}$ and
$f_{\mathcal{P},i}\left(\underline{r}\right)$ we have: 
\begin{align*}
\mathbb{E}\left[\left\langle M_{\mathcal{P}}\right\rangle _{1}^{\frac{q}{2}}\right] & \leq d^{\frac{q}{2}}N^{q}\int_{0}^{1}\mathbb{E}\left[\left\vert \beta_{r}-\beta_{\underline{r}}\right\vert ^{q}\right]\mathbb{E}\left[\left\vert f_{\mathcal{P},i}\left(\underline{r}\right)\right\vert ^{q}\right]dr=C\underset{j\in \left\{0,\dots,n\right\}}{\sup}\mathbb{E}\left[\left\vert f_{\mathcal{P},i}\left(s_j\right)\right\vert ^{q}\right]\left\vert \mathcal{P}\right\vert ^{\frac{q}{2}}.
\end{align*}
By Lemma \ref{lem.7.8}, we know 
\begin{equation}
\mathbb{E}\left[\left\langle M_{\mathcal{P}}\right\rangle _{1}^{\frac{q}{2}}\right]\leq C\left\vert \mathcal{P}\right\vert ^{\frac{q}{2}}.\label{eq:-44}
\end{equation}
So to finish the proof of Lemma \ref{lem.7.12}, it suffices to show: 
\begin{equation}
\mathbb{E}\left[\underset{i\in\left\{ 0,\cdots,n\right\} ,j\in\left\{ i+1,\cdots,n\right\} }{\sup}\left\vert \sum_{k=i}^{j-1}e_{i,k}\right\vert ^{q}\right]\leq C\left\vert \mathcal{P}\right\vert ^{\gamma q}.\label{eq:-45}
\end{equation}
Since $\left\vert e_{i,k}\right\vert \leq I_{\mathcal{P}}\left(i,k\right)+II_{\mathcal{P}}\left(i,k\right)$, where 
\[
I_{\mathcal{P}}\left(i,k\right)=\left\vert \int_{s_{k}}^{s_{k+1}}A_{\mathcal{P},k}\left(r\right)\left(f_{\mathcal{P},i}\left(r\right)-f_{\mathcal{P},i}\left(s_{k}\right)\right)\left(s_{k+1}-r\right)dr\right\vert 
\]
and
\[
II_{\mathcal{P}}\left(i,k\right)=\left\vert \int_{s_{k}}^{s_{k+1}}\left(A_{\mathcal{P},k}\left(r\right)-A_{\mathcal{P},k}\left(s_k\right)\right)f_{\mathcal{P},i}\left(s_{k}\right)\left(s_{k+1}-r\right)dr\right\vert,
\]
using (\ref{equ.7.3}) we know
\begin{align*}
I_{\mathcal{P}}\left(i,k\right) & \leq\frac{N}{2}\underset{i\in\left\{ 1,\cdots,n\right\} ,r\in\left[0,1\right]}{\sup}\left\vert f_{\mathcal{P},i}\left(r\right)-f_{\mathcal{P},i}\left(\underline{r}\right)\right\vert \left\vert \Delta_{k+1}\beta\right\vert ^{2} \leq CK_{\gamma}^{4}\left\vert \mathcal{P}\right\vert ^{4\gamma}e^{2N\sum_{k=1}^{n}\left\vert \Delta_{k}\beta\right\vert ^{2}}.
\end{align*}
Since 
\[
\left\vert R_{u_{\mathcal{P}}\left(s_{k}\right)}-R_{u_{\mathcal{P}}\left(r\right)}\right\vert \leq C\int_{s_{k}}^{s_{k+1}}\left\vert \beta_{\mathcal{P}}^{\prime}\left(s\right)\right\vert ds=C\left\vert \Delta_{k+1}\beta\right\vert \leq CK_{\gamma}\left\vert \mathcal{P}\right\vert ^{\gamma},
\]using (\ref{equ.7.2}) we have
\begin{align*}
II_{\mathcal{P}}\left(i,k\right) & \leq C \underset{i,j\in\left\{ 1,\cdots,n\right\} }{\sup}\left\vert f_{\mathcal{P},i}\left(s_j\right)\right\vert \left\vert \Delta_{k+1}\beta\right\vert ^{2}\underset{r\in\left[s_{k},s_{k+1}\right]}{\sup}\left\vert R_{u_{\mathcal{P}}\left(s_{k}\right)}-R_{u_{\mathcal{P}}\left(r\right)}\right\vert \\
 & \leq CK_{\gamma}^{3}\left\vert \mathcal{P}\right\vert ^{3\gamma}e^{N\sum_{k=1}^{n}\left\vert \Delta_{k}\beta\right\vert ^{2}}.
\end{align*}
So 
\begin{align*}
\left\vert \sum_{k=i}^{j-1}e_{i,k}\right\vert  & \leq\frac{1}{\left\vert \mathcal{P}\right\vert }\left(I_{\mathcal{P}}\left(i,k\right)+II_{\mathcal{P}}\left(i,k\right)\right) \leq C\left(K_{\gamma}^{4}\left\vert \mathcal{P}\right\vert ^{4\gamma-1}+K_{\gamma}^{3}\left\vert \mathcal{P}\right\vert ^{3\gamma-1}\right)e^{2N\sum_{k=1}^{n}\left\vert \Delta_{k}\beta\right\vert ^{2}}.
\end{align*}
For any $\gamma^\prime\in \left(0,\frac{1}{2}\right)$, we can choose $\lambda\in \left(\frac{1}{3},\frac{1}{2}\right)$ such that $\gamma^\prime=3\gamma-1$ and thus using Lemma \ref{lem.6.5} we get
\[
\mathbb{E}\left[\underset{i\in\left\{ 0,\cdots,n\right\} ,j\in\left\{ i+1,\cdots,n\right\} }{\sup}\left\vert \sum_{k=i}^{j-1}e_{i,k}\right\vert ^{q}\right]\leq C\left\vert \mathcal{P}\right\vert ^{q\gamma^\prime}.
\]
Combining Eq. (\ref{eq:-1}), $\left(\ref{eq:-44}\right)$ and $\left(\ref{eq:-45}\right)$ we obtain $\left(\ref{eq:-46}\right)$. 
\end{proof}

\begin{proof}[Proof of Theorem \ref{the.7.11}]For $s\geq s_i$, define
\begin{equation}
\hat{f}_{\mathcal{P},i}\left(s\right):=f_{\mathcal{P},i}\left(s_{i}\right)-\frac{1}{2}\int_{s_{i}}^{s}Ric_{u_{\mathcal{P}}\left(r\right)}f_{\mathcal{P},i}\left(r\right)dr.
\end{equation}
Then
\begin{align*}
\left\vert \hat{f}_{\mathcal{P},i}\left(s_{j}\right)-f_{\mathcal{P},i}\left(s_{j}\right)\right\vert  & \leq\left\vert \frac{1}{2}\int_{s_{i}}^{s_j}\left(Ric_{u_{\mathcal{P}}\left(r\right)}-Ric_{u_{\mathcal{P}}\left(\underline{r}\right)}\right)f_{\mathcal{P},i}\left(\underline{r}\right)dr\right\vert \\
 & +\left\vert \frac{1}{2}\int_{s_{i}}^{s_j}Ric_{u_{\mathcal{P}}\left(r\right)}\left(f_{\mathcal{P},i}\left(r\right)-f_{\mathcal{P},i}\left(\underline{r}\right)\right)dr\right\vert+\left\vert M_{\mathcal{P},s_{j}}\right\vert+\left\vert\sum_{k=i}^{j-1}e_{i,k}\right\vert.
\end{align*}
Since 
\[
\left\vert Ric_{u_{\mathcal{P}}\left(r\right)}-Ric_{u_{\mathcal{P}}\left(\underline{r}\right)}\right\vert=\left\vert\nabla_{\left(r-\underline{r}\right)\beta^\prime_{\mathcal{P}}(\underline{r})}Ric\right\vert\leq C\underset{i}{\sup}\left\vert\Delta_i\beta\right\vert\leq CK_{\gamma}\left\vert \mathcal{P}\right\vert ^{\gamma},
\]
using Lemma \ref{lem.7.8}, \ref{lem.6.5} and Holder's inequality we know
\begin{equation}
\mathbb{E}\left[\left\vert \int_{s_{i}}^{s_j}\left(Ric_{u_{\mathcal{P}}\left(r\right)}-Ric_{u_{\mathcal{P}}\left(\underline{r}\right)}\right)f_{\mathcal{P},i}\left(\underline{r}\right)dr\right\vert ^{q}\right]\leq C\left\vert \mathcal{P}\right\vert ^{\gamma q}.\label{equ.7.4} \end{equation}
Then we consider 
\[
\left\vert \int_{s_{i}}^{s_j}Ric_{u_{\mathcal{P}}\left(r\right)}\left(f_{\mathcal{P},i}\left(r\right)-f_{\mathcal{P},i}\left(\underline{r}\right)\right)dr\right\vert.
\]
By Lemma \ref{lem.7.10}, one can easily see
\begin{equation}
\mathbb{E}\left[\underset{i\in\left\{ 0,\cdots,n\right\} }{\sup}\left\vert \int_{s_{i}}^{s_j}Ric_{u_{\mathcal{P}}\left(r\right)}\left(f_{\mathcal{P},i}\left(r\right)-f_{\mathcal{P},i}\left(\underline{r}\right)\right)dr\right\vert ^{q}\right]\leq C\left\vert \mathcal{P}\right\vert ^{2q\gamma}.\label{equ.7.5}
\end{equation}
Combining Eq.$\left(\ref{equ.7.4}\right)$ and $\left(\ref{equ.7.5} \right)$ and Lemma \ref{lem.7.12} we get
\begin{equation*}
\mathbb{E}\left[\underset{i\in\left\{ 0,\cdots,n\right\} ,j\geq i}{\sup}\left\vert \hat{f}_{\mathcal{P},i}\left(s_{j}\right)-f_{\mathcal{P},i}\left(s_{j}\right)\right\vert ^{q}\right]\leq C\left\vert \mathcal{P}\right\vert ^{q\gamma}.
\end{equation*}
Then using Lemma \ref{lem7.10} and notice that 
\[\left\vert\hat{f}_{\mathcal{P},i}\left(s\right)-\hat{f}_{\mathcal{P},i}\left(\underline{s}\right)\right\vert\leq C\left(s-\underline{s}\right)\underset{0\leq s\leq 1}{\sup}\left\vert f_{\mathcal{P},i}(s)\right\vert,\]
we have
\begin{equation}
\mathbb{E}\left[\underset{i\in\left\{ 0,\cdots,n\right\} ,s\geq s_i}{\sup}\left\vert \hat{f}_{\mathcal{P},i}\left(s\right)-f_{\mathcal{P},i}\left(s\right)\right\vert ^{q}\right]\leq C\left\vert \mathcal{P}\right\vert ^{ q\gamma}.\label{equ.7.6}
\end{equation}
Then for $s\geq s_i$, define $\tilde{f}_{\mathcal{P},i}\left(s\right)$ to be the solution
to the following ODE
\[
\begin{cases}
\frac{d}{ds}\tilde{f}_{\mathcal{P},i}\left(s\right)+\frac{1}{2}Ric_{u_{\mathcal{P}}\left(s\right)}\tilde{f}_{\mathcal{P},i}\left(s\right)=0\\
\tilde{f}_{\mathcal{P},i}\left(s_{i}\right)=I.
\end{cases}
\]
Therefore
\[
\tilde{f}_{\mathcal{P},i}\left(s\right)=I-\frac{1}{2}\int_{s_{i}}^{s}Ric_{u_{\mathcal{P}}\left(r\right)}\tilde{f}_{\mathcal{P},i}\left(r\right)dr
\]
and
\[
\left\vert \tilde{f}_{\mathcal{P},i}\left(s\right)-\hat{f}_{\mathcal{P},i}\left(s\right)\right\vert \leq\left\vert f_{\mathcal{P},i}\left(s_{i}\right)-I\right\vert +C\int_{s_{i}}^{s}\left\vert \tilde{f}_{\mathcal{P},i}\left(r\right)-\hat{f}_{\mathcal{P},i}\left(r\right)\right\vert dr+C\underset{s\geq s_i}{\sup}\left\vert f_{\mathcal{P},i}\left(s\right)-\hat{f}_{\mathcal{P},i}\left(s\right)\right\vert.
\]
By Gronwall's inequality we have
\[
\left\vert \tilde{f}_{\mathcal{P},i}\left(s\right)-\hat{f}_{\mathcal{P},i}\left(s\right)\right\vert \leq\left(\left\vert f_{\mathcal{P},i}\left(s_{i}\right)-I\right\vert +C\underset{s\geq s_i}{\sup}\left\vert f_{\mathcal{P},i}\left(s\right)-\hat{f}_{\mathcal{P},i}\left(s\right)\right\vert\right)e^{\frac{1}{2}N}.
\]
Thus by Lemma \ref{lem.6.3} and Eq.(\ref{equ.7.6}) it follows that
\begin{equation}
\mathbb{E}\left[\underset{i\in\left\{ 0,\cdots,n\right\} ,s\geq s_{i}}{\sup}\left\vert \tilde{f}_{\mathcal{P},i}\left(s\right)-\hat{f}_{\mathcal{P},i}\left(s\right)\right\vert ^{q}\right]\leq C\left\vert \mathcal{P}\right\vert ^{q\gamma}.\label{equ.7.7}
\end{equation}
Lastly, we look at $\tilde{f}_{\mathcal{P},i}\left(s\right)-\tilde{T}_s\tilde{T}_{s_{i}}^{-1}$ where $s\geq s_i$. Note that $\tilde{T}_s\tilde{T}_{s_{i}}^{-1}$ satisfies the
following ODE,
\[
\begin{cases}
\left(\tilde{T}_s\tilde{T}_{s_{i}}^{-1}\right)^\prime+\frac{1}{2}Ric_{\tilde{u}_s}\left(\tilde{T}_s\tilde{T}_{s_{i}}^{-1}\right)=0\\
\left(\tilde{T}_{s_{i}}\tilde{T}_{s_{i}}^{-1}\right)=I.
\end{cases}
\]
So 
\[
\tilde{f}_{\mathcal{P},i}\left(s\right)-\tilde{T}_s\tilde{T}_{s_{i}}^{-1}=\frac{1}{2}\int_{s_{i}}^{s}\left(Ric_{u_{\mathcal{P}}\left(r\right)}-Ric_{\tilde{u}_r}\right)\left(\tilde{f}_{\mathcal{P},i}\left(r\right)-\tilde{T}_r\tilde{T}_{s_{i}}^{-1}\right)dr.
\]
By Gronwall's inequality again we have
\[
\left\vert \tilde{f}_{\mathcal{P},i}\left(s\right)-\tilde{T}_s\tilde{T}_{s_{i}}^{-1}\right\vert \leq CK_{\gamma}\left\vert \mathcal{P}\right\vert ^{\gamma}e^{\frac{1}{2}N},
\]
so 
\begin{equation}
\mathbb{E}\left[\underset{i\in\left\{ 0,\cdots,n\right\} ,s\geq s_{i}}{\sup}\left\vert \tilde{f}_{\mathcal{P},i}\left(s\right)-\tilde{T}_s\tilde{T}_{s_{i}}^{-1}\right\vert ^{q}\right]\leq C\left\vert \mathcal{P}\right\vert ^{\gamma q}.\label{equ.7.8}
\end{equation}
The proof is completed by combining Lemma \ref{lem.7.12} and $\left(\ref{equ.7.5}\right)$, $\left(\ref{equ.7.6}\right)$, $\left(\ref{equ.7.7}\right)$ and $\left(\ref{equ.7.8}\right)$. \end{proof}

\subsubsection{Convergence of $\mathbf{K}_{\mathcal{P}}\left(s\right)$ to $\mathbf{\tilde{K}}_s$\label{sub.6.1.3}}

Recall from Definition \ref{def Kp} that $\mathbf{K}_{\mathcal{P}}\left(s\right)$
satisfies the piecewise Jacobi equation: 
\begin{equation}
\begin{cases}
\mathbf{K}_{\mathcal{P}}^{\prime\prime}\left(s\right)=A_{\mathcal{P},i}(s)\mathbf{K}_{\mathcal{P}}\left(s\right)\text{ for }s\in \left[s_{i-1},s_i\right)\\
\mathbf{K}_{\mathcal{P}}^{\prime}\left(s_{i-1}+\right)=f_{\mathcal{P},i}^{\ast}\left(1\right)\text{ and }\mathbf{K}_{\mathcal{P}}\left(0\right)=0, \text{for }i=1,...,n
\end{cases}\label{equ.7.9}
\end{equation}
where $f_{\mathcal{P},i}\left(1\right)$ is given in Definition \ref{def.2.1-1}.

Before we state the main theorem in this section, we need some
supplementary lemmas.

\begin{lemma} \label{lem.7.14}Recall that $n:=\frac{1}{\left|\mathcal{P}\right|}$ and $N$ is the curvature bound. For each $q\geq1$,
\begin{equation}
\sup_{n>2qN}\mathbb{E}\left[\underset{j\in\left\{0,\dots,n\right\}}{\sup}\left\vert \mathbf{K}_{\mathcal{P}}\left(s_j\right)\right\vert ^{q}\right]<\infty.\label{K_p}
\end{equation}
\end{lemma}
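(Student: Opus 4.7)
The strategy is to reduce (\ref{K_p}) to the uniform exponential moment estimate for sums of squared Brownian increments from Appendix \ref{App.B}. The key technical point is that the sharper threshold $n>2qN$, compared to what one would get by naively squaring the bound of Lemma \ref{lem.7.8}, forces us to track which subintervals actually contribute to each factor appearing in $\mathbf{K}_{\mathcal{P}}$.

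First I would derive the closed form
\[\mathbf{K}_{\mathcal{P}}(s_j)=\frac{1}{n}\sum_{i=1}^{j}f_{\mathcal{P},i}(s_j)f_{\mathcal{P},i}^{*}(1),\qquad 0\leq j\leq n,\]
which follows immediately from Definition \ref{def Kp} together with Definition \ref{def.6.8} and the expression (\ref{equ.6.4}) for $\mathbf{L}_1^{*}$; alternatively, it may be obtained by iterating the piecewise Jacobi problem (\ref{equ.7.9}) across each subinterval, using that the jump of $\mathbf{K}_{\mathcal{P}}'$ at $s_{i-1}$ equals $f_{\mathcal{P},i}^{*}(1)$.

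Setting $a_k:=|\Delta_k\beta|^{2}$, I would next apply the pointwise bounds (\ref{lem.6.1}) and (\ref{lem.6.3}) from Lemma \ref{lem1} to the factorization $f_{\mathcal{P},i}(s_j)=C_{\mathcal{P},j}C_{\mathcal{P},j-1}\cdots C_{\mathcal{P},i+1}\,S_{\mathcal{P},i}/\Delta_i$. This yields
\[|f_{\mathcal{P},i}(s_j)|\leq\bigl(1+\tfrac{Na_i}{6}e^{Na_i/2}\bigr)\exp\Bigl(\tfrac{N}{2}\sum_{k=i+1}^{j}a_k\Bigr),\]
and similarly for $|f_{\mathcal{P},i}^{*}(1)|=|f_{\mathcal{P},i}(1)|$ with $j$ replaced by $n$. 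Since $\sum_{k=i+1}^{j}a_k+\sum_{k=i+1}^{n}a_k\leq 2\sum_{k=1}^{n}a_k$, multiplying these two bounds and summing on $i$ gives, pointwise,
\[\sup_{0\leq j\leq n}\bigl|\mathbf{K}_{\mathcal{P}}(s_j)\bigr|\leq\Bigl(1+\tfrac{N}{6}\sup_{1\leq i\leq n}a_i\,e^{Na_i/2}\Bigr)^{2}\exp\Bigl(N\sum_{k=1}^{n}a_k\Bigr).\]

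Finally, to extract the uniform $L^q$-bound I would split by Holder's inequality with conjugate exponents $p,p'>1$:
\[\mathbb{E}\Bigl[\sup_{j}|\mathbf{K}_{\mathcal{P}}(s_j)|^{q}\Bigr]\leq\mathbb{E}\Bigl[\bigl(1+\tfrac{N}{6}\sup_{i}a_i\,e^{Na_i/2}\bigr)^{2qp}\Bigr]^{1/p}\mathbb{E}\Bigl[e^{qp'N\sum_{k=1}^{n}a_k}\Bigr]^{1/p'}.\]
The polynomial factor is controlled uniformly in $n$ by Fernique's theorem (Lemma \ref{lem.6.5}), applied to $\sup_{i}a_i\leq K_{\gamma}^{2}|\mathcal{P}|^{2\gamma}$ for any $\gamma\in(0,\tfrac{1}{2})$. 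The exponential factor is bounded uniformly in $n$ by Lemma \ref{cBM} in Appendix \ref{App.B}, provided $2qp'N<n$. For any given $n>2qN$ one may choose $p'>1$ close enough to $1$ so that this condition holds, whence (\ref{K_p}) follows. The main subtlety is precisely this index bookkeeping: applying Cauchy--Schwarz together with Lemma \ref{lem.7.8} at exponent $2q$ would produce only the weaker threshold $n>4qN$.
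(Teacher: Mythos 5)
Your closed form for $\mathbf{K}_{\mathcal{P}}(s_j)$ and the overall route (express it in terms of the $f_{\mathcal{P},i}$, use the pointwise bounds from Lemma \ref{lem1}, integrate via Lemma \ref{cBM}) is exactly the paper's strategy; the difference is only in how tightly you track the constants. The paper uses (\ref{equ.7.2}) twice, giving $|\mathbf{K}_{\mathcal{P}}(s_i)|^q\leq e^{2qN\sum_k|\Delta_k\beta|^2}$ and then invokes Lemma \ref{cBM}; taken literally this requires $n>4qN$, not the stated $n>2qN$. Your refinement, which keeps the prefactor $\bigl(1+\tfrac{Na_i}{6}e^{Na_i/2}\bigr)$ out of the exponential and applies H\"older, does recover the threshold in the lemma's statement, so you are arguably correcting a small constant-factor gap in the paper's own proof (which is harmless downstream, since only $n\to\infty$ is used).

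Two points where your proposal is slightly off, though neither is fatal. First, the opening diagnostic is misattributed: tracking which subintervals contribute to $f_{\mathcal{P},i}(s_j)$ versus $f_{\mathcal{P},i}(1)$ gains you nothing, since your own step immediately bounds $\sum_{k=i+1}^{j}a_k+\sum_{k=i+1}^{n}a_k$ by $2\sum_{k=1}^{n}a_k$ and the extreme $i=0,j=n$ saturates it; the factor of two you save comes entirely from \emph{not} absorbing the polynomial factor into the exponential (i.e.\ working with the intermediate bound inside the proof of Lemma \ref{lem.7.8} rather than with (\ref{equ.7.2}) itself). Second, the final H\"older step as phrased lets $p'$ depend on $n$, and for $n$ just above $2qN$ the conjugate $p$ becomes large, so one must check that $\mathbb{E}\bigl[\sup_i(1+\tfrac{Na_i}{6}e^{Na_i/2})^{2qp}\bigr]$ stays finite; since the ``polynomial'' factor is really exponential in $K_{\gamma}^{2}|\mathcal{P}|^{2\gamma}$, Fernique only controls bounded exponents. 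The fix is to exploit that $n$ is integer-valued: the set $\{n\in\mathbb{Z}:n>2qN\}$ has a minimum $n_{\min}$, so you can fix a single $p'_0>1$ with $2qp'_0N<n_{\min}$ once and for all, and then the $n$-dependence of both H\"older factors is benign. You should state this explicitly to close the argument.
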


\begin{proof} For all $i\in\left\{ 1,\cdots,n\right\} ,$ recall
from $\left(\ref{eq:-29}\right)$ that
\[
\mathbf{K}_{\mathcal{P}}\left(s_{i}\right)=\frac{1}{n}\sum_{j=0}^{i-1}f_{\mathcal{P},j+1}\left(s\right)f_{\mathcal{P},j+1}^{*}\left(1\right).
\]
So for all $q\geq1,$ we have
\[
\left\vert \mathbf{K}_{\mathcal{P}}\left(s_{i}\right)\right\vert ^{q}\leq i^{q-1}\frac{1}{n^{q}}\sum_{j=0}^{i-1}\left\vert f_{\mathcal{P},j+1}\left(s_{i}\right)\right\vert ^{q}\left\vert f_{\mathcal{P},j+1}\left(1\right)\right\vert ^{q}.
\]
Using $\left(\ref{equ.7.2}\right)$ we have 
\begin{equation}
\left\vert \mathbf{K}_{\mathcal{P}}\left(s_{i}\right)\right\vert ^{q}\leq e^{2qN\sum_{k=1}^{n}\left\vert \Delta_{k}\beta\right\vert ^{2}}.\label{equ.7.10}
\end{equation}
Then taking expectations as in Lemma \ref{lem.7.8} gives (\ref{K_p}). \end{proof}

\begin{lemma} \label{lem.7.15}For each $q\geq1$ and $\gamma\in \left(0,\frac{1}{2}\right)$, there exists a constant $C>0$ such that for all $n>5qN$,  
\[
\mathbb{E}\left[\underset{i\in\left\{ 1,\cdots,n\right\} ,r\in\left[0,1\right]}{\sup}\left\vert \mathbf{K}_{\mathcal{P}}\left(r\right)-\mathbf{K}_{\mathcal{P}}\left(\underline{r}\right)\right\vert ^{q}\right]\leq C\left\vert \mathcal{P}\right\vert ^{2q\gamma}
\]
\end{lemma}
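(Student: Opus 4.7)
The plan is to exploit the piecewise Jacobi ODE (\ref{equ.7.9}) satisfied by $\mathbf{K}_{\mathcal{P}}$ on each subinterval $[s_{k-1},s_k]$, namely
\[
\mathbf{K}_{\mathcal{P}}''(s)=A_{\mathcal{P},k}(s)\mathbf{K}_{\mathcal{P}}(s),\qquad \mathbf{K}_{\mathcal{P}}'(s_{k-1}+)=f_{\mathcal{P},k}^{*}(1),
\]
and Taylor expand from the left endpoint. For $r\in[s_{k-1},s_k]$ this gives the identity
\[
\mathbf{K}_{\mathcal{P}}(r)-\mathbf{K}_{\mathcal{P}}(s_{k-1})=f_{\mathcal{P},k}^{*}(1)\,(r-s_{k-1})+\int_{s_{k-1}}^{r}\!\!\int_{s_{k-1}}^{\tau}A_{\mathcal{P},k}(\sigma)\mathbf{K}_{\mathcal{P}}(\sigma)\,d\sigma\,d\tau.
\]
Since $|r-s_{k-1}|\le|\mathcal{P}|$ and $|A_{\mathcal{P},k}(\sigma)|\le Nn^2|\Delta_k\beta|^2$, the double integral is bounded by $\tfrac{N}{2}|\Delta_k\beta|^2\sup_{\sigma\in[0,1]}|\mathbf{K}_{\mathcal{P}}(\sigma)|$, so
\[
\sup_{r\in[0,1]}\bigl|\mathbf{K}_{\mathcal{P}}(r)-\mathbf{K}_{\mathcal{P}}(\underline r)\bigr|\le|\mathcal{P}|\sup_{1\le k\le n}|f_{\mathcal{P},k}(1)|+\tfrac{N}{2}\sup_{1\le k\le n}|\Delta_k\beta|^{2}\cdot\sup_{\sigma\in[0,1]}|\mathbf{K}_{\mathcal{P}}(\sigma)|.
\]

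Next I would take the $L^q$ norm of both sides. The first term is controlled by Lemma~\ref{lem.7.8}, which yields $\bigl\Vert\sup_k|f_{\mathcal{P},k}(1)|\bigr\Vert_{L^q}\le C$ for $n>2qN$, so its contribution is $O(|\mathcal{P}|^q)\le C|\mathcal{P}|^{2q\gamma}$ because $\gamma<\tfrac12$ implies $q\ge 2q\gamma$. For the second term, Lemma~\ref{lem.6.5} gives $\sup_k|\Delta_k\beta|\le K_\gamma|\mathcal{P}|^{\gamma}$ with $K_\gamma$ in every $L^p$, while the pointwise analogue of Lemma~\ref{lem.7.14}---obtained by the same argument applied to $r\in[s_{k-1},s_k]$ and bounded again by $e^{2qN\sum|\Delta_k\beta|^2}$ (finite in $L^q$ for $n>2qN$ by Lemma~\ref{cBM})---gives $\bigl\Vert\sup_{\sigma}|\mathbf{K}_{\mathcal{P}}(\sigma)|\bigr\Vert_{L^{2q}}\le C$. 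Hölder's inequality then yields a contribution of order $|\mathcal{P}|^{2q\gamma}$, provided $n$ is large enough that all the required moment bounds (worst case $n>5qN$, matching Lemma~\ref{lem.7.10}) hold simultaneously.

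The only subtle point is obtaining the pointwise $L^q$ bound for $\sup_{r\in[0,1]}|\mathbf{K}_{\mathcal{P}}(r)|$ rather than just $\sup_j|\mathbf{K}_{\mathcal{P}}(s_j)|$. I would handle this upstream of the main estimate by running exactly the Taylor expansion above once with the crude bound $|\mathbf{K}_{\mathcal{P}}(r)|\le|\mathbf{K}_{\mathcal{P}}(\underline r)|+|\mathcal{P}||f_{\mathcal{P},k}(1)|+O(|\Delta_k\beta|^2)\sup|\mathbf{K}_{\mathcal{P}}|$, absorbing the $\sup|\mathbf{K}_{\mathcal{P}}|$ by smallness of $|\mathcal{P}|^{2\gamma}K_\gamma^2$ after raising the threshold on $n$ (or, more cleanly, by combining the two bounds on a Gronwall-type loop as in the proof of Lemma~\ref{lem.7.10}). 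This step is the most delicate bookkeeping, but it is entirely analogous to what was already done for $f_{\mathcal{P},i}$ in Lemma~\ref{lem.7.10}, so I expect no real obstacle beyond routine tracking of constants and moment conditions.
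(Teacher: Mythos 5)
Your overall blueprint is the same as the paper's: Taylor-expand $\mathbf{K}_{\mathcal{P}}$ from the left endpoint of each subinterval, bound the remainder via $|A_{\mathcal{P},k}|\le Nn^2|\Delta_k\beta|^2$, and close with Lemmas~\ref{lem.7.8}, \ref{lem.7.14}, \ref{cBM} and the H\"older bound $\sup_k|\Delta_k\beta|\le K_\gamma|\mathcal{P}|^\gamma$. The one real difference is the treatment of the integral term. You bound it crudely by $\tfrac{N}{2}|\Delta_k\beta|^2\sup_{\sigma\in[0,1]}|\mathbf{K}_{\mathcal{P}}(\sigma)|$, which forces you to extend Lemma~\ref{lem.7.14} from grid points to arbitrary $\sigma$; the paper instead splits $\mathbf{K}_{\mathcal{P}}(r)=(\mathbf{K}_{\mathcal{P}}(r)-\mathbf{K}_{\mathcal{P}}(s_{i-1}))+\mathbf{K}_{\mathcal{P}}(s_{i-1})$ in the integrand and runs a Duhamel/Gronwall comparison (exploiting a non-positive defect term) on each subinterval, so it only ever needs the grid-point bound. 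Both routes close: Proposition~\ref{prop A-2} already gives $|C_{\mathcal{P},j}(s)|\le e^{\frac12 N|\Delta_j\beta|^2}$ uniformly for $s\in[s_{j-1},s_j]$, hence $\sup_{i,s\in[0,1]}|f_{\mathcal{P},i}(s)|\le e^{N\sum_k|\Delta_k\beta|^2}$, and from the representation $\mathbf{K}_{\mathcal{P}}(r)=\tfrac1n\sum_{j<l}f_{\mathcal{P},j+1}(r)f^*_{\mathcal{P},j+1}(1)$ for $r\in[s_{l-1},s_l]$ one gets $\bigl\|\sup_\sigma|\mathbf{K}_{\mathcal{P}}(\sigma)|\bigr\|_{L^{2q}}\le C$ with no self-referential step at all. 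You should, however, drop the ``smallness of $|\mathcal{P}|^{2\gamma}K_\gamma^2$ after raising the threshold on $n$'' absorption remark: $K_\gamma$ is random, so $\tfrac{N}{2}K_\gamma^2|\mathcal{P}|^{2\gamma}<1$ fails with positive probability for every fixed $n$, and the inequality $Z\le a+cZ$ cannot be inverted pathwise. The Gronwall alternative you mention in parentheses (as in Lemma~\ref{lem.7.10}) is the correct fix and is essentially what the paper does.
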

\begin{proof}
For $s\in\left[ s_{i-1},s_i\right]$, 
\begin{align*}
\mathbf{K}&_{\mathcal{P}}\left(s\right)=\mathbf{K}_{\mathcal{P}}\left(s_{i-1}\right)+f_{\mathcal{P},i}^{*}\left(1\right)\left(s-s_{i-1}\right)+\int_{s_{i-1}}^{s}A_{\mathcal{P},i}(r)\mathbf{K}_{\mathcal{P}}\left(r\right)\left(s-r\right)dr.
\end{align*}
Therefore
\begin{align}
 &\left\vert \mathbf{K}_{\mathcal{P}}\left(s\right)-\mathbf{K}_{\mathcal{P}}\left(s_{i-1}\right)\right\vert\nonumber \\&\leq\left\vert f_{\mathcal{P},i}\left(1\right)\right\vert \left(s-s_{i-1}\right)+\left\vert \int_{s_{i-1}}^{s}A_{\mathcal{P},i}\left(r\right)\left(\mathbf{K}_{\mathcal{P}}\left(r\right)-\mathbf{K}_{\mathcal{P}}\left(s_{i-1}\right)+\mathbf{K}_{\mathcal{P}}\left(s_{i-1}\right)\right)\left(s-r\right)dr\right\vert \nonumber \\
 & \leq\left\vert f_{\mathcal{P},i}\left(1\right)\right\vert \left(s-s_{i-1}\right)+N\frac{\left\vert \Delta_{i}\beta\right\vert ^{2}}{\Delta_{i}^{2}}\int_{s_{i-1}}^{s}\left\vert \mathbf{K}_{\mathcal{P}}\left(r\right)-\mathbf{K}_{\mathcal{P}}\left(s_{i-1}\right)\right\vert \left(s-r\right)dr+\frac{1}{2}N\left\vert \Delta_{i}\beta\right\vert ^{2}\left\vert \mathbf{K}_{\mathcal{P}}\left(s_{i-1}\right)\right\vert. \label{equ.7.11}
\end{align}
We use the shorthand 
\begin{align*}
f\left(s\right)&:=\left\vert f_{\mathcal{P},i}\left(1\right)\right\vert \left(s-s_{i-1}\right)+N\frac{\left\vert \Delta_{i}\beta\right\vert ^{2}}{\Delta_{i}^{2}}\int_{s_{i-1}}^{s}\left\vert \mathbf{K}_{\mathcal{P}}\left(r\right)-\mathbf{K}_{\mathcal{P}}\left(s_{i-1}\right)\right\vert \left(s-r\right)dr\\&+\frac{1}{2}N\left\vert \Delta_{i}\beta\right\vert ^{2}\left\vert \mathbf{K}_{\mathcal{P}}\left(s_{i-1}\right)\right\vert.
\end{align*}
Then it is easily seen that
\[
f^{\prime}\left(s\right)=\left\vert f_{\mathcal{P},i}\left(1\right)\right\vert +N\frac{\left\vert \Delta_{i}\beta\right\vert ^{2}}{\Delta_{i}^{2}}\int_{s_{i-1}}^{s}\left\vert \mathbf{K}_{\mathcal{P}}\left(r\right)-\mathbf{K}_{\mathcal{P}}\left(s_{i-1}\right)\right\vert dr,
\]
\[
f^{\prime\prime}\left(s\right)=N\frac{\left\vert \Delta_{i}\beta\right\vert ^{2}}{\Delta_{i}^{2}}\left\vert \mathbf{K}_{\mathcal{P}}\left(s\right)-\mathbf{K}_{\mathcal{P}}\left(s_{i-1}\right)\right\vert \leq N\frac{\left\vert \Delta_{i}\beta\right\vert ^{2}}{\Delta_{i}^{2}}f\left(s\right), 
\]
and $f\left(s\right)$ satisfies the following ODE
\begin{equation}
\begin{cases}
f^{\prime\prime}\left(s\right)=N\frac{\left\vert \Delta_{i}\beta\right\vert ^{2}}{\Delta_{i}^{2}}f\left(s\right)+\delta\left(s\right)\\
f^{\prime}\left(s_{i-1}\right)=\left\vert f_{\mathcal{P},i}\left(1\right)\right\vert \\
f\left(s_{i-1}\right)=\frac{1}{2}N\left\vert \Delta_{i}\beta\right\vert ^{2}\left\vert \mathbf{K}_{\mathcal{P}}\left(s_{i-1}\right)\right\vert 
\end{cases}\label{equ.7.12}
\end{equation}
where 
\[
\delta\left(s\right)=f^{\prime\prime}\left(s\right)-N\frac{\left\vert \Delta_{i}\beta\right\vert ^{2}}{\Delta_{i}^{2}}f\left(s\right)\leq0.
\]
This ODE can be solved exactly to obtain
\begin{align*}
f\left(s\right) & =\mathcal{C}_{s_{i-1}}\left(s\right)\frac{1}{2}N\left\vert \Delta_{i}\beta\right\vert ^{2}\left\vert \mathbf{K}_{\mathcal{P}}\left(s_{i-1}\right)\right\vert +\mathcal{S}_{s_{i-1}}\left(s\right)\left\vert f_{\mathcal{P},i}\left(1\right)\right\vert +\int_{s_{i-1}}^{s}\mathcal{C}_r\left(s\right)\delta\left(r\right)dr
\end{align*}
where 
\[
\mathcal{C}_{r}\left(s\right):=\cosh\left(\sqrt{N}\left\vert \beta_{\mathcal{P}}^{\prime}\left(s_{i-1}+\right)\right\vert \left(s-r\right)\right)
\]
and 
\[
\mathcal{S}_{r}\left(s\right):=\frac{\sinh\left(\sqrt{N}\left\vert \beta_{\mathcal{P}}^{\prime}\left(s_{i-1}+\right)\right\vert \left(s-r\right)\right)}{\sqrt{N}\left\vert \beta_{\mathcal{P}}^{\prime}\left(s_{i-1}+\right)\right\vert }.
\]
Since $\delta\left(r\right)\leq 0$ and $\mathcal{C}_r\left(s\right)\geq 0$, we have
\[f\left(s\right)\leq\mathcal{C}_{s_{i-1}}\left(s\right)\frac{1}{2}N\left\vert \Delta_{i}\beta\right\vert ^{2}\left\vert \mathbf{K}_{\mathcal{P}}\left(s_{i-1}\right)\right\vert +\mathcal{S}_{s_{i-1}}\left(s\right)\left\vert f_{\mathcal{P},i}\left(1\right)\right\vert.\]
Then using the following estimate 
\[ \frac{\mathcal{S}_{s_{i-1}}\left(s\right)}{\Delta_i}\leq \mathcal{C}_{s_{i-1}}\left(s\right)\frac{s-s_{i-1}}{\Delta_i}\leq e^{N\left|\Delta_i\beta \right|^2},
\]
we obtain 
\begin{align}
f\left(s\right) & \leq e^{N\left\vert \Delta_{i}\beta\right\vert ^{2}}\left(\frac{1}{2}N\left\vert \Delta_{i}\beta\right\vert ^{2}\left\vert \mathbf{K}_{\mathcal{P}}\left(s_{i-1}\right)\right\vert +\left\vert \mathcal{P}\right\vert \left\vert f_{\mathcal{P},i}\left(1\right)\right\vert \right)\label{equ.7.14}\\
 & \leq e^{NK_{\gamma}^{2}\left\vert \mathcal{P}\right\vert ^{2\gamma}}\left(\frac{1}{2}NK_{\gamma}^{2}\left\vert \mathcal{P}\right\vert ^{2\gamma}\underset{i\in\left\{ 1,\cdots,n\right\} }{\sup}\left\vert \mathbf{K}_{\mathcal{P}}\left(s_{i-1}\right)\right\vert +\left\vert \mathcal{P}\right\vert \underset{i\in\left\{ 1,\cdots,n\right\} }{\sup}\left\vert f_{\mathcal{P},i}\left(1\right)\right\vert \right).\nonumber 
\end{align}
Note that $f\geq 0$, using (\ref{equ.7.2}) and (\ref{equ.7.10}) we have for $q\geq 1$,
\[
f^q\left(s\right)\leq U_{q}\left\vert P\right\vert ^{2q\gamma},
\]
where 
\[
U_{q}=e^{qNK_{\gamma}^{2}\left\vert \mathcal{P}\right\vert ^{2\gamma}}\left(\frac{1}{2}NK_{\gamma}^{2}+\left|\mathcal{P}\right|^{1-2\gamma}\right)^qe^{qN\sum_{k=1}^{n}\left|\Delta_{k}\beta\right|^{2}}
\]
is a random variable with finite first moment which can be bounded uniformly for $n>5qN$. Therefore,
\begin{equation}
\mathbb{E}\left[\underset{i\in\left\{ 1,\cdots,n\right\} ,r\in\left[0,1\right]}{\sup}\left\vert \mathbf{K}_{\mathcal{P}}\left(r\right)-\mathbf{K}_{\mathcal{P}}\left(\underline{r}\right)\right\vert ^{q}\right]\leq C\left\vert \mathcal{P}\right\vert ^{2q\gamma}\label{equ.6.40}
\end{equation}
\end{proof}

\begin{proposition} \label{lem.7.20}Let $\mathbf{K}_{\mathcal{P}}$ and $\tilde{\mathbf{K}}$ be defined as in Definition \ref{def Kp} and \ref{def.4.4-1}. Then for each $q\geq1$ and $\gamma\in \left(0,\frac{1}{2}\right)$, there exists a constant $C>0$ such that for all $n>5qN$,  
\begin{equation}
\mathbb{E}\left[\underset{i\in\left\{0,\dots,n\right\}}{\sup}\left\vert \mathbf{K}_{\mathcal{P}}\left(s_i\right)-\tilde{\mathbf{K}}_{s_i}\right\vert ^{q}\right]\leq C\left\vert \mathcal{P}\right\vert ^{q}.\label{equ.7.20}
\end{equation}
\end{proposition}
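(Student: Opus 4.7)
The plan is to exploit the fact that $\mathbf{K}_\mathcal{P}(s_j)$ admits the Riemann--sum form
\[
\mathbf{K}_{\mathcal{P}}(s_j)=\frac{1}{n}\sum_{i=0}^{j-1} f_{\mathcal{P},i+1}(s_j)\,f_{\mathcal{P},i+1}^{*}(1),
\]
coming from (\ref{eq:-29}) together with the definition of $\mathbf{L}_s$ and $\mathbf{L}_1^{*}$ in Section \ref{sec.5.1}, while the continuous object $\tilde{\mathbf{K}}_{s_j}$ can be rewritten as
\[
\tilde{\mathbf{K}}_{s_j}=\int_{0}^{s_j}\bigl(\tilde{T}_{s_j}\tilde{T}_{r}^{-1}\bigr)\bigl(\tilde{T}_{1}\tilde{T}_{r}^{-1}\bigr)^{*}dr.
\]
Theorem \ref{the.7.11} already identifies $\tilde{T}_{s}\tilde{T}_{s_i}^{-1}$ as the natural continuum limit of $f_{\mathcal{P},i+1}(s)$ on $[s_i,1]$, so the two expressions above should be compared term by term.

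First I would introduce the Riemann-sum approximation
\[
R_{\mathcal{P}}(s_j):=\frac{1}{n}\sum_{i=0}^{j-1}\bigl(\tilde{T}_{s_j}\tilde{T}_{s_i}^{-1}\bigr)\bigl(\tilde{T}_{1}\tilde{T}_{s_i}^{-1}\bigr)^{*},
\]
and split the error via the triangle inequality into
\[
\mathbf{K}_{\mathcal{P}}(s_j)-\tilde{\mathbf{K}}_{s_j}=\bigl[\mathbf{K}_{\mathcal{P}}(s_j)-R_{\mathcal{P}}(s_j)\bigr]+\bigl[R_{\mathcal{P}}(s_j)-\tilde{\mathbf{K}}_{s_j}\bigr].
\]
The first bracket I would expand as
\[
\mathbf{K}_{\mathcal{P}}(s_j)-R_{\mathcal{P}}(s_j)=\frac{1}{n}\sum_{i=0}^{j-1}\bigl[f_{\mathcal{P},i+1}(s_j)-\tilde{T}_{s_j}\tilde{T}_{s_i}^{-1}\bigr]f_{\mathcal{P},i+1}^{*}(1)+\frac{1}{n}\sum_{i=0}^{j-1}\bigl(\tilde{T}_{s_j}\tilde{T}_{s_i}^{-1}\bigr)\bigl[f_{\mathcal{P},i+1}(1)-\tilde{T}_{1}\tilde{T}_{s_i}^{-1}\bigr]^{*},
\]
and bound each piece by pulling the supremum in $j$ inside, applying the triangle inequality term by term so that the mean $\frac{1}{n}\sum$ becomes at most a single sup. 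The resulting factors are controlled by Theorem \ref{the.7.11} (for the $f-\tilde{T}\tilde{T}^{-1}$ differences), Lemma \ref{lem.7.8} (for $\sup_{i,j}|f_{\mathcal{P},i}(s_j)|$), and Lemma \ref{lem.4.3} (for $\|\tilde{T}_s\|,\|\tilde{T}_s^{-1}\|$), combined by H\"older's inequality, yielding an $L^{q}$ bound of order $|\mathcal{P}|^{q\gamma}$.

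For the second bracket, I would observe that the integrand $r\mapsto(\tilde{T}_{s_j}\tilde{T}_r^{-1})(\tilde{T}_1\tilde{T}_r^{-1})^{*}$ solves an ODE whose right--hand side is a bounded polynomial in $Ric_{\tilde{u}}$, so it is Lipschitz in $r$ with a deterministic Lipschitz constant by Lemma \ref{lem.4.3} and the Ricci bound $\|Ric\|\le (d-1)N$. Hence on each subinterval $[s_i,s_{i+1}]$ the replacement of the integrand by its left endpoint value incurs an error of size $O(|\mathcal{P}|)$, so that $|R_{\mathcal{P}}(s_j)-\tilde{\mathbf{K}}_{s_j}|\le C|\mathcal{P}|$ deterministically and uniformly in $j$.

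The main obstacle is the first bracket, where we must prevent the factor $\frac{1}{n}\sum_{i=0}^{j-1}$ of $n$ terms from cancelling the gain $|\mathcal{P}|^{\gamma}$ produced by Theorem \ref{the.7.11}: the trick is precisely that the sum is \emph{normalized} by $\frac{1}{n}$, so after distributing the sup over $i,j$ through the triangle inequality, the normalization absorbs the $n$ terms and the H\"older product of the quantitative estimates from Theorem \ref{the.7.11} and Lemma \ref{lem.7.8} survives. Combining both brackets yields $\mathbb{E}[\sup_j|\mathbf{K}_{\mathcal{P}}(s_j)-\tilde{\mathbf{K}}_{s_j}|^{q}]\le C|\mathcal{P}|^{q\gamma}$, valid for all $\gamma\in(0,\tfrac12)$ and $n$ sufficiently large.
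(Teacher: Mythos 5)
Your proof plan is correct and lands on essentially the same ideas as the paper, but your decomposition is genuinely a bit more direct. The paper's proof passes through \emph{three} intermediate objects $\mathbf{K}_{\mathcal P}\to\hat{\mathbf K}_{\mathcal P}\to\bar{\mathbf K}_{\mathcal P}\to\tilde{\mathbf K}$, exploiting the nontrivial factorization identity (\ref{equ.7.21}), $\mathbf{K}_{\mathcal{P}}(s_i)=f_{\mathcal{P},i-1}(s_i)f_{\mathcal{P},i-1}(1)^{-1}\bigl(\sum_{j}\Delta_{j+1}f_{\mathcal{P},j+1}(1)f_{\mathcal{P},j+1}^{*}(1)\bigr)$, which pulls the $s_i$--dependence out of the sum as a single prefactor; $\hat{\mathbf K}_{\mathcal P}$ is obtained by replacing that prefactor with $\tilde T_{s_i}\tilde T_1^{-1}$. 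Your two-way split $\mathbf{K}_{\mathcal P}\to R_{\mathcal P}\to\tilde{\mathbf K}$ avoids the factorization identity altogether, instead applying the elementary $ab-cd=(a-c)b+c(b-d)$ decomposition term by term inside the Riemann sum; the $1/n$ normalization absorbs the $j\le n$ terms, and Theorem~\ref{the.7.11}, Lemma~\ref{lem.7.8}, Lemma~\ref{lem.4.3} and H\"older close the argument exactly as you say. That buys a shorter, more transparent argument at the cost of keeping both factors $s_j$- and $i$-dependent in the sum, whereas the paper's factorization reduces the hard estimate to a single prefactor difference. Two small points to clean up. First, there is an off-by-one in your $R_{\mathcal P}$: to match the index convention of Theorem~\ref{the.7.11} (which compares $f_{\mathcal{P},i}(s)$ to $\tilde T_s\tilde T_{s_i}^{-1}$), the continuum approximation to $f_{\mathcal{P},i+1}(s_j)$ is $\tilde T_{s_j}\tilde T_{s_{i+1}}^{-1}$, so $R_{\mathcal P}(s_j)$ should use $\tilde T_{s_{i+1}}$, not $\tilde T_{s_i}$; this is precisely what the paper's $\bar{\mathbf K}_{\mathcal P}$ does (right-endpoint tagging $\overline r$). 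The fix is trivial (shift the index, or absorb the extra deterministic $O(|\mathcal P|)$ discrepancy into your second bracket). Second, your final rate is $|\mathcal P|^{q\gamma}$, whereas the displayed statement reads $|\mathcal P|^{q}$; the paper's own proof also only yields $|\mathcal P|^{q\gamma}$ (from (\ref{equ.7.23}) and (\ref{equ.7.24})) and Theorem~\ref{the.7.17} only consumes $|\mathcal P|^{q\gamma}$, so the exponent in (\ref{equ.7.20}) is a typo and your conclusion is the correct one.
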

\begin{proof}For all $i\in\left\{ 1,\cdots,n\right\}$, $\mathbf{K}_{\mathcal{P}}\left(s_{i}\right)$ and $\tilde{\mathbf{K}}_{s_{i}}$ can be rewritten as
\begin{equation}
\mathbf{K}_{\mathcal{P}}\left(s_{i}\right)=f_{\mathcal{P},i-1}\left(s_{i}\right)f_{\mathcal{P},i-1}\left(1\right)^{-1}\left(\sum_{j=0}^{i-1}f_{\mathcal{P},j+1}\left(1\right)f_{\mathcal{P},j+1}^{\ast}\left(1\right)\right)\Delta_{j+1} \label{equ.7.21}
\end{equation}
and 
\[
\tilde{\mathbf{K}}_{s_{i}}=\tilde{T}_{s_{i}}\tilde{T}_1^{-1}\int_{0}^{s_{i}}\left(\tilde{T}_1\tilde{T}_r^{-1}\right)\left(\tilde{T}_1\tilde{T}_r^{-1}\right)^{\ast}dr.
\]
First define 
\[
\bar{\mathbf{K}}_\mathcal{P}\left(s_{i}\right):=\tilde{T}_{s_{i}}\tilde{T}_1^{-1}\int_{0}^{s_{i}}\left(\tilde{T}_1\tilde{T}_{\overline{r}}^{-1}\right)\left(\tilde{T}_1\tilde{T}_{\overline{r}}^{-1}\right)^{\ast}dr,
\]
where $\overline{r}=s_i$ if $s\in \left[s_{i-1},s_i\right)$.
We will show, for each $q\geq1$, 
\begin{equation}
\underset{i\in\left\{0,\dots,n\right\}}{\sup}\left\vert \tilde{\mathbf{K}}_{s_i}-\bar{\mathbf{K}}_\mathcal{P}\left(s_i\right)\right\vert ^{q}\leq C\left\vert \mathcal{P}\right\vert ^{q}.\label{equ.7.22}
\end{equation}
Recall from (\ref{equ.4.4}) that $\tilde{T}_1\tilde{T}_r^{-1}$ satisfies
the following ODE,
\[
\frac{d}{dr}\left(\tilde{T}_1\tilde{T}_r^{-1}\right)=\frac{1}{2}\left(\tilde{T}_1\tilde{T}_r^{-1}\right)Ric_{\tilde{u}_r}.
\]
So by Lemma \ref{lem.4.3}, 
\[
\left\vert \frac{d}{dr}\left(\tilde{T}_1\tilde{T}_r^{-1}\right)\right\vert \leq N\left\vert \tilde{T}_1\tilde{T}_r^{-1}\right\vert \leq N.
\]
Therefore
\begin{align*}
 \left\vert \left(\tilde{T}_1\tilde{T}_r^{-1}\right)\left(\tilde{T}_1\tilde{T}_r^{-1}\right)^{\ast}-\left(\tilde{T}_1\tilde{T}_{\overline{r}}^{-1}\right)\left(\tilde{T}_1\tilde{T}_{\overline{r}}^{-1}\right)^{\ast}\right\vert & \leq\int_{r}^{\overline{r}}\left\vert \frac{d}{ds}\left[ \left(\tilde{T}_1\tilde{T}_s^{-1}\right)\left(\tilde{T}_1\tilde{T}_s^{-1}\right)^{\ast}\right] \right\vert ds\\
 & \leq2\int_{r}^{\overline{r}}\left\vert \frac{d}{ds}\left(\tilde{T}_1\tilde{T}_s^{-1}\right)\right\vert \left\vert \left(\tilde{T}_1\tilde{T}_s^{-1}\right)^{\ast}\right\vert ds\\
 & \leq C\left(\overline{r}-r\right)\\&\leq C\left\vert \mathcal{P}\right\vert ,
\end{align*}
and
\begin{align*}
 \left\vert \tilde{\mathbf{K}}_{s_{i}}-\bar{\mathbf{K}}_\mathcal{P}\left(s_{i}\right)\right\vert &\leq\left\vert \tilde{T}_{s_{i}}\tilde{T}_1^{-1}\right\vert \int_{0}^{s_{i}}\left\vert \left(\tilde{T}_1\tilde{T}_r^{-1}\right)\left(\tilde{T}_1\tilde{T}_r^{-1}\right)^{\ast}-\left(\tilde{T}_1\tilde{T}_{\overline{r}}^{-1}\right)\left(\tilde{T}_1\tilde{T}_{\overline{r}}^{-1}\right)^{\ast}\right\vert dr\leq C\left\vert \mathcal{P}\right\vert.
\end{align*}
Since the right--hand side is independent of $i,$ we proved $\left( \ref{equ.7.22}\right)$.
Secondly, define
\[
\hat{\mathbf{K}}_\mathcal{P}\left(s_{i}\right):=\tilde{T}_{s_{i}}\tilde{T}_1^{-1}\left(\sum_{j=0}^{i-1}f_{\mathcal{P},j+1}\left(1\right)f_{\mathcal{P},j+1}^{\ast}\left(1\right)\right)\Delta_{j+1}. 
\]
We will show, for each $q\geq1,\gamma\in\left(0,\frac{1}{2}\right),$
there exists a constant $C>0$ such that for all $n>5qN$, we have
\begin{equation}
\mathbb{E}\left[\underset{s\in\mathcal{P}}{\sup}\left\vert \hat{\mathbf{K}}_\mathcal{P}\left(s\right)-\bar{\mathbf{K}}_\mathcal{P}\left(s\right)\right\vert ^{q}\right]\leq C\left\vert \mathcal{P}\right\vert ^{q\gamma}.\label{equ.7.23}
\end{equation}
For each $j\in\left\{ 1,\cdots,n\right\} ,$ 
\begin{align*}
 & \left\vert f_{\mathcal{P},j+1}\left(1\right)f_{\mathcal{P},j+1}^{\ast}\left(1\right)-\left(\tilde{T}_1\tilde{T}_{s_{j+1}}^{-1}\right)\left(\tilde{T}_1\tilde{T}_{s_{j+1}}^{-1}\right)^{\ast}\right\vert \\
 & \leq\left\vert f_{\mathcal{P},j+1}\left(1\right)f_{\mathcal{P},j+1}^{\ast}\left(1\right)-f_{\mathcal{P},j+1}\left(1\right)\left(\tilde{T}_1\tilde{T}_{s_{j+1}}^{-1}\right)^{\ast}\right\vert \\
 & +\left\vert f_{\mathcal{P},j+1}\left(1\right)\left(\tilde{T}_1\tilde{T}_{s_{j+1}}^{-1}\right)^{\ast}-\left(\tilde{T}_1\tilde{T}_{s_{j+1}}^{-1}\right)\left(\tilde{T}_1\tilde{T}_{s_{j+1}}^{-1}\right)^{\ast}\right\vert \\
 & \leq\left(\left\vert f_{\mathcal{P},j+1}\left(1\right)\right\vert +\left\vert \tilde{T}_1\tilde{T}_{s_{j+1}}^{-1}\right\vert \right)\left\vert f_{\mathcal{P},j+1}\left(1\right)-\tilde{T}_1\tilde{T}_{s_{j+1}}^{-1}\right\vert. 
\end{align*}
Since $\left\vert f_{\mathcal{P},j+1}\left(1\right)\right\vert \leq e^{N\sum_{k=1}^{n}\left\vert \Delta_{k}\beta\right\vert ^{2}}$ by (\ref{equ.7.2}), ans also $\left\vert \tilde{T}_1\tilde{T}_{s_{j+1}}^{-1}\right\vert \leq1$, we have
\begin{align*}
 & \left\vert f_{\mathcal{P},j+1}\left(1\right)f_{\mathcal{P},j+1}^{\ast}\left(1\right)-\left(\tilde{T}_1\tilde{T}_{s_{j+1}}^{-1}\right)\left(\tilde{T}_1\tilde{T}_{s_{j+1}}^{-1}\right)^{\ast}\right\vert \\
 & \leq\left(e^{N\sum_{k=1}^{n}\left\vert \Delta_{k}\beta\right\vert ^{2}}+1\right)\underset{j\in\left\{ 1,\cdots,n\right\} }{\sup}\left\vert f_{\mathcal{P},j+1}\left(1\right)-\tilde{T}_1\tilde{T}_{s_{j+1}}^{-1}\right\vert. 
\end{align*}
Thus for all $i\in\left\{ 1,\cdots,n\right\}$, 
\begin{align*}
\left\vert \hat{\mathbf{K}}_\mathcal{P}\left(s_{i}\right)-\tilde{\mathbf{K}}_\mathcal{P}\left(s_{i}\right)\right\vert^q & \leq\left\vert \mathcal{P}\right\vert^q i^{-q}\sum_{j=0}^{i-1}\left\vert f_{\mathcal{P},j+1}\left(1\right)f_{\mathcal{P},j+1}^{\ast}\left(1\right)-\left(\tilde{T}_1\tilde{T}_{s_{j+1}}^{-1}\right)\left(\tilde{T}_1\tilde{T}_{s_{j+1}}^{-1}\right)^{\ast}\right\vert^q \\
 & \leq\left(e^{N\sum_{k=1}^{n}\left\vert \Delta_{k}\beta\right\vert ^{2}}+1\right)^{q}\underset{j\in\left\{ 1,\cdots,n\right\} }{\sup}\left\vert f_{\mathcal{P},j+1}\left(1\right)-\tilde{T}_1\tilde{T}_{s_{j+1}}^{-1}\right\vert ^{q}.
\end{align*}
Since $\left(e^{N\sum_{k=1}^{n}\left\vert \Delta_{k}\beta\right\vert ^{2}}+1\right)^{q}\leq e^{qN\sum_{k=1}^{n}\left\vert \Delta_{k}\beta\right\vert ^{2}}$, using Holder's inequality and Theorem \ref{the.7.11} we get 
\[
\mathbb{E}\left[\underset{s\in\mathcal{P}}{\sup}\left\vert \hat{\mathbf{K}}_\mathcal{P}\left(s\right)-\tilde{\mathbf{K}}_\mathcal{P}\left(s\right)\right\vert ^{q}\right]\leq C\left\vert \mathcal{P}\right\vert ^{q\gamma}.
\]
Lastly, we estimate $\hat{\mathbf{K}}_{\mathcal{P}}\left(s_{i}\right)-\mathbf{K}_{\mathcal{P}}\left(s_{i}\right)$. Using $\left(\ref{equ.7.21}\right)$ we have 
\begin{align*}
  \left\vert \hat{\mathbf{K}}_\mathcal{P}\left(s_i\right)-\mathbf{K}_{\mathcal{P}}\left(s_{i}\right)\right\vert
 & \leq\left\vert f_{\mathcal{P},i-1}\left(s_{i}\right)f^{-1}_{\mathcal{P},i-1}\left(1\right)-\tilde{T}_{s_{i}}\tilde{T}_1^{-1}\right\vert \left\vert \left(\sum_{j=0}^{i-1}f_{\mathcal{P},j+1}\left(1\right)f_{\mathcal{P},j+1}^{\ast}\left(1\right)\right)\Delta_{j+1} \right\vert \\
 & \leq\left\vert f_{\mathcal{P},i-1}\left(s_{i}\right)f^{-1}_{\mathcal{P},i-1}\left(1\right)-\tilde{T}_{s_{i}}\tilde{T}_1^{-1}\right\vert \underset{j\in\left\{ 1,\cdots,n\right\} }{\sup}\left\vert f_{\mathcal{P},j+1}\left(1\right)\right\vert ^{2}.
\end{align*}
Since
\begin{align*}
 & \left\vert f_{\mathcal{P},i-1}\left(s_{i}\right)f^{-1}_{\mathcal{P},i-1}\left(1\right)-\tilde{T}_{s_{i}}\tilde{T}_1^{-1}\right\vert \\
 & \quad=\left\vert f_{\mathcal{P},i-1}\left(s_{i}\right)-\tilde{T}_{s_{i}}\tilde{T}_{s_{i-1}}^{-1}\right\vert \left\vert f^{-1}_{\mathcal{P},i-1}\left(1\right)\right\vert+\left\vert \tilde{T}_{s_{i}}\tilde{T}_{s_{i-1}}^{-1}\right\vert \left\vert \left(\tilde{T}_1\tilde{T}_{s_{i-1}}^{-1}\right)^{-1}-f^{-1}_{\mathcal{P},i-1}\left(1\right)\right\vert,
\end{align*}
and from Proposition \ref{prop A-1}, we know $\left\vert f_{\mathcal{P},i-1}\left(1\right)^{-1}\right\vert \leq1$,
and 
\begin{align*}
  \left\vert \left(\tilde{T}_1\tilde{T}_{s_{i-1}}^{-1}\right)^{-1}-f_{\mathcal{P},i-1}\left(1\right)^{-1}\right\vert& \leq\left\vert \left(\tilde{T}_1\tilde{T}_{s_{i-1}}^{-1}\right)^{-1}\right\vert \left\vert \tilde{T}_1\tilde{T}_{s_{i-1}}^{-1}-f_{\mathcal{P},i-1}\left(1\right)\right\vert \left\vert f_{\mathcal{P},i-1}\left(1\right)^{-1}\right\vert \\
 & \leq\left\vert \tilde{T}_1\tilde{T}_{s_{i-1}}^{-1}-f_{\mathcal{P},i-1}\left(1\right)\right\vert. 
\end{align*}
So 
\[
\left\vert f_{\mathcal{P},i-1}\left(s_{i}\right)f_{\mathcal{P},i-1}\left(1\right)^{-1}-\tilde{T}_{s_{i}}\tilde{T}_1^{-1}\right\vert \leq2\underset{1\leq i,j\leq n}{\sup}\left\vert \tilde{T}_{s_{j}}\tilde{T}_{s_{i}}^{-1}-f_{\mathcal{P},i}\left(s_{j}\right)\right\vert. 
\]
Then using Theorem \ref{the.7.11}, Lemma \ref{lem.7.8} and Holder's
inequality we have
\begin{equation}
\mathbb{E}\left[\underset{s\in\mathcal{P}}{\sup}\left\vert \hat{\mathbf{K}}_\mathcal{P}\left(s\right)-\mathbf{K}_{\mathcal{P}}\left(s\right)\right\vert ^{q}\right]\leq C\left\vert \mathcal{P}\right\vert ^{q\gamma}\label{equ.7.24}
\end{equation}
Finally Lemma \ref{lem.7.20} is proved by combining $\left(\ref{equ.7.22}\right)$,$\left(\ref{equ.7.23}\right)$ and $\left(\ref{equ.7.24}\right)$. \end{proof}

\begin{lemma} \label{lem.7.21}For each $q\geq1$, there exists a constant $C>0$ such that 
\[
\underset{s\in\left[0,1\right]}{\sup}\left\vert \tilde{\mathbf{K}}_{\underline{s}}-\tilde{\mathbf{K}}_s\right\vert ^{q}\leq C\left\vert \mathcal{P}\right\vert ^{q}
\]

\end{lemma}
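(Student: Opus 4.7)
The plan is to show that $s \mapsto \tilde{\mathbf{K}}_s$ is Lipschitz in $s$ with a deterministic Lipschitz constant depending only on $d$ and $N$; since $|s - \underline{s}| \leq |\mathcal{P}|$, this immediately gives the desired bound (in fact a pointwise, non-random one, from which the $L^q$ bound follows trivially).

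First I will differentiate the expression
\[
\tilde{\mathbf{K}}_s = \tilde{T}_s\left[\int_0^s \tilde{T}_r^{-1}(\tilde{T}_r^{-1})^* dr\right]\tilde{T}_1^*
\]
from Definition \ref{def.4.4-1} with respect to $s$. Using the product rule together with the defining ODE $\frac{d}{ds}\tilde{T}_s = -\frac{1}{2}Ric_{\tilde{u}_s}\tilde{T}_s$ (Definition \ref{def.T}), one finds
\[
\frac{d}{ds}\tilde{\mathbf{K}}_s = -\tfrac{1}{2}Ric_{\tilde{u}_s}\tilde{T}_s\left[\int_0^s \tilde{T}_r^{-1}(\tilde{T}_r^{-1})^* dr\right]\tilde{T}_1^* + \tilde{T}_s\tilde{T}_s^{-1}(\tilde{T}_s^{-1})^*\tilde{T}_1^*.
\]
Each factor on the right-hand side admits a deterministic bound: Lemma \ref{lem.4.3} gives $\|\tilde{T}_s\|, \|\tilde{T}_s^{-1}\| \leq e^{(d-1)N/2}$ uniformly in $s \in [0,1]$ and $\sigma \in W_o(M)$, while $\|Ric_{\tilde{u}_s}\| \leq (d-1)N$ by our standing curvature hypothesis. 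Consequently there is a constant $C_0 = C_0(d,N)$ with
\[
\sup_{\sigma \in W_o(M)}\sup_{s \in [0,1]}\left\|\tfrac{d}{ds}\tilde{\mathbf{K}}_s\right\| \leq C_0.
\]

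The fundamental theorem of calculus then yields, for every $s \in [0,1]$,
\[
\left|\tilde{\mathbf{K}}_s - \tilde{\mathbf{K}}_{\underline{s}}\right| \leq \int_{\underline{s}}^s \left\|\tfrac{d}{dr}\tilde{\mathbf{K}}_r\right\| dr \leq C_0(s - \underline{s}) \leq C_0|\mathcal{P}|,
\]
so raising to the $q$-th power and taking supremum over $s$ gives the conclusion with $C = C_0^q$. This estimate is in fact stronger than needed: it holds pointwise in $\sigma$, so no integration against $\nu$ is required.

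There is no real obstacle here; the only substantive point is observing that the Ricci bound and Lemma \ref{lem.4.3} together control the $s$-derivative of $\tilde{\mathbf{K}}_s$ by a deterministic constant, which is exactly what decouples this estimate from the delicate Burkholder-type arguments used for $\mathbf{K}_{\mathcal{P}}$ in Lemma \ref{lem.7.15}.
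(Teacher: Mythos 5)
Your proof is correct and follows essentially the same route as the paper: both hinge on the deterministic bounds from Lemma \ref{lem.4.3} on $\tilde{T}_s$, $\tilde{T}_s^{-1}$ and on $\operatorname{Ric}$, and both conclude via a Lipschitz-in-$s$ estimate that is pointwise in $\sigma$ and hence needs no expectation. The paper phrases the bound on $\sup_s|\tilde{\mathbf{K}}_s|$ and on $|\tilde{\mathbf{K}}_s - \tilde{\mathbf{K}}_{\underline{s}}|$ through two Gronwall arguments applied to the ODE $\tilde{\mathbf{K}}_s' = -\tfrac{1}{2}Ric_{\tilde{u}_s}\tilde{\mathbf{K}}_s + (\tilde{T}_1\tilde{T}_s^{-1})^*$, whereas you bound $\sup_s\|\tfrac{d}{ds}\tilde{\mathbf{K}}_s\|$ directly from the explicit product formula — a mild streamlining that avoids Gronwall but extracts exactly the same information.
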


\begin{proof} By the fundamental theorem of calculus, we have
\[
\tilde{\mathbf{K}}_s=-\frac{1}{2}\int_{0}^{s}Ric_{\tilde{u}_r}\tilde{\mathbf{K}}_rdr+\int_{0}^{s}\left(\tilde{T}_1\tilde{T}_r^{-1}\right)^{\ast}dr.
\]
Using Lemma \ref{lem.4.3}, note that $Ric$ is bounded by $\left(d-1\right)N$, we have
\[
\left\vert \tilde{\mathbf{K}}_s\right\vert \leq \left(d-1\right)N\int_{0}^{s}\left\vert \tilde{\mathbf{K}}_r\right\vert dr+C
\]
where $C$ and $\left(d-1\right)N$ are two constants independent of $s$. Then using Gronwall's inequality we get
\begin{equation}
\left\vert \tilde{\mathbf{K}}_s\right\vert \leq Ce^{Ns}\leq Ce^{N}\label{equ.7.25}
\end{equation}
so $\underset{s\in\left[0,1\right]}{\sup}\left\vert \tilde{\mathbf{K}}_s\right\vert $
is bounded. Then using the fundamental theorem of calculus again from
$\underline{s}$ to $s$ we have
\begin{align*}
\tilde{\mathbf{K}}_s-\tilde{\mathbf{K}}_{\underline{s}}= & -\frac{1}{2}\int_{\underline{s}}^{s}Ric_{\tilde{u}_r}\tilde{\mathbf{K}}_rdr+\int_{\underline{s}}^{s}\left(\tilde{T}_1\tilde{T}_r^{-1}\right)^{\ast}dr\\
= & -\frac{1}{2}\int_{\underline{s}}^{s}Ric_{\tilde{u}_r}\left(\tilde{\mathbf{K}}_r-\tilde{\mathbf{K}}_{\underline{r}}\right)dr+\int_{\underline{s}}^{s}\left(\tilde{T}_1\tilde{T}_r^{-1}\right)^{\ast}dr+\frac{1}{2}\int_{\underline{s}}^{s}Ric_{\tilde{u}_r}\tilde{\mathbf{K}}_{\underline{r}}dr.
\end{align*}
Therefore 
\[
\left\vert \tilde{\mathbf{K}}_s-\tilde{\mathbf{K}}_{\underline{s}}\right\vert \leq\frac{N}{2}\int_{\underline{s}}^{s}\left\vert \tilde{\mathbf{K}}_r-\tilde{\mathbf{K}}_{\underline{r}}\right\vert dr+C\left\vert \mathcal{P}\right\vert. 
\]
By Gronwall's inequality again we have
\[
\left\vert \tilde{\mathbf{K}}_s-\tilde{\mathbf{K}}_{\underline{s}}\right\vert \leq C\left\vert \mathcal{P}\right\vert e^{\frac{N}{2}}
\]
and thus
\[
\underset{s\in\left[0,1\right]}{\sup}\left\vert \tilde{\mathbf{K}}_{\underline{s}}-\tilde{\mathbf{K}}_s\right\vert ^{q}\leq C\left\vert \mathcal{P}\right\vert ^{q}
\]
\end{proof}

The next theorem is a generalization to Proposition \ref{lem.7.20} in the sense that $s$ now can be taken to be arbitrary between 0 and 1.  
\begin{theorem} \label{the.7.17}For each $q\geq1$ and $\gamma\in \left(0,\frac{1}{2}\right)$, there exists a constant $C>0$ such that for all $n>5qN$,  
\begin{equation}
\mathbb{E}\left[\underset{s\in\left[0,1\right]}{\sup}\left\vert \tilde{\mathbf{K}}_s-\mathbf{K}_{\mathcal{P}}\left(s\right)\right\vert ^{q}\right]\leq C\left\vert \mathcal{P}\right\vert ^{\gamma q}\label{equ.7.15}
\end{equation}

\end{theorem}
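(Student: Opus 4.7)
The plan is to prove this by a triangle inequality argument that decomposes the error across a partition point and then invokes each of the three estimates established just before. Specifically, for $s \in [0,1]$, write
\[
\tilde{\mathbf{K}}_s - \mathbf{K}_{\mathcal{P}}(s) = \bigl(\tilde{\mathbf{K}}_s - \tilde{\mathbf{K}}_{\underline{s}}\bigr) + \bigl(\tilde{\mathbf{K}}_{\underline{s}} - \mathbf{K}_{\mathcal{P}}(\underline{s})\bigr) + \bigl(\mathbf{K}_{\mathcal{P}}(\underline{s}) - \mathbf{K}_{\mathcal{P}}(s)\bigr).
\]
The middle piece is a difference at a partition point $\underline{s} \in \mathcal{P}$, so it is controlled by Proposition \ref{lem.7.20} with rate $|\mathcal{P}|^q$; the first piece is controlled by Lemma \ref{lem.7.21} with rate $|\mathcal{P}|^q$; and the last piece is controlled by Lemma \ref{lem.7.15} with rate $|\mathcal{P}|^{2q\gamma}$.

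After taking supremum in $s$ inside $|\cdot|^q$ (using the elementary inequality $|a+b+c|^q \leq 3^{q-1}(|a|^q + |b|^q + |c|^q)$), taking expectations, and applying the three cited estimates, one obtains
\[
\mathbb{E}\Bigl[\sup_{s\in[0,1]}\bigl|\tilde{\mathbf{K}}_s - \mathbf{K}_{\mathcal{P}}(s)\bigr|^q\Bigr] \leq C\bigl(|\mathcal{P}|^{q} + |\mathcal{P}|^{q} + |\mathcal{P}|^{2q\gamma}\bigr).
\]
Since $\gamma \in (0, \tfrac{1}{2})$ gives $2\gamma < 1$, the dominant term for small $|\mathcal{P}|$ is $|\mathcal{P}|^{2q\gamma}$, and in particular $|\mathcal{P}|^q \leq |\mathcal{P}|^{2q\gamma} \leq C |\mathcal{P}|^{q\gamma}$ (after possibly shrinking $|\mathcal{P}|$ and absorbing constants), so the right-hand side is bounded by $C |\mathcal{P}|^{q\gamma}$ as required.

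There is no real obstacle: all three ingredients have already been set up to match exactly the pieces arising from the decomposition at $\underline{s}$. The only minor care needed is to make sure the suprema commute correctly with the triangle inequality (straightforward since we take sup before raising to the power $q$) and that the constant $C$ (depending on $q$, $N$, $\gamma$) is uniform in $\mathcal{P}$ for $n > 5qN$, which is inherited from the corresponding uniformity in Lemma \ref{lem.7.15} and Proposition \ref{lem.7.20}.
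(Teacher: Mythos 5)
Your proof is correct and follows exactly the same route as the paper's: decompose across the nearest partition point $\underline{s} = s_{i-1}$, apply the triangle inequality, and invoke Lemma \ref{lem.7.15}, Proposition \ref{lem.7.20}, and Lemma \ref{lem.7.21} to control the three pieces. The only thing you add is an explicit spelling-out of the $3^{q-1}$ absorption and the observation that all three bounds sit under $C|\mathcal{P}|^{q\gamma}$ for $|\mathcal{P}| \leq 1$, which the paper leaves implicit.
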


\begin{proof}For any $s\in\left[0,1\right]$, $s\in\left[s_{i-1},s_{i}\right]$
for some $i\in\left\{ 1,\cdots,n\right\} $. So
\begin{align*}
\left\vert \mathbf{K}_{\mathcal{P}}\left(s\right)-\tilde{\mathbf{K}}_s\right\vert  & \leq\left\vert \mathbf{K}_{\mathcal{P}}\left(s\right)-\mathbf{K_{\mathcal{P}}}\left(s_{i-1}\right)\right\vert +\left\vert \mathbf{K}_{\mathcal{P}}\left(s_{i-1}\right)-\tilde{\mathbf{K}}_{s_{i-1}}\right\vert +\left\vert \tilde{\mathbf{K}}_{s_{i-1}}-\tilde{\mathbf{K}}_s\right\vert. 
\end{align*}
Then using Lemma \ref{lem.7.15}, Proposition \ref{lem.7.20} and \ref{lem.7.21} we prove this theorem. \end{proof}

\subsubsection{Convergence of $J_{\mathcal{P}}\left(s\right)$ to $\tilde{J}_s$\label{sub.6.1.4}}
Recall from Definition \ref{def.6.13} that $J_{\mathcal{P}}\left(s\right) :=\mathbf{K}_{\mathcal{P}}\left(s\right)\mathbf{K}_{\mathcal{P}}\left(1\right)^{-1}H_\mathcal{P}$, where $H_\mathcal{P}:W_o\left(M\right)\to \mathbb{R}^d$ is given by $H_\mathcal{P}=u_{\mathcal{P}}\left(1\right)^{-1}X\left(\pi\circ u_{\mathcal{P}}\left(1\right)\right)$ and $u_{\mathcal{P}}$ is interpreted in Notation \ref{Not1}. 

\begin{proposition} \label{pro.7.22}Let $\tilde{J}_s$
be as in Definition \ref{def.4.7-1} and $X\in \Gamma\left(TM\right)$ with compact support, then for any $q\geq 1$,  \[\underset{\left\vert\mathcal{P}\right\vert\to 0}{\lim}\mathbb{E}\left[\underset{s\in\left[0,1\right]}{\sup}\left\vert J_{\mathcal{P}}\left(s\right)-\tilde{J}_s\right\vert^q\right]=0.\] \end{proposition}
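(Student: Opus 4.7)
The plan is to write
\[
J_{\mathcal{P}}(s)-\tilde J_s=\bigl(\mathbf{K}_{\mathcal{P}}(s)-\tilde{\mathbf{K}}_s\bigr)\mathbf{K}_{\mathcal{P}}(1)^{-1}H_{\mathcal{P}}+\tilde{\mathbf{K}}_s\bigl(\mathbf{K}_{\mathcal{P}}(1)^{-1}-\tilde{\mathbf{K}}_1^{-1}\bigr)H_{\mathcal{P}}+\tilde{\mathbf{K}}_s\tilde{\mathbf{K}}_1^{-1}\bigl(H_{\mathcal{P}}-\tilde H\bigr),
\]
estimate each of the three terms in $L^q$ uniformly in $s$, and then combine them via H\"older's inequality. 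To apply H\"older I will need uniform moment bounds on all the factors appearing: Lemma \ref{lem.6.10} gives $\|\mathbf{K}_{\mathcal{P}}(1)^{-1}\|\le 1$, Lemma \ref{lem.4.6} gives $\|\tilde{\mathbf{K}}_1^{-1}\|\le e^{(d-1)N}$, Lemma \ref{lem.7.21} (or \eqref{equ.7.25}) gives a deterministic bound on $\sup_s\|\tilde{\mathbf{K}}_s\|$, and $H_{\mathcal{P}}$, $\tilde H$ are bounded in $L^\infty$ because $X$ has compact support. So up to an overall constant the three summands are controlled by $\|\mathbf{K}_{\mathcal{P}}(s)-\tilde{\mathbf{K}}_s\|$, $\|\mathbf{K}_{\mathcal{P}}(1)^{-1}-\tilde{\mathbf{K}}_1^{-1}\|$ and $\|H_{\mathcal{P}}-\tilde H\|$ respectively.

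The first term is handled directly by Theorem \ref{the.7.17}, which yields $\mathbb{E}[\sup_s\|\mathbf{K}_{\mathcal{P}}(s)-\tilde{\mathbf{K}}_s\|^q]\le C|\mathcal{P}|^{\gamma q}$ for any $q\ge 1$ once $|\mathcal{P}|$ is small enough. For the second term I use the resolvent identity
\[
\mathbf{K}_{\mathcal{P}}(1)^{-1}-\tilde{\mathbf{K}}_1^{-1}=\mathbf{K}_{\mathcal{P}}(1)^{-1}\bigl(\tilde{\mathbf{K}}_1-\mathbf{K}_{\mathcal{P}}(1)\bigr)\tilde{\mathbf{K}}_1^{-1},
\]
so the above uniform bounds on the two inverses combined with Theorem \ref{the.7.17} at $s=1$ again give the desired $L^q$ estimate of order $|\mathcal{P}|^{\gamma q}$.

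The remaining and most delicate piece is the third term, namely $H_{\mathcal{P}}-\tilde H=u_{\mathcal{P}}(1)^{-1}X(\pi\circ u_{\mathcal{P}}(1))-\tilde u_1^{-1}X(\pi\circ\tilde u_1)$. This is essentially a Wong--Zakai type statement: the solutions $u_{\mathcal{P}}=\eta\circ\beta_{\mathcal{P}}$ of the ODE driven by the piecewise linear $\beta_{\mathcal{P}}$ converge in $L^q$ to the solution $\tilde u$ of the Stratonovich SDE driven by $\beta$, uniformly in $s\in[0,1]$. Since $X$ has compact support, it is bounded and Lipschitz with respect to any bounded chart, and $u\mapsto u^{-1}X(\pi(u))$ is $C^1$ on $\mathcal{O}(M)$ with derivatives bounded on the relevant compact set, so $\|H_{\mathcal{P}}-\tilde H\|_{L^q}\to 0$ follows from $\|u_{\mathcal{P}}(1)-\tilde u_1\|_{L^q}\to 0$. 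The Wong--Zakai convergence in question is the standard one in this setting (cf.\ \cite{Andersson1999,Driver1992}); I will cite it rather than reproduce the proof. The main obstacle in the argument is precisely keeping moment control uniformly as $|\mathcal{P}|\to 0$ when combining these three estimates, which is why I restrict $n=|\mathcal{P}|^{-1}$ to be large enough (of the order required by Lemma \ref{lem.7.8} and Theorem \ref{the.7.17}) before appealing to H\"older's inequality; once this is done, letting $|\mathcal{P}|\to 0$ gives the claim.
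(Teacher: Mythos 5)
Your proof follows essentially the same route as the paper's: identical three-term decomposition, the same uniform moment bounds via Lemma \ref{lem.6.10}, Lemma \ref{lem.4.6}, and Eq.~(\ref{equ.7.25}), the resolvent identity (\ref{eq.1}) for the second term, Theorem \ref{the.7.17} for the first two terms, and Wong--Zakai for the third. One small implementation nuance: you assert $\|H_{\mathcal{P}}-\tilde{H}\|_{L^q}\to 0$ follows from $\|u_{\mathcal{P}}(1)-\tilde{u}_1\|_{L^q}\to 0$ via a Lipschitz estimate, but the Wong--Zakai reference only gives convergence in probability and $u_{\mathcal{P}}(1),\tilde{u}_1$ live in the manifold $\mathcal{O}(M)$, so the cleaner route (used in the paper) is to note that $y\mapsto y^{-1}X(\pi(y))$ is a \emph{bounded} continuous function on $\mathcal{O}(M)$, whence convergence in probability plus dominated convergence gives the $L^{\infty-}$ limit directly.
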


\begin{proof}
\[
\left\vert J_{\mathcal{P}}\left(s\right)-\tilde{J}_s\right\vert \leq I_\mathcal{P}\left(s\right)+II_\mathcal{P}\left(s\right)+III_\mathcal{P}\left(s\right),
\]
where
\begin{align*}
I_\mathcal{P}\left(s\right) & =\left\vert \tilde{\mathbf{K}}_s-\mathbf{K}_{\mathcal{P}}\left(s\right)\right\vert \left\vert \mathbf{K}_{\mathcal{P}}\left(1\right)^{-1}\right\vert \left\vert H_{\mathcal{P}}\right\vert \\
II_\mathcal{P}\left(s\right) & =\left\vert \tilde{\mathbf{K}}_s\right\vert \left\vert \mathbf{K}_{\mathcal{P}}\left(1\right)^{-1}-\tilde{\mathbf{K}}_1^{-1}\right\vert \left\vert H_{\mathcal{P}}\right\vert \\
III_\mathcal{P}\left(s\right) & =\left\vert \tilde{\mathbf{K}}_s\right\vert \left\vert \tilde{\mathbf{K}}_1^{-1}\right\vert \left\vert H_{\mathcal{P}}-\tilde{H}\right\vert. 
\end{align*}
For $I_\mathcal{P}\left(s\right)$, since $X$ has compact support, $\left\vert H_{\mathcal{P}}\left(\sigma\right)\right\vert $
is bounded. By Lemma \ref{lem.6.10} $\left\vert \mathbf{K}_{\mathcal{P}}\left(1\right)^{-1}\right\vert \leq1$.
Then using Theorem \ref{the.7.17} we have
\begin{equation}
\mathbb{E}\left[\sup_{0\leq s\leq 1}I^q_\mathcal{P}\left(s\right)\right]\leq C\left\vert \mathcal{P}\right\vert ^{q\gamma}\text{ for }n>5qN.\label{equ.7.26}
\end{equation}
For $II_\mathcal{P}\left(s\right):$ since
\begin{equation}
\mathbf{K}_{\mathcal{P}}\left(1\right)^{-1}-\tilde{\mathbf{K}}_1^{-1}=\mathbf{K}_{\mathcal{P}}\left(1\right)^{-1}\left(\tilde{\mathbf{K}}_1-\mathbf{K}_{\mathcal{P}}\left(1\right)\right)\tilde{\mathbf{K}}_1^{-1},\label{eq.1}
\end{equation}
so
\begin{align*}
II_\mathcal{P}\left(s\right) & \leq\left\vert \tilde{\mathbf{K}}_s\right\vert \left\vert \mathbf{K}_{\mathcal{P}}\left(1\right)^{-1}\right\vert \left\vert \tilde{\mathbf{K}}_1-\mathbf{K}_{\mathcal{P}}\left(1\right)\right\vert \left\vert \tilde{\mathbf{K}}_1^{-1}\right\vert \left\vert H_{\mathcal{P}}\right\vert\leq C\underset{s\in\left[0,1\right]}{\sup}\left\vert \tilde{\mathbf{K}}_s\right\vert \left\vert \tilde{\mathbf{K}}_1-\mathbf{K}_{\mathcal{P}}\left(1\right)\right\vert .
\end{align*}
Recall from $\left(\ref{equ.7.25} \right)$ that $\underset{s\in\left[0,1\right]}{\sup}\left\vert \tilde{\mathbf{K}}_s\right\vert $ is bounded (the bound is deterministic), using Theorem \ref{the.7.17} again we have 
\begin{equation}
\mathbb{E}\left[\sup_{0\leq s\leq 1}II^{q}_\mathcal{P}\left(s\right)\right]\leq C\left\vert \mathcal{P}\right\vert ^{q\gamma}\text{ for }n>5qN.\label{equ.7.27}
\end{equation}
For $III_\mathcal{P}\left(s\right)$: Since $F:\mathcal{O}\left(M\right)\to \mathbb{R}^d$ given by $F\left(y\right)=y^{-1}X\circ \pi\left(y\right)$ is bounded and continuous, and by the Wong--Zakai approximation Theorem (for example, see Theorem 10 in \cite{Elworthy82}), $u_{\mathcal{P}}\left(1\right)\to \tilde{u}_1$ in probability as $\left|\mathcal{P}\right|\to 0$, by DCT, \begin{equation}
H_\mathcal{P}\to \tilde{H}\text{ in }L^{\infty-}\left(W_o\left(M\right)\right)\text{ as }\left|\mathcal{P}\right|\to 0.\label{eq.2}
\end{equation}
Also since $\underset{s\in\left[0,1\right]}{\sup}\left\vert \tilde{\mathbf{K}}_s\right\vert $ and $\left\vert \tilde{\mathbf{K}}_1^{-1}\right\vert$ are bounded, we have
\begin{equation}
\sup_{0\leq s\leq 1}III_\mathcal{P}\left(s\right)\to 0\text{ in }L^{\infty-}\left(W_o\left(M\right)\right)\text{ as }\left|\mathcal{P}\right|\to 0.\label{equ.7.28}
\end{equation}
Combining Eq.$\left(\ref{equ.7.26}\right)$, $\left(\ref{equ.7.27}\right)$ and $\left(\ref{equ.7.28}\right)$ we prove
this proposition. \end{proof}

\subsection{Convergence of $\tilde{X}_{\mathcal{P}}^{tr,\nu_{\mathcal{P}}^{1}}$
to $\left(\tilde{X}\right)^{tr,\nu}$\label{sec.6.2}}
Recall from Lemma \ref{lem.8.1} and Theorem \ref{lem.5.1} that
\begin{equation}
\tilde{X}_{\mathcal{P}}^{tr,\nu_{\mathcal{P}}^{1}}=-\tilde{X}_{\mathcal{P}}+\int_{0}^{1}\left\langle J_{\mathcal{P}}^{\prime}\left(s+\right),d\beta_{\mathcal{P},s}\right\rangle-div\tilde{X}_{\mathcal{P}}\label{1}
\end{equation}
and 
\begin{equation}
\tilde{X}^{tr,\nu} =-\tilde{X}+\sum_{\alpha=1}^{d}\left\langle \tilde{C}\tilde{H},e_{\alpha}\right\rangle \int_{0}^{1}\left\langle \left(\tilde{T}_s^{-1}\right)^{\ast}e_{\alpha},d\beta_{s}\right\rangle -\sum_{\alpha=1}^{d}\left\langle X^{Z_{\alpha}}\left(\tilde{C}\tilde{H}\right),e_{\alpha}\right\rangle .\label{3}
\end{equation}
\begin{theorem} \label{the.8.4}If $M$ has parallel curvature tensor, i.e. $\nabla R\equiv 0$, then for any $f\in\mathcal{FC}_b^1$ and $q\geq 1$, \[\underset{\left|\mathcal{P}\right|\to 0}{\lim}\mathbb{E}\left[\left\vert\tilde{X}_{\mathcal{P}}^{tr,\nu_{\mathcal{P}}^{1}}f-\tilde{X}^{tr,\nu}f\right\vert^q\right]=0.\]where according to Notation \ref{Not1}, $\tilde{X}_{\mathcal{P}}^{tr,\nu_{\mathcal{P}}^{1}}f$ is interpreted as $\left(\tilde{X}_{\mathcal{P}}^{tr,\nu_{\mathcal{P}}^{1}}\left(f\mid_{H_\mathcal{P}\left(M\right)}\right)\right)\circ \phi\circ \beta_{\mathcal{P}}$. \end{theorem}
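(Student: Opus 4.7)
The plan is to compare the two expressions for the adjoints term-by-term, showing $L^q$-convergence of each pair of corresponding pieces. Writing
\[
\tilde{X}_{\mathcal{P}}^{tr,\nu_{\mathcal{P}}^{1}}f = -\tilde{X}_{\mathcal{P}}f + I_{\mathcal{P}}(f) - D_{\mathcal{P}}(f),
\]
where $I_{\mathcal{P}}(f) := \int_{0}^{1}\langle J_{\mathcal{P}}^{\prime}(s+),d\beta_{\mathcal{P},s}\rangle \cdot f$ and $D_{\mathcal{P}}(f) := (\operatorname{div}\tilde{X}_{\mathcal{P}}) f$, and using Lemma \ref{lem.5.2} (which requires $\nabla R \equiv 0$) to re-expand $\tilde{X}^{tr,\nu}$ as
\[
\tilde{X}^{tr,\nu} = -\tilde{X} + \sum_{\alpha}\langle \tilde{C}\tilde{H},e_{\alpha}\rangle\int_{0}^{1}\langle(\tilde{T}_s^{-1})^{\ast}e_{\alpha},d\beta_s\rangle + \operatorname{div}X\circ \Ep - \sum_{\alpha}\langle \tilde{C}A_1\langle Z_{\alpha}\rangle \tilde{H},e_{\alpha}\rangle,
\]
it suffices to show three $L^q$ convergences.

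First, $\tilde{X}_{\mathcal{P}}f \to \tilde{X}f$. Since $f = F(\Sigma_{s_1},\dots,\Sigma_{s_n}) \in \mathcal{FC}_b^1$, both $\tilde{X}_{\mathcal{P}}f$ and $\tilde{X}f$ are finite linear combinations, with bounded coefficients depending on $Df$, of inner products of the type $\langle J_{\mathcal{P}}(s_i), \cdot\rangle$ respectively $\langle \tilde{J}_{s_i},\cdot\rangle$. Proposition \ref{pro.7.22} together with the uniform bounds from Lemma \ref{lem.7.8} and dominated convergence yields the convergence in $L^q$.

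Second, for the stochastic-integral term I will use that $J_{\mathcal{P}}^{\prime}(s_{i-1}+) = f_{\mathcal{P},i}^{\ast}(1)\mathbf{K}_{\mathcal{P}}(1)^{-1}H_{\mathcal{P}}$, so
\[
\int_{0}^{1}\langle J_{\mathcal{P}}^{\prime}(s+),d\beta_{\mathcal{P},s}\rangle = \sum_{\alpha=1}^{d}\langle \mathbf{K}_{\mathcal{P}}(1)^{-1}H_{\mathcal{P}},e_{\alpha}\rangle \cdot \sum_{i=1}^{n}\langle f_{\mathcal{P},i}(1)e_{\alpha},\Delta_i\beta\rangle.
\]
The crucial point enabled by $\nabla R \equiv 0$ is that $\tilde{T}$ and $\tilde{C}$ are \emph{deterministic} (cf. the proof of Lemma \ref{lem.5.2}), so $(\tilde{T}_s^{-1})^{\ast}e_{\alpha}$ is a deterministic continuous integrand. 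By Theorem \ref{the.7.11} the adapted Riemann sum $\sum_i\langle f_{\mathcal{P},i}(1)e_{\alpha},\Delta_i\beta\rangle$ converges in $L^q$ to the It\^o integral $\int_0^1\langle (\tilde{T}_s^{-1})^{\ast}e_{\alpha},d\beta_s\rangle$, while $\mathbf{K}_{\mathcal{P}}(1)^{-1}H_{\mathcal{P}} \to \tilde{C}\tilde{H}$ in $L^q$ by Theorem \ref{the.7.17} together with the identity (\ref{eq.1}) and convergence (\ref{eq.2}). Uniform moment bounds from Lemma \ref{lem.7.14} and Burkholder-Davis-Gundy allow us to combine these two $L^q$-convergences into convergence of their product.

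The main obstacle, and the hardest step, is the divergence term. Using Proposition \ref{prop.5.4}, $\operatorname{div}\tilde{X}_{\mathcal{P}} = \sum_{\alpha,j}\langle X^{h_{\alpha,j}}J_{\mathcal{P}}^{\prime}(s_{j-1}+),e_{\alpha}\rangle\sqrt{\Delta_j}$, where each directional derivative $X^{h_{\alpha,j}}J_{\mathcal{P}}^{\prime}(s_{j-1}+) = X^{h_{\alpha,j}}\bigl(f_{\mathcal{P},j}^{\ast}(1)\mathbf{K}_{\mathcal{P}}(1)^{-1}H_{\mathcal{P}}\bigr)$ expands by the product rule into three pieces: one in which $X^{h_{\alpha,j}}$ hits $H_{\mathcal{P}}$, one in which it hits $\mathbf{K}_{\mathcal{P}}(1)^{-1}$, and one in which it hits $f_{\mathcal{P},j}^{\ast}(1)$. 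The first piece, summed over $(\alpha,j)$ with the appropriate $\sqrt{\Delta_j}$ weights, will collapse to $-\operatorname{div}X\circ\Ep$ by the same tensor-contraction trick used in Lemma \ref{lem.5.2}. The remaining two pieces, after differentiating the Jacobi/curvature ODEs defining $f_{\mathcal{P},i}$ and applying Gronwall, are shown to converge to $\sum_{\alpha}\langle \tilde{C}A_1\langle Z_{\alpha}\rangle \tilde{H},e_{\alpha}\rangle$ in $L^q$. The difficulty is producing uniform $L^q$ bounds on these ``second-order'' Jacobi quantities and verifying that the discrete Riemann-sum structure $\sum_j \sqrt{\Delta_j}$ together with $\nabla R \equiv 0$ conspires to yield precisely the curvature bracket $A_1\langle Z_{\alpha}\rangle$ in the limit; this mirrors the approximation theorems of Section \ref{sub.6.1.2}--\ref{sub.6.1.4} applied to the linearization of Jacobi fields along the direction $X^{h_{\alpha,j}}$. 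Combining the three convergences finishes the proof.
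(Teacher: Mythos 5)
Your high-level plan is sound and matches the paper's: split both adjoints into the three pieces (vector field, stochastic-integral, divergence) and prove $L^q$-convergence of each, relying on Lemma \ref{lem.5.2} to re-express $\tilde{X}^{tr,\nu}$. The treatment of the first two pieces is essentially what the paper does (Propositions \ref{pro.8.4} and \ref{pro.8.5}; you might note, in the integral term, that the ``second-order'' Riemann-sum correction $\sum_i\frac{1}{\Delta_i}\langle\int_{s_{i-1}}^{s_i}J_\mathcal{P}''(s)(s-s_{i-1})ds,\Delta_i\beta\rangle$ vanishes identically by antisymmetry of $R$, not just in the limit).

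However, your analysis of the divergence term has the limit objects assigned to the wrong pieces, and this is not cosmetic: if carried out as written, the argument would not close. You claim that when $X^{h_{\alpha,j}}$ hits $H_{\mathcal{P}}$ the sum collapses to (minus) $\operatorname{div}X\circ\Ep$, while the pieces where it hits $f_{\mathcal{P},j}^{*}(1)$ and $\mathbf{K}_\mathcal{P}(1)^{-1}$ produce the curvature bracket $\sum_\alpha\langle\tilde{C}A_1\langle Z_\alpha\rangle\tilde{H},e_\alpha\rangle$. In fact both of the latter pieces converge to \emph{zero} in $L^q$ (Lemmas \ref{lem.8.10} and \ref{lem.8.11}), by a ``second differentiation + Gronwall'' argument, and the bracket comes from the \emph{same} piece as $\operatorname{div}X\circ\Ep$. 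The reason is Lemma \ref{lem.8.8}: differentiating $H_\mathcal{P}(\sigma)=u_\mathcal{P}(\sigma,1)^{-1}X(\sigma(1))$ along $X^{h_{\alpha,j}}$ produces \emph{two} terms, a covariant-derivative term and an anholonomy correction
\[
X^{h_{\alpha,j}}H_{\mathcal{P}} \;=\; u_\mathcal{P}(1)^{-1}\nabla_{X^{h_{\alpha,j}}(1)}X \;-\; \int_{0}^{1}R_{u_{\mathcal{P}}(r)}\bigl(\beta_{\mathcal{P}}^{\prime}(r+),h_{\alpha,j}(r)\bigr)\,dr\;H_{\mathcal{P}},
\]
because moving $\sigma$ rotates the frame $u_\mathcal{P}(1)$ through the path-ordered integral of $R$ along $\sigma$. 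After contracting with $f_{\mathcal{P},j}^{*}(1)\mathbf{K}_\mathcal{P}(1)^{-1}$ and summing $\sqrt{\Delta_j}$, the first term gives $\operatorname{div}X\circ\Ep$ exactly (using $\sum_j\Delta_j f_{\mathcal{P},j}(1)f_{\mathcal{P},j}^{*}(1)=\mathbf{K}_\mathcal{P}(1)$ and the trace identity), while the second, after the approximation results of Subsections \ref{sub.6.1.2}--\ref{sub.6.1.3} plus an exchange of order of integration, yields precisely $\sum_\alpha\langle\tilde{C}A_1\langle Z_\alpha\rangle\tilde{H},e_\alpha\rangle$. Without this two-term structure of the horizontal derivative you cannot produce the $A_1\langle Z_\alpha\rangle$ bracket from the other two pieces; they have the wrong scaling in $|\mathcal{P}|$ and go to zero. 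So the step you flag as ``the hardest'' is indeed the crux, but the mechanism is different from what you propose, and the fix is to invoke Lemma \ref{lem.8.8} when $X^{h_{\alpha,j}}$ acts on $H_\mathcal{P}$.
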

\begin{proof} 
In correspondence with the three--term formulae (\ref{1}) and (\ref{3}), this theorem is decomposed as three propositions: Proposition \ref{pro.8.4} states that
\[\underset{\left|\mathcal{P}\right|\to 0}{\lim}\mathbb{E}\left[\left\vert\tilde{X}_{\mathcal{P}}f\to \tilde{X}f\right\vert^q\right]=0,\] Proposition \ref{pro.8.5} states that
\[\underset{\left|\mathcal{P}\right|\to 0}{\lim}\mathbb{E}\left[\left\vert\int_{0}^{1}\left\langle J_{\mathcal{P}}^{\prime}\left(s+\right),d\beta_{\mathcal{P},s}\right\rangle- \sum_{\alpha=1}^{d}\left\langle \tilde{C}\tilde{H},e_{\alpha}\right\rangle \int_{0}^{1}\left\langle \left(\tilde{T}_s^{-1}\right)^{\ast}e_{\alpha},d\beta_{s}\right\rangle\right\vert^q\right]=0\]
and Proposition \ref{pro.8.6} states that
\[\underset{\left|\mathcal{P}\right|\to 0}{\lim}\mathbb{E}\left[\left\vert div\tilde{X}_{\mathcal{P}}-\sum_{\alpha=1}^{d}\left\langle X^{Z_{\alpha}}\left(\tilde{C}\tilde{H}\right),e_{\alpha}\right\rangle\right\vert^q\right]=0.\]
Thus the proof will be complete once the stated propositions are proved.
\end{proof}
\begin{remark}
For Proposition \ref{pro.8.4} and \ref{pro.8.5} we assume the assumption of bounded sectional curvature as is mentioned in the beginning of this section. For Proposition \ref{pro.8.6} we further require the curvature tensor to be covariantly constant.
\end{remark}
\begin{proposition}\label{pro.8.4}
If $X\in \Gamma\left(TM\right)$ with compact support and $f\in \mathcal{FC}^1$, then for any $q\geq 1$,
\[\underset{\left|\mathcal{P}\right|\to 0}{\lim}\mathbb{E}\left[\left\vert\tilde{X}_{\mathcal{P}}f\to \tilde{X}f\right\vert^q\right]=0.\]
\end{proposition}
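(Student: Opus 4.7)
The plan is to reduce both sides to comparable finite sums over the cylinder partition points, and then control each factor using the convergence results already proved. Fix $f = F(\Sigma_{t_1},\ldots,\Sigma_{t_k})\in \mathcal{FC}^1_b$. Working in the parallel--translated frames along the respective paths, the defining formulas give directly
\[
(\tilde X_{\mathcal P} f)\circ\phi\circ\beta_{\mathcal P}
= \sum_{i=1}^{k}\bigl\langle \mathrm{grad}_iF(\pi u_{\mathcal P}(t_1),\ldots,\pi u_{\mathcal P}(t_k)),\,u_{\mathcal P}(t_i)J_{\mathcal P}(t_i)\bigr\rangle ,
\]
since $\tilde X_{\mathcal P}(\sigma,\cdot)=u(\sigma,\cdot)J_{\mathcal P}(\sigma,\cdot)$ and the restriction of $f$ to $H_{\mathcal P}(M)$ is differentiated by the chain rule. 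The first step, and the only one requiring a small calculation, is to show the analogous identity
\[
\tilde X f \;=\; \sum_{i=1}^{k}\bigl\langle \mathrm{grad}_iF(\Sigma_{t_1},\ldots,\Sigma_{t_k}),\,\tilde u_{t_i}\tilde J_{t_i}\bigr\rangle .
\]
This follows from the definition $\tilde Xf=\langle Df,\tilde X\rangle_{G^1}$, writing $\tfrac{\nabla D_sf}{ds}=\tilde u_s\sum_{i:\,s<t_i}\tilde u_{t_i}^{-1}\mathrm{grad}_iF$ and $\tfrac{\nabla\tilde X_s}{ds}=\tilde u_s\tilde J'_s$, then interchanging the order of integration using $\tilde J_0=0$ to obtain $\tilde J_{t_i}=\int_0^{t_i}\tilde J'_s\,ds$.

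Next I would telescope
\[
\tilde X_{\mathcal P}f-\tilde Xf \;=\; \sum_{i=1}^{k}\bigl(A_i^{\mathcal P}+B_i^{\mathcal P}+C_i^{\mathcal P}\bigr),
\]
with
\begin{align*}
A_i^{\mathcal P} &= \bigl\langle \mathrm{grad}_iF(\pi u_{\mathcal P})-\mathrm{grad}_iF(\Sigma),\,\tilde u_{t_i}\tilde J_{t_i}\bigr\rangle,\\
B_i^{\mathcal P} &= \bigl\langle \mathrm{grad}_iF(\pi u_{\mathcal P}),\,(u_{\mathcal P}(t_i)-\tilde u_{t_i})\tilde J_{t_i}\bigr\rangle,\\
C_i^{\mathcal P} &= \bigl\langle \mathrm{grad}_iF(\pi u_{\mathcal P}),\,u_{\mathcal P}(t_i)\bigl(J_{\mathcal P}(t_i)-\tilde J_{t_i}\bigr)\bigr\rangle.
\end{align*}
Each factor needs to be controlled in a suitable $L^q$. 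Boundedness of $\mathrm{grad}_iF$ is free from $f\in\mathcal{FC}^1_b$; $u_{\mathcal P}(t_i)$ and $\tilde u_{t_i}$ are isometries; $|\tilde J_{t_i}|\in L^{\infty-}$ follows from Lemma~\ref{lem.4.3}, Lemma~\ref{lem.4.6}, and compact support of $X$; and $|J_{\mathcal P}(t_i)|\in L^{\infty-}$ uniformly in $\mathcal P$ (for $|\mathcal P|$ small) from the explicit formula $J_{\mathcal P}=\mathbf K_{\mathcal P}\mathbf K_{\mathcal P}(1)^{-1}H_{\mathcal P}$ together with Lemma~\ref{lem.7.14}, Lemma~\ref{lem.6.10}, and compact support of $X$.

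The three convergences then follow from tools already in place. For $C_i^{\mathcal P}$, Proposition~\ref{pro.7.22} gives $\sup_s|J_{\mathcal P}(s)-\tilde J_s|\to 0$ in $L^q$ for every $q$; one H\"older splitting against the uniformly bounded remaining factors yields $C_i^{\mathcal P}\to 0$ in $L^q$. For $B_i^{\mathcal P}$, the Wong--Zakai approximation theorem (cited at~\eqref{eq.2}) yields $u_{\mathcal P}(t_i)\to \tilde u_{t_i}$ in probability, strengthened to $L^q$ by the uniform isometry bound and DCT; H\"older against the $L^{q'}$-bound on $\tilde J_{t_i}$ and the boundedness of $\mathrm{grad}_iF$ finishes the term. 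For $A_i^{\mathcal P}$, Wong--Zakai also gives $\pi u_{\mathcal P}(t_i)\to \Sigma_{t_i}$ in probability, so continuity and boundedness of $\mathrm{grad}_iF$ together with DCT give $\mathrm{grad}_iF(\pi u_{\mathcal P})\to \mathrm{grad}_iF(\Sigma)$ in every $L^q$, and another H\"older pairing with $\tilde u_{t_i}\tilde J_{t_i}\in L^{\infty-}$ completes the estimate. Summing over the finitely many $i$ gives $\tilde X_{\mathcal P}f\to \tilde Xf$ in $L^q$.

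No step presents a real obstacle: the hardest quantitative input, $\sup_s|J_{\mathcal P}(s)-\tilde J_s|\to 0$ in $L^q$, is Proposition~\ref{pro.7.22}; everything else is H\"older, DCT, and the uniform (in $\mathcal P$) moment bounds on $\mathbf K_{\mathcal P}$, $f_{\mathcal P,i}$, and $\mathbf K_{\mathcal P}(1)^{-1}$ that were established earlier in Section~\ref{cha.6}. The one bookkeeping point to keep in mind is that the cylinder times $t_i$ need not lie in $\mathcal P$; this is harmless because for $\sigma\in H_{\mathcal P}(M)$ the horizontal lift $u(\sigma,t_i)$ and the Jacobi-type field $J_{\mathcal P}(\sigma,t_i)$ are well defined at every $t_i\in[0,1]$ by piecewise extension.
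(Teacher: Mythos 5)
Your proposal takes essentially the same route as the paper: write both $\tilde X_{\mathcal P}f$ and $\tilde Xf$ as finite sums over the cylinder times, then control the two factors in each summand via (i) Wong--Zakai plus DCT for the gradient/frame part and (ii) Proposition \ref{pro.7.22} for $J_{\mathcal P}\to\tilde J$. Both proofs lean on exactly the same quantitative input.

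One bookkeeping point needs fixing, though. In your decomposition the terms
\[
A_i^{\mathcal P}=\bigl\langle \mathrm{grad}_iF(\pi u_{\mathcal P})-\mathrm{grad}_iF(\Sigma),\,\tilde u_{t_i}\tilde J_{t_i}\bigr\rangle,
\qquad
B_i^{\mathcal P}=\bigl\langle \mathrm{grad}_iF(\pi u_{\mathcal P}),\,(u_{\mathcal P}(t_i)-\tilde u_{t_i})\tilde J_{t_i}\bigr\rangle
\]
are not well defined as written: $\mathrm{grad}_iF(\pi u_{\mathcal P})$ lives in $T_{\pi u_{\mathcal P}(t_i)}M$, $\mathrm{grad}_iF(\Sigma)$ in $T_{\Sigma_{t_i}}M$, and $u_{\mathcal P}(t_i),\tilde u_{t_i}$ are frames with different target tangent spaces, so the subtractions and inner products mix base points. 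You gesture at the fix yourself ("working in the parallel-translated frames"), and that is precisely what the paper does: rather than split into an $A$ and a $B$ term, it absorbs the frame into the gradient factor and compares in $\mathbb{R}^d$ directly, showing
\[
u_{\mathcal P}^{-1}(s_i)(\mathrm{grad}_iF)(\pi\circ u_{\mathcal P})
\longrightarrow
\tilde u_{s_i}^{-1}(\mathrm{grad}_iF)(\pi\circ\tilde u)
\quad\text{in }L^{\infty-},
\]
using that $W(\mathcal O(M))\ni y\mapsto y_{s_i}^{-1}(\mathrm{grad}_iF)(\pi\circ y)\in\mathbb{R}^d$ is bounded and continuous, combined with the Wong--Zakai theorem and DCT. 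This merges your $A_i$ and $B_i$ into a single well-posed factor; pairing it with the $J_{\mathcal P}\to\tilde J$ bound (your $C_i$) via H\"older finishes the proof exactly as you intend. So your strategy is right, but the decomposition should be rewritten in $\mathbb{R}^d$ before the three pieces make sense.
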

\begin{proposition} \label{pro.8.5}Keeping the notation above, we have for any $q\geq 1$,
\[\underset{\left|\mathcal{P}\right|\to 0}{\lim}\mathbb{E}\left[\left\vert\int_{0}^{1}\left\langle J_{\mathcal{P}}^{\prime}\left(s+\right),d\beta_{\mathcal{P},s}\right\rangle- \sum_{\alpha=1}^{d}\left\langle \tilde{C}\tilde{H},e_{\alpha}\right\rangle \int_{0}^{1}\left\langle \left(\tilde{T}_s^{-1}\right)^{\ast}e_{\alpha},d\beta_{s}\right\rangle\right\vert^q\right]=0.\]
\end{proposition}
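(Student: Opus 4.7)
First I would turn the statement into a convergence of inner products. On each $[s_{i-1},s_{i})$ the piecewise Jacobi equation yields $J_{\mathcal{P}}'(s+)=J_{\mathcal{P}}'(s_{i-1}+)+\int_{s_{i-1}}^{s}A_{\mathcal{P},i}(r)J_{\mathcal{P}}(r)\,dr$. Pairing the integral term with the constant vector $\beta_{\mathcal{P}}'(s_{i-1}+)$ produces the integrand $\langle R_{u_{\mathcal{P}}(r)}(\beta',J_{\mathcal{P}})\beta',\beta'\rangle$, which vanishes identically by the skew-symmetry of $\langle R(X,Y)\cdot,\cdot\rangle$ in its last two slots. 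Hence
\[
\int_{0}^{1}\langle J_{\mathcal{P}}'(s+),d\beta_{\mathcal{P},s}\rangle
=\sum_{i=1}^{n}\langle J_{\mathcal{P}}'(s_{i-1}+),\Delta_{i}\beta\rangle
=\langle v,I_{\mathcal{P}}\rangle,
\]
with $v:=\mathbf{K}_{\mathcal{P}}(1)^{-1}H_{\mathcal{P}}$ and $I_{\mathcal{P}}:=\sum_{i}f_{\mathcal{P},i}(1)\Delta_{i}\beta$ (using $J_{\mathcal{P}}'(s_{i-1}+)=f_{\mathcal{P},i}^{*}(1)v$). The right-hand side is rewritten as $\langle\tilde v,\tilde I\rangle$ with $\tilde v:=\tilde{\mathbf{K}}_{1}^{-1}\tilde H$ and $\tilde I:=\tilde T_{1}\int_{0}^{1}\tilde T_{s}^{-1}\,d\beta_{s}$, using the identity $\tilde{\mathbf{K}}_{1}^{-1}=(\tilde T_{1}^{*})^{-1}\tilde C$ that follows from the definitions of $\tilde{\mathbf{K}}_{1}$ and $\tilde C$.

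Hölder's inequality then reduces everything to $\|v-\tilde v\|_{q}\to 0$ and $\|I_{\mathcal{P}}-\tilde I\|_{q}\to 0$ for every $q\geq 1$, together with uniform $L^{q}$-bounds on all four vectors. The convergence $v\to\tilde v$ and these bounds are routine consequences of Theorem \ref{the.7.17} (for $\mathbf{K}_{\mathcal{P}}(1)\to\tilde{\mathbf{K}}_{1}$), Lemmas \ref{lem.6.10} and \ref{lem.4.6} (for bounding the inverses), the Wong--Zakai convergence $H_{\mathcal{P}}\to\tilde H$ used in Proposition \ref{pro.7.22}, and the compact support of $X$.

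The substantive step is $\|I_{\mathcal{P}}-\tilde I\|_{q}\to 0$. I would write
\[
I_{\mathcal{P}}-\tilde I=A_{\mathcal{P}}+B_{\mathcal{P}},\quad A_{\mathcal{P}}:=\sum_{i}\!\bigl(f_{\mathcal{P},i}(1)-\tilde T_{1}\tilde T_{s_{i}}^{-1}\bigr)\Delta_{i}\beta,\ B_{\mathcal{P}}:=\tilde T_{1}\Bigl(\sum_{i}\tilde T_{s_{i}}^{-1}\Delta_{i}\beta-\int_{0}^{1}\tilde T_{s}^{-1}\,d\beta_{s}\Bigr).
\]
For $B_{\mathcal{P}}$, the ODE $\tfrac{d}{ds}\tilde T_{s}^{-1}=\tfrac{1}{2}\tilde T_{s}^{-1}Ric_{\tilde u_{s}}$ gives the deterministic bound $|\tilde T_{s_{i}}^{-1}-\tilde T_{s_{i-1}}^{-1}|=O(\Delta_{i})$, reducing $B_{\mathcal{P}}$ to the left-endpoint Riemann sum $\tilde T_{1}\sum\tilde T_{s_{i-1}}^{-1}\Delta_{i}\beta$ plus an $L^{q}$-small remainder; the former converges to $\tilde I$ in every $L^{q}$ by standard Itô-integral theory together with the $L^{\infty}$-bound on $\tilde T_{1}$ from Lemma \ref{lem.4.3}. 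For $A_{\mathcal{P}}$ I would substitute the semimartingale decomposition of $f_{\mathcal{P},i}(1)$ obtained in Lemma \ref{lem.7.12}, splitting $f_{\mathcal{P},i}(1)-\tilde T_{1}\tilde T_{s_{i}}^{-1}$ into (i) the boundary term $S_{\mathcal{P},i}/\Delta_{i}-I=O(|\Delta_{i}\beta|^{2})$, (ii) a drift difference between the Jacobi and damped $Ric$-integrals, controlled pathwise by Theorem \ref{the.7.11}, (iii) the forward martingale $M^{(i)}_{\mathcal{P},1}=\int_{s_{i}}^{1}\Xi_{\mathcal{P}}(i,r)\,d\beta_{r}$ with $\langle M^{(i)}\rangle_{1}=O(|\mathcal{P}|)$, and (iv) the cubic errors $\sum_{k}e_{i,k}$. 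Pieces (i) and (iv) paired with $\Delta_{i}\beta$ collapse to bounds on $\sum_{i}|\Delta_{i}\beta|^{3}=O(n^{-1/2})$ in $L^{q}$, and (ii) is uniform. The genuinely hard piece $\sum_{i}M^{(i)}_{\mathcal{P},1}\Delta_{i}\beta$ is re-organised as an iterated stochastic integral $\int_{0}^{1}\!\int_{\underline s}^{1}\Xi_{\mathcal{P}}(s,r)\,d\beta_{r}\,d\beta_{\mathcal{P},s}$ with adapted kernel, to which stochastic Fubini and Burkholder--Davis--Gundy apply and recover the missing factor $|\mathcal{P}|^{1/2}$.

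The principal obstacle is precisely that $f_{\mathcal{P},i}(1)$ is anticipating and, outside of the parallel-curvature setting, not independent of $\Delta_{i}\beta$, so that a naive Cauchy--Schwarz estimate on $A_{\mathcal{P}}$ is off by a factor of $n^{1/2-\gamma}$ and cannot succeed. The necessary cancellation is recovered only by the explicit martingale decomposition of $f_{\mathcal{P},i}(1)$ inherited from Lemma \ref{lem.7.12}, combined with BDG applied to the resulting iterated-integral representation.
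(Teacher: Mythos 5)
Your reduction of the statement to the convergence of $I_{\mathcal{P}}:=\sum_{i}f_{\mathcal{P},i}(1)\Delta_{i}\beta$, the vanishing of the Jacobi correction term $II_{\mathcal{P}}$ by skew-symmetry, and the identity $\tilde C\tilde H=\tilde T_{1}^{\ast}\tilde{\mathbf{K}}_{1}^{-1}\tilde H$ all match the paper. Your handling of $B_{\mathcal{P}}$ is also sound, since $\tilde T_{s}^{-1}$ has absolutely continuous paths so the right-endpoint sum is an innocuous variant of the It\^{o} Riemann sum.

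The gap is in $A_{\mathcal{P}}:=\sum_{i}\bigl(f_{\mathcal{P},i}(1)-\tilde T_{1}\tilde T_{s_{i}}^{-1}\bigr)\Delta_{i}\beta$. You correctly identify that the naive bound $\sup_{i}\lvert f_{\mathcal{P},i}(1)-\tilde T_{1}\tilde T_{s_{i}}^{-1}\rvert\lesssim\lvert\mathcal{P}\rvert^{\gamma}$ is off by a factor of $n^{1/2-\gamma}$. But your proposed rescue does not close this gap: after substituting the decomposition from Lemma \ref{lem.7.12} and reorganising the martingale term $\sum_{i}M^{(i)}_{\mathcal{P},1}\Delta_{i}\beta$ as an It\^{o} integral $\int_{0}^{1}\bigl(\sum_{i:s_{i}<r}\Xi_{\mathcal{P}}(i,r)\Delta_{i}\beta\bigr)d\beta_{r}$, the BDG estimate requires controlling $\int_{0}^{1}\lvert\sum_{i:s_{i}<r}\Xi_{\mathcal{P}}(i,r)\Delta_{i}\beta\rvert^{2}dr$. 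Since $\lvert\Xi_{\mathcal{P}}(i,r)\rvert\lesssim N\lvert\beta_{r}-\beta_{\underline r}\rvert\lvert f_{\mathcal{P},i}(\underline r)\rvert$ and the factors $f_{\mathcal{P},i}(\underline r)$ vary with $i$ with no obvious telescoping, one only gets $\sum_{i}\lvert f_{\mathcal{P},i}(\underline r)\rvert\lvert\Delta_{i}\beta\rvert\lesssim G\sum_{i}\lvert\Delta_{i}\beta\rvert\approx Gn^{1/2}$ in probability. Combined with $\lvert\beta_{r}-\beta_{\underline r}\rvert\approx\lvert\mathcal{P}\rvert^{1/2}$ this gives $\int_{0}^{1}\lvert\cdot\rvert^{2}dr=O(1)$, not $o(1)$. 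The ``missing factor $\lvert\mathcal{P}\rvert^{1/2}$'' you invoke is not recovered by stochastic Fubini plus BDG alone; you would need a genuine cancellation across the $i$-sum inside the BDG quadratic variation, and none is supplied. A similar problem afflicts the cubic-error piece $\sum_{i}\bigl(\sum_{k}e_{i,k}\bigr)\Delta_{i}\beta$: the bound from Lemma \ref{lem.7.12} is $\lvert\sum_{k}e_{i,k}\rvert\lesssim\lvert\mathcal{P}\rvert^{\gamma}$ for each $i$ (not $\lvert\Delta_{i}\beta\rvert^{2}$), so this sum does not collapse to $\sum_{i}\lvert\Delta_{i}\beta\rvert^{3}$ as you assert.

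The paper circumvents the whole difficulty by never pairing the full anticipating matrix $f_{\mathcal{P},i}(1)$ with $\Delta_{i}\beta$. It first replaces $f_{\mathcal{P},i}(1)$ by $f_{\mathcal{P},i-1}(1)$: the consecutive difference is controlled by $\lvert S_{\mathcal{P},i}-C_{\mathcal{P},i}S_{\mathcal{P},i-1}\rvert\lesssim\lvert\Delta_{i}\beta\rvert^{2}$, i.e. of size $\lvert\mathcal{P}\rvert^{2\gamma}$, which is one power of $\gamma$ better than the distance to $\tilde T_{1}\tilde T_{s_{i}}^{-1}$ and makes the replacement summable. It then factors $f_{\mathcal{P},i-1}(1)=f_{\mathcal{P},0}(1)\,f_{\mathcal{P},0}^{-1}(s_{i-1})\frac{S_{\mathcal{P},i-1}}{\Delta_{i-1}}$, where the single anticipating factor $f_{\mathcal{P},0}(1)$ is pulled entirely out of the sum and the remaining factor $f_{\mathcal{P},0}^{-1}(s_{i-1})\frac{S_{\mathcal{P},i-1}}{\Delta_{i-1}}$ is $\mathcal{F}_{s_{i-1}}$-measurable, hence an honest It\^{o} integrand. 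After discarding $\frac{S_{\mathcal{P},i-1}}{\Delta_{i-1}}-I=O(\lvert\Delta_{i-1}\beta\rvert^{2})$, one is left with $f_{\mathcal{P},0}^{\ast}(1)\mathbf{K}_{\mathcal{P}}(1)^{-1}H_{\mathcal{P}}$ contracted against a genuinely adapted stochastic integral, and BDG applies directly via Theorem \ref{the.7.11}. If you want to keep your decomposition of $I_{\mathcal{P}}-\tilde I$ you will need to reproduce a cancellation of this strength; the index shift plus factorisation is the mechanism the paper uses to manufacture it.
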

\begin{proposition} \label{pro.8.6}Continuing the notation above, if we further assume $\nabla R\equiv 0$, then for any $q\geq 1$,
\[\underset{\left|\mathcal{P}\right|\to 0}{\lim}\mathbb{E}\left[\left\vert div\tilde{X}_{\mathcal{P}}-\sum_{\alpha=1}^{d}\left\langle X^{Z_{\alpha}}\left(\tilde{C}\tilde{H}\right),e_{\alpha}\right\rangle\right\vert^q\right]=0.\]
\end{proposition}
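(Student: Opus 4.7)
The plan is to unpack the finite-dimensional divergence formula from Proposition \ref{prop.5.4} by a product-rule expansion, identify three pieces, and match them with the continuous limit via the identity (commented out but derived from $\nabla R\equiv 0$):
\[
\sum_{\alpha=1}^{d}\langle X^{Z_{\alpha}}(\tilde{C}\tilde{H}),e_\alpha\rangle = -\,\text{div}\,X\circ E_1 + \sum_{\alpha=1}^{d}\langle \tilde{C}A_1\langle Z_\alpha\rangle\tilde{H},e_\alpha\rangle .
\]
Since $J'_{\mathcal{P}}(s_{j-1}+)=f^{*}_{\mathcal{P},j}(1)\,\mathbf{K}_{\mathcal{P}}(1)^{-1}H_{\mathcal{P}}$, applying $X^{h_{\alpha,j}}$ by the product rule decomposes the divergence into three sums: (A) the piece hitting $f^{*}_{\mathcal{P},j}(1)$, (B) the piece hitting $\mathbf{K}_{\mathcal{P}}(1)^{-1}$, and (C) the piece hitting $H_{\mathcal{P}}$.

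First I would handle (C). A finite-dimensional analogue of Theorem \ref{the.2.9} (the action of $X^{h_{\alpha,j}}$ on the horizontal-lift-derived $H_{\mathcal{P}}=u_{\mathcal{P}}(1)^{-1}X\circ E_1$) splits $X^{h_{\alpha,j}}H_{\mathcal{P}}$ into a $\nabla X$-term and an $\mathfrak{so}(d)$-valued connection term. The $\nabla X$ contribution, summed in $\alpha,j$ against $f_{\mathcal{P},j}(1)\mathbf{K}_{\mathcal{P}}(1)^{-1}(\cdot)\sqrt{\Delta_j}$, collapses to a full trace because
\[
\frac{1}{n}\sum_{j=1}^{n} f_{\mathcal{P},j}(1)\,\mathbf{K}_{\mathcal{P}}(1)^{-1}\,f^{*}_{\mathcal{P},j}(1) = I,
\]
which together with the unitarity of $u_{\mathcal{P}}(1)$ yields $-\text{div}\,X\circ E_1$ after passing to the limit (using Wong–Zakai $u_\mathcal{P}(1)\to \tilde{u}_1$ and bounded-support of $X$).

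The main difficulty is (A) and (B). Differentiating the piecewise Jacobi ODE (Notation \ref{not.2.1}) along the flow of $X^{h_{\alpha,j}}$ produces a variation of $f^{*}_{\mathcal{P},j}(1)$ driven by the curvature tensor evaluated on the perturbation $h_{\alpha,j}$, while the variation of $\mathbf{K}_{\mathcal{P}}(1)^{-1}=(\frac{1}{n}\sum_i f_{\mathcal{P},i}(1)f^{*}_{\mathcal{P},i}(1))^{-1}$ has an analogous structure by the identity \eqref{eq.1}. A discrete Abel-type summation in $j$, together with the explicit form of $h_{\alpha,j}$ and the Jacobi equation, recasts the combined $(A)+(B)$ contribution as a Riemann sum of the form
\[
\sum_{j} R_{u_{\mathcal{P}}(s_{j-1})}\!\bigl(Z_\alpha^{\mathcal{P}}(s_{j-1}),\Delta_j\beta\bigr),
\]
which is recognised as a discrete analogue of the Stratonovich integral defining $A_1\langle Z_\alpha\rangle=\int_0^1 R_{\tilde{u}_r}(Z_\alpha(r),\delta\beta_r)$. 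The parallel-curvature assumption $\nabla R\equiv 0$ is crucial here, as it makes $\tilde{T}$, $\tilde{\mathbf{K}}$, $\tilde{C}$ deterministic, eliminates a ``tower'' of path-dependence in the $X^{Z_\alpha}\tilde{C}$ term, and lets $R_{u_{\mathcal{P}}(s)}$ be replaced by $R_{u_0}$ up to $O(\sqrt{|\mathcal{P}|})$-errors controlled uniformly by Lemma \ref{lem.6.5}.

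Finally, the $L^q$-convergence is assembled by combining the pointwise identification above with the uniform $L^q$-estimates already proved: Lemmas \ref{lem1} and \ref{lem.7.8} bounding $\|f_{\mathcal{P},i}\|$ and $\|C_{\mathcal{P},i}\|,\|S_{\mathcal{P},i}\|$; Lemma \ref{lem.7.10} controlling time-increments of $f_{\mathcal{P},i}$; Theorem \ref{the.7.11} giving $f_{\mathcal{P},i}\to \tilde{T}_s\tilde{T}_{s_i}^{-1}$; Proposition \ref{lem.7.20} / Theorem \ref{the.7.17} giving $\mathbf{K}_{\mathcal{P}}\to\tilde{\mathbf{K}}$; Lemma \ref{lem.6.10} giving $\|\mathbf{K}_{\mathcal{P}}(1)^{-1}\|\le 1$; and a Burkholder–Davis–Gundy argument to upgrade the Riemann-sum convergence in Step 3 to the Stratonovich limit. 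The uniform integrability from Lemma \ref{cBM} then turns convergence in probability into convergence in $L^q$ for every $q\ge 1$, provided $n$ exceeds a $q$-dependent multiple of $N$. The central obstacle, as noted, is the bookkeeping in Step 3 — matching the discrete curvature-driven corrections with the stochastic integrand $A_1\langle Z_\alpha\rangle$ — and this is precisely where the $\nabla R\equiv 0$ hypothesis does the decisive work.
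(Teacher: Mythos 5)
Your decomposition of $\mathrm{div}\,\tilde{X}_{\mathcal{P}}$ via the product rule on $J'_{\mathcal{P}}(s_{j-1}+)=f^*_{\mathcal{P},j}(1)\mathbf{K}_{\mathcal{P}}(1)^{-1}H_{\mathcal{P}}$ into (A), (B), (C), and your identification of Lemma \ref{lem.5.2} as the identity that rewrites the continuous target, both match the paper. So does the use of trace cyclicity plus $\sum_j\Delta_jf_{\mathcal{P},j}(1)f^*_{\mathcal{P},j}(1)=\mathbf{K}_{\mathcal{P}}(1)$ to collapse the $\nabla X$ part of (C) to $\mathrm{div}\,X\circ E_1$. (The matrix identity you display, with $\mathbf{K}_{\mathcal{P}}(1)^{-1}$ sandwiched inside the sum, is not an identity of operators; it holds only after taking traces, which is what the argument actually needs.)

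The genuine gap is in where the Stratonovich term $\sum_\alpha\langle\tilde{C}A_1\langle Z_\alpha\rangle\tilde{H},e_\alpha\rangle$ is produced. You set up (C) as splitting into a $\nabla X$-piece plus an $\mathfrak{so}(d)$-valued connection piece, then analyze only the $\nabla X$-piece and never return to the connection piece of (C); instead you assert that (A)+(B), after a discrete Abel summation, produce a Riemann sum that limits to $A_1\langle Z_\alpha\rangle$. That is precisely backwards. In the paper, the surviving stochastic contribution comes from the connection part of $X^{h_{\alpha,j}}H_{\mathcal{P}}$ — the term $\int_0^1 R_{u_{\mathcal{P}}(r)}(\beta'_{\mathcal{P}}(r+),h_{\alpha,j}(r))\,dr\,H_{\mathcal{P}}$ appearing in Lemma \ref{lem.8.8} — which is the piece $V_{\mathcal{P}}$ inside (C), and it is this $V_{\mathcal{P}}$ that Lemma \ref{lem.8.12} shows converges in $L^q$ to $\sum_\alpha\langle\tilde{C}A_1\langle Z_\alpha\rangle\tilde{H},e_\alpha\rangle$ after a change of variables $\tilde{T}_s e_\alpha$, an exchange of integration order, and BDG estimates. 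Meanwhile, Lemmas \ref{lem.8.10} and \ref{lem.8.11} prove that (A) and (B) individually tend to zero in $L^q$: the estimate on $g_j(s)=X^{h_{\alpha,j}}f_{\mathcal{P},j}(s)-X^{h_{\alpha,j}}f_{\mathcal{P},j}(\underline{s})$ shows $\sqrt{\Delta_j}\,|\tilde{g}_j(1)|$ is of order $|\mathcal{P}|^{5\gamma-3/2}$, and the same scaling argument disposes of the $\mathbf{K}_{\mathcal{P}}(1)^{-1}$ piece. Without that vanishing you would get $A_1\langle Z_\alpha\rangle$ twice — once from (C)'s connection piece, once from your (A)+(B) account — and the limit would be wrong. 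You need to bring back the connection piece of (C), follow it through the change-of-order argument, and separately show (A), (B) $\to 0$; the Abel-summation idea you sketch does not rescue (A)+(B), which are too small in $|\mathcal{P}|$ to survive.
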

\begin{proof}[Proof of Proposition \ref{pro.8.4}] 
Using Eq.(\ref{gradient}) and the fact that $\pi\circ u_\mathcal{P}=\phi\circ \beta_\mathcal{P}$, we have
\begin{align*}
\tilde{X}_\mathcal{P}f&=\sum_{i=1}^{n}\left<\left(grad_iF\right)\left(\pi\circ u_\mathcal{P}\right),u_\mathcal{P}\left(s_i\right)J_\mathcal{P}\left(s_i\right)\right>=\sum_{i=1}^{n}\left<u^{-1}_\mathcal{P}\left(s_i\right)\left(grad_iF\right)\left(\pi\circ u_\mathcal{P}\right),J_\mathcal{P}\left(s_i\right)\right>,
\end{align*}	
and
\begin{align*}
\tilde{X}f &=\sum_{i=1}^{n}\left<\left(grad_iF\right)\left(\pi\circ \tilde{u}\right),\tilde{u}_{s_i}\tilde{J}_{s_i}\right>=\sum_{i=1}^{n}\left<\tilde{u}^{-1}_{s_i}\left(grad_iF\right)\left(\pi\circ \tilde{u}\right),\tilde{J}_{s_i}\right>.
\end{align*}
where $F$ is a representation of $f$ as in Definition \ref{def.rcf}.

Since $W\left(\mathcal{O}(M)\right)\ni y\to y^{-1}_{s_i}grad_iF\left(\pi\circ y\right)\in \mathbb{R}^d$ is continuous and bounded, using Theorem 10 in \cite{Elworthy82} and DCT, we know
\begin{equation}
u^{-1}_\mathcal{P}\left(s_i\right)\left(grad_iF\right)\left(\pi\circ u_\mathcal{P}\right)\to \tilde{u}^{-1}_{s_i}\left(grad_iF\right)\left(\pi\circ \tilde{u}\right)\text{ in }L^{\infty-}\left(W_o\left(M\right)\right)\text{ as }\left|\mathcal{P}\right|\to 0.\label{grad}
\end{equation}
The proof is then completed by making use of (\ref{grad}) and Proposition \ref{pro.7.22}.
\end{proof}

\begin{proof}[Proof of Proposition \ref{pro.8.5}] 
\begin{align*}
\int_{0}^{1}\left\langle J_{\mathcal{P}}^{\prime}\left(s+\right),d\beta_{\mathcal{P}}\left(s\right)\right\rangle  & =\sum_{i=1}^{n}\left\langle \frac{J_{\mathcal{P}}\left(s_{i}\right)-J_{\mathcal{P}}\left(s_{i-1}\right)}{\Delta_{i}},\Delta_{i}\beta\right\rangle \\
 & =\sum_{i=1}^{n}\left\langle J_{\mathcal{P}}^{\prime}\left(s_{i-1}\right),\Delta_{i}\beta\right\rangle +\sum_{i=1}^{n}\frac{1}{\Delta_i}\left\langle \int_{s_{i-1}}^{s_{i}}J_{\mathcal{P}}^{\prime\prime}\left(s\right)\left(s-s_{i-1}\right)ds,\Delta_{i}\beta\right\rangle \\
 & =I_\mathcal{P}+II_\mathcal{P},
\end{align*}
where 
\[
I_\mathcal{P}=\sum_{i=1}^{n}\left\langle J_{\mathcal{P}}^{\prime}\left(s_{i-1}\right),\Delta_{i}\beta\right\rangle 
\]
and
\[II_\mathcal{P}=\sum_{i=1}^{n}\frac{1}{\Delta_i}\left\langle \int_{s_{i-1}}^{s_{i}}J_{\mathcal{P}}^{\prime\prime}\left(s\right)\left(s-s_{i-1}\right)ds,\Delta_{i}\beta\right\rangle.\]
Using the fact that $J_\mathcal{P}$ satisfies Jacobi equation, we further have
\begin{align*}
II_\mathcal{P} & =\sum_{i=1}^{n}\left\langle \frac{1}{\Delta_{i}^{3}}\int_{s_{i-1}}^{s_{i}}R_{u_{\mathcal{P}}\left(s\right)}\left(\Delta_{i}\beta,J_{\mathcal{P}}\left(s\right)\right)\Delta_{i}\beta\left(s-s_{i-1}\right)ds,\Delta_{i}\beta\right\rangle \\&=\sum_{i=1}^{n}\frac{1}{\Delta_{i}^{3}}\int_{s_{i-1}}^{s_{i}}\left\langle R_{u_{\mathcal{P}}\left(s\right)}\left(\Delta_{i}\beta,J_{\mathcal{P}}\left(s\right)\right)\Delta_{i}\beta,\Delta_{i}\beta\right\rangle \left(s-s_{i-1}\right)ds. 
\end{align*}
Since the curvature tensor is anti-symmetric, \[\left\langle R_{u_{\mathcal{P}}\left(s\right)}\left(\Delta_{i}\beta,J_{\mathcal{P}}\left(s\right)\right)\Delta_{i}\beta,\Delta_{i}\beta\right\rangle\equiv 0\text{ }\nu\text{--a.s,}\] so $II_\mathcal{P}\equiv 0\text{  }\nu\text{--a.s}$. 
\begin{align*}
I_\mathcal{P} & =\sum_{i=1}^{n}\left\langle f_{\mathcal{P},i}^{*}\left(1\right)\mathbf{K_{\mathcal{P}}}\left(1\right)^{-1}H_{\mathcal{P}},\Delta_{i}\beta\right\rangle \\
 & =\sum_{i=1}^{n}\left\langle \mathbf{K_{\mathcal{P}}}\left(1\right)^{-1}H_{\mathcal{P}},f_{\mathcal{P},i}\left(1\right)\Delta_{i}\beta\right\rangle=\left\langle \mathbf{K_{\mathcal{P}}}\left(1\right)^{-1}H_{\mathcal{P}},\sum_{i=1}^{n}f_{\mathcal{P},i}\left(1\right)\Delta_{i}\beta\right\rangle. 
\end{align*}
For each $i\geq1,s\in\left[s_{i-1},s_{i}\right],$ define $g_{i}\left(s\right):=S_{\mathcal{P},i}\left(s\right)-C_{\mathcal{P},i}\left(s\right)S_{\mathcal{P},i-1}$. Then Taylor's expansion of $g_{i}$ at $s_{i-1}$ gives
\[
g_{i}\left(s\right)=-S_{\mathcal{P},i-1}+\left(s-s_{i-1}\right)I+\int_{s_{i-1}}^{s}A_{\mathcal{P},i}(r)g_i(r)\left(s-r\right)dr.
\]
So 
\[
\left\vert g_{i}\left(s\right)\right\vert \leq\left\vert S_{\mathcal{P},i-1}-\left(s-s_{i-1}\right)I\right\vert +N\left\vert \beta_{\mathcal{P}}^{\prime}\left(s_{i-1}\right)\right\vert ^{2}\int_{s_{i-1}}^{s}\left\vert g_{i}\left(r\right)\right\vert \left(s-r\right)dr.
\]
By Gronwall's inequality and Eq.(\ref{lem.6.3}), we have
\[
\left\vert g_{i}\left(s_{i}\right)\right\vert \leq\frac{N}{6}K_{\gamma}^{2}\left\vert \mathcal{P}\right\vert ^{2\gamma+1}e^{\frac{1}{2}N\left\vert \Delta_{i}\beta\right\vert ^{2}}.
\]
Note that $g_i\left(s_i\right)=S_{\mathcal{P},i}-C_{\mathcal{P},i}S_{\mathcal{P},i-1}$, so by Eq.(\ref{lem.6.1}), 
\begin{align*}
\left\vert f_{\mathcal{P},i}\left(1\right)-f_{\mathcal{P},i-1}\left(1\right)\right\vert &\leq\frac{1}{\left\vert \mathcal{P}\right\vert }\left\vert C_{\mathcal{P},n}\right\vert \cdot\cdots\cdot\left\vert C_{\mathcal{P},i+1}\right\vert \cdot\left\vert S_{\mathcal{P},i}-C_{\mathcal{P},i}S_{\mathcal{P},i-1}\right\vert \\& \leq CK_{\gamma}^{2}\left\vert \mathcal{P}\right\vert ^{2\gamma}e^{\sum_{i=1}^{n}N\left\vert \Delta_{i}\beta\right\vert ^{2}}
\end{align*}
and thus
\begin{align*}
\left\vert \sum_{i=1}^{n}f_{\mathcal{P},i}\left(1\right)\Delta_{i}\beta-\sum_{i=1}^{n}f_{\mathcal{P},i-1}\left(1\right)\Delta_{i}\beta\right\vert ^{q} & \leq\left\vert \mathcal{P}\right\vert ^{1-q}\left[\sum_{i=1}^{n}\left\vert f_{\mathcal{P},i}\left(1\right)-f_{\mathcal{P},i-1}\left(1\right)\right\vert ^{q}\left\vert \Delta_{i}\beta\right\vert ^{q}\right]\\
 & \leq CK_{\gamma}^{3q}\left\vert \mathcal{P}\right\vert ^{3q\gamma-q}e^{\sum_{i=1}^{n}qN\left\vert \Delta_{i}\beta\right\vert ^{2}}.
\end{align*}
Picking $\gamma\in \left(\frac{1}{2},\frac{1}{3}\right)$ we know for any $q\geq 1$,
\begin{equation}
\mathbb{E}\left[\left\vert \sum_{i=1}^{n}f_{\mathcal{P},i}\left(1\right)\Delta_{i}\beta-\sum_{i=1}^{n}f_{\mathcal{P},i-1}\left(1\right)\Delta_{i}\beta\right\vert ^{q}\right]\to 0\text{ as }\left|\mathcal{P}\right|\to 0.\label{equ.8.11}
\end{equation}
Since $f_{\mathcal{P},i-1}\left(1\right)=f_{\mathcal{P},0}\left(1\right)f_{\mathcal{P},0}^{-1}\left(s_{i-1}\right)\frac{S_{\mathcal{P},i-1}}{\Delta_{i-1}},$
so
\begin{align}
&\left\langle \mathbf{K_{\mathcal{P}}}\left(1\right)^{-1}H_{\mathcal{P}},\sum_{i=1}^{n}f_{\mathcal{P},i-1}(1)\Delta_{i}\beta\right\rangle=\left\langle f_{\mathcal{P},0}^{\ast}\left(1\right)\mathbf{K_{\mathcal{P}}}\left(1\right)^{-1}H_{\mathcal{P}},\sum_{i=1}^{n}f_{\mathcal{P},0}^{-1}\left(s_{i-1}\right)\frac{S_{\mathcal{P},i-1}}{\Delta_{i-1}}\Delta_{i}\beta\right\rangle. 
\end{align}
Using Proposition \ref{prop A-1} we have $\left\vert f_{\mathcal{P},0}^{-1}\left(s_{i-1}\right)\right\vert \leq1$. Then using Eq.(\ref{lem.6.3}) we obtain 
\begin{align*}
\left\vert f_{\mathcal{P},0}^{-1}\left(s_{i-1}\right)\frac{S_{\mathcal{P},i-1}}{\Delta_{i-1}}-f_{\mathcal{P},0}^{-1}\left(s_{i-1}\right)\right\vert \left\vert \Delta_{i}\beta\right\vert & \leq\left\vert \frac{S_{\mathcal{P},i-1}}{\Delta_{i-1}}-I\right\vert \left\vert \Delta_{i}\beta\right\vert \leq\frac{NK_{\gamma}^{3}\left\vert \mathcal{P}\right\vert ^{3\gamma}}{6}e^{\frac{N}{2}\left\vert \Delta_{i-1}\beta\right\vert ^{2}}.
\end{align*}
Therefore for each $q\geq1$, 
\begin{align}
\left\vert \sum_{i=1}^{n}f_{\mathcal{P},0}^{-1}\left(s_{i-1}\right)\frac{S_{\mathcal{P},i-1}}{\Delta_{i-1}}\Delta_{i}\beta-\sum_{i=1}^{n}f_{\mathcal{P},0}^{-1}\left(s_{i-1}\right)\Delta_{i}\beta\right\vert ^{q}& \leq\left\vert \mathcal{P}\right\vert ^{1-q}\sum_{i=1}^{n}\frac{N^{q}K_{\gamma}^{3q}\left\vert \mathcal{P}\right\vert ^{3q\gamma}}{6^{q}}e^{\frac{Nq}{2}\left\vert \Delta_{i-1}\beta\right\vert ^{2}}\nonumber\\
 & \leq C\left\vert \mathcal{P}\right\vert ^{\left(3\gamma-1\right) q}K_{\gamma}^{3q}e^{\sum_{i=1}^{n}\frac{Nq}{2}\left\vert \Delta_{i-1}\beta\right\vert ^{2}}.
\end{align}
Picking $\gamma\in \left(\frac{1}{3},\frac{1}{2}\right)$, we have
\[
\mathbb{E}\left[\left\vert \sum_{i=1}^{n}f_{\mathcal{P},0}^{-1}\left(s_{i-1}\right)\frac{S_{\mathcal{P},i-1}}{\Delta_{i-1}}\Delta_{i}\beta-\sum_{i=1}^{n}f_{\mathcal{P},0}^{-1}\left(s_{i-1}\right)\Delta_{i}\beta\right\vert ^{q}\right]\to 0\text{ as }\left\vert\mathcal{P}\right\vert\to 0.
\]
Rewrite $\sum_{i=1}^{n}f_{\mathcal{P},0}^{-1}\left(s_{i-1}\right)\Delta_{i}\beta\text{ as }\int_{0}^{1}f_{\mathcal{P}}\left(s\right)d\beta_{s}
$, where $f_{\mathcal{P}}\left(s\right):=\sum_{i=1}^{n}f_{\mathcal{P},0}^{-1}\left(s_{i-1}\right)1_{[s_{i-1},s_{i})}\left(s\right)$. Define a martingale
$M_{r}:=\int_{0}^{r}f_{\mathcal{P}}\left(s\right)d\beta_{s}-\int_{0}^{r}\tilde{T}_s^{-1}d\beta_{s}$. Then by Burkholder-Davis-Gundy inequality, for each $q\geq1$, 
\[
\mathbb{E}\left[\underset{r\in\left[0,1\right]}{\sup}\left\vert M_{r}\right\vert ^{q}\right]\leq C\mathbb{E}\left[\left\langle M\right\rangle _{1}^{\frac{q}{2}}\right],
\]
where
\begin{align*}
\left\langle M\right\rangle _{1}&\leq\int_{0}^{1}\left\vert f_{\mathcal{P}}\left(s\right)-\tilde{T}_s^{-1}\right\vert ^{2}ds\leq 2\int_{0}^{1}\left\vert f_{\mathcal{P}}\left(s\right)-\tilde{T}_{\underline{s}}^{-1}\right\vert ^{2}ds+2\int_{0}^{1}\left\vert \tilde{T}_{\underline{s}}^{-1}-\tilde{T}_s^{-1}\right\vert ^{2}ds.
\end{align*}
Since 
\begin{align}
\int_{0}^{1}\left\vert f_{\mathcal{P}}\left(s\right)-\tilde{T}_{\underline{s}}^{-1}\right\vert ^{2}ds & =\sum_{i=1}^{n}\left\vert f_{\mathcal{P},0}^{-1}\left(s_{i-1}\right)-\tilde{T}_{s_{i-1}}^{-1}\right\vert ^{2}\Delta_{i}\nonumber \\
 & \leq\sum_{i=1}^{n}\left\vert f_{\mathcal{P},0}^{-1}\left(s_{i-1}\right)\right\vert ^{2}\left\vert f_{\mathcal{P},0}\left(s_{i-1}\right)-\tilde{T}_{s_{i-1}}\right\vert ^{2}\left\vert \tilde{T}_{s_{i-1}}^{-1}\right\vert ^{2}\Delta_{i}\nonumber \\
 & \leq\underset{i\in \left\{0,\dots,n\right\}}{\sup}\left\vert f_{\mathcal{P},0}\left(s_i\right)-\tilde{T}_{s_i}\right\vert ^{2}\label{equ.8.12}
\end{align}
and 
\begin{align}
\int_{0}^{1}\left\vert \tilde{T}_{\underline{s}}^{-1}-\tilde{T}_s^{-1}\right\vert ^{2}ds & =\int_{0}^{1}\left\vert \int_{s}^{\underline{s}}\left(\tilde{T}_r^{-1}\right)^{\prime}dr\right\vert ^{2}ds\nonumber  \leq\int_{0}^{1}N\left\vert s-\underline{s}\right\vert ^{2}ds\leq N\left\vert \mathcal{P}\right\vert ^{2},\label{equ.8.13}
\end{align}
so 
\begin{align*}
\left\langle M\right\rangle _{1}^{\frac{q}{2}} & \leq C\left(\int_{0}^{1}\left\vert f_{\mathcal{P}}\left(s\right)-\tilde{T}_{\underline{s}}^{-1}\right\vert ^{2}ds\right)^{\frac{q}{2}}+C\left(\int_{0}^{1}\left\vert \tilde{T}_{\underline{s}}^{-1}-\tilde{T}_s^{-1}\right\vert ^{2}ds\right)^{\frac{q}{2}}\\
 & \leq C\left(\underset{i\in \left\{0,\dots,n\right\}}{\sup}\left\vert f_{\mathcal{P},0}\left(s_i\right)-\tilde{T}_{s_i}\right\vert ^{q}+\left\vert \mathcal{P}\right\vert ^{q}\right).
\end{align*}
Then using Theorem \ref{the.7.11} we have
\begin{equation}
\mathbb{E}\left[\left\langle M\right\rangle _{1}^{\frac{q}{2}}\right]\leq C\left\vert \mathcal{P}\right\vert ^{q\gamma}.\label{2}
\end{equation}
Then it follows that for each $q\geq 1$, 
\[
\int_{0}^{1}f_{\mathcal{P}}\left(s\right)d\beta_{s}-\int_{0}^{1}\tilde{T}_s^{-1}d\beta_{s}\to 0\text{ in }L^{q}\left(W_o\left(M\right)\right)\text{ as }\left|\mathcal{P}\right|\to 0.
\]
Then using Eq.(\ref{eq.1}), Eq.(\ref{eq.2}) and Theorem \ref{the.7.17} we have
\[
\mathbf{K_{\mathcal{P}}}\left(1\right)^{-1}H_{\mathcal{P}}\to\tilde{\mathbf{K}}_1^{-1}\tilde{H}\text{ in }L^{\infty-}\left(W_o\left(M\right)\right)\text{ as }\left|\mathcal{P}\right|\to 0
\]
and 
\[
f_{\mathcal{P},0}^{\ast}\left(1\right)\to \tilde{T}_1^{\ast}\text{ in }L^{\infty-}\left(W_o\left(M\right)\right)\text{ as }\left|\mathcal{P}\right|\to 0,
\]
therefore
\begin{equation}
I_\mathcal{P}\rightarrow\left\langle T^{\ast}_1\tilde{\mathbf{K}}_1^{-1}\tilde{H},\int_{0}^{1}\tilde{T}_s^{-1}d\beta_{s}\right\rangle \text{ in }L^{\infty-}\left(W_o\left(M\right)\right)\text{ as }\left|\mathcal{P}\right|\to 0.\label{equ.8.14}
\end{equation}
Lastly, notice that $\tilde{\mathbf{K}}_1=\tilde{T}_1\int_{0}^{1}\left(\tilde{T}_r^{\ast}\tilde{T}_r\right)^{-1}drT_1^{\ast}$, so 
\[
\tilde{\mathbf{K}}_1^{-1}=\left({\tilde{T}_1^{-1}}\right)^{\ast}\tilde{C}
\]
where $\tilde{C}$ is defined in Theorem \ref{lem.5.1}, and 
\begin{equation}
\left\langle T^{\ast}_1\tilde{\mathbf{K}}_1^{-1}\tilde{H},\int_{0}^{1}\tilde{T}_s^{-1}d\beta_{s}\right\rangle =\left\langle \tilde{C}\tilde{H},\int_{0}^{1}\tilde{T}_s^{-1}d\beta_{s}\right\rangle=\sum_{\alpha=1}^{d}\left\langle \tilde{C}\tilde{H},e_{\alpha}\right\rangle \int_{0}^{1}\left\langle \left(\tilde{T}_s^{-1}\right)^{\ast}e_{\alpha},d\beta_{s}\right\rangle. \label{eq1}
\end{equation}
The proof is completed by combining Eq.(\ref{equ.8.14}) and (\ref{eq1}).
\end{proof}

We state two supplementary lemmas.
\begin{lemma} \label{lem.5.2}If the curvature tensor is parallel, i.e. $\nabla R\equiv 0$, then
	\begin{align}
	- & \sum_{\alpha=1}^{d}\left\langle X^{Z_{\alpha}}\left(\tilde{C}\tilde{H}\right),e_{\alpha}\right\rangle =divX\circ\Ep-\sum_{\alpha=1}^{d}\left\langle \tilde{C}A_1\left\langle Z_{\alpha}\right\rangle \tilde{H},e_{\alpha}\right\rangle .\label{equ.5.1}
	\end{align}
	where $A_s\left<Z_\alpha\right>=\int_{0}^{s}R_{\tilde{u}_r}\left(Z_\alpha(r),\delta\beta_r\right)$.
\end{lemma}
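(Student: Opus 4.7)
The plan is to use the hypothesis $\nabla R\equiv 0$ to trivialize the randomness of $\tilde C$, so that the identity reduces to a purely pointwise differential-geometric computation. First I would argue that since $Ric$ is a contraction of $R$, $\nabla R\equiv 0$ gives $\nabla Ric\equiv 0$; the Stratonovich differential $\delta Ric_{\tilde u_s}=(\nabla_{\delta\beta_s}Ric)_{\tilde u_s}$ therefore vanishes identically, so $Ric_{\tilde u_s}=Ric_{u_0}$ a.s. Consequently the linear ODE (\ref{equ.4.3-1}) has deterministic coefficients, and both $\tilde T_s$ and $\tilde C=[\int_0^1(\tilde T_r^{\ast}\tilde T_r)^{-1}dr]^{-1}\tilde T_1^{-1}$ possess non-random versions. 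Taking these versions, $X^{Z_\alpha}(\tilde C)=0$, and the product rule yields $X^{Z_\alpha}(\tilde C\tilde H)=\tilde C\,X^{Z_\alpha}(\tilde H)$, reducing everything to the single derivative $X^{Z_\alpha}(\tilde H)$.

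Next, since $\tilde H=F(\tilde u_1)$ with $F(u):=u^{-1}X(\pi(u))$ depending only on the terminal frame, $\tilde H$ is a smooth cylinder functional of the frame-bundle Brownian motion. I would split the action of $X^{Z_\alpha}$ into its rotation and horizontal contributions. The rotation contribution arises from $\tfrac{d}{dt}|_0 F(\tilde u_1 e^{-tA_1\langle Z_\alpha\rangle})$: because $\pi$ is invariant under the $O(d)$-action on fibres of $\mathcal O(M)$ and $(\tilde u_1 e^{-tA_1\langle Z_\alpha\rangle})^{-1}=e^{tA_1\langle Z_\alpha\rangle}\tilde u_1^{-1}$, this equals $A_1\langle Z_\alpha\rangle\tilde H$. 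The horizontal contribution comes from $\tfrac{d}{dt}|_0 F(u_1(t))$ with $u_1(t):=e^{tB_{Z_\alpha(1)}}(\tilde u_1)$; since $u_1(t)$ is parallel transport of $\tilde u_1$ along the base curve $\pi(u_1(t))$ with initial velocity $\tilde u_1Z_\alpha(1)$, applying $\tfrac{\nabla}{dt}$ to $u_1(t)F(u_1(t))=X(\pi(u_1(t)))$ at $t=0$ gives $\tilde u_1 F'(\tilde u)\langle Z_\alpha\rangle=\nabla_{\tilde u_1 Z_\alpha(1)}X$, and hence $F'(\tilde u)\langle Z_\alpha\rangle=\tilde u_1^{-1}\nabla_{X^{Z_\alpha}(1)}X$. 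Summing, $\tilde C X^{Z_\alpha}\tilde H$ breaks into a $\tilde CA_1\langle Z_\alpha\rangle\tilde H$ piece that directly accounts for the second term on the right-hand side of (\ref{equ.5.1}), plus a $\tilde C\tilde u_1^{-1}\nabla_{X^{Z_\alpha}(1)}X$ piece whose $e_\alpha$-inner product will be identified with $\operatorname{div} X\circ\Ep$ up to the sign convention governing the two pieces.

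The remaining, and hardest, step is this identification. The key algebraic input is the identity $Z_\alpha(1)=\tilde C^{-1}e_\alpha$, which I would verify by producing the explicit solution $Z_\alpha(s)=\tilde T_s\int_0^s(\tilde T_r^{\ast}\tilde T_r)^{-1}dr\,e_\alpha$; substituting this into $Z_\alpha'+\tfrac12 Ric_{\tilde u}Z_\alpha=(\tilde T^{-1})^{\ast}e_\alpha$ is a direct check, and evaluating at $s=1$ yields exactly $\tilde C^{-1}e_\alpha$ by the definition of $\tilde C$. Setting $A:=\tilde u_1\tilde C\tilde u_1^{-1}\in\operatorname{End}(T_{\Sigma_1}M)$ and $f_\alpha:=\tilde u_1 e_\alpha$ (an orthonormal basis of $T_{\Sigma_1}M$), the sum becomes $\sum_\alpha\langle\nabla_{A^{-1}f_\alpha}X,A^{\ast}f_\alpha\rangle$. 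Under the canonical isomorphism $T_{\Sigma_1}M\otimes T_{\Sigma_1}M\cong\operatorname{End}(T_{\Sigma_1}M)$ given by $v\otimes w\mapsto(x\mapsto v\langle w,x\rangle)$, the tensor $\sum_\alpha(A^{-1}f_\alpha)\otimes(A^{\ast}f_\alpha)$ corresponds to $A^{-1}A=\operatorname{id}$, which also equals $\sum_\alpha f_\alpha\otimes f_\alpha$. Applying the bilinear form $(v,w)\mapsto\langle\nabla_v X,w\rangle$ to both representations of $\operatorname{id}$ collapses the $\alpha$-sum to $\sum_\alpha\langle\nabla_{f_\alpha}X,f_\alpha\rangle=\operatorname{div} X(\Sigma_1)$, which completes the proof.

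The main obstacle is this final tensor-contraction argument: although elementary, it works only because of the precise design of $\tilde C$, which makes $A^{-1}$ and $A^{\ast}$ mutually dual under the identification $V\otimes V\simeq\operatorname{End}(V)$; without the matching $Z_\alpha(1)=\tilde C^{-1}e_\alpha$ the sum would not telescope to a trace. The hypothesis $\nabla R\equiv 0$ plays a second, equally essential role in making $\tilde C$ non-random, so that the product rule collapses to $X^{Z_\alpha}(\tilde C\tilde H)=\tilde C X^{Z_\alpha}\tilde H$; otherwise one would acquire an extra $(X^{Z_\alpha}\tilde C)\tilde H$ term which could not be absorbed into a purely geometric quantity like $\operatorname{div}X$.
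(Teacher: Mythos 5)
Your proposal is correct and follows essentially the same strategy as the paper's proof (the version in the companion paper referenced as ``IVP,'' visible in a commented-out block in the source): use $\nabla R\equiv 0$ to make $\tilde{T}$ and $\tilde{C}$ deterministic so the product rule collapses, split $X^{Z_\alpha}\tilde{H}$ into rotation and horizontal contributions, and close the $\alpha$-sum of the horizontal piece to $\operatorname{div}X\circ\Ep$ via the identification $T_{\Sigma_1}M\otimes T_{\Sigma_1}M\cong\operatorname{End}(T_{\Sigma_1}M)$ with $A:=\tilde{u}_1\tilde{C}^{\pm 1}\tilde{u}_1^{-1}$. The only genuine value you add over the paper is making explicit the solution formula $Z_\alpha(s)=\tilde{T}_s\int_0^s(\tilde{T}_r^\ast\tilde{T}_r)^{-1}dr\,e_\alpha$ and hence the identity $Z_\alpha(1)=\tilde{C}^{-1}e_\alpha$, which the paper uses silently; you should however pin down the overall sign of the horizontal piece rather than leaving it ``up to the sign convention.''
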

\begin{proof}
	See Lemma 4.25 in IVP. 
\end{proof}
\begin{lemma} \label{lem.8.8}Fix $s\in\left[0,1\right]$,
consider an one parameter family of paths $\left\{ \sigma_{t}\right\}\subset H_\mathcal{P}\left(M\right) $
and denote by $u_t\left(\cdot\right)$ the horizontal lift of $\sigma_t$. For simplicity, we will denote $u_t\left(s\right)$ by $u_t$, $\sigma_0$ by $\sigma$, the derivative with respect to $t$ by $\cdot$ and the derivative with respect to $s$ by $\prime$. For any $X\in\Gamma\left(TM\right)$, define $f_{X}:\mathcal{\mathcal{O}}\left(M\right)\mapsto\mathbb{R}^{d}\simeq T_{o}M$
by 
\[
f_{X}\left(u\right)=u^{-1}\left(X\circ\pi\right)\left(u\right)
\]
Then: 
\begin{align*}
\frac{d}{dt}|_{0}f_{X}\left(u_t\right)=\left(\frac{d}{dt}|_{0}u_t\right)f_{X}&=u_0^{-1}\nabla_{\dot{\sigma}\left(s\right)}X-\int_{0}^{s}R_{u_0{\left(r\right)}}\left(u_0\left(r\right)^{-1}\sigma^{\prime}\left(r+\right),u_0\left(r\right)^{-1}\dot{\sigma}\left(r\right)\right)drf_X\left(u_0\right).
\end{align*}
\end{lemma}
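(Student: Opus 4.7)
The plan is to decompose the tangent vector $\dot u_0 := \frac{d}{dt}\big|_0 u_t \in T_{u_0}\mathcal{O}(M)$ into its horizontal and vertical pieces with respect to the connection form $\omega^\nabla$, and then evaluate $(df_X)_{u_0}$ separately on each piece. The horizontal piece will contribute the $u_0^{-1}\nabla_{\dot\sigma(s)}X$ term, while the vertical piece, computed using the structure equation on $\mathcal{O}(M)$, will produce the curvature integral.

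First I would establish the two elementary calculus rules for $f_X$. If $H\in T_{u_0}\mathcal{O}(M)$ is horizontal with $\pi_*H=v\in T_{\sigma(s)}M$, then the standard identification of horizontal motion with parallel translation yields
\[
(df_X)_{u_0}(H)=u_0^{-1}\nabla_{v}X.
\]
If $V^*\in T_{u_0}\mathcal{O}(M)$ is vertical with $\omega^\nabla_{u_0}(V^*)=A\in\mathfrak{so}(d)$, then writing $V^*$ as the initial velocity of the curve $r\mapsto u_0 e^{rA}$ and using $f_X(uA)=A^{-1}f_X(u)$ for $A\in O(d)$ acting on the fiber gives
\[
(df_X)_{u_0}(V^*)=-Af_X(u_0).
\]
Hence, writing $\dot u_0=H+V^*$ with $\pi_*\dot u_0=\dot\sigma(s)$, I get
\[
\frac{d}{dt}\Big|_0 f_X(u_t)=u_0^{-1}\nabla_{\dot\sigma(s)}X - \omega^\nabla_{u_0}(\dot u_0)\, f_X(u_0).
\]

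It remains to compute $A(s):=\omega^\nabla_{u_0(s)}\!\bigl(\partial_t U(s,0)\bigr)$ where $U(s,t):=u_t(s)$. I would exploit that $\partial_s U$ is horizontal in $s$ (so $\omega(\partial_s U)=0$) and that $[\partial_s U,\partial_t U]=0$. The Cartan structure equation $d\omega^\nabla+\omega^\nabla\wedge\omega^\nabla=\Omega^\nabla$ evaluated on the pair $(\partial_s U,\partial_t U)$ reduces, because of $\omega(\partial_s U)=0$, to
\[
\partial_s\,\omega^\nabla(\partial_t U)=\Omega^\nabla(\partial_s U,\partial_t U).
\]
Translating the curvature form via $\Omega^\nabla_{u}(X,Y)=u^{-1}R(\pi_*X,\pi_*Y)u$ and the notation of (\ref{n1}), and using $\pi_*\partial_s U|_{t=0}=\sigma'(r+)$ and $\pi_*\partial_t U|_{t=0}=\dot\sigma(r)$, yields
\[
\frac{d}{dr}A(r)=R_{u_0(r)}\bigl(u_0(r)^{-1}\sigma'(r+),\,u_0(r)^{-1}\dot\sigma(r)\bigr),\qquad r\notin\mathcal{P}.
\]
Since every variation $\sigma_t$ lies in $H_\mathcal{P}(M)$ with the same base point, all horizontal lifts begin at the fixed frame $u_0$, so $\partial_t U(0,0)=0$ and consequently $A(0)=0$. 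Integrating from $0$ to $s$ and substituting back into the horizontal/vertical split produces exactly the claimed identity.

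The main technical nuisance, rather than any deep difficulty, is bookkeeping the piecewise structure: $\sigma'$ jumps at the partition points, which is why the statement uses the right-hand limit $\sigma'(r+)$; since the set $\mathcal{P}$ is Lebesgue-null, the structure-equation ODE determines $A(s)$ via an absolutely continuous primitive and the integral formula is unambiguous. Everything else is a direct application of Cartan's structure equation on the orthonormal frame bundle together with the two elementary differentiation rules for the equivariant map $f_X$.
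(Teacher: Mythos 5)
Your decomposition of $\dot u_0$ into horizontal and vertical pieces, and the two differentiation rules $(df_X)(H)=u_0^{-1}\nabla_{\pi_*H}X$ and $(df_X)(V^*)=-\omega^\nabla(V^*)f_X(u_0)$, match what the paper does: there $\dot u_0 = B_a(u_0)+\tilde A(u_0)$ and these two actions are computed in exactly the same way. The place where you diverge is the identification of the vertical part $A(s)=\omega^\nabla_{u_0(s)}(\partial_t U(s,0))$. The paper simply invokes Theorem 3.3 of Andersson--Driver for the formula $A=\int_0^s R_{u_0(r)}\bigl(u_0(r)^{-1}\sigma'(r+),u_0(r)^{-1}\dot\sigma(r)\bigr)dr$, whereas you rederive it from Cartan's structure equation $d\omega^\nabla+\omega^\nabla\wedge\omega^\nabla=\Omega^\nabla$: since $\omega^\nabla(\partial_s U)\equiv 0$ (horizontal lift) and $[\partial_s U,\partial_t U]=0$, the wedge and commutator terms drop, leaving $\partial_s\bigl(\omega^\nabla(\partial_t U)\bigr)=\Omega^\nabla(\partial_s U,\partial_t U)=R_{u_0(s)}\bigl(u_0(s)^{-1}\sigma'(s+),u_0(s)^{-1}\dot\sigma(s)\bigr)$, and $A(0)=0$ because all lifts start at the fixed frame $u_0$. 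Integrating gives the result. This is a perfectly valid and self-contained alternative; it trades a citation for a short structure-equation computation, which is arguably cleaner if the reader does not have the cited reference at hand. Your handling of the piecewise issue (the jump set $\mathcal{P}$ is Lebesgue-null, so the absolutely continuous primitive is well-defined) is also correct and matches the implicit convention in the statement. In short: same skeleton as the paper, but you inline the proof of the step the paper outsources.
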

\begin{proof}
The connection on $\mathcal{O}(M)$ defined in Definition \ref{con} gives the following decomposition:
\[
\dot{u}_0=B_{a}\left(u_0\right)+\tilde{A}\left(u_0\right)
\]
where $a=u_0^{-1}\frac{d}{dt}|_{0}\sigma_{t}\left(s\right)=u_0^{-1}\dot{\sigma}\left(s\right)\in T_{o}M$
and $\tilde{A}\left(u_0\right)=\frac{d}{dt}|_{0}u_0e^{tA}$ for some $A=u_0^{-1}\frac{\triangledown u_t}{dt}\left(0\right)\in\text{\ensuremath{\mathfrak{so}}}(d)$
and $B_{a}\left(u_0\right)=\frac{d}{dt}|_{0}//_{t}\left(\gamma\right)u_0$
where $\gamma$ satisfies $\dot{\gamma}\left(0\right)=u_0a$ and $\gamma\left(0\right)=\sigma\left(s\right)$.
In this example, we can choose $\gamma(\cdot)$ to be $\sigma_{(\cdot)}\left(s\right)$. So
\[
B_{a}\left(u_0\right)f_{X}=\frac{d}{dt}|_{0}u_0^{-1}//_{t}^{-1}\left(\gamma\right)\left(X\circ\pi\right)\left(//_{t}\left(\gamma\right)u_0\right)=u_0^{-1}\nabla_{\dot{\sigma}\left(s\right)}X
\]
and 
\[
\tilde{A}\left(u\right)f_{X}=\frac{d}{dt}|_{0}e^{-tA}u^{-1}\left(X\circ\pi\right)\left(ue^{tA}\right)=-Au_0^{-1}X\left(\sigma\left(1\right)\right)=-Af_x\left(u_0\right)
\]
Following the computation in Theorem 3.3 in \cite{Andersson1999}, we know that 
\[
A=\int_{0}^{s}R_{u_0{\left(r\right)}}\left(u_0\left(r\right)^{-1}\sigma^{\prime}\left(r+\right),u_0\left(r\right)^{-1}\dot{\sigma}\left(r\right)\right)dr.
\]
\end{proof}

\begin{proof}[Proof of Proposition \ref{pro.8.6}] Because of Lemma
\ref{lem.5.2}, it suffices to prove
\begin{align*}
\underset{\left|\mathcal{P}\right|\to 0}{\lim}\mathbb{E}\left[\left\vert   div\tilde{X}_{\mathcal{P}}- \left(divX\circ\Ep-\sum_{\alpha=1}^{d}\left\langle \tilde{C}A_1\left\langle Z_{\alpha}\right\rangle \tilde{H},e_{\alpha}\right\rangle\right)\right\vert^q\right]=0.
\end{align*}
From Definition \ref{def.6.13} we get, for each $\alpha\in \left\{1,\dots, d\right\}$ and $j\in \left\{1,\dots, n\right\}$, that
\[
J_{\mathcal{P}}^{\prime}\left(s_{j-1}+\right)=\mathbf{K}_{\mathcal{P}}^{\prime}\left(s_{j-1}+\right)\mathbf{K}_{\mathcal{P}}\left(1\right)^{-1}H_{\mathcal{P}}=f_{\mathcal{P},j}^{*}\left(1\right)\mathbf{K}_{\mathcal{P}}\left(1\right)^{-1}H_{\mathcal{P}}.
\]
Therefore
\[
X^{h_{\alpha,j}}J_{\mathcal{P}}^{\prime}\left(s_{j-1}+\right)=I_\mathcal{P}\left(\alpha,j\right)+II_\mathcal{P}\left(\alpha,j\right)+III_\mathcal{P}\left(\alpha,j\right),
\]
where 
\begin{align}
I_\mathcal{P}\left(\alpha,j\right) & =\left(X^{h_{\alpha,j}}f_{\mathcal{P},j}^{*}\left(1\right)\right)\mathbf{K}_{\mathcal{P}}\left(1\right)^{-1}H_{\mathcal{P}}\label{equ.8.15}\\
II_\mathcal{P}\left(\alpha,j\right) & =f_{\mathcal{P},j}^{*}\left(1\right)\left(X^{h_{\alpha,j}}\mathbf{K}_{\mathcal{P}}\left(1\right)^{-1}\right)H_{\mathcal{P}}\nonumber \\
III_\mathcal{P}\left(\alpha,j\right) & =f_{\mathcal{P},j}^{*}\left(1\right)\mathbf{K}_{\mathcal{P}}\left(1\right)^{-1}\left(X^{h_{\alpha,j}}H_{\mathcal{P}}\right).\nonumber 
\end{align}
Using Proposition \ref{prop.5.4}, we have
\[div\tilde{X}_\mathcal{P}=\sum_{\alpha=1}^{d}\sum_{j=1}^{n}\left< \left(I_\mathcal{P}+II_\mathcal{P}+III_\mathcal{P}\right)\left(\alpha,j\right), e_\alpha\right>\sqrt{\Delta_j}.\]
Based on the expression above, Proposition \ref{pro.8.6} will be proved as a corollary of Lemma \ref{lem.8.9} to Lemma \ref{lem.8.11}.
In Lemma \ref{lem.8.9} and Lemma \ref{lem.8.12} we show that 
\[\sum_{\alpha=1}^{d}\sum_{j=1}^{n}\left< III_\mathcal{P}\left(\alpha,j\right),e_\alpha\right>\sqrt{\Delta_j}-\left(divX\circ E_1-\sum_{\alpha=1}^{d}\left\langle \tilde{C}A_1\left\langle Z_{\alpha}\right\rangle \tilde{H},e_{\alpha}\right\rangle\right)\to 0\text{ as }\left|\mathcal{P}\right|\to 0.\]
In Lemma \ref{lem.8.10} we show that 
\[\sum_{\alpha=1}^{d}\sum_{j=1}^{n} \left< II_\mathcal{P}\left(\alpha,j\right),e_\alpha\right>\sqrt{\Delta_j}\to 0\text{ as }\left|\mathcal{P}\right|\to 0.\]
In Lemma \ref{lem.8.11} we show that
\[\sum_{\alpha=1}^{d}\sum_{j=1}^{n} \left< I_\mathcal{P}\left(\alpha,j\right),e_\alpha\right>\sqrt{\Delta_j}\to 0\text{ as }\left|\mathcal{P}\right|\to 0.\]

\end{proof}

\begin{lemma} \label{lem.8.9} If $\nabla R\equiv 0$, then for any $q\geq 1$,
\begin{equation}
\underset{\left|\mathcal{P}\right|\to 0}{\lim}\mathbb{E}\left[\left\vert\sum_{\alpha=1}^{d}\sum_{j=1}^{n}\left< III_\mathcal{P}\left(\alpha,j\right),e_\alpha\right>\sqrt{\Delta_j}-\left(divX\circ E_1-\sum_{\alpha=1}^{d}\left\langle \tilde{C}A_1\left\langle Z_{\alpha}\right\rangle \tilde{H},e_{\alpha}\right\rangle\right)\right\vert^q\right]=0.\label{equ.vp}
\end{equation}
\end{lemma}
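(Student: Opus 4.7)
The plan is to expand $X^{h_{\alpha,j}}H_{\mathcal{P}}$ using Lemma \ref{lem.8.8}, then recognize that the combinatorial sums of the two resulting pieces collapse into a trace expression via the key identities
\[
\frac{1}{n}\sum_{j=1}^{n} f_{\mathcal{P},j}(1)f^{*}_{\mathcal{P},j}(1)=\mathbf{K}_{\mathcal{P}}(1),\qquad \sum_{j=1}^{n}\frac{1}{n}f_{\mathcal{P},j}(r)f^{*}_{\mathcal{P},j}(1)=\mathbf{K}_{\mathcal{P}}(r)
\]
and the elementary relation $\sum_{\alpha}\langle C v_{\alpha},w_{\alpha}\rangle=\operatorname{tr}\bigl(C\sum_{\alpha}v_{\alpha}w_{\alpha}^{*}\bigr)$. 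After this algebraic reduction, the convergence is proved by the stochastic estimates of Section \ref{cha.6}: Theorem \ref{the.7.17} for $\mathbf{K}_{\mathcal{P}}(r)\to \tilde{\mathbf{K}}_{r}$, Theorem \ref{the.7.11} for $f_{\mathcal{P},j}(r)\to \tilde{T}_{r}\tilde{T}_{s_{j-1}}^{-1}$, Wong--Zakai convergence of $u_{\mathcal{P}}\to\tilde{u}$, and a Stratonovich-type convergence for the piecewise-linear curvature integral.

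In detail, Lemma \ref{lem.8.8} applied along the curve generated by $X^{h_{\alpha,j}}$ at $s=1$ yields, using $u_{\mathcal{P}}(r)^{-1}\sigma'(r+)=\beta'_{\mathcal{P}}(r+)$ and $u_{\mathcal{P}}(r)^{-1}X^{h_{\alpha,j}}(r)=\tfrac{1}{\sqrt{n}}f_{\mathcal{P},j}(r)e_{\alpha}$,
\[
X^{h_{\alpha,j}}H_{\mathcal{P}}=\tfrac{1}{\sqrt{n}}\,u_{\mathcal{P}}(1)^{-1}\nabla_{u_{\mathcal{P}}(1)f_{\mathcal{P},j}(1)e_{\alpha}}X-\tfrac{1}{\sqrt{n}}\!\int_{0}^{1}\!R_{u_{\mathcal{P}}(r)}\!\bigl(\beta'_{\mathcal{P}}(r+),\,f_{\mathcal{P},j}(r)e_{\alpha}\bigr)H_{\mathcal{P}}\,dr.
\]
Plugging into $III_{\mathcal{P}}(\alpha,j)=f^{*}_{\mathcal{P},j}(1)\mathbf{K}_{\mathcal{P}}(1)^{-1}(X^{h_{\alpha,j}}H_{\mathcal{P}})$ and multiplying by $\sqrt{\Delta_{j}}=1/\sqrt{n}$ splits the sum as $A_{\mathcal{P}}-B_{\mathcal{P}}$. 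For $A_{\mathcal{P}}$, define the bilinear form $Q(v,w):=\langle u_{\mathcal{P}}(1)^{-1}\nabla_{u_{\mathcal{P}}(1)v}X,w\rangle$; then a direct computation shows $A_{\mathcal{P}}=Q\bigl(\sum_{j,\alpha}\tfrac{1}{n} f_{\mathcal{P},j}(1)e_{\alpha}\otimes \mathbf{K}_{\mathcal{P}}(1)^{-1}f_{\mathcal{P},j}(1)e_{\alpha}\bigr)=Q(I)=\operatorname{div}X(\pi\circ u_{\mathcal{P}}(1))$ exactly, since $\sum_{\alpha}e_{\alpha}\otimes e_{\alpha}\mapsto I$ and $\bigl(\sum_{j}\tfrac{1}{n}f_{\mathcal{P},j}(1)f^{*}_{\mathcal{P},j}(1)\bigr)\mathbf{K}_{\mathcal{P}}(1)^{-1}=I$. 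The Wong--Zakai theorem together with continuity and bounded support of $\operatorname{div}X$ then gives $A_{\mathcal{P}}\to \operatorname{div}X\circ\Ep$ in $L^{\infty-}$.

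For $B_{\mathcal{P}}$, the same trace identity reduces the double sum to
\[
B_{\mathcal{P}}=\int_{0}^{1}\operatorname{tr}\!\Bigl(\mathbf{K}_{\mathcal{P}}(1)^{-1}R_{u_{\mathcal{P}}(r)}\!\bigl(\beta'_{\mathcal{P}}(r+),\cdot\bigr)H_{\mathcal{P}}\;\mathbf{K}_{\mathcal{P}}(r)\Bigr)dr.
\]
Using $\tilde{C}=\tilde{T}_{1}^{*}\tilde{\mathbf{K}}_{1}^{-1}$, the identity $Z_{\alpha}(r)=\tilde{\mathbf{K}}_{r}(\tilde{T}_{1}^{-1})^{*}e_{\alpha}$ (a direct consequence of Definitions \ref{def.4.4-1} and of the ODE for $Z_{\alpha}$), and the dual-basis trace formula $\sum_{\alpha}\langle L(\tilde{T}_{1}^{-1})^{*}e_{\alpha},\tilde{T}_{1}e_{\alpha}\rangle=\operatorname{tr}(L)$, one verifies that the target equals (up to the antisymmetry $R(a,b)=-R(b,a)$)
\[
\sum_{\alpha}\langle \tilde{C}A_{1}\langle Z_{\alpha}\rangle\tilde{H},e_{\alpha}\rangle=\int_{0}^{1}\operatorname{tr}\!\Bigl(\tilde{\mathbf{K}}_{1}^{-1}R_{\tilde{u}_{r}}(\tilde{\mathbf{K}}_{r}\cdot,\delta\beta_{r})\tilde{H}\Bigr).
\]
Thus the statement reduces to showing that the deterministic piecewise-linear integral defining $B_{\mathcal{P}}$ converges in $L^{q}$ to the Stratonovich integral on the right-hand side, for each $q\geq 1$.

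The main obstacle will be this last Wong--Zakai-type convergence for the curvature integral: the integrands $\mathbf{K}_{\mathcal{P}}(1)^{-1}$, $\mathbf{K}_{\mathcal{P}}(r)$, $H_{\mathcal{P}}$ and $R_{u_{\mathcal{P}}(r)}$ are not adapted and each carries its own error, so one must combine the $L^{q}$-estimates of Theorems \ref{the.7.11}, \ref{the.7.17} and Proposition \ref{lem.7.20} with a Stratonovich--Itô approximation argument for $\int_{0}^{1}F_{\mathcal{P}}(r)\beta'_{\mathcal{P}}(r+)\,dr$. Concretely, one writes $\beta'_{\mathcal{P}}(r+)dr=d\beta_{r}-\bigl(d\beta_{r}-\beta'_{\mathcal{P}}(r+)dr\bigr)$ and splits into an Itô martingale piece handled by Burkholder--Davis--Gundy (as in the proof of Proposition \ref{pro.8.5}) plus a quadratic-covariation piece that produces the Stratonovich correction ensuring convergence to $\delta\beta_{r}$. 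All remaining bounds are uniform in $\mathcal{P}$ by the Gaussian exponential moment estimate of Lemma \ref{cBM} and the bounds on $\mathbf{K}_{\mathcal{P}}(1)^{-1}$ from Lemma \ref{lem.6.10}; after assembling these, H\"older's inequality and dominated convergence yield \eqref{equ.vp}.
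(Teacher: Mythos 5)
Your proposal is correct and follows essentially the same route as the paper: apply Lemma \ref{lem.8.8} to $X^{h_{\alpha,j}}H_{\mathcal{P}}$, extract $\operatorname{div}X$ exactly from the covariant-derivative piece via the identity $\sum_{j}\Delta_{j}f_{\mathcal{P},j}(1)f_{\mathcal{P},j}^{*}(1)=\mathbf{K}_{\mathcal{P}}(1)$, and send the curvature piece to $\sum_{\alpha}\langle\tilde{C}A_{1}\langle Z_{\alpha}\rangle\tilde{H},e_{\alpha}\rangle$ using $Z_{\alpha}(r)=\tilde{\mathbf{K}}_{r}(\tilde{T}_{1}^{-1})^{*}e_{\alpha}$ together with the $L^{q}$-estimates of Section \ref{cha.6}; your trace-form packaging of $A_{\mathcal{P}}$ and $B_{\mathcal{P}}$ is a cleaner rendering of what the paper does componentwise in Lemma \ref{lem.8.12} (the successive adapted integrands $g_{1},\dots,g_{5}$ plus a change-of-integration-order step). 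One small correction: no Stratonovich correction term actually appears in the limit, because $\nabla R\equiv 0$ makes $R_{\tilde{u}_{s}}$ deterministic and the second slots ($Z_{\alpha}$, $\tilde{T}_{r}\tilde{T}_{s_{j}}^{-1}$) are absolutely continuous, so the relevant It\^{o} and Stratonovich integrals coincide; the paper accordingly works with It\^{o} Riemann sums and Burkholder--Davis--Gundy throughout rather than producing a quadratic-covariation correction.
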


\begin{proof}Applying Lemma \ref{lem.8.8} to $X^{h_{\alpha,j}}H_{\mathcal{P}}$ gives 
\[
\sum_{\alpha=1}^{d}\sum_{j=1}^{n}\left\langle III_\mathcal{P}\left(\alpha,j\right),e_{\alpha}\right\rangle \sqrt{\Delta_{j}}=IV_\mathcal{P}-V_\mathcal{P},
\]
where 
\[
IV_\mathcal{P}=\sum_{\alpha=1}^{d}\sum_{j=1}^{n}\left\langle f_{\mathcal{P},j}^{*}\left(1\right)\mathbf{K}_{\mathcal{P}}\left(1\right)^{-1}u_\mathcal{P}\left(1\right)^{-1}\nabla_{u_\mathcal{P}\left(1\right)\sqrt{\Delta_{j}}f_{\mathcal{P},j}\left(1\right)e_{\alpha}}X,e_{\alpha}\right\rangle \sqrt{\Delta_{j}}
\]
and 
\[
V_\mathcal{P}=\sum_{\alpha=1}^{d}\sum_{j=1}^{n}\left\langle f_{\mathcal{P},j}^{*}\left(1\right)\mathbf{K}_{\mathcal{P}}\left(1\right)^{-1}\int_{0}^{1}R_{u_{\mathcal{P}}\left(r\right)}\left(\beta_{\mathcal{P}}^{\prime}\left(r+\right),h_{\alpha,j}\left(r\right)\right)drH_{\mathcal{P}},e_{\alpha}\right\rangle \sqrt{\Delta_{j}}.
\]
We first compute $IV_\mathcal{P}$. After viewing $L\left(\cdot\right)=u_\mathcal{P}\left(1\right)^{-1}\nabla_{u_\mathcal{P}\left(1\right)\left(\cdot\right)}X$
as a linear functional on $\mathbb{R}^{d}$ we have
\begin{align}
IV_\mathcal{P} & =\sum_{j=1}^{n}\sum_{\alpha=1}^{d}\left\langle f_{\mathcal{P},j}^{*}\left(1\right)\mathbf{K}_{\mathcal{P}}\left(1\right)^{-1}L\left(f_{\mathcal{P},j}\left(1\right)e_{\alpha}\right),e_{\alpha}\right\rangle \Delta_{j}\nonumber\\
 & =\sum_{j=1}^{n} Tr\left(f_{\mathcal{P},j}^{*}\left(1\right)\mathbf{K}_{\mathcal{P}}\left(1\right)^{-1}Lf_{\mathcal{P},j}\left(1\right)\right)\Delta_{j}\nonumber\\
 & =\sum_{j=1}^{n}Tr\left(\Delta_{j}f_{\mathcal{P},j}\left(1\right)f_{\mathcal{P},j}^{*}\left(1\right)\mathbf{K}_{\mathcal{P}}\left(1\right)^{-1}L\right)\nonumber\\
 & =Tr\left(\sum_{j=1}^{n}\Delta_{j}f_{\mathcal{P},j}\left(1\right)f_{\mathcal{P},j}^{*}\left(1\right)\mathbf{K}_{\mathcal{P}}\left(1\right)^{-1}L\right)\label{tr}\\
 & =Tr\left(L\right)\nonumber\\
 & =divX\circ \Ep,\nonumber
\end{align}
where in Eq. (\ref{tr}) we use identity (\ref{eq:-28}) $\sum_{j=1}^{n}\Delta_{j}f_{\mathcal{P},j}\left(1\right)f_{\mathcal{P},j}^{*}\left(1\right)=\mathbf{K}_{\mathcal{P}}\left(1\right)$ and given $A\in M_{d\times d}$, $Tr\left(A\right):=\sum_{\alpha=1}^{d}\left<Ae_\alpha, e_\alpha\right>$ is the trace of the matrix $A$.

The proof of the lemma will be completed by Lemma \ref{lem.8.12} below which shows term $V_P$ converges to the right side of Eq. (\ref{equ.vp}).
\end{proof}
\begin{lemma}\label{lem.8.12} Let $V_\mathcal{P}$ be defined as in Lemma \ref{lem.8.9} and $\nabla R\equiv 0$, then for any $q\geq 1$,
\begin{equation}
\underset{\left|\mathcal{P}\right|\to 0}{\lim}\mathbb{E}\left[\left\vert V_\mathcal{P}-\sum_{\alpha=1}^{d}\left\langle \tilde{C}A_1\left\langle Z_{\alpha}\right\rangle \tilde{H},e_{\alpha}\right\rangle\right\vert^q\right]=0.\label{e}
\end{equation}
\end{lemma}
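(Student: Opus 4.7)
The plan is to identify $V_\mathcal{P}$ as an iterated Riemann sum whose limit is a Stratonovich integral, and then use a stochastic Fubini rearrangement to match the target $\sum_\alpha \langle \tilde{C}A_1\langle Z_\alpha\rangle \tilde{H}, e_\alpha\rangle$. First I will rewrite $V_\mathcal{P}$ using $h_{\alpha,j}(r) = \sqrt{\Delta_j}\,f_{\mathcal{P},j}(r) e_\alpha$ from Remark \ref{rem.8.3-1}, combining the two $\sqrt{\Delta_j}$ factors into $\Delta_j$. Since $f_{\mathcal{P},j}(r) = 0$ for $r < s_{j-1}$, we obtain
$$V_\mathcal{P} = \sum_{\alpha=1}^{d}\sum_{j=1}^{n}\Delta_j \left\langle f_{\mathcal{P},j}^{*}(1)\mathbf{K}_\mathcal{P}(1)^{-1}\int_{s_{j-1}}^{1} R_{u_\mathcal{P}(r)}(\beta_\mathcal{P}'(r+), f_{\mathcal{P},j}(r) e_\alpha)\,dr\,H_\mathcal{P},\, e_\alpha\right\rangle.$$
The outer sum is a Riemann sum in $t=s_{j-1}\in[0,1]$; the inner $dr$-integral is the polygonal (Wong--Zakai) approximation of a Stratonovich integral in $r$.

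Next I will pass to the limit factor by factor. Theorem \ref{the.7.11} gives $f_{\mathcal{P},j}(r)\to \tilde{T}_r\tilde{T}_{s_{j-1}}^{-1}$ in $L^q$ uniformly in $j$, Theorem \ref{the.7.17} with Lemma \ref{lem.4.6} gives $\mathbf{K}_\mathcal{P}(1)^{-1}\to \tilde{\mathbf{K}}_1^{-1}$, and Wong--Zakai together with the continuity of $X$ yields $H_\mathcal{P}\to \tilde{H}$ in $L^{\infty-}$. The hypothesis $\nabla R\equiv 0$ forces $Ric_{\tilde{u}_s}\equiv Ric_{\tilde{u}_0}$ along Brownian paths, so $\tilde{T}_s$ and $\tilde{C}=\tilde{T}_1^{*}\tilde{\mathbf{K}}_1^{-1}$ admit deterministic versions; this is what legitimises the subsequent interchange of $dt$-integration and $\delta\beta_r$-integration. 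Combining these substitutions produces the candidate limit
$$\lim_{|\mathcal{P}|\to 0}V_\mathcal{P}=\sum_\alpha\int_0^1 dt\left\langle (\tilde{T}_1\tilde{T}_t^{-1})^{*}\tilde{\mathbf{K}}_1^{-1}\!\int_t^1\! R_{\tilde{u}_r}(\delta\beta_r,\,\tilde{T}_r\tilde{T}_t^{-1}e_\alpha)\,\tilde{H},\,e_\alpha\right\rangle,$$
the inner integral being Stratonovich.

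To match the target I will use the representation $Z_\alpha(r)=\tilde{T}_r\int_0^r \tilde{T}_\tau^{-1}(\tilde{T}_\tau^{-1})^{*}e_\alpha\,d\tau$, which is immediate from the defining ODE $Z_\alpha'=-\tfrac12 Ric_{\tilde{u}_s}Z_\alpha+(\tilde{T}_s^{-1})^{*}e_\alpha$ with $Z_\alpha(0)=0$. A stochastic Fubini theorem then exchanges the Lebesgue $dt$-integral with the Stratonovich $\delta\beta_r$-integral (valid because $\tilde{T}$, and hence $\tilde{C}$, are deterministic under $\nabla R\equiv 0$, so no anticipating difficulties arise). The resulting algebra, together with $\tilde{C}=\tilde{T}_1^{*}\tilde{\mathbf{K}}_1^{-1}$ and the skew-symmetry $R(a,b)=-R(b,a)$, rearranges the candidate limit into $\sum_\alpha\langle \tilde{C}A_1\langle Z_\alpha\rangle \tilde{H}, e_\alpha\rangle$.

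The main obstacle is the Wong--Zakai step: proving that the polygonal integrand $\int_0^1 R_{u_\mathcal{P}(r)}(\beta_\mathcal{P}'(r+), f_{\mathcal{P},j}(r)e_\alpha)\,dr$ converges to its Stratonovich limit in $L^q$ with rates that are uniform in $j$. This will be handled block by block on each $[s_{k-1},s_k]$, exactly as in the proof of Lemma \ref{lem.7.12}: an Ito-type expansion separates a martingale increment (controlled by Burkholder--Davis--Gundy combined with the uniform $f_{\mathcal{P},j}$-moments from Lemma \ref{lem.7.8}), a drift piece carrying the Stratonovich correction, and higher-order remainders of order $|\mathcal{P}|^{\gamma}$ for $\gamma\in(0,\tfrac12)$. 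Once this quantitative convergence is established uniformly in $j$, the Riemann-in-$j$ summation and the stochastic Fubini identification become routine, and the $L^q$ convergence stated in \eqref{e} follows.
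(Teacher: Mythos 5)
Your proposal follows essentially the same route as the paper: rewrite $V_\mathcal{P}$ via $h_{\alpha,j}=\sqrt{\Delta_j}\,f_{\mathcal{P},j}e_\alpha$; use $\nabla R\equiv 0$ to freeze $R_{u_\mathcal{P}(r)}$ and to make $\tilde T$, $\tilde{\mathbf K}_1^{-1}$, $\tilde C$ deterministic; invoke the $L^q$ convergences $f_{\mathcal{P},j}(\cdot)\to\tilde T_\cdot\tilde T_{s_j}^{-1}$ (Theorem \ref{the.7.11}), $\mathbf{K}_\mathcal{P}(1)^{-1}\to\tilde{\mathbf K}_1^{-1}$ (Theorem \ref{the.7.17}), $H_\mathcal{P}\to\tilde H$ (Wong--Zakai); control the stochastic integrals in the limit with Burkholder--Davis--Gundy and the moment bounds of Lemma \ref{lem.7.8}; and close by a Fubini interchange plus the representation $Z_\alpha(r)=\tilde T_r\int_0^r\tilde T_\tau^{-1}(\tilde T_\tau^{-1})^*e_\alpha\,d\tau$. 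The paper carries this out by an explicit chain of adapted step integrands $g_1,\dots,g_5$ with four error martingales $e_{\mathcal P,i}$; you instead summarise this as a block-by-block Wong--Zakai estimate ``exactly as in Lemma \ref{lem.7.12}'', which is a fair abbreviation of the same computation.

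Two small points of discrepancy, neither fatal but worth attention. First, the paper does not phrase the inner integral as a Stratonovich limit at all: because $\nabla R\equiv 0$ kills $e_{0,1}$, the sum $\sum_k R_{u_\mathcal{P}(s_{k-1})}(\Delta_k\beta,\cdot)$ is already an It\^o integral with an adapted piecewise-constant integrand, and no It\^o--Stratonovich correction term ever appears; your ``drift piece carrying the Stratonovich correction'' is vacuous here, and it would be cleaner to say so. Second, the paper closes the identification not by skew-symmetry of $R$ but by a change-of-basis/trace identity on the bilinear form $\mathcal B_1$: it substitutes $(u,v)\mapsto(A(r)e_\alpha,\,(A(r)^{-1})^*e_\alpha)$ with $A(r)=\int_0^r\tilde T_s^{-1}(\tilde T_s^{-1})^*ds$ symmetric, so $\tilde T_r A(r)e_\alpha=Z_\alpha(r)$ appears in the $R$-slot while the outer pairing reduces to $e_\alpha$. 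If you instead push $Z_\alpha$ into place by writing $R(a,b)=-R(b,a)$ you must track the sign that swap introduces and check it against the ordering in Notation \ref{Geo} and the definition of $A_1\langle Z_\alpha\rangle$; the trace argument sidesteps this entirely and is the mechanism the paper actually uses.
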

\begin{proof}Recall that 
\begin{align*}
V_\mathcal{P}&=\sum_{\alpha=1}^{d}\sum_{j=1}^{n}\left\langle f_{\mathcal{P},j}^{*}\left(1\right)\mathbf{K}_{\mathcal{P}}\left(1\right)^{-1}\int_{0}^{1}R_{u_{\mathcal{P}}\left(r\right)}\left(\beta_{\mathcal{P}}^{\prime}\left(r+\right),h_{\alpha,j}\left(r\right)\right)drH_{\mathcal{P}},e_{\alpha}\right\rangle \sqrt{\Delta_{j}}.
\end{align*}
For each $\alpha\in \left\{1,\dots, d\right\}$ and $j\in \left\{1,\dots, n\right\}$, since $h_{\alpha,j}\left(r\right)=\sqrt{\Delta_j}f_{\mathcal{P},j}\left(r\right)e_\alpha$, we have
\begin{align}
\int_{0}^{1}R_{u_{\mathcal{P}}\left(r\right)}\left(\beta_{\mathcal{P}}^{\prime}\left(r+\right),\frac{1}{\sqrt{\Delta_{j}}}h_{\alpha,j}\left(r\right)\right)dr \nonumber& =\int_{0}^{1}R_{u_{\mathcal{P}}\left(r\right)}\left(\beta_{\mathcal{P}}^{\prime}\left(r+\right),f_{\mathcal{P},j}\left(r\right)e_{\alpha}\right)dr\nonumber\\
 & =\int_{0}^{1}R_{u_{\mathcal{P}}\left(\underline{r}\right)}\left(\beta_{\mathcal{P}}^{\prime}\left(r+\right),f_{\mathcal{P},j}\left(\underline{r}\right)e_{\alpha}\right)dr+e_{0}\label{eq12}
\end{align}
where $e_{0}:=e_{0,1}+e_{0,2}$ 
\[
e_{0,1}=\int_{0}^{1}R_{u_{\mathcal{P}}\left(r\right)}\left(\beta_{\mathcal{P}}^{\prime}\left(r+\right),f_{\mathcal{P},j}\left(r\right)e_{\alpha}\right)dr-\int_{0}^{1}R_{u_{\mathcal{P}}\left(\underline{r}\right)}\left(\beta_{\mathcal{P}}^{\prime}\left(r+\right),f_{\mathcal{P},j}\left(r\right)e_{\alpha}\right)dr
\]
and 
\[
e_{0,2}=\int_{0}^{1}R_{u_{\mathcal{P}}\left(\underline{r}\right)}\left(\beta_{\mathcal{P}}^{\prime}\left(r+\right),f_{\mathcal{P},j}\left(r\right)e_{\alpha}\right)dr-\int_{0}^{1}R_{u_{\mathcal{P}}\left(\underline{r}\right)}\left(\beta_{\mathcal{P}}^{\prime}\left(r+\right),f_{\mathcal{P},j}\left(\underline{r}\right)e_{\alpha}\right)dr.
\]
Since $\nabla R\equiv 0$, $dR_{u_s}=\nabla_{db_s}R\equiv 0$. So $R_{u_s}\equiv R_{u_0}$ is independent
of $u$, therefore $e_{0,1}=0$.

As for $e_{0,2}$, since
\[
\left|e_{0,2}\right|^{q}\leq N\underset{r\in\left[0,1\right]}{\sup}\left|\beta_{\mathcal{P}}^{\prime}\left(r+\right)\right|^{q}\underset{r\in\left[0,1\right],j\in\left\{ 1,\cdots,n\right\} }{\sup}\left|f_{\mathcal{P},j}\left(r\right)-f_{\mathcal{P},j}\left(\underline{r}\right)\right|^{q},
\]
using $\left(\ref{equ.7.3}\right)$ we have 
\[
\left|e_{0,2}\right|^{q}\leq CK_{\gamma}^{3q}\left\vert \mathcal{P}\right\vert ^{q\left(3\gamma-1\right)}e^{2qN\sum_{k=1}^{n}\left\vert \Delta_{k}\beta\right\vert ^{2}},
\]
and from which it follows
\begin{equation}
\mathbb{E}\left[\left|e_{0,2}\right|^{q}\right]\leq C\left\vert \mathcal{P}\right\vert ^{q\left(3\gamma-1\right)}\text{   }\forall n\geq 5qN.\label{eq:-6}
\end{equation}
Picking $\gamma>\frac{1}{3}$ such that $q\left(3\gamma-1\right)>0$ for any $q\geq1$, so $\mathbb{E}\left[\left|e_{0,2}\right|^{q}\right]\to 0$ as $\left\vert \mathcal{P}\right\vert \to 0$.

Next we analyze 
\begin{align*}
\int_{0}^{1}R_{u_{\mathcal{P}}\left(\underline{r}\right)}\left(\beta_{\mathcal{P}}^{\prime}\left(r+\right),f_{\mathcal{P},j}\left(\underline{r}\right)e_{\alpha}\right)dr & =\sum_{k=1}^{n}R_{u_{\mathcal{P}}\left(s_{k-1}\right)}\left(\Delta_{k}\beta,f_{\mathcal{P},j}\left(s_{k-1}\right)e_{\alpha}\right) =\int_{0}^{1}g_{1}\left(s\right)d\beta_{s},
\end{align*}
where 
\[
g_{1}\left(s\right)=\sum_{k=1}^{n}R_{u_{\mathcal{P}}\left(s_{k-1}\right)}\left(\cdot,f_{\mathcal{P},j}\left(s_{k-1}\right)e_{\alpha}\right)1_{[s_{k-1},s_{k})}\left(s\right).
\]
Define 
\begin{align*}
g_{2}\left(s\right) & =\sum_{k=1}^{n}R_{\tilde{u}_{s_{k-1}}}\left(\cdot,f_{\mathcal{P},j}\left(s_{k-1}\right)e_{\alpha}\right)1_{[s_{k-1},s_{k})}\left(s\right)\\
g_{3}\left(s\right) & =\sum_{k=j+1}^{n}R_{\tilde{u}_{s_{k-1}}}\left(\cdot,\tilde{T}_{s_{k-1}}\tilde{T}_{s_{j}}^{-1}e_{\alpha}\right)1_{[s_{k-1},s_{k})}\left(s\right)\\
g_{4}\left(s\right) & =R_{\tilde{u}_{\underline{s}}}\left(\cdot,\tilde{T}_s\tilde{T}_{s_{j}}^{-1}e_{\alpha}\right)1_{\left[s_{j},1\right]}\left(s\right)\\
g_{5}\left(s\right) & =R_{\tilde{u}_s}\left(\cdot,\tilde{T}_s\tilde{T}_{s_{j}}^{-1}e_{\alpha}\right)1_{\left[s_{j},1\right]}\left(s\right).
\end{align*}
For each $i=1,2,3,4$, 
denote $e_{\mathcal{P},i}\left(r\right)=\int_{0}^{r}g_{i}\left(s\right)d\beta_{s}-\int_{0}^{r}g_{i+1}\left(s\right)d\beta_{s}$, then\begin{align}
\int_{0}^{1}R_{u_{\mathcal{P}}\left(\underline{r}\right)}&\left(\beta_{\mathcal{P}}^{\prime}\left(r+\right),f_{\mathcal{P},j}\left(\underline{r}\right)e_{\alpha}\right)dr-\int_{s_{j}}^{1}R_{\tilde{u}_r}\left(d\beta_{r},\tilde{T}_r\tilde{T}_{s_{j}}^{-1}e_{\alpha}\right)=\sum_{i=1}^{4}e_{\mathcal{P},i}\left(1\right).\label{eqe}
\end{align}
Notice that $\left\{g_i\right\}_{i=1}^5$ are all adapted, so based on the same computation as in Lemma \ref{lem.7.12} (mainly Burkholder-Davis-Gundy inequality), we can show for each $i\in \left\{1,2,3,4\right\}$ and for any $q\geq 1$,
\begin{equation}
\lim_{\left|\mathcal{P}\right|\to 0}\mathbb{E}_\nu\left[\left\vert e_{\mathcal{P},i}\left(1\right)\right\vert^q\right]=0\label{error}
\end{equation}
Using Eq.(\ref{eq12}), (\ref{eq:-6}), (\ref{eqe}) and (\ref{error}) we have for any $q\geq 1$,
\begin{equation}
\lim_{\left|\mathcal{P}\right|\to 0}\mathbb{E}_\nu\left[\left\vert V_\mathcal{P}-\tilde{V}_\mathcal{P}\right\vert^q\right]=0\label{e1}
\end{equation}
where
\[\tilde{V}_\mathcal{P}=\sum_{\alpha=1}^{d}\sum_{j=1}^{n}\left\langle \left(\tilde{T}_{s_{j}}^{-1}\right)^{\ast}\tilde{T}_1^{\ast}\tilde{\mathbf{K}}_1^{-1}\int_{s_{j}}^{1}R_{\tilde{u}_r}\left(d\beta_{r},\tilde{T}_r\tilde{T}_{s_{j}}^{-1}e_{\alpha}\right)\tilde{H},e_{\alpha}\right\rangle \Delta_{j}.\]
For each $j\in \left\{1,\dots,n\right\}$, denote by $\mathcal{B}_j$ the bilinear form on $\mathbb{R}^d$:
\[\mathcal{B}_j(u,v)=\left\langle \left(\tilde{T}_{s_{j}}^{-1}\right)^{\ast}\tilde{T}_1^{\ast}\tilde{\mathbf{K}}_1^{-1}\int_{s_{j}}^{1}R_{\tilde{u}_r}\left(d\beta_{r},\tilde{T}_r\tilde{T}_{s_{j}}^{-1}u\right)\tilde{H},v\right\rangle \Delta_{j}\]
Then 
\begin{align}
\tilde{V}_\mathcal{P}=\sum_{j=1}^{n}\sum_{\alpha=1}^{d}\mathcal{B}(e_\alpha,e_\alpha)&=\sum_{j=1}^{n}\sum_{\alpha=1}^{d}\mathcal{B}(\tilde{T}_{s_{j}}e_\alpha,\left(\tilde{T}_{s_{j}}^{-1}\right)^{\ast}e_\alpha)\nonumber\\&=\sum_{\alpha=1}^{d}\int_{0}^{1}\left\langle \tilde{T}_1^{\ast}K_1^{-1}\int_{\underline{s}}^{1}R_{\tilde{u}_r}\left(d\beta_{r},\tilde{T}_re_{\alpha}\right)\tilde{H},\tilde{T}_{\underline{s}}^{-1}\left(\tilde{T}_{\underline{s}}^{-1}\right)^{\ast}e_{\alpha}\right\rangle ds.\label{hi}
\end{align}
Define 
\begin{equation*}
\hat{V}_\mathcal{P}:=\sum_{\alpha=1}^{d}\int_{0}^{1}\left\langle \tilde{T}_1^{\ast}K_1^{-1}\int_{s}^{1}R_{\tilde{u}_r}\left(d\beta_{r},\tilde{T}_re_{\alpha}\right)\tilde{H},\tilde{T}_{s}^{-1}\left(\tilde{T}_{s}^{-1}\right)^{\ast}e_{\alpha}\right\rangle ds
\end{equation*}
Then we are about to show for any $q\geq 1$, 
\begin{equation}
\underset{\left\vert\mathcal{P}\to 0\right\vert}{\lim}\mathbb{E}\left[\left\vert\tilde{V}_\mathcal{P}-\hat{V}_\mathcal{P}\right\vert^q\right]=0.\label{e2}
\end{equation}
Using Eq.(\ref{hi}) we know
\begin{equation*}
\left\vert\tilde{V}_\mathcal{P}-\hat{V}_\mathcal{P}\right\vert\leq \sum_{\alpha=1}^{d}\int_{0}^{1}\left(I_\mathcal{P}\left(s\right)+II_\mathcal{P}\left(s\right)\right)ds,
\end{equation*}
where 
\[
I_\mathcal{P}\left(s\right)=\left\langle \tilde{T}_1^{\ast}K_1^{-1}\int_{\underline{s}}^{s}R_{\tilde{u}_r}\left(d\beta_{r},\tilde{T}_re_{\alpha}\right)\tilde{H},\tilde{T}_s^{-1}\left(\tilde{T}_s^{-1}\right)^{\ast}e_{\alpha}\right\rangle 
\]
and 
\begin{align}
& II_\mathcal{P}\left(s\right)= \left\langle \tilde{T}_1^{\ast}K_1^{-1}\int_{s}^{1}R_{\tilde{u}_r}\left(d\beta_{r},\tilde{T}_re_{\alpha}\right)\tilde{H},\left({\tilde{T}_{\underline{s}}}^{-1}\left({\tilde{T}_{\underline{s}}}^{-1}\right)^{\ast}-\tilde{T}_s^{-1}\left(\tilde{T}_s^{-1}\right)^{\ast}\right)e_{\alpha}\right\rangle. 
\end{align}
Since
\[
\left\vert I_\mathcal{P}\left(s\right)\right\vert ^{q}\leq C\left\vert \int_{\underline{s}}^{s}R_{\tilde{u}_r}\left(d\beta_{r},\tilde{T}_re_{\alpha}\right)\right\vert ^{q},
\]
\[\left\vert II_\mathcal{P}\left(s\right)\right\vert ^{q}\leq C\left\vert \mathcal{P}\right\vert ^{q}\left\vert \int_{\underline{s}}^{s}R_{\tilde{u}_r}\left(d\beta_{r},\tilde{T}_re_{\alpha}\right)\right\vert ^{q}\]
and by Burkholder-Davies-Gundy inequality, $\mathbb{E}\left[\left\vert \int_{\underline{s}}^{s}R_{\tilde{u}_r}\left(d\beta_{r},\tilde{T}_re_{\alpha}\right)\right\vert ^{q}\right]\leq C\left\vert \mathcal{P}\right\vert ^{\frac{q}{2}}$, we obtain
\begin{align*}
\mathbb{E}\left[\left\vert \tilde{V}_\mathcal{P}-\hat{V}_\mathcal{P}\right\vert ^{q}\right]& \leq C\sum_{\alpha=1}^{d}\sum_{j=1}^{n}\int_{s_{j-1}}^{s_{j}}\mathbb{E}\left[\left\vert I_\mathcal{P}\left(s\right)\right\vert ^{q}+\left\vert II_\mathcal{P}\left(s\right)\right\vert ^{q}\right]\\
 & \leq C\sum_{\alpha=1}^{d}\sum_{j=1}^{n}\int_{s_{j-1}}^{s_{j}}\left(C+\left\vert \mathcal{P}\right\vert ^{q}\right)\mathbb{E}\left[\left\vert \int_{\underline{s}}^{s}R_{\tilde{u}_r}\left(d\beta_{r},\tilde{T}_re_{\alpha}\right)\right\vert ^{q}\right]\\
 & =C\left\vert \mathcal{P}\right\vert ^{\frac{q}{2}}
\end{align*}
and from which Eq. (\ref{e2}) follows.

Then we show a change of integration order, i.e. \begin{equation}
\hat{V}_\mathcal{P}=\sum_{\alpha=1}^{d}\mathcal{B}_1(e_\alpha,e_\alpha)\label{equ.8.16}
\end{equation}, where $\mathcal{B}_1(u,v):=\left\langle \tilde{T}_1^{\ast}K_1^{-1}\int_{0}^{1}R_{\tilde{u}_r}\left(d\beta_{r},\tilde{T}_ru\right)\tilde{H},\int_{0}^{r}\tilde{T}_s^{-1}\left(\tilde{T}_s^{-1}\right)^{\ast}vds\right\rangle.$ Define
\[
f\left(t\right)=\sum_{\alpha=1}^{d}\int_{0}^{t}\left\langle \tilde{T}_1^{\ast}K_1^{-1}\int_{s}^{t}R_{\tilde{u}_r}\left(d\beta_{r},\tilde{T}_re_{\alpha}\right)\tilde{H},\tilde{T}_s^{-1}\left(\tilde{T}_s^{-1}\right)^{\ast}e_{\alpha}\right\rangle ds
\]
and
\[
g\left(t\right)=\sum_{\alpha=1}^{d}\int_{0}^{r}\left\langle \tilde{T}_1^{\ast}K_1^{-1}\int_{0}^{t}R_{\tilde{u}_r}\left(d\beta_{r},\tilde{T}_re_{\alpha}\right)\tilde{H},\int_{0}^{r}\tilde{T}_s^{-1}\left(\tilde{T}_s^{-1}\right)^{\ast}dse_{\alpha}\right\rangle. 
\]
Then 
\[
df=\sum_{\alpha=1}^{d}\left\langle \tilde{T}_1^{\ast}K_1^{-1}R_{\tilde{u}_t}\left(d\beta_{t},\tilde{T}_te_{\alpha}\right)\tilde{H},\int_{0}^{t}\tilde{T}_s^{-1}\left(\tilde{T}_s^{-1}\right)^{\ast}dse_{\alpha}\right\rangle. 
\]
Since 
\[
dg=\sum_{\alpha=1}^{d}\left\langle \tilde{T}_1^{\ast}K_1^{-1}R_{\tilde{u}_t}\left(d\beta_{t},\tilde{T}_te_{\alpha}\right)\tilde{H},\int_{0}^{t}\tilde{T}_s^{-1}\left(\tilde{T}_s^{-1}\right)^{\ast}dse_{\alpha}\right\rangle =df
\]
and $g\left(0\right)=0=f(0)$,  Eq. (\ref{equ.8.16}) is proved by observing that $\hat{V}_\mathcal{P}=f_1=g_1=\sum_{\alpha=1}^{d}\mathcal{B}_1(e_\alpha,e_\alpha)$.

Lastly, notice that 
\begin{align}
\sum_{\alpha=1}^{d}\mathcal{B}_1(e_\alpha,e_\alpha)=Tr\left(\mathcal{B}_1\right)=\sum_{\alpha=1}^{d}\mathcal{B}_1\left(\int_{0}^{r}\tilde{T}_s^{-1}\left(\tilde{T}_s^{-1}\right)^{\ast}dse_{\alpha},\left[\left(\int_{0}^{r}\tilde{T}_s^{-1}\left(\tilde{T}_s^{-1}\right)^{\ast}ds\right)^{-1}\right]^{\ast}e_{\alpha}\right)\nonumber
\end{align}
and $\tilde{T}_r\int_{0}^{r}\tilde{T}_s^{-1}\left(\tilde{T}_s^{-1}\right)^{\ast}dse_{\alpha}=Z_{\alpha}\left(r\right)$,
we combine Eq. (\ref{e1}), (\ref{e2}) and (\ref{equ.8.16}) to prove Eq.(\ref{e}). 
\end{proof}

\begin{lemma} \label{lem.8.10}If $\nabla R\equiv 0$, then for any $q\geq 1$, 
	\[\underset{\left\vert\mathcal{P}\right\vert\to 0}{\lim}\mathbb{E}\left[\left\vert\sum_{\alpha=1}^{d}\sum_{j=1}^{n} \left< I_\mathcal{P}\left(\alpha,j\right),e_\alpha\right>\sqrt{\Delta_j}\right\vert^q\right]=0.\]
\end{lemma}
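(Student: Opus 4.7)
The plan is to show $\|\hat X^{j,\alpha}f^*_{\mathcal{P},j}(1)\|_{L^q}\lesssim n^{-1/2}$ uniformly in $(\alpha,j)$, from which the conclusion follows by the triangle inequality after rescaling. Since $h_{\alpha,j}=\frac{1}{\sqrt{n}}f_{\mathcal{P},j}e_\alpha$, setting $\hat X^{j,\alpha}:=X^{f_{\mathcal{P},j}e_\alpha}=\sqrt{n}\,X^{h_{\alpha,j}}$ (the Jacobi vector field with $h'(s_{j-1}+)=e_\alpha$ and $h'(s_{k-1}+)=0$ for $k\neq j$) gives
\[
\sum_{\alpha,j}\langle I_{\mathcal{P}}(\alpha,j),e_\alpha\rangle\sqrt{\Delta_j}=\frac{1}{n}\sum_{j=1}^{n}\sum_{\alpha=1}^{d}\langle(\hat X^{j,\alpha}f^*_{\mathcal{P},j}(1))\mathbf{K}_{\mathcal{P}}(1)^{-1}H_{\mathcal{P}},e_\alpha\rangle,
\]
and since $\|\mathbf{K}_{\mathcal{P}}(1)^{-1}H_{\mathcal{P}}\|_{L^{\infty-}}\lesssim 1$ (Lemma \ref{lem.6.10} and compact support of $X$), it suffices, by H\"older, to bound each $\|\hat X^{j,\alpha}f^*_{\mathcal{P},j}(1)\|_{L^{2q}}$ by $n^{-1/2}$.

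The hypothesis $\nabla R\equiv 0$ forces $R_{u_s}\equiv R$ (path-independent), so every $C_{\mathcal{P},k}$ and $S_{\mathcal{P},k}$ depends on $\sigma$ only through $b'(s_{k-1}+)$, and the chain rule gives $\hat X^{j,\alpha}C_{\mathcal{P},k}=(\partial_{b'}C_{\mathcal{P},k})[\hat X^{j,\alpha}b'(s_{k-1}+)]$ and likewise for $S_{\mathcal{P},j}$. Writing $f_{\mathcal{P},j}(1)=\Delta_j^{-1}C_{\mathcal{P},n}\cdots C_{\mathcal{P},j+1}S_{\mathcal{P},j}$ and applying the Leibniz rule,
\[
\hat X^{j,\alpha}f_{\mathcal{P},j}(1)=\underbrace{\tfrac{1}{\Delta_j}C_{\mathcal{P},n}\cdots C_{\mathcal{P},j+1}\,\hat X^{j,\alpha}S_{\mathcal{P},j}}_{\text{local}}+\sum_{k=j+1}^{n}\underbrace{\tfrac{1}{\Delta_j}C_{\mathcal{P},n}\cdots(\hat X^{j,\alpha}C_{\mathcal{P},k})\cdots C_{\mathcal{P},j+1}S_{\mathcal{P},j}}_{\text{non-local}_k}.
\]
Proposition \ref{prop A-2} yields the Taylor expansions $S_{\mathcal{P},j}=\Delta_jI+\tfrac{\Delta_j^3}{6}A_j+O(\Delta_j^5\|A_j\|^2)$ and $C_{\mathcal{P},k}=I+\tfrac{\Delta_k^2}{2}A_k+O(\Delta_k^4\|A_k\|^2)$ with $A_k=R(b'(s_{k-1}+),\cdot)b'(s_{k-1}+)$, so combined with the bilinearity formula $\hat X^{j,\alpha}A_k=R(\hat X^{j,\alpha}b'(s_{k-1}+),\cdot)b'(s_{k-1}+)+R(b'(s_{k-1}+),\cdot)\hat X^{j,\alpha}b'(s_{k-1}+)$ and $\|b'(s_{k-1}+)\|_{L^p}\lesssim \Delta_k^{-1/2}$ (from Gaussian moments of $\Delta_k\beta$), one obtains $\|\hat X^{j,\alpha}S_{\mathcal{P},j}\|_{L^q}\lesssim \Delta_j^{5/2}$ using the direct identity $\hat X^{j,\alpha}b'(s_{j-1}+)=e_\alpha$ (an immediate consequence of $h_{\alpha,j}(s_{j-1})=0$ and the definition of $\hat X^{j,\alpha}$), which makes the local contribution $\lesssim \Delta_j^{3/2}=n^{-3/2}$. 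Granted the bound $\|\hat X^{j,\alpha}b'(s_{k-1}+)\|_{L^p}\lesssim 1$ for $k>j$ discussed below, each non-local summand is of $L^q$-size $\Delta_j^{-1}\cdot \Delta_k^{3/2}\cdot\Delta_j=\Delta_k^{3/2}=n^{-3/2}$, and summing over $k=j+1,\ldots,n$ gives at most $n\cdot n^{-3/2}=n^{-1/2}$; altogether $\|\hat X^{j,\alpha}f^*_{\mathcal{P},j}(1)\|_{L^{2q}}\lesssim n^{-1/2}$, so the displayed sum is $O(d\,n^{-1/2})$ in $L^q$.

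The main obstacle is establishing the uniform bound $\|\hat X^{j,\alpha}b'(s_{k-1}+)\|_{L^p}\lesssim 1$ for $k>j$, which is the quantitative version of the flat-case identity $\hat X^{j,\alpha}b'(s_{k-1}+)=0$ (valid there because $h_{\alpha,j}\equiv e_\alpha$ is constant on $[s_{j-1},1]$). The derivation proceeds by differentiating $b_t'(s_{k-1}+)=u_t(s_{k-1})^{-1}\sigma_t'(s_{k-1}+)$ at $t=0$: Lemma \ref{lem.8.8} gives the horizontal/vertical decomposition of $\dot u(s_{k-1})$ with vertical part $A_{s_{k-1}}=\int_0^{s_{k-1}}R(b'(r+),f_{\mathcal{P},j}(r)e_\alpha)dr$, while the Jacobi-field representation of $\dot\sigma$ on the segment $[s_{k-1},s_k]$ expresses $\dot\sigma_t'(s_{k-1}+)$ in terms of the boundary values $\dot\sigma(s_{k-1})=u(s_{k-1})f_{\mathcal{P},j}(s_{k-1})e_\alpha$ and $\dot\sigma(s_k)$. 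Equivalently, one identifies $\phi_*^{-1}\hat X^{j,\alpha}$ on $H_{\mathcal{P}}(\mathbb{R}^d)$ as a piecewise linear vector field $\hat h_{\alpha,j}$ whose increment $\hat h_{\alpha,j}(s_k)-\hat h_{\alpha,j}(s_{k-1})$ equals $(C_{\mathcal{P},k}-I)f_{\mathcal{P},j}(s_{k-1})e_\alpha$ plus a curvature correction of the same order $O(\Delta_k)$, so that $\hat X^{j,\alpha}b'(s_{k-1}+)=\Delta_k^{-1}(\hat h_{\alpha,j}(s_k)-\hat h_{\alpha,j}(s_{k-1}))$ is $O(1)$ in every $L^p$ by Lemmas \ref{lem1} and \ref{lem.7.8}.
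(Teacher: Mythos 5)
There is a genuine gap, and it sits in the non-local estimate. You claim $\|\hat X^{j,\alpha}b'(s_{k-1}+)\|_{L^p}\lesssim 1$ for $k>j$. The paper's own formula (Eq.\ (\ref{equ.8.18}), or Lemma \ref{lem.8.8}) shows that for $k>j$,
\[
\hat X^{j,\alpha}\beta_\mathcal{P}'(s_{k-1}+)=-\Bigl(\int_0^{s_{k-1}}R_{u_\mathcal{P}(\tau)}\bigl(\beta_\mathcal{P}'(\tau+),f_{\mathcal{P},j}(\tau)e_\alpha\bigr)d\tau\Bigr)\beta_\mathcal{P}'(s_{k-1}+),
\]
where the matrix in parentheses is $\mathcal{F}_{s_{k-1}}$-measurable with $L^p$-norm $O(1)$ (a BDG/martingale estimate along the lines you sketch), but $\beta_\mathcal{P}'(s_{k-1}+)=\Delta_k\beta/\Delta_k$ is an independent Gaussian increment with $\|\beta_\mathcal{P}'(s_{k-1}+)\|_{L^p}\asymp\sqrt{n}$. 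By independence the product has $L^p$-norm of order $\sqrt{n}$, not $O(1)$. Feeding the correct $\sqrt{n}$ into your chain of estimates gives $\|\hat X^{j,\alpha}A_k\|_{L^p}\lesssim n$, then $\|\hat X^{j,\alpha}C_{\mathcal{P},k}\|_{L^p}\lesssim\Delta_k^2\cdot n=n^{-1}$, so each non-local summand is $O(n^{-1})$ in $L^p$, and summing over $k=j+1,\dots,n$ gives $O(1)$ rather than $O(n^{-1/2})$. The term-by-term triangle-inequality estimate is therefore too crude to close the argument: the needed smallness must come from the extra factor that the paper introduces, namely the increment $f_{\mathcal{P},j}(r)-f_{\mathcal{P},j}(s_{k-1})$ which is $O(|\mathcal{P}|^{2\gamma})$ by Lemma \ref{lem.7.10}. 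The paper exploits this by writing $g_j(s)=X^{h_{\alpha,j}}(f_{\mathcal{P},j}(s)-f_{\mathcal{P},j}(\underline s))$ and applying DuHamel's principle on each subinterval (using that $A_{\mathcal{P},k}$ is constant there when $\nabla R\equiv0$), so the forced term carries the small factor $f_{\mathcal{P},j}(r)-f_{\mathcal{P},j}(s_{k-1})$ explicitly; that is precisely what your Leibniz expansion loses when you treat each factor $\hat X^{j,\alpha}C_{\mathcal{P},k}$ in isolation.

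A smaller but real error: the ``direct identity $\hat X^{j,\alpha}b'(s_{j-1}+)=e_\alpha$'' is off by a factor of $n$. From Notation \ref{not3.1.8}, $h_{\alpha,j}'(s_{j-1}+)=e_\alpha/\sqrt{\Delta_j}$, so $\hat X^{j,\alpha}=\sqrt{n}X^{h_{\alpha,j}}$ has $\hat h'(s_{j-1}+)=n\,e_\alpha$, and Eq.\ (\ref{equ.8.18}) with $k=j$ gives $\hat X^{j,\alpha}\beta_\mathcal{P}'(s_{j-1}+)=ne_\alpha$. With that correction your local contribution is $O(n^{-1/2})$ rather than $O(n^{-3/2})$, which is still acceptable, but it shows the parenthetical characterization of $\hat X^{j,\alpha}$ as the Jacobi field with $h'(s_{j-1}+)=e_\alpha$ is not consistent with the normalization you actually use.
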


\begin{proof} Define $\tilde{g}_{j}\left(s\right):=X^{h_{\alpha,j}}f_{\mathcal{P},j}\left(s\right)$
and $g_{j}\left(s\right):=\tilde{g}_{j}\left(s\right)-\tilde{g}_{j}\left(\underline{s}\right)$.
Then we know that $g_{j}\left(s\right)$ satisfies the following ODE: $\text{ for }k=j,\cdots,n$
\[
\begin{cases}
g_{j}^{\prime\prime}\left(s\right)=A_{\mathcal{P},k}\left(s\right)g_{j}\left(s\right)+\dot{A}_{\mathcal{P},k}\left(s\right)\left(f_{\mathcal{P},j}\left(s\right)-f_{\mathcal{P},j}\left(\underline{s}\right)\right) & s\in\left[s_{k-1},s_{k}\right]\\
g_{j}\left(\underline{s}\right)=0\\
g_{j}^{\prime}\left(\underline{s}\right)=0 
\end{cases}
\]
where $\dot{A}_{\mathcal{P},k}\left(s\right)=\frac{d}{dt}|_{0}\left(R_{u_{\mathcal{P}}\left(t,s\right)}\left(\beta_{\mathcal{P}}^{\prime}\left(t,s\right),\cdot\right)\beta_{\mathcal{P}}^{\prime}\left(t,s\right)\right)$, $\beta\left(t,\cdot\right):=\tilde{\phi}^{-1}\left(E\left(tX^{h_{\alpha,j}}\right)\right)$ is the stochastic anti-rolling of the approximate flow $E\left(tX^{h_{\alpha,j}}\right)$ $(\text{See Corollary 4.6 in \cite{Driver1999}})$,  $\beta_{\mathcal{P}}\left(t,\cdot\right)$ is the piecewise linear approximation of $\beta\left(t,\cdot\right)$ and $u_\mathcal{P}\left(t,\cdot\right)=\eta\circ \beta_\mathcal{P}\left(t,\cdot\right)$ is the horizontal lift of $\beta_\mathcal{P}\left(t,\cdot\right)$.

Since $\nabla R\equiv 0$, $A_{\mathcal{P},k}(s)$ is a constant operator for $s\in\left[s_{k-1},s_{k}\right]$, therefore by DuHammel's principle, 
\[
g_{j}\left(s\right)=\int_{s_{k-1}}^{s}S_{\mathcal{P},k}\left(s-r\right)\dot{A}_{\mathcal{P},k}\left(f_{\mathcal{P},j}\left(r\right)-f_{\mathcal{P},j}\left(s_{k-1}\right)\right)dr.
\]
Using Eq. \ref{lem.6.3} and \ref{equ.7.3} we obtain the following estimate,
\begin{align}
\left\vert g_j\left(s\right)\right\vert&\leq \int_{s_{k-1}}^{s}\left\vert S_{\mathcal{P},k}\left(s-r\right)\right\vert \left\vert \dot{A}_{\mathcal{P},k}\right\vert \left\vert f_{\mathcal{P},j}\left(r\right)-f_{\mathcal{P},j}\left(s_{k-1}\right)\right\vert dr\nonumber\\&\leq C\underset{1\leq k\leq n}{\sup}\left\vert \dot{A}_{\mathcal{P},k}\right\vert \left\vert \mathcal{P}\right\vert^{2\gamma+2}K_{\gamma}^{2q}e^{3N\sum_{k=1}^{n}\left\vert\Delta_k\beta\right\vert^2}.\label{eq3} 
\end{align}
Therefore
\begin{align}
\left\vert \tilde{g}_{j}\left(1\right)\right\vert  \leq\sum_{k=j}^{n}\left\vert g_{j}\left(s_{k}\right)\right\vert\leq C\underset{1\leq k\leq n}{\sup}\left\vert \dot{A}_{\mathcal{P},k}\right\vert \left\vert \mathcal{P}\right\vert^{2\gamma}K_{\gamma}^{2q}e^{3N\sum_{k=1}^{n}\left\vert\Delta_k\beta\right\vert^2}.\label{equ.8.17}
\end{align}
Then we analyze $\underset{k\in\left\{ 1,\dots,n\right\}}{\sup}\left\vert \dot{A}_{\mathcal{P},k}\right\vert$. Since $\nabla R\equiv 0$,
\begin{align*}
\dot{A}_{\mathcal{P},k}& =2R_{u_{\mathcal{P}}\left(s\right)}\left(\frac{d}{dt}|_{0}\beta_{\mathcal{P}}^{\prime}\left(t,s\right),\cdot\right)\beta_{\mathcal{P}}^{\prime}\left(s\right).
\end{align*}
Notice that $\beta_{\mathcal{P}}^{\prime}\left(t,s\right)=u_{\mathcal{P}}^{-1}\left(s\right)\sigma_{\mathcal{P}}^{\prime}\left(t,s\right)$, where $\sigma_{\mathcal{P}}(t,\cdot)=\phi\circ \beta_{\mathcal{P}}(t,\cdot)$ is the rolling of $\beta_{\mathcal{P}}(t,\cdot)$, so we can use Lemma \ref{lem.8.8} to get
\begin{equation}
X^{h_{\alpha,j}}\beta_{\mathcal{P}}^{\prime}\left(s_{k-1}+\right)=\frac{\delta_{k}^{j}e_{\alpha}}{\sqrt{\Delta_{j}}}-\int_{0}^{s_{k-1}}R_{u_{\mathcal{P}}\left(\tau\right)}\left(\beta_{\mathcal{P}}^{\prime}\left(\tau+\right),h_{\alpha,j}\left(\tau\right)\right)d\tau \beta_{\mathcal{P}}^{\prime}\left(s_{k-1}+\right).\label{equ.8.18}
\end{equation}
Therefore
\begin{align*}
\left\vert \dot{A_{\mathcal{P}}^{k}}\left(r\right)\right\vert  & \leq N\left\vert X^{h_{\alpha,j}}\beta_{\mathcal{P}}^{\prime}\left(s_{k-1}+\right)\right\vert \left\vert \beta_{\mathcal{P}}^{\prime}\left(s_{k-1}\right)\right\vert \\
 & \leq N\left(\frac{1}{\sqrt{\left\vert \mathcal{P}\right\vert }}+N\underset{j,s}{\sup}\left\vert h_{\alpha,j}\left(s\right)\right\vert \underset{s\in\left[0,1\right]}{\sup}\left\vert \beta_{\mathcal{P}}^{\prime}\left(s\right)\right\vert ^{2}\right)\left\vert \beta_{\mathcal{P}}^{\prime}\left(s_{k-1}\right)\right\vert \\
 & \leq N\left(\frac{1}{\sqrt{\left\vert \mathcal{P}\right\vert }}+Nf\left(K_{\gamma}\right)\sqrt{\left\vert \mathcal{P}\right\vert }\left\vert \mathcal{P}\right\vert ^{2\left(\gamma-1\right)}\right)K_{\gamma}\left\vert \mathcal{P}\right\vert ^{\gamma-1}\\
 & \leq f\left(K_{\gamma}\right)\left\vert \mathcal{P}\right\vert ^{3\gamma-\frac{5}{2}}
\end{align*}
where $f\left(K_{\gamma}\right)$ is some random variable in $L^{1}\left(W_o\left(M\right)\right)$,
so
\begin{equation}
\left\vert \tilde{g}_{j}\left(1\right)\right\vert \leq Cf\left(K_{\gamma}\right)\left\vert \mathcal{P}\right\vert ^{5\gamma-\frac{3}{2}}.\label{equ.8.19}
\end{equation}
From above one can see 
\begin{align*}
\sum_{\alpha,j=1,1}^{d,n}\left\langle I,e_{\alpha}\right\rangle \sqrt{\Delta_{j}} & =\sum_{\alpha,j=1,1}^{d,n}\left\langle \left(X^{h_{\alpha,j}}T_{j}^{\ast}\right)\mathbf{K}_{\mathcal{P}}^{-1}\left(1\right)H_{\mathcal{P}},e_{\alpha}\right\rangle \sqrt{\Delta_{j}}\\
 & =\sum_{\alpha=1}^{d}\left\langle \sum_{j=1}^{n}\left(\tilde{g}_{j}^{\ast}\left(1\right)\sqrt{\left\vert \mathcal{P}\right\vert }\right)\mathbf{K}_{\mathcal{P}}\left(1\right)^{-1}H_{\mathcal{P}},e_{\alpha}\right\rangle. 
\end{align*}
From $\left(\ref{equ.8.19}\right)$ we know that $\sum_{j=1}^{n}\left(\tilde{g}_{j}^{\ast}\left(1\right)\sqrt{\left\vert \mathcal{P}\right\vert }\right)\rightarrow0$
$\text{ in }L^{\infty-}\left(W\right)$, also notice that for any $q\geq 1$,
\[\lim_{\left|\mathcal{P}\right|\to0}\mathbb{E}_\nu\left[\left\vert \mathbf{K}_{\mathcal{P}}\left(1\right)^{-1}H_{\mathcal{P}}-\mathbf{K}\left(1\right)^{-1}\tilde{H}\right\vert^q\right]=0.\] 
So
\[\lim_{\left|\mathcal{P}\right|\to0}\mathbb{E}_\nu\left[\left\vert \sum_{\alpha=1}^{d}\left\langle \sum_{j=1}^{n}\left(\tilde{g}_{j}^{\ast}\left(1\right)\sqrt{\left\vert \mathcal{P}\right\vert }\right)\mathbf{K}_{\mathcal{P}}\left(1\right)^{-1}H_{\mathcal{P}},e_{\alpha}\right\rangle\right\vert^q\right]=0\text{ }\forall q\geq 1.\] 
\end{proof}

\begin{lemma} \label{lem.8.11}If $\nabla R\equiv 0$,
then for any $q\geq 1$,
	\[\underset{\left\vert\mathcal{P}\right\vert\to 0}{\lim}\mathbb{E}\left[\left\vert\sum_{\alpha=1}^{d}\sum_{j=1}^{n} \left< II_\mathcal{P}\left(\alpha,j\right),e_\alpha\right>\sqrt{\Delta_j}\to 0\text{ as }\left\vert \mathcal{P}\right\vert\right\vert^q\right]=0.\]
\end{lemma}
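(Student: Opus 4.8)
The plan is to start from the identity
\[
II_{\mathcal{P}}\left(\alpha,j\right)=f_{\mathcal{P},j}^{*}\left(1\right)\left(X^{h_{\alpha,j}}\mathbf{K}_{\mathcal{P}}\left(1\right)^{-1}\right)H_{\mathcal{P}}
\]
recorded in the proof of Proposition \ref{pro.8.6}, apply the product rule $X^{h_{\alpha,j}}\mathbf{K}_{\mathcal{P}}\left(1\right)^{-1}=-\mathbf{K}_{\mathcal{P}}\left(1\right)^{-1}\left(X^{h_{\alpha,j}}\mathbf{K}_{\mathcal{P}}\left(1\right)\right)\mathbf{K}_{\mathcal{P}}\left(1\right)^{-1}$, and use $\left(\ref{eq:-28}\right)$, i.e. $\mathbf{K}_{\mathcal{P}}\left(1\right)=\frac{1}{n}\sum_{i=1}^{n}f_{\mathcal{P},i}\left(1\right)f_{\mathcal{P},i}^{*}\left(1\right)$. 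Since $\left\Vert\mathbf{K}_{\mathcal{P}}\left(1\right)^{-1}\right\Vert\leq 1$ by Lemma \ref{lem.6.10}, $\left\vert H_{\mathcal{P}}\right\vert$ is bounded ($X$ has compact support), and $\sup_{i}\left\vert f_{\mathcal{P},i}\left(1\right)\right\vert$ has all moments bounded by $\left(\ref{equ.7.2}\right)$ and Lemma \ref{lem.7.8}, the task reduces to controlling $\sum_{\alpha,j}\sqrt{\Delta_{j}}$ times $X^{h_{\alpha,j}}\mathbf{K}_{\mathcal{P}}\left(1\right)$ contracted against $W:=\mathbf{K}_{\mathcal{P}}\left(1\right)^{-1}H_{\mathcal{P}}$ and $\mathbf{K}_{\mathcal{P}}\left(1\right)^{-1}h_{\alpha,j}\left(1\right)$. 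I expect a term-by-term estimate to be insufficient: each summand only decays like a small positive power of $\left\vert\mathcal{P}\right\vert$ while there are $\sim n$ of them, so one must extract the cancellation hidden in the structure of $X^{h_{\alpha,j}}\mathbf{K}_{\mathcal{P}}\left(1\right)$.

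To expose that structure I would split $\mathbf{K}_{\mathcal{P}}\left(1\right)=\mathbf{K}_{<j}+\mathbf{K}_{\geq j}$ with $\mathbf{K}_{<j}:=\frac{1}{n}\sum_{i<j}f_{\mathcal{P},i}\left(1\right)f_{\mathcal{P},i}^{*}\left(1\right)$ and $\mathbf{K}_{\geq j}:=\frac{1}{n}\sum_{i\geq j}f_{\mathcal{P},i}\left(1\right)f_{\mathcal{P},i}^{*}\left(1\right)$. Because $h_{\alpha,j}$ is supported on $\left[s_{j-1},1\right]$ and $\nabla R\equiv 0$ forces $R_{u_{\mathcal{P}}\left(s\right)}\equiv R_{u_{0}}$, the vector field $X^{h_{\alpha,j}}$ leaves the path on $\left[0,s_{j-1}\right]$ unchanged; hence, using the factorisation $f_{\mathcal{P},i}\left(1\right)=\Pi_{j}f_{\mathcal{P},i}\left(s_{j-1}\right)$ for $i<j$ with $\Pi_{j}:=C_{\mathcal{P},n}C_{\mathcal{P},n-1}\cdots C_{\mathcal{P},j}$ (immediate from Definition \ref{def.2.1-1}), one gets $\mathbf{K}_{<j}=\Pi_{j}\widehat{\mathbf{K}}\left(s_{j-1}\right)\Pi_{j}^{*}$ where $\widehat{\mathbf{K}}\left(s_{j-1}\right):=\frac{1}{n}\sum_{i<j}f_{\mathcal{P},i}\left(s_{j-1}\right)f_{\mathcal{P},i}^{*}\left(s_{j-1}\right)$ is $X^{h_{\alpha,j}}$-invariant, so that $X^{h_{\alpha,j}}\mathbf{K}_{<j}=L_{j}\mathbf{K}_{<j}+\mathbf{K}_{<j}L_{j}^{*}$ with $L_{j}:=\left(X^{h_{\alpha,j}}\Pi_{j}\right)\Pi_{j}^{-1}$. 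The remaining piece $X^{h_{\alpha,j}}\mathbf{K}_{\geq j}$ is handled exactly as in Lemma \ref{lem.8.10}, where every $X^{h_{\alpha,j}}f_{\mathcal{P},i}\left(1\right)$ (now with $i\geq j$) is shown to be $O\left(\left\vert\mathcal{P}\right\vert^{5\gamma-3/2}\right)$ in all $L^{q}$, so this contributes only a negligible remainder. Substituting $X^{h_{\alpha,j}}\mathbf{K}_{\mathcal{P}}\left(1\right)=L_{j}\mathbf{K}_{<j}+\mathbf{K}_{<j}L_{j}^{*}+\left(\mathrm{rem}\right)_{j}$ into $II_{\mathcal{P}}$, using $\mathbf{K}_{<j}=\mathbf{K}_{\mathcal{P}}\left(1\right)-\mathbf{K}_{\geq j}$ together with $\mathbf{K}_{\mathcal{P}}\left(1\right)W=H_{\mathcal{P}}$, collapses the dangerous product $\mathbf{K}_{\mathcal{P}}\left(1\right)^{-1}\mathbf{K}_{<j}$ into $I-\mathbf{K}_{\mathcal{P}}\left(1\right)^{-1}\mathbf{K}_{\geq j}$; together with the identity $\sum_{\alpha,j}h_{\alpha,j}\left(1\right)\otimes\mathbf{K}_{\mathcal{P}}\left(1\right)^{-1}h_{\alpha,j}\left(1\right)=I$ (from $\sum_{\alpha,j}\Delta_{j}f_{\mathcal{P},j}\left(1\right)e_{\alpha}e_{\alpha}^{*}f_{\mathcal{P},j}^{*}\left(1\right)=\mathbf{K}_{\mathcal{P}}\left(1\right)$), the whole sum reorganises into a handful of terms in which the factor $X^{h_{\alpha,j}}\Pi_{j}$ enters only through $L_{j}H_{\mathcal{P}}$, $L_{j}^{*}W$ and against $\mathbf{K}_{\geq j}$, each of which carries an extra power of $\left\vert\mathcal{P}\right\vert$.

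It then remains to estimate $X^{h_{\alpha,j}}\Pi_{j}=\sum_{k=j}^{n}\left(C_{\mathcal{P},n}\cdots C_{\mathcal{P},k+1}\right)\left(X^{h_{\alpha,j}}C_{\mathcal{P},k}\right)\left(C_{\mathcal{P},k-1}\cdots C_{\mathcal{P},j}\right)$. Expressing $X^{h_{\alpha,j}}C_{\mathcal{P},k}$ through its dependence on $A_{\mathcal{P},k}$ and invoking $\left(\ref{equ.8.18}\right)$, namely $X^{h_{\alpha,j}}\beta_{\mathcal{P}}^{\prime}\left(s_{k-1}+\right)=\frac{\delta_{k}^{j}e_{\alpha}}{\sqrt{\Delta_{j}}}-\left(\int_{0}^{s_{k-1}}R_{u_{\mathcal{P}}\left(\tau\right)}\left(\beta_{\mathcal{P}}^{\prime}\left(\tau+\right),h_{\alpha,j}\left(\tau\right)\right)d\tau\right)\beta_{\mathcal{P}}^{\prime}\left(s_{k-1}+\right)$, the key point is that the matrix $\int_{0}^{s_{k-1}}R_{u_{\mathcal{P}}}\left(\beta_{\mathcal{P}}^{\prime},h_{\alpha,j}\right)$ is, in $k$, a Brownian-semimartingale-type object whose moments are controlled by Burkholder--Davis--Gundy using the adaptedness of $h_{\alpha,j}$ and $\left\vert h_{\alpha,j}\left(\tau\right)\right\vert^{2}\leq\Delta_{j}\sup\left\vert f_{\mathcal{P},j}\right\vert^{2}$, exactly as in Lemmas \ref{lem.7.12} and \ref{lem.8.12}. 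Feeding these bounds, the estimates $\left(\ref{equ.7.2}\right)$, $\left(\ref{equ.7.3}\right)$, Lemma \ref{lem.7.8}, the Fernique estimate Lemma \ref{lem.6.5} and Holder's inequality into the reorganised sum, and taking $\gamma\in\left(0,\frac{1}{2}\right)$ sufficiently close to $\frac{1}{2}$, one should obtain that every surviving term is $O\left(\left\vert\mathcal{P}\right\vert^{\varepsilon}\right)$ in all $L^{q}$ for some $\varepsilon=\varepsilon\left(\gamma\right)>0$, which is the claim. The hard part will be this bookkeeping: after summing over the $n$ values of $j$ and over $\alpha$, one must check that each term genuinely carries enough negative powers of $n$ from the $\sqrt{\Delta_{j}}$ factor, the bounds $\left\vert h_{\alpha,j}\left(1\right)\right\vert\leq\sqrt{\Delta_{j}}\sup\left\vert f_{\mathcal{P},j}\right\vert$, the gain from $\mathbf{K}_{\geq j}$, and the martingale estimate on $\int_{0}^{s_{k-1}}R_{u_{\mathcal{P}}}\left(\beta_{\mathcal{P}}^{\prime},h_{\alpha,j}\right)$, to beat the $n^{1/2}$ produced by $\sum_{j}\sqrt{\Delta_{j}}$; a naive triangle-inequality bound on $X^{h_{\alpha,j}}\mathbf{K}_{\mathcal{P}}\left(1\right)$ alone diverges, so the algebraic cancellation extracted in the second step is essential.
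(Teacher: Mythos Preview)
Your belief that ``a term-by-term estimate is insufficient'' is mistaken, and it drives all the unnecessary complexity in your plan. The paper's proof is three lines: from $X^{h_{\alpha,j}}\mathbf{K}_{\mathcal{P}}(1)^{-1}=-\mathbf{K}_{\mathcal{P}}(1)^{-1}\bigl(X^{h_{\alpha,j}}\mathbf{K}_{\mathcal{P}}(1)\bigr)\mathbf{K}_{\mathcal{P}}(1)^{-1}$ and $\lVert\mathbf{K}_{\mathcal{P}}(1)^{-1}\rVert\leq 1$ one gets $\lvert X^{h_{\alpha,j}}\mathbf{K}_{\mathcal{P}}(1)^{-1}\rvert\leq\lvert X^{h_{\alpha,j}}\mathbf{K}_{\mathcal{P}}(1)\rvert$, and then one sets $\tilde g_j(s):=X^{h_{\alpha,j}}\mathbf{K}_{\mathcal{P}}(s)$ and repeats the argument of Lemma~\ref{lem.8.10} verbatim. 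The only new ingredients compared to that lemma are that the increment estimate $\lvert\mathbf{K}_{\mathcal{P}}(s)-\mathbf{K}_{\mathcal{P}}(\underline s)\rvert\leq C\lvert\mathcal P\rvert^{2\gamma}$ now comes from Lemma~\ref{lem.7.15} rather than~(\ref{equ.7.3}), and the jump condition $\tilde g_j'(s_{k-1}+)=X^{h_{\alpha,j}}f_{\mathcal{P},k}^{*}(1)$ is no longer identically zero but is already controlled by Lemma~\ref{lem.8.10} itself. One then obtains $\lvert\tilde g_j(1)\rvert\leq Cf(K_\gamma)\lvert\mathcal P\rvert^{5\gamma-3/2}$ in all $L^q$, so that $\sum_{\alpha,j}\lvert II_{\mathcal P}(\alpha,j)\rvert\sqrt{\Delta_j}\leq C\lvert\mathcal P\rvert^{5\gamma-2}\to 0$ for $\gamma>2/5$: the naive triangle inequality is in fact enough.

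You actually already have this in hand. Your treatment of $\mathbf{K}_{\geq j}$---``every $X^{h_{\alpha,j}}f_{\mathcal{P},i}(1)$ is $O(\lvert\mathcal P\rvert^{5\gamma-3/2})$''---applies equally well to $\mathbf{K}_{<j}$, since that too is an average of $f_{\mathcal{P},i}(1)f_{\mathcal{P},i}^{*}(1)$ with $i<j$; the bound on $X^{h_{\alpha,j}}f_{\mathcal{P},i}(1)$ from Lemma~\ref{lem.8.10} does not depend on the relation between $i$ and $j$. Hence the entire $L_j$/$\Pi_j$ decomposition, the algebraic reorganisation, and the martingale bookkeeping you propose are superfluous: once you see that the Lemma~\ref{lem.8.10} estimate is \emph{uniform in the second index} $i$, the whole lemma collapses to a one-line reduction to Lemma~\ref{lem.8.10}.
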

\begin{proof}Since
\[
X^{h_{\alpha,j}}\left(\mathbf{K}_{\mathcal{P}}\left(1\right)^{-1}\right)=-\mathbf{K}_{\mathcal{P}}\left(1\right)^{-1}X^{h_{\alpha,j}}\left(\mathbf{K}_{\mathcal{P}}\left(1\right)\right)\mathbf{K}_{\mathcal{P}}\left(1\right)^{-1},
\]
so 
\[
\left\vert X^{h_{\alpha,j}}\left(\mathbf{K}_{\mathcal{P}}\left(1\right)^{-1}\right)\right\vert \leq\left\vert X^{h_{\alpha,j}}\left(\mathbf{K}_{\mathcal{P}}\left(1\right)\right)\right\vert. 
\]
Then let $\tilde{g}_{j}\left(s\right):=X^{h_{\alpha,j}}\left(\mathbf{K}_{\mathcal{P}}\left(s\right)\right)$
and this lemma follows from a Lemma \ref{lem.8.10}-type argument.
\end{proof}

\section{Proof of Theorem \ref{thm2}\label{cha.7}}
First we collect a list of supplementary results.
\begin{lemma}\label{inf}
	For any $f\in \mathcal{FC}^1_b$, $\tilde{X}^{tr,\nu}f\in L^{\infty-}\left(W_o(M),\nu\right)$.
\end{lemma}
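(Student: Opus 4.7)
The plan is to invoke the explicit three--term formula
\[
\tilde{X}^{tr,\nu}=-\tilde{X}+\sum_{\alpha=1}^{d}\left\langle \tilde{C}\tilde{H},e_{\alpha}\right\rangle \int_{0}^{1}\left\langle (\tilde{T}_s^{-1})^{\ast}e_{\alpha},d\beta_{s}\right\rangle -\sum_{\alpha=1}^{d}\left\langle X^{Z_{\alpha}}(\tilde{C}\tilde{H}),e_{\alpha}\right\rangle
\]
of Theorem \ref{lem.5.1} and to show each of the three pieces, evaluated on $f\in\mathcal{FC}_b^1$, lies in $L^{\infty-}(W_o(M),\nu)$. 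Throughout I will use that $X$ has compact support so $\tilde{H}=\tilde{u}_1^{-1}X\circ \Ep$ is bounded, together with the deterministic bounds $\sup_s\|\tilde{T}_s\|\vee\sup_s\|\tilde{T}_s^{-1}\|\leq e^{(d-1)N/2}$ from Lemma \ref{lem.4.3} and $\|\tilde{\mathbf{K}}_1^{-1}\|\leq e^{(d-1)N}$ from Lemma \ref{lem.4.6}, which together imply $\tilde{C}$ is bounded by a deterministic constant.

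The multiplicative middle term is the easiest. The factor $\langle \tilde{C}\tilde{H},e_\alpha\rangle$ is bounded, and the martingale $M_\alpha := \int_0^1 \langle (\tilde{T}_s^{-1})^*e_\alpha, d\beta_s\rangle$ has integrand with deterministic sup--norm bound, so Burkholder--Davis--Gundy gives $M_\alpha\in L^{\infty-}$. Multiplication by the bounded function $f$ keeps the product in $L^{\infty-}$.

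For the leading term $-\tilde{X}f=-\langle Df,\tilde{X}\rangle_{G^1}=-\int_0^1 \langle \nabla_s Df,\nabla_s\tilde{X}\rangle\, ds$, I would observe that $\nabla_s Df=\tilde{u}_s\sum_{i:s_i>s}\tilde{u}_{s_i}^{-1}\operatorname{grad}_i F$ is deterministically bounded by the definition of $\mathcal{FC}_b^1$, while $\nabla_s\tilde{X}=\tilde{u}_s Z_\Phi'(s)$ where $Z_\Phi$ solves the linear ODE in Notation \ref{not.4.9} driven by $\Phi'(s)=(\tilde{T}_s^{-1})^{\ast}\tilde{C}\tilde{H}$. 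Both the driver $\Phi'$ and the coefficient $\tfrac{1}{2}Ric_{\tilde u_s}$ are bounded, so Gronwall gives a deterministic bound on $|\nabla_s\tilde{X}|$, and hence $\tilde{X}f\in L^\infty\subset L^{\infty-}$.

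The remaining term $\langle X^{Z_\alpha}(\tilde{C}\tilde{H}),e_\alpha\rangle$ is the only one requiring a Malliavin--style computation, and will be the main obstacle. I would apply the product rule to split it into $(X^{Z_\alpha}\tilde{C})\tilde{H}+\tilde{C}(X^{Z_\alpha}\tilde{H})$. Because $\tilde{H}\in\mathcal{GFC}^1$ depends only on $\tilde{u}_1$, Theorem \ref{the.2.9}--type formulas express $X^{Z_\alpha}\tilde{H}$ in terms of $\nabla X$ (bounded by the compact support hypothesis on $X$) and $A_1\langle Z_\alpha\rangle$, which is a stochastic integral with bounded integrand hence in $L^{\infty-}$ by BDG. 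For $X^{Z_\alpha}\tilde{C}$, differentiating the defining ODE for $\tilde{T}_s$ through the approximate flow of $Z_\alpha$ produces a linear ODE whose forcing involves $X^{Z_\alpha}Ric_{\tilde{u}_s}=(\nabla_{X^{Z_\alpha}}Ric)_{\tilde{u}_s}+[A_s\langle Z_\alpha\rangle,Ric_{\tilde{u}_s}]$; the bounded curvature hypothesis and $Z_\alpha\in\mathcal{V}^\infty\cap\mathcal{B}^\infty$ (Lemma \ref{lem Z}) plus BDG place both summands in $L^{\infty-}$, and another Gronwall argument transfers the conclusion to $X^{Z_\alpha}\tilde{T}_s$ and hence to $X^{Z_\alpha}\tilde{C}$. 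Multiplying by bounded $f$ and summing finishes the proof.
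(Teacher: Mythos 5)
Your proposal is correct, and it reconstructs the argument that the paper itself only references externally (the paper's ``proof'' of Lemma \ref{inf} is a one--line citation, ``See Lemma 4.24 in IVP,'' so there is no in--text proof to compare with directly). However, a superseded version of this lemma and its proof survives in a commented--out block in the LaTeX source (the \texttt{\textbackslash iffalse}--\texttt{\textbackslash fi} section between Definition \ref{def Oc} and Section \ref{cha.6}), and your three--term decomposition is exactly the structure used there: the stochastic integral $\int_0^1\langle(\tilde{T}_s^{-1})^*e_\alpha,d\beta_s\rangle$ is handled via the deterministic bound on $\sup_s\|\tilde{T}_s^{-1}\|$ plus Burkholder--Davis--Gundy, while $\tilde{C}\tilde{H}$ and $X^{Z_\alpha}(\tilde{C}\tilde{H})\in L^{\infty-}$ are obtained from Malliavin differentiability of $\tilde{T}_s$ (via the Gronwall argument on the forcing $X^{Z_\alpha}Ric_{\tilde{u}_s}$) and of $\tilde{H}\in\mathcal{GFC}^1$. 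You spell out these last two facts directly rather than delegating to the intermediate ``Corollary \ref{Col C}'' / ``Proposition \ref{prop Ric}'' of the hidden section, which is fine.

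One point worth noting: the commented--out version states the lemma for $f\in\mathcal{D}(\tilde{X})=W_1^{\infty-}(M)$, so the leading term $-\tilde{X}f$ is in $L^{\infty-}$ by fiat and is not addressed in the old proof. The current statement is for $f\in\mathcal{FC}^1_b$, so your explicit Gronwall estimate showing $\tilde{X}f\in L^\infty$ (driver $\Phi'_s=(\tilde{T}_s^{-1})^*\tilde{C}\tilde{H}$ bounded, $\mathrm{Ric}$ bounded, hence $Z_\Phi$ and $Z'_\Phi$ deterministically bounded, and $\nabla_sDf$ bounded by $\sum_i\|\mathrm{grad}_iF\|_\infty$) is a necessary addition, and you got it right. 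Two hypotheses that you use are implicit in the statement but should be acknowledged: $X$ has compact support (so $\tilde{H}$ is bounded), and the Malliavin derivative $X^{Z_\alpha}Ric_{\tilde{u}_s}$ is controlled only under bounded $\nabla Ric$ --- both are automatic in the paper's application since Theorem \ref{thm2} assumes $\nabla R\equiv0$, under which $\tilde{T}_s$ and $\tilde{C}$ become deterministic and $X^{Z_\alpha}\tilde{C}\equiv0$, so your last paragraph could in fact be shortened there.
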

\begin{proof}
	See Lemma 4.24 in IVP.
\end{proof}

\begin{lemma} \label{lem7.4}For any $f\in\mathcal{FC}_{b}^{1}$ and $q\geq 1$, there exists a constant $M=M(q)$ such that for all partition $\mathcal{P}$ with $\left\vert\mathcal{P}\right\vert<\frac{1}{M}$,  $\tilde{X}_{\mathcal{P}}^{tr,\nu_{\mathcal{P}}^{1}}f\in L^{q}\left(H_{\mathcal{P}}\left(M\right),\nu_{\mathcal{P}}^{1}\right).$

\end{lemma}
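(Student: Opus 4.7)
The plan is to bound separately, in $L^q(\nu_\mathcal{P}^1)$, each of the three pieces
\[
\tilde{X}_\mathcal{P}f,\qquad f\cdot\int_0^1\langle J_\mathcal{P}'(s+),b'(s+)\rangle\,ds,\qquad f\cdot div\tilde{X}_\mathcal{P}
\]
appearing in the formula for $\tilde{X}_\mathcal{P}^{tr,\nu_\mathcal{P}^1}f$ given in Lemma \ref{lem.8.1}. Since $f\in\mathcal{FC}_b^1$, both $f$ and its differential are bounded, so the $f$ factors are harmless. The bridge between $(H_\mathcal{P}(M),\nu_\mathcal{P}^1)$ and the Wiener space $(W_o(M),\nu)$---where all the size estimates of Section \ref{cha.6} live---is provided by Lemma \ref{lem3.1.7}, via $\rho_\mathcal{P}\geq 1$, together with Notation \ref{Not1} identifying $\nu_\mathcal{P}^0$ with the pushforward of $\nu$ under $\phi\circ\beta_\mathcal{P}$; this gives $\int F\,d\nu_\mathcal{P}^1\leq \int F\circ\phi\circ\beta_\mathcal{P}\,d\nu$ for any non--negative $F$, converting each required $L^q$-bound into one under Wiener measure.

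For the first piece, $|\tilde{X}_\mathcal{P}f|\leq C_f\|\tilde{X}_\mathcal{P}\|_{G_\mathcal{P}^1}$ with
\[
\|\tilde{X}_\mathcal{P}\|_{G_\mathcal{P}^1}^2=\sum_{j=1}^n\|f_{\mathcal{P},j}^*(1)\mathbf{K}_\mathcal{P}(1)^{-1}H_\mathcal{P}\|^2\Delta_j\leq \sup_j\|f_{\mathcal{P},j}(1)\|^2\cdot\|\mathbf{K}_\mathcal{P}(1)^{-1}\|^2\cdot\|H_\mathcal{P}\|^2,
\]
by Definition \ref{def.6.13}; Lemma \ref{lem.6.10} gives $\|\mathbf{K}_\mathcal{P}(1)^{-1}\|\leq 1$, $\|H_\mathcal{P}\|$ is bounded under the standing assumption that $X$ has bounded norm (as in Section \ref{cha.6}), and Lemma \ref{lem.7.8} produces the $L^q$ integrability of $\sup_j\|f_{\mathcal{P},j}(1)\|^q$ as soon as $n>2qN$. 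For the second piece, the integral discretizes to $\sum_j\langle J_\mathcal{P}'(s_{j-1}+),\Delta_j\beta\rangle$, and Cauchy--Schwarz yields
\[
\Bigl|\sum_j\langle J_\mathcal{P}'(s_{j-1}+),\Delta_j\beta\rangle\Bigr|\leq \|\tilde{X}_\mathcal{P}\|_{G_\mathcal{P}^1}\cdot\Bigl(\sum_j|\Delta_j\beta|^2/\Delta_j\Bigr)^{1/2};
\]
the second factor, essentially $\sqrt{E(\sigma)}$, has all $L^q$-moments under $\nu$ by Lemma \ref{cBM}, and Cauchy--Schwarz in $L^{2q}\times L^{2q}$ finishes the bound.

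The third piece is the main obstacle. Applying Proposition \ref{prop.5.4} and Cauchy--Schwarz on the $(\alpha,j)$ double sum (using $\sum_j\Delta_j=1$),
\[
|div\tilde{X}_\mathcal{P}|^2\leq d\sum_{\alpha=1}^d\sum_{j=1}^n|X^{h_{\alpha,j}}J_\mathcal{P}'(s_{j-1}+)|^2,
\]
so it suffices to bound each summand. Expanding $J_\mathcal{P}'(s_{j-1}+)=f_{\mathcal{P},j}^*(1)\mathbf{K}_\mathcal{P}(1)^{-1}H_\mathcal{P}$ by the product rule produces the three terms $I_\mathcal{P}(\alpha,j),\,II_\mathcal{P}(\alpha,j),\,III_\mathcal{P}(\alpha,j)$ of (\ref{equ.8.15}); for the purposes of this lemma only finiteness is needed, not the refined quantitative decay of Lemmas \ref{lem.8.10}--\ref{lem.8.11}. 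The crude bounds inside those proofs---derivatives of $f_{\mathcal{P},i}$ and of $\mathbf{K}_\mathcal{P}(1)^{-1}=-\mathbf{K}_\mathcal{P}(1)^{-1}(X^{h_{\alpha,j}}\mathbf{K}_\mathcal{P}(1))\mathbf{K}_\mathcal{P}(1)^{-1}$ expressed through Lemma \ref{lem.8.8} and controlled by the Fernique--type constant $K_\gamma$ of Lemma \ref{lem.6.5} together with $\sup_i\|f_{\mathcal{P},i}(s_j)\|$ from Lemma \ref{lem.7.8}---give $L^q$ integrability of each piece for $n$ sufficiently large. Taking $M(q)$ as the maximum of the thresholds arising in Lemmas \ref{lem.7.8}, \ref{lem.7.10} and Theorem \ref{the.7.11}, all proportional to $qN$, completes the proof. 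The delicate point is the product-rule bookkeeping in the divergence: one must carefully track how each differentiation introduces an extra factor that is polynomial in $K_\gamma$ multiplied by $e^{CqN\sum_k|\Delta_k\beta|^2}$, and verify via Lemma \ref{cBM} that the resulting exponential is integrable for $n$ beyond the stated threshold.
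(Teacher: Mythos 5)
The paper's own proof is a two--liner: cite Theorem \ref{the.8.4} ($L^q(\nu)$--convergence of $\tilde{X}_\mathcal{P}^{tr,\nu_\mathcal{P}^1}f\circ\phi\circ\beta_\mathcal{P}$ to $\tilde{X}^{tr,\nu}f$), cite Lemma \ref{inf} ($\tilde{X}^{tr,\nu}f\in L^{\infty-}(\nu)$), and finish with the triangle inequality and the identity that the law of $\phi\circ\beta_\mathcal{P}$ under $\nu$ is $\nu_\mathcal{P}^1$. Your term--by--term estimation is therefore a genuinely different route; it can reach the stated conclusion, but two of the ingredients are misstated, and the approach gives up the $\mathcal{P}$--uniformity that the paper obtains for free.

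On the measure identification: the pushforward of $\nu$ under $\phi\circ\beta_\mathcal{P}$ is $\nu_\mathcal{P}^1$ itself, not $\nu_\mathcal{P}^0$ --- this equality is precisely why the $G_\mathcal{P}^1$ metric was chosen, and it is what the paper invokes in the proofs of Lemma \ref{lem7.4} and Lemma \ref{lem7.10}. Lemma \ref{lem3.1.7} and the density $\rho_\mathcal{P}$ concern the truncated space $\mathcal{K}$ and the measures $\nu_{\mathcal{P},\tau}^0,\nu_{\mathcal{P},\tau}^1$, not $H_\mathcal{P}(M)$; they are not the right tool. The transfer you want is the \emph{equality} $\int F\,d\nu_\mathcal{P}^1=\int F\circ\phi\circ\beta_\mathcal{P}\,d\nu$, with no $\rho_\mathcal{P}$ appearing.

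On the second piece: with $\Delta_j=1/n$ the Cauchy--Schwarz factor is $\bigl(\sum_j|\Delta_j\beta|^2/\Delta_j\bigr)^{1/2}=\bigl(n\sum_j|\Delta_j\beta|^2\bigr)^{1/2}$, and Lemma \ref{cBM} controls exponential moments of $\sum_j|\Delta_j\beta|^2$ uniformly in $n$ but says nothing about $E=n\sum_j|\Delta_j\beta|^2$, whose $q$th moment grows like $(nd)^q$. So the bound you produce is of order $n^{q/2}$ --- finite for each fixed $\mathcal{P}$, hence adequate for the bare statement of Lemma \ref{lem7.4}, but the citation of Lemma \ref{cBM} for ``all $L^q$--moments of $\sqrt{E}$'' is not correct as written. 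The same loss occurs in the Cauchy--Schwarz step on the $(\alpha,j)$ double sum for $div\tilde{X}_\mathcal{P}$: taking absolute values discards the martingale/trace cancellations that Lemmas \ref{lem.8.9}--\ref{lem.8.11} exploit, and the crude estimate again grows with $n$. None of this breaks the lemma, which only asserts membership for each small enough $\mathcal{P}$, but it makes your thresholds $\mathcal{P}$--dependent in a way the paper's route avoids, which is why the paper deduces Lemma \ref{lem7.4} as a corollary of the uniform convergence in Theorem \ref{the.8.4} rather than by direct estimation.
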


\begin{proof}
From Theorem \ref{the.8.4} we know that 
\[\underset{\left\vert\mathcal{P}\right\vert\to 0}{\lim}\mathbb{E}\left[\left\vert \tilde{X}_{\mathcal{P}}^{tr,\nu_{\mathcal{P}}^{1}}f\left(\phi\left(\beta_\mathcal{P}\right)\right)-\tilde{X}^{tr,\nu}f\right\vert^q\right]=0.\]
By Lemma \ref{inf}, $\tilde{X}^{tr,\nu}f\in L^{\infty-}\left(W_o\left(M\right)\right)$. Therefore Lemma \ref{lem7.4} follows from triangle inequality and the fact that the law of $\phi\left(\beta_\mathcal{P}\right)$ under $\nu $ is $\nu_{\mathcal{P}}^1$.
\end{proof}
\begin{notation}
Denote by $g$ any one of $\left\{g_i\right\}^d_{i=0}$ as in Theorem \ref{thm.1.11} and $\left\{g^{\left(m\right)}\right\}_m\subset C^{\infty}_0\left(M\right)$ be the approximate sequence in $L^{\frac{d}{d-1}}\left(M\right)$ as defined in Remark \ref{rem3.1.2}. 
\end{notation}
\begin{lemma}\label{lem7.6}
Define $\tilde{g}\left(\sigma\right)=g\left(\sigma\left(1\right)\right)$ and $\tilde{g}^{\left(m\right)}\left(\sigma\right)=g^{\left(m\right)}\left(\sigma\left(1\right)\right)$, then for any $f\in\mathcal{FC}^{1}_b$,
 \begin{equation}
  \int_{W_o\left(M\right)}\left|\tilde{g}\cdot\left(\tilde{X}^{tr,\nu}f\right)\right|\left(\sigma\right)d\nu\left(\sigma\right)<\infty\label{Eq:0}
 \end{equation}
 and 
 \begin{equation}
 \lim_{m\to\infty}\int_{W_o\left(M\right)}\tilde{g}^{\left(m\right)}\left(\sigma\right)\left(\tilde{X}^{tr,\nu}f\right)\left(\sigma\right)d\nu\left(\sigma\right)=\int_{W_o\left(M\right)}\tilde{g}\left(\sigma\right)\left(\tilde{X}^{tr,\nu}f\right)\left(\sigma\right)d\nu\left(\sigma\right).
 \label{Eq:1}
 \end{equation} 
\end{lemma}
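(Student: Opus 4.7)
The plan is to deduce both assertions from Hölder's inequality once we identify the right integrability exponents. The key inputs are that $\tilde{X}^{tr,\nu}f\in L^{\infty-}\left(W_o\left(M\right),\nu\right)$ (Lemma \ref{inf}) and that $g$ lies in $L^{\frac{d}{d-1}}\left(M\right)$ with compact support (Theorem \ref{thm.1.11}), together with the elementary observation that under $\nu$ the law of $\Sigma_1$ is $p_1\left(o,\cdot\right)dx$ where $p_1$ is continuous and hence bounded on any compact set of $M$.

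First I would prove $(\ref{Eq:0})$. Let $p=\frac{d}{d-1}$ and let $q=d$ be its Hölder conjugate. By Hölder's inequality,
\[
\int_{W_o\left(M\right)}\left|\tilde{g}\cdot\tilde{X}^{tr,\nu}f\right|d\nu \;\leq\; \left\Vert \tilde{g}\right\Vert _{L^{p}\left(\nu\right)}\left\Vert \tilde{X}^{tr,\nu}f\right\Vert _{L^{q}\left(\nu\right)}.
\]
Since $\tilde{g}=g\circ\Sigma_1$ and $\Sigma_{1\ast}\nu\left(dx\right)=p_1\left(o,x\right)dx$,
\[
\left\Vert \tilde{g}\right\Vert _{L^{p}\left(\nu\right)}^{p}=\int_{M}\left|g\left(x\right)\right|^{p}p_1\left(o,x\right)dx\;\leq\;\Bigl(\sup_{x\in\operatorname{supp}g}p_1\left(o,x\right)\Bigr)\left\Vert g\right\Vert _{L^{p}\left(M\right)}^{p}<\infty,
\]
where the supremum is finite by continuity of the heat kernel on the compact set $\operatorname{supp}g\subset M$ (Theorem \ref{thm.1.11}) and $\left\Vert g\right\Vert _{L^{p}\left(M\right)}<\infty$ by the same theorem. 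Lemma \ref{inf} gives $\left\Vert \tilde{X}^{tr,\nu}f\right\Vert _{L^{d}\left(\nu\right)}<\infty$, and $(\ref{Eq:0})$ follows.

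For $(\ref{Eq:1})$ I would argue the same way applied to the difference: by Hölder,
\[
\left|\int_{W_o\left(M\right)}\bigl(\tilde{g}^{\left(m\right)}-\tilde{g}\bigr)\tilde{X}^{tr,\nu}f\,d\nu\right|\;\leq\;\left\Vert \tilde{g}^{\left(m\right)}-\tilde{g}\right\Vert _{L^{p}\left(\nu\right)}\left\Vert \tilde{X}^{tr,\nu}f\right\Vert _{L^{q}\left(\nu\right)}.
\]
By Remark \ref{rem3.1.2} we may arrange that $K:=\operatorname{supp}g\cup\bigcup_{m}\operatorname{supp}g^{\left(m\right)}$ is compact, so setting $C_{K}:=\sup_{x\in K}p_1\left(o,x\right)<\infty$ we obtain
\[
\left\Vert \tilde{g}^{\left(m\right)}-\tilde{g}\right\Vert _{L^{p}\left(\nu\right)}^{p}=\int_{K}\left|g^{\left(m\right)}-g\right|^{p}p_1\left(o,x\right)dx\;\leq\; C_{K}\left\Vert g^{\left(m\right)}-g\right\Vert _{L^{p}\left(M\right)}^{p}\;\longrightarrow\;0
\]
as $m\to\infty$ by Remark \ref{rem3.1.2}. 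Combined with $\left\Vert \tilde{X}^{tr,\nu}f\right\Vert _{L^{d}\left(\nu\right)}<\infty$ this yields $(\ref{Eq:1})$.

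There is no serious obstacle here; the only point that needs care is the uniform boundedness of $p_1\left(o,\cdot\right)$ on the common compact support $K$, which is automatic from continuity of the heat kernel on a stochastically complete Hadamard manifold. Everything else is a direct application of Hölder's inequality combined with the two input facts (Lemma \ref{inf} and Theorem \ref{thm.1.11} with Remark \ref{rem3.1.2}).
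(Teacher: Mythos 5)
Your proof is correct and takes essentially the same route as the paper: push forward by $\Sigma_1$ to reduce $\|\tilde{g}\|_{L^{d/(d-1)}(\nu)}$ to $\int_M |g|^{d/(d-1)} p_1(o,\cdot)\,d\lambda$, bound the heat kernel on the common compact support, invoke Lemma \ref{inf} for the other factor, and apply Hölder (with the same trick for $g^{(m)}-g$). The only detail the paper makes explicit that you omit is that $\tilde{g}$ is $\nu$-a.s.\ well defined despite the potential singularity of $g$ at $x$, because $\nu\{\sigma:\sigma(1)=x\}=0$; this is a trivial addition and does not affect the argument.
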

\begin{proof}
Since $\nu\left\{\sigma:\sigma\left(1\right)=x\right\}=0$, so $\tilde{g}$ is $\nu-a.s.$ well-defined. In particular, for any $p>0$, 
\begin{align}
 \int_{W_o\left(M\right)}\left|\tilde{g}\left(\sigma\right)\right|^pd\nu\left(\sigma\right)=\int_M\left|g\left(x\right)\right|^pp_1\left(0,x\right)d\lambda\left(x\right),\label{6}
\end{align}
where $\lambda$ is the volume measure on $M$.

Since $g$ has compact support and $p_1\left(0,\cdot\right)\in C^\infty\left(M\right)$, 
\begin{align}
\int_M\left|g\left(x\right)\right|^pp_1\left(0,x\right)d\lambda\left(x\right)\leq C\left\Vert g\right\Vert^p_{L^p\left(M\right)}.\label{5}
\end{align}
Combining Eq.(\ref{6}) and (\ref{5}) and letting $p=\frac{d}{d-1}$ we get
\begin{equation}
\tilde{g}\in  L^{1+\frac{1}{d-1}}\left(W_o\left(M\right)\right).\label{4}
\end{equation}
Since $\tilde{X}^{tr,\nu}f\in L^{\infty-}\left(W_o\left(M\right)\right)$ by Lemma \ref{inf}, using Holder's inequality we prove Eq.(\ref{Eq:0}).

Since $\cup_m supp\left(g^{(m)}\right)$ is contained in a compact set, Eq.(\ref{Eq:1}) can be proved similarly with $g$ replaced by $g^{(m)}-g$.
\end{proof}
\begin{lemma} \label{lem7.7} Define $\tilde{g}:H_\mathcal{P}\left(M\right)\to \mathbb{R}$ to be $\tilde{g}\left(\sigma\right)=g\left(\sigma\left(1\right)\right)$, then $\tilde{g}\in L^{\frac{d}{d-1}}\left(H_{\mathcal{P}}\left(M\right),\nu_{\mathcal{P}}^{1}\right)$. \end{lemma}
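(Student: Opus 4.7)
The plan is to estimate the $L^{d/(d-1)}$-norm of $\tilde g$ by disintegrating the measure $\nu_{\mathcal{P}}^{1}$ along the endpoint map, exactly as was done in Corollary \ref{Col3.2.2}, and then using the uniform fiber bound already established in Proposition \ref{pro3.31}.

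First I would apply the co--area formula (Theorem 3.15 / Eq. (\ref{co})) with $H=H_{\mathcal{P}}(M)$, $M=M$, $p=\Ep^{\mathcal{P}}$, density function $g=\tfrac{1}{Z_{\mathcal{P}}^{1}}e^{-E/2}$ on $H_{\mathcal{P}}(M)$ and test function $f(y)=|g(y)|^{d/(d-1)}$ on $M$. Since $\Ep^{\mathcal{P}}$ is a submersion by Proposition \ref{pro.3.2.1}, this yields
\begin{equation*}
\int_{H_{\mathcal{P}}(M)}|g(\sigma(1))|^{\frac{d}{d-1}}\,d\nu_{\mathcal{P}}^{1}(\sigma)=\int_{M}|g(y)|^{\frac{d}{d-1}}\Bigl(\int_{H_{\mathcal{P},y}(M)}\tfrac{1}{J_{\mathcal{P}}(\sigma)}\tfrac{1}{Z_{\mathcal{P}}^{1}}e^{-E(\sigma)/2}\,dvol_{G_{\mathcal{P},y}^{1}}(\sigma)\Bigr)dy,
\end{equation*}
and by the very definition of the pinned approximate measure (Definition \ref{def.ab}) the inner integral is just $\nu_{\mathcal{P},y}^{1}(H_{\mathcal{P},y}(M))$.

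Next I would invoke Proposition \ref{pro3.31}, which supplies a constant $C=C(d,N)$ with
\begin{equation*}
\sup_{\mathcal{P}}\sup_{y\in M}\nu_{\mathcal{P},y}^{1}(H_{\mathcal{P},y}(M))\leq C.
\end{equation*}
Inserting this bound into the previous display gives
\begin{equation*}
\int_{H_{\mathcal{P}}(M)}|\tilde g(\sigma)|^{\frac{d}{d-1}}\,d\nu_{\mathcal{P}}^{1}(\sigma)\leq C\int_{M}|g(y)|^{\frac{d}{d-1}}dy=C\|g\|_{L^{d/(d-1)}(M)}^{d/(d-1)}.
\end{equation*}
Since $g\in L^{d/(d-1)}(M)$ by Theorem \ref{thm.1.11}, the right--hand side is finite and we conclude $\tilde g\in L^{d/(d-1)}(H_{\mathcal{P}}(M),\nu_{\mathcal{P}}^{1})$.

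There is essentially no obstacle here; the only delicate point is to make sure the two nontrivial inputs (Proposition \ref{pro3.31} providing a bound uniform in both $y$ and $\mathcal{P}$, and the integrability $g\in L^{d/(d-1)}(M)$ guaranteed by the construction in Theorem \ref{thm.1.11}) are correctly applied, and that the co--area factor $1/J_{\mathcal{P}}$ is absorbed into the definition of $\nu_{\mathcal{P},y}^{1}$ rather than left dangling.
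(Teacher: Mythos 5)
Your proof is correct and follows essentially the same route as the paper's: disintegrate via the co--area formula along $\Ep^{\mathcal{P}}$ to reduce to $\int_M |g|^{d/(d-1)}\,\nu_{\mathcal{P},y}^{1}(H_{\mathcal{P},y}(M))\,dy$, then bound the fiber mass. The only difference is that you invoke the uniform-in-$\mathcal{P}$ bound from Proposition \ref{pro3.31}, whereas the paper uses the continuity of $h_{\mathcal{P}}$ from Theorem \ref{thm3.2.1} together with the compact support of $g$ to bound the fiber mass for a fixed $\mathcal{P}$ (the stronger uniform version is reserved for Lemma \ref{lem7.10}); both suffice here.
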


\begin{proof}
	Set 
	\begin{equation*}
	V_\mathcal{P}:=\left\{\sigma\in H_\mathcal{P}\left(M\right):\Ep^{\mathcal{P}}\left(\sigma\right)=x \right\}.
	\end{equation*}
	Applying the co-area formula Eq.(\ref{co}) to $f(y)=1_{\left\{y=x\right\}}$, we have
	\begin{equation*}
	\nu_{\mathcal{P}}^1\left(V_\mathcal{P}\right)=\int_{H_{\mathcal{P}}\left(M\right)}f\left(\sigma\left(1\right)\right)d\nu_{\mathcal{P}}^{1}\left(\sigma\right)=\int_{M}f\left(y\right)h_{\mathcal{P}}\left(y\right)dy=0.
	\end{equation*}
	So $\tilde{g}$ is $\nu_{\mathcal{P}}^1-a.s.$ well-defined. Then applying the co-area formula Eq.(\ref{co}) again to $\left|\tilde{g}\right|^{\frac{d}{d-1}},$
we have 
\begin{equation}
\int_{H_{\mathcal{P}}\left(M\right)}\left|\tilde{g}\left(\sigma\right)\right|^{\frac{d}{d-1}}d\nu_{\mathcal{P}}^{1}\left(\sigma\right)=\int_{M}\left|g\left(x\right)\right|^{\frac{d}{d-1}}h_{\mathcal{P}}\left(x\right)dx,\label{eq4}
\end{equation}
where $h_{\mathcal{P}}\left(x\right)\in C\left(M\right)$
is defined in Theorem \ref{thm3.2.1} with $f\equiv 1$. Since $g$ has compact support,
\begin{equation*}
\int_{M}\left|g\left(x\right)\right|^{\frac{d}{d-1}}h_{\mathcal{P}}\left(x\right)dx\leq C\int_{M}\left|g\left(x\right)\right|^{\frac{d}{d-1}}dx.
\end{equation*}
Therefore 
$\tilde{g}\in L^{\frac{d}{d-1}}\left(H_{\mathcal{P}}\left(M\right),\nu_{\mathcal{P}}^{1}\right)$.
\end{proof}
\begin{lemma}\label{the.1.3.5} 
Define $\tilde{g}\left(\sigma\right)=g\left(\sigma\left(1\right)\right)$ and $\tilde{g}^{\left(m\right)}\left(\sigma\right)=g^{\left(m\right)}\left(\sigma\left(1\right)\right)$, then there exists a constant $M$ such that for any $f\in\mathcal{FC}_{b}^{1}$ and $\mathcal{P}$ with $\left\vert\mathcal{P}\right\vert<\frac{1}{M}$,
 \begin{equation}
  \int_{H_\mathcal{P}\left(M\right)}\left|\tilde{g}\cdot\left(\tilde{X}^{tr,\nu_{\mathcal{P}}^1}f\right)\right|\left(\sigma\right)d\nu_{\mathcal{P}}^1\left(\sigma\right)<\infty\label{Eq:3}
 \end{equation}
 and 
 \begin{equation}
  \lim_{m\to\infty}\int_{H_\mathcal{P}\left(M\right)}\tilde{g}^{\left(m\right)}\left(\sigma\right)\left(\tilde{X}^{tr,\nu_{\mathcal{P}}^1}f\right)\left(\sigma\right)d\nu_{\mathcal{P}}^1\left(\sigma\right)=\int_{H_\mathcal{P}\left(M\right)}\tilde{g}\left(\sigma\right)\left(\tilde{X}^{tr,\nu_{\mathcal{P}}^1}f\right)\left(\sigma\right)d\nu_{\mathcal{P}}^1\left(\sigma\right).
 \label{Eq:2} 
 \end{equation}
\end{lemma}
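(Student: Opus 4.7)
The plan is to deduce both assertions as direct consequences of Hölder's inequality, once the right integrability endpoints are identified. The two building blocks have essentially been put in place already: Lemma \ref{lem7.7} gives $\tilde{g}\in L^{d/(d-1)}(H_{\mathcal{P}}(M),\nu_{\mathcal{P}}^{1})$, and Lemma \ref{lem7.4} gives, for every $q\geq 1$, a threshold $M=M(q)$ so that $\tilde{X}_{\mathcal{P}}^{tr,\nu_{\mathcal{P}}^{1}}f\in L^{q}(H_{\mathcal{P}}(M),\nu_{\mathcal{P}}^{1})$ whenever $|\mathcal{P}|<1/M$. Choosing the conjugate pair $p=d/(d-1)$ and $q=d$, and taking $M:=M(d)$, we get for every partition with $|\mathcal{P}|<1/M$
\[
\int_{H_{\mathcal{P}}(M)}|\tilde{g}\cdot \tilde{X}_{\mathcal{P}}^{tr,\nu_{\mathcal{P}}^{1}}f|\,d\nu_{\mathcal{P}}^{1}\leq \|\tilde{g}\|_{L^{d/(d-1)}(\nu_{\mathcal{P}}^{1})}\|\tilde{X}_{\mathcal{P}}^{tr,\nu_{\mathcal{P}}^{1}}f\|_{L^{d}(\nu_{\mathcal{P}}^{1})}<\infty,
\]
which is Eq.$(\ref{Eq:3})$.

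For Eq.$(\ref{Eq:2})$ I would first promote the $L^{d/(d-1)}(M)$-convergence $g^{(m)}\to g$ from Remark \ref{rem3.1.2} to $L^{d/(d-1)}$-convergence on $(H_{\mathcal{P}}(M),\nu_{\mathcal{P}}^{1})$. Applying the co-area formula exactly as in the proof of Lemma \ref{lem7.7} (with $p\circ \Ep^{\mathcal{P}}=\Ep^{\mathcal{P}}$ and density $\frac{1}{Z_{\mathcal{P}}^{1}}e^{-E/2}$) yields
\[
\int_{H_{\mathcal{P}}(M)}|\tilde{g}^{(m)}-\tilde{g}|^{d/(d-1)}\,d\nu_{\mathcal{P}}^{1}=\int_{M}|g^{(m)}-g|^{d/(d-1)}(x)\,h_{\mathcal{P}}(x)\,dx,
\]
where $h_{\mathcal{P}}$ is the function from Theorem \ref{thm3.2.1} corresponding to $f\equiv 1$. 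By Proposition \ref{pro3.31}, $\sup_{\mathcal{P}}\sup_{x\in M}h_{\mathcal{P}}(x)<\infty$, so the right-hand side is bounded by $C\|g^{(m)}-g\|_{L^{d/(d-1)}(M)}^{d/(d-1)}$, which tends to $0$ as $m\to\infty$. Hence $\tilde{g}^{(m)}\to \tilde{g}$ in $L^{d/(d-1)}(\nu_{\mathcal{P}}^{1})$.

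Finally, applying Hölder's inequality once more,
\[
\left|\int_{H_{\mathcal{P}}(M)}(\tilde{g}^{(m)}-\tilde{g})\cdot \tilde{X}_{\mathcal{P}}^{tr,\nu_{\mathcal{P}}^{1}}f\,d\nu_{\mathcal{P}}^{1}\right|\leq \|\tilde{g}^{(m)}-\tilde{g}\|_{L^{d/(d-1)}(\nu_{\mathcal{P}}^{1})}\|\tilde{X}_{\mathcal{P}}^{tr,\nu_{\mathcal{P}}^{1}}f\|_{L^{d}(\nu_{\mathcal{P}}^{1})},
\]
and the $L^{d}$-factor is finite by Lemma \ref{lem7.4} (for $|\mathcal{P}|<1/M(d)$), so the left-hand side vanishes in the limit, giving Eq.$(\ref{Eq:2})$. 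There is no real obstacle; the content of the lemma is entirely a matching of integrability exponents, with Proposition \ref{pro3.31} providing the uniform-in-$\mathcal{P}$ control that allows the single threshold $M$ to work across all sufficiently fine partitions.
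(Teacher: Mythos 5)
Your argument is correct and follows essentially the same route as the paper: Hölder's inequality with exponents $d/(d-1)$ and $d$, combined with Lemma \ref{lem7.7} and Lemma \ref{lem7.4}, gives Eq.~(\ref{Eq:3}), and the co-area formula plus the $L^{d/(d-1)}(M)$-convergence of $g^{(m)}\to g$ gives Eq.~(\ref{Eq:2}). The appeal to Proposition \ref{pro3.31} for a bound on $h_{\mathcal{P}}$ uniform in $\mathcal{P}$ is valid but more than required; the paper simply uses continuity of $h_{\mathcal{P}}$ (from Theorem \ref{thm3.2.1}) on the fixed compact set containing $\cup_m\operatorname{supp}(g^{(m)})$, since both Eq.~(\ref{Eq:3}) and the $m\to\infty$ limit in Eq.~(\ref{Eq:2}) are statements for a fixed $\mathcal{P}$, with the single threshold $M$ coming entirely from Lemma \ref{lem7.4}.
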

\begin{proof}
Using Lemma \ref{lem7.4}, Lemma \ref{lem7.7}
and Holder's inequality, we can easily see Eq.(\ref{Eq:3}). Then applying the co-area formula (\ref{co}) with 
\[\left(H,M,p,g,f\right)=\left(H_\mathcal{P}\left(M\right),M,\Ep^{\mathcal{P}},\frac{1}{Z_\mathcal{P}^1}e^{-\frac{E}{2}},\left|\left(\tilde{g}^{\left(m\right)}-\tilde{g}\right)\left(\sigma\right)\right|^{\frac{d}{d-1}}\right),\]we have: 
\[
\int_{H_{\mathcal{P}}\left(M\right)}\left|\left(\tilde{g}^{\left(m\right)}-\tilde{g}\right)\left(\sigma\right)\right|^{\frac{d}{d-1}}d\nu_{\mathcal{P}}^{1}\left(\sigma\right)=\int_{M}\left|\left(g^{m}-g\right)\left(x\right)\right|^{\frac{d}{d-1}}h_{\mathcal{P}}\left(x\right)dx.
\]
Since $\cup_m supp\left(g^{(m)}\right)$ is contained in a compact set, Eq.(\ref{Eq:2}) can be proved using exactly the same argument as in Lemma \ref{the.1.3.5} with $g$ replaced by $g^{(m)}-g$ and letting $m\to \infty$.\end{proof}
\begin{lemma}\label{lem7.10}
For any $p\leq \frac{d}{d-1}$, $\sup_{\mathcal{P}}\mathbb{E}\left[\left|\tilde{g}\left(\phi\circ \beta_\mathcal{P}\right)\right|^p\right]<\infty$.
\end{lemma}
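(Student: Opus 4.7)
The plan is to translate the expectation into an integral against $\nu_{\mathcal{P}}^{1}$, apply the co--area formula already used in Lemma \ref{lem7.7} to push the integral down to $M$, and then exploit the uniform bound on the continuous \textquotedblleft density\textquotedblright\ $h_{\mathcal{P}}$ furnished by Proposition \ref{pro3.31}. The final ingredient is simply that $g$ has compact support and lies in $L^{d/(d-1)}(M)$, hence in $L^{p}(M)$ for every $p\leq d/(d-1)$.

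More concretely, first I would use Fact \ref{fact 1} and Notation \ref{Not1} to observe that the law of $\phi\circ\beta_{\mathcal{P}}:W_o(M)\to H_{\mathcal{P}}(M)$ under $\nu$ is exactly $\nu_{\mathcal{P}}^{1}$. Therefore
\[
\mathbb{E}\left[\left|\tilde{g}(\phi\circ\beta_{\mathcal{P}})\right|^{p}\right]=\int_{H_{\mathcal{P}}(M)}|g(\sigma(1))|^{p}\,d\nu_{\mathcal{P}}^{1}(\sigma).
\]
Next I would apply the co--area formula (the same instance used in the proof of Lemma \ref{lem7.7}) with $H=H_{\mathcal{P}}(M)$, $p=E_{1}^{\mathcal{P}}$, density $\frac{1}{Z_{\mathcal{P}}^{1}}e^{-E/2}$, and integrand $|g(\sigma(1))|^{p}$. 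This yields
\[
\int_{H_{\mathcal{P}}(M)}|g(\sigma(1))|^{p}\,d\nu_{\mathcal{P}}^{1}(\sigma)=\int_{M}|g(x)|^{p}\,h_{\mathcal{P}}(x)\,dx,
\]
where $h_{\mathcal{P}}(x):=\nu_{\mathcal{P},x}^{1}(H_{\mathcal{P},x}(M))$ is the function introduced in Theorem \ref{thm3.2.1} (taking $f\equiv 1$).

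At this point Proposition \ref{pro3.31} provides a constant $C$ (depending only on $d$ and the curvature bound $N$) such that $h_{\mathcal{P}}(x)\leq C$ uniformly in $\mathcal{P}$ and $x\in M$. Consequently
\[
\mathbb{E}\left[\left|\tilde{g}(\phi\circ\beta_{\mathcal{P}})\right|^{p}\right]\leq C\int_{M}|g(x)|^{p}\,dx.
\]
Since $g$ has compact support $K\subset M$ and satisfies $g\in L^{d/(d-1)}(M)$ by Theorem \ref{thm.1.11}, H\"older's inequality on the compact set $K$ yields $g\in L^{p}(M)$ for every $p\leq d/(d-1)$; the bound on the right is finite and independent of $\mathcal{P}$, completing the proof.

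There is no serious obstacle here: the lemma is essentially a packaging of the uniform mass bound from Proposition \ref{pro3.31} together with the regularity of $g$ from Theorem \ref{thm.1.11}. The only point one must be careful about is that $\tilde{g}(\phi\circ\beta_{\mathcal{P}})$ is only $\nu$--a.s.\ well defined, which is harmless because $\nu\{\sigma:\sigma(1)=p\}=0$ and the corresponding statement $\nu_{\mathcal{P}}^{1}\{\sigma:\sigma(1)=p\}=0$ was already verified inside the proof of Lemma \ref{lem7.7}.
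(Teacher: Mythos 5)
Your proof is correct and follows essentially the same path as the paper: push forward to $\nu_{\mathcal{P}}^{1}$, apply the co--area formula to reduce to an integral over $M$ against $h_{\mathcal{P}}(x)=\nu_{\mathcal{P},x}^{1}(H_{\mathcal{P},x}(M))$, and invoke the uniform bound from Proposition \ref{pro3.31} together with the compact support and $L^{d/(d-1)}$ regularity of $g$. The only cosmetic difference is that you factor out $\sup_{x}h_{\mathcal{P}}(x)\leq C$ first and then apply H\"older to $g$ alone on its compact support, whereas the paper applies H\"older directly to the product $|g|^{p}h_{\mathcal{P}}$; the content is the same.
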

\begin{proof}
Since the law of $\phi\circ \beta_\mathcal{P}$ under $\nu$ is $\nu_\mathcal{P}^1$, we have
\begin{equation}
\mathbb{E}\left[\left|\tilde{g}\left(\phi\circ \beta_\mathcal{P}\right)\right|^p\right]=\int_{H_\mathcal{P}\left(M\right)}\left|\tilde{g}\right|^p\left(\sigma\right)d\nu_\mathcal{P}^1\left(\sigma\right).
\end{equation}
Then applying co-area formula (\ref{co}) exactly as Eq. (\ref{eq4}) we get
\begin{equation}
\int_{H_\mathcal{P}\left(M\right)}\left|\tilde{g}\right|^p\left(\sigma\right)d\nu_\mathcal{P}^1\left(\sigma\right)=\int_M\left|g\left(x\right)\right|^p\nu_{\mathcal{P},x}^1\left(H_{\mathcal{P},x}(M)\right)dx
\end{equation}
Using Proposition \ref{pro3.31}, note that $g$ has compact support, we have
\begin{align}
\sup_{\mathcal{P}}\int_M\left|g\left(x\right)\right|^p\nu_{\mathcal{P},x}^1\left(H_{\mathcal{P},x}(M)\right)dx\nonumber&\leq \left\Vert g\right\Vert_{L^{\frac{d}{d-1}}(M)} \left\Vert \sup_{\mathcal{P}}\nu_{\mathcal{P},x}^1\left(H_{\mathcal{P},x}(M)\right)\cdot 1_{supp(g)}(x)\right\Vert_{L^{\frac{d}{d-p(d-1)}}(M)} \\&\leq C\left\Vert g\right\Vert_{L^{\frac{d}{d-1}}(M)}.
\end{align}\end{proof}

\begin{theorem} \label{the.1.3.7} For any $f\in\mathcal{FC}_{b}^{1}$,
\[
\lim_{\left|\mathcal{P}\right|\to0}\int_{H_{\mathcal{P}}\left(M\right)}\tilde{g}\left(\sigma\right)\tilde{X}_{\mathcal{P}}^{tr,\nu_{\mathcal{P}}^{1}}f\left(\sigma\right)d\nu_{\mathcal{P}}^{1}\left(\sigma\right)=\int_{W_o\left(M\right)}\tilde{g}\left(\sigma\right)\tilde{X}^{tr,\nu}f\left(\sigma\right)d\nu\left(\sigma\right).
\]

\end{theorem}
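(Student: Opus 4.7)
The plan is to use a double approximation: first replace the singular $\tilde{g}$ by the smooth cut-off $\tilde{g}^{(m)}$ from Corollary \ref{col3.7}, handle the smooth case by combining the $L^{q}$ convergence $\tilde{X}_{\mathcal{P}}^{tr,\nu_{\mathcal{P}}^{1}}f\to\tilde{X}^{tr,\nu}f$ from Theorem \ref{the.8.4} with a Wong--Zakai argument for $\tilde{g}^{(m)}(\phi\circ\beta_{\mathcal{P}})\to\tilde{g}^{(m)}(\Sigma)$, and finally control the error from truncation uniformly in $\mathcal{P}$ using Lemma \ref{lem7.10}, Lemma \ref{the.1.3.5} and Lemma \ref{lem7.6}. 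Throughout, I will transport integrals over $H_{\mathcal{P}}(M)$ to $W_{o}(M)$ via the fact (Notation \ref{Not1}, Fact \ref{fact 1}) that $(\phi\circ\beta_{\mathcal{P}})_{*}\nu=\nu_{\mathcal{P}}^{1}$, so that
\[
\int_{H_{\mathcal{P}}(M)}\tilde{g}\cdot\tilde{X}_{\mathcal{P}}^{tr,\nu_{\mathcal{P}}^{1}}f\,d\nu_{\mathcal{P}}^{1}=\mathbb{E}_{\nu}\!\left[\tilde{g}(\phi\circ\beta_{\mathcal{P}})\cdot(\tilde{X}_{\mathcal{P}}^{tr,\nu_{\mathcal{P}}^{1}}f)(\phi\circ\beta_{\mathcal{P}})\right],
\]
and similarly on the limit side with $\Sigma$ in place of $\phi\circ\beta_{\mathcal{P}}$.

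Given $\varepsilon>0$, I would first choose $m$ large. By Lemma \ref{lem7.6} the tail
\[
\Big|\int_{W_{o}(M)}(\tilde{g}-\tilde{g}^{(m)})\,\tilde{X}^{tr,\nu}f\,d\nu\Big|
\]
can be made smaller than $\varepsilon/3$, since $\tilde{X}^{tr,\nu}f\in L^{\infty-}$ by Lemma \ref{inf}, $\tilde{g}-\tilde{g}^{(m)}\in L^{d/(d-1)}(W_o(M))$, and $g^{(m)}\to g$ in $L^{d/(d-1)}(M)$. For the corresponding tail on the approximating side, Lemma \ref{the.1.3.5} gives the same smallness \emph{for each fixed} $\mathcal{P}$; to make the bound uniform in $\mathcal{P}$ I would apply H\"older with exponents $\tfrac{d}{d-1}$ and $d$, use Lemma \ref{lem7.10} to bound $\|\tilde{g}-\tilde{g}^{(m)}\|_{L^{d/(d-1)}(\nu_{\mathcal{P}}^{1})}$ uniformly by $C\|g-g^{(m)}\|_{L^{d/(d-1)}(M)}$, and use Theorem \ref{the.8.4} together with Lemma \ref{inf} to bound $\|\tilde{X}_{\mathcal{P}}^{tr,\nu_{\mathcal{P}}^{1}}f\|_{L^{d}}$ uniformly in $\mathcal{P}$ (for $|\mathcal{P}|$ small enough). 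Fixing such an $m$, it then remains to show that
\[
\mathbb{E}_{\nu}\!\left[\tilde{g}^{(m)}(\phi\circ\beta_{\mathcal{P}})\cdot\tilde{X}_{\mathcal{P}}^{tr,\nu_{\mathcal{P}}^{1}}f\right]\longrightarrow\mathbb{E}_{\nu}\!\left[\tilde{g}^{(m)}(\Sigma)\cdot\tilde{X}^{tr,\nu}f\right]\quad\text{as }|\mathcal{P}|\to0.
\]

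For this middle step I would decompose the difference as
\[
\mathbb{E}_{\nu}\!\left[\tilde{g}^{(m)}(\phi\circ\beta_{\mathcal{P}})\cdot(\tilde{X}_{\mathcal{P}}^{tr,\nu_{\mathcal{P}}^{1}}f-\tilde{X}^{tr,\nu}f)\right]+\mathbb{E}_{\nu}\!\left[(\tilde{g}^{(m)}(\phi\circ\beta_{\mathcal{P}})-\tilde{g}^{(m)}(\Sigma))\cdot\tilde{X}^{tr,\nu}f\right].
\]
Since $g^{(m)}\in C_{0}^{\infty}(M)$, the factor $\tilde{g}^{(m)}(\phi\circ\beta_{\mathcal{P}})$ is uniformly bounded, so the first summand is at most $\|g^{(m)}\|_{\infty}\|\tilde{X}_{\mathcal{P}}^{tr,\nu_{\mathcal{P}}^{1}}f-\tilde{X}^{tr,\nu}f\|_{L^{1}}$, which tends to zero by Theorem \ref{the.8.4}. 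For the second summand, the Wong--Zakai approximation theorem (as used in the proof of Proposition \ref{pro.7.22}) yields $\phi\circ\beta_{\mathcal{P}}\to\Sigma$ in probability on $W_{o}(M)$; continuity and boundedness of $g^{(m)}\circ\Sigma_{1}$ then give $\tilde{g}^{(m)}(\phi\circ\beta_{\mathcal{P}})\to\tilde{g}^{(m)}(\Sigma)$ in every $L^{p}(\nu)$ by dominated convergence. Combining with $\tilde{X}^{tr,\nu}f\in L^{\infty-}$ and H\"older completes the argument.

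The main obstacle is keeping the truncation error controlled uniformly in $\mathcal{P}$: the approximating measures $\nu_{\mathcal{P}}^{1}$ are not probability measures and the operators $\tilde{X}_{\mathcal{P}}^{tr,\nu_{\mathcal{P}}^{1}}$ depend on $\mathcal{P}$, so naive dominated-convergence bounds are not available. The resolution is the uniform $L^{d/(d-1)}$ bound on $\tilde{g}\circ\phi\circ\beta_{\mathcal{P}}$ furnished by Lemma \ref{lem7.10} (which in turn rests on Proposition \ref{pro3.31}), together with the uniform $L^{q}$ bound on $\tilde{X}_{\mathcal{P}}^{tr,\nu_{\mathcal{P}}^{1}}f$ implicit in the $L^{q}$ convergence of Theorem \ref{the.8.4}; these two together let one close the $\varepsilon/3$-argument and conclude.
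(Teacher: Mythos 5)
Your proposal is correct, but it takes a genuinely different route from the paper. The paper works directly with the singular $\tilde g$: it splits the difference into $\mathbb{E}\bigl[|\tilde g(\phi\circ\beta_{\mathcal P})|\cdot|\tilde X_{\mathcal P}^{tr,\nu_{\mathcal P}^1}f(\phi\circ\beta_{\mathcal P})-\tilde X^{tr,\nu}f|\bigr]+\mathbb{E}\bigl[|\tilde g(\phi\circ\beta_{\mathcal P})-\tilde g|\cdot|\tilde X^{tr,\nu}f|\bigr]$, kills the first term by exactly your H\"older argument (Lemma \ref{lem7.10} plus Theorem \ref{the.8.4}), and kills the second by proving $\tilde g(\phi\circ\beta_{\mathcal P})\to\tilde g$ in $L^p(\nu)$ for a suitable $p<\tfrac{d}{d-1}$ via uniform integrability (again Lemma \ref{lem7.10}) combined with convergence in probability; the latter requires isolating a $\nu$-null set $N=\bigcup_{\mathcal P}U_{\mathcal P}\cup\{\Ep=x\}$ where $\tilde g$ is singular and invoking Wong--Zakai off $N$ together with $g\in C^\infty(M\setminus\{x\})$. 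You instead introduce the smooth truncations $\tilde g^{(m)}$ and run an $\varepsilon/3$ argument, paying for the extra layer of approximation with a uniform-in-$\mathcal{P}$ tail estimate that you correctly extract from the same Lemma \ref{lem7.10} / Proposition \ref{pro3.31}. What your route buys is that the probabilistic step only ever involves the bounded continuous $g^{(m)}\circ\Sigma_1$, so Wong--Zakai plus dominated convergence applies immediately and you never need the null-set bookkeeping the paper uses to handle $\tilde g$ directly. What the paper's route buys is avoiding the double limit altogether: once the $L^p$-convergence $\tilde g(\phi\circ\beta_{\mathcal P})\to\tilde g$ is established, the conclusion is a single H\"older estimate. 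Both rest on the same two quantitative inputs — the uniform $L^{d/(d-1)}$ moment bound on $\tilde g(\phi\circ\beta_{\mathcal P})$ coming from Proposition \ref{pro3.31}, and the $L^q$ convergence of the adjoint lifts from Theorem \ref{the.8.4} — and your identification of the uniformity-in-$\mathcal P$ issue as the real obstacle, and of Lemma \ref{lem7.10} as its resolution, is exactly right.
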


\begin{proof} 
Since
\begin{equation*}
\int_{H_{\mathcal{P}}\left(M\right)}\tilde{g}\left(\sigma\right)\left(\tilde{X}_{\mathcal{P}}^{tr,\nu_{\mathcal{P}}^{1}}f\right)\left(\sigma\right)d\nu_{\mathcal{P}}^{1}\left(\sigma\right)=\mathbb{E}_{\nu}\left[\tilde{g}\cdot \left(\tilde{X}_{\mathcal{P}}^{tr,\nu_{\mathcal{P}}^{1}}f\right)\left(\phi\circ \beta_\mathcal{P}\right)\right]
\end{equation*}
and
\begin{equation*}
\int_{W_o\left(M\right)}\tilde{g}\left(\sigma\right)\left(\tilde{X}^{tr,\nu}f\right)\left(\sigma\right)d\nu\left(\sigma\right)=\mathbb{E}_{\nu}\left[\tilde{g}\cdot \tilde{X}^{tr,\nu}f\right],
\end{equation*}
we have
\begin{align*}
&\left|\int_{H_{\mathcal{P}}\left(M\right)}\tilde{g}\left(\sigma\right)\tilde{X}_{\mathcal{P}}^{tr,\nu_{\mathcal{P}}^{1}}f\left(\sigma\right)d\nu_{\mathcal{P}}^{1}\left(\sigma\right) - \int_{W_o\left(M\right)}\tilde{g}\left(\sigma\right)\tilde{X}^{tr,\nu}f\left(\sigma\right)d\nu\left(\sigma\right)\right|\\&\leq \mathbb{E}\left[\left|\tilde{g}\cdot \tilde{X}_{\mathcal{P}}^{tr,\nu_{\mathcal{P}}^{1}}f\left(\phi\circ \beta_\mathcal{P}\right)-\tilde{g}\cdot \tilde{X}^{tr,\nu}f\right|\right]\\&\leq \mathbb{E}\left[\left|\tilde{g}\left(\phi\circ \beta_\mathcal{P}\right)\right|\cdot\left|\tilde{X}_{\mathcal{P}}^{tr,\nu_{\mathcal{P}}^{1}}f\left(\phi\circ \beta_\mathcal{P}\right)-\tilde{X}^{tr,\nu}f\right|\right]+\mathbb{E}\left[\left|\tilde{g}\left(\phi\circ \beta_\mathcal{P}\right)-\tilde{g}\right|\cdot\left|\tilde{X}^{tr,\nu}f\right|\right].
\end{align*}
Choosing $p<\frac{d}{d-1}$ and using Holder's inequality, we have
\begin{align*}
 \mathbb{E}&\left[\left|\tilde{g}\left(\phi\circ \beta_\mathcal{P}\right)\right|\cdot\left|\tilde{X}_{\mathcal{P}}^{tr,\nu_{\mathcal{P}}^{1}}f\left(\phi\circ \beta_\mathcal{P}\right)-\tilde{X}^{tr,\nu}f\right|\right]\\&\leq \left\Vert\tilde{g}\left(\phi\circ \beta_\mathcal{P}\right)\right\Vert_{L^p(W_o(M))}\cdot \left\Vert\tilde{X}_{\mathcal{P}}^{tr,\nu_{\mathcal{P}}^{1}}f\left(\phi\circ \beta_\mathcal{P}\right)-\tilde{X}^{tr,\nu}f\right\Vert_{L^q(W_o(M))}\\&\leq \sup_{\mathcal{P}}\left\Vert\tilde{g}\left(\phi\circ \beta_\mathcal{P}\right)\right\Vert_{L^p(W_o(M))}\cdot \left\Vert\tilde{X}_{\mathcal{P}}^{tr,\nu_{\mathcal{P}}^{1}}f\left(\phi\circ \beta_\mathcal{P}\right)-\tilde{X}^{tr,\nu}f\right\Vert_{L^q(W_o(M))}.
\end{align*}
Then using Theorem \ref{the.8.4} and Lemma \ref{inf} we have 
\[\lim_{\left|\mathcal{P}\right|\to 0}\mathbb{E}\left[\left|\tilde{g}\left(\phi\circ \beta_\mathcal{P}\right)\right|\cdot\left|\tilde{X}_{\mathcal{P}}^{tr,\nu_{\mathcal{P}}^{1}}f\left(\phi\circ \beta_\mathcal{P}\right)-\tilde{X}^{tr,\nu}f\right|\right]=0.\]
Then because of Lemma \ref{inf}, it suffices to  find a $p\leq \frac{d}{d-1}$ such that 
\begin{equation}
\lim_{\left|\mathcal{P}\right|\to 0}\mathbb{E}_\nu\left[\left|\tilde{g}\left(\phi\circ \beta_\mathcal{P}\right)-\tilde{g}\right|^p\right]=0.\label{Eq:-1a}
\end{equation}
Since for any $\epsilon>0$, there exists a constant $C_{p,\epsilon}$ such that
 \[\left|\tilde{g}\left(\phi\circ \beta_\mathcal{P}\right)-\tilde{g}\right|^{p\left(1+\epsilon\right)}\leq C_{p,\epsilon}\left(\left|\tilde{g}\left(\phi\circ \beta_\mathcal{P}\right)\right|^{p\left(1+\epsilon\right)}+\left|\tilde{g}\right|^{p\left(1+\epsilon\right)}\right).\]
 We choose $p$ and $\epsilon$ such that $p\left(1+\epsilon\right)<\frac{d}{d-1}$. From Eq. (\ref{4}) we know $\mathbb{E}\left[\left|\tilde{g}\right|^{p\left(1+\epsilon\right)}\right]<\infty$. Then using Lemma \ref{lem7.10} we have
 \[\sup_\mathcal{P}\mathbb{E}_\nu\left[\left|\tilde{g}\left(\phi\circ \beta_\mathcal{P}\right)\right|^{p\left(1+\epsilon\right)}\right]<\infty.\]
 So $\sup_\mathcal{P}\mathbb{E}_\nu\left[\left|\tilde{g}\left(\phi\circ \beta_\mathcal{P}\right)-\tilde{g}\right|^{p\left(1+\epsilon\right)}\right]<\infty$ and thus
\[\left\{\left|\tilde{g}\left(\phi\circ \beta_\mathcal{P}\right)-\tilde{g}\right|^{p}\right\}\text{ is uniformly integrable under }\nu.\]
Then we want to show $\left|\tilde{g}\left(\phi\circ \beta_\mathcal{P}\right)-\tilde{g}\right|^{p}\to 0$ in probability.

Let $U_\mathcal{P}:=\left\{\sigma\in W_o\left(M\right): \Ep\circ{\Phi^{-1}}\circ \beta_\mathcal{P}\left(\sigma\right)=x\right\}$, since the law of ${\Phi^{-1}}\circ \beta_\mathcal{P}$ under $\nu$ is $\nu_{\mathcal{P}}^1$, recall from Lemma \ref{lem7.7} that 
$V_\mathcal{P}:=\left\{\sigma\in H_\mathcal{P}\left(M\right):\Ep^{\mathcal{P}}\left(\sigma\right)=x \right\}$ and $\nu_{\mathcal{P}}^1\left(V_\mathcal{P}\right)=0$, so 
\[\nu\left(U_\mathcal{P}\right)=\nu_{\mathcal{P}}^1\left(V_\mathcal{P}\right)=0.\]
From there we can construct a $\nu-$Null set;
\[N:=\cup_{\mathcal{P}}U_\mathcal{P}\cup\left\{\sigma\in W_o\left(M\right):\Ep\left(\sigma\right)=x\right\}.\]
Recall from Theorem 10 in \cite{Elworthy82} that 
\[\left|u_\mathcal{P}\left(1\right)-\tilde{u}\left(1\right)\right|\to 0\text{ in probability.}\]
Notice that $g\in C^{\infty}\left(M/\left\{x\right\}\right)$ and $\pi:\mathcal{O}\left(M\right)\to M$ is smooth, so excluding $N$, we have
\begin{equation}
\left|\tilde{g}\left(\phi\circ \beta_\mathcal{P}\right)-\tilde{g}\right|=\left|g\circ \pi(u_\mathcal{P}\left(1\right))-g\circ \pi\left(\tilde{u}\left(1\right)\right)\right|\to 0 \text{ in probability.}\label{Eq:-2}
\end{equation} 
Combining this with uniformly integrability we proved Eq. (\ref{Eq:-1a}). 
\end{proof}
\begin{proposition}\label{prop7.12}
Let $f\in \mathcal{FC}_b^1$, then
\[
\lim_{m\rightarrow\infty}\int_{W_{o}\left(  M\right)  }\delta_{x}^{\left(
m\right)  }\left(  \Sigma_{1}\right)  fd\nu=\int_{W_{o}\left(  M\right)
}fd\nu_{x}.
\]
where
$\Sigma_{r}\left(  \sigma\right)  =\sigma\left(  r\right)  $ is the canonical
process on $W_{o}\left(  M\right)$.
\end{proposition}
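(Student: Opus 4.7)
The plan is to evaluate both sides explicitly via the finite-dimensional marginals from Theorem \ref{thm Brown} and then pass to the limit using the distributional convergence $\delta_x^{(m)}\to \delta_x$ of Corollary \ref{col3.7}. First I would reduce to the case $f = F(\Sigma_{s_1},\ldots,\Sigma_{s_n})$ with $s_n = 1$ (adjoining $s_n=1$ to the partition if necessary and letting $F$ be constant in the added variable). Writing $0 = s_0 < s_1 < \cdots < s_n = 1$ and $\Delta_i := s_i - s_{i-1}$, Eq.~(\ref{eq:-36}) gives
\[
\int_{W_o(M)}\delta_x^{(m)}(\Sigma_1)\,f\,d\nu \;=\; \int_{M^n}\delta_x^{(m)}(x_n)\,F(x_1,\ldots,x_n)\prod_{i=1}^n p_{\Delta_i}(x_{i-1},x_i)\,dx_1\cdots dx_n,
\]
with $x_0=o$. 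Isolating the $x_n$-integral, define
\[
h_F(y) \;:=\; \int_{M^{n-1}}F(x_1,\ldots,x_{n-1},y)\,p_{\Delta_n}(x_{n-1},y)\prod_{i=1}^{n-1}p_{\Delta_i}(x_{i-1},x_i)\,dx_1\cdots dx_{n-1},
\]
so that $\int_{W_o(M)}\delta_x^{(m)}(\Sigma_1)f\,d\nu = \int_M \delta_x^{(m)}(y)\,h_F(y)\,dy$.

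Second, I would verify that $h_F\in C^\infty(M)\cap L^\infty(M)$ and that $X_j^{*}h_F\in L^\infty(M)$ for the vector fields $\{X_j\}_{j=1}^d$ of Theorem \ref{thm.1.11}. Boundedness follows from $\|F\|_\infty<\infty$ together with the stochastic-completeness estimate $\int_M p_{\Delta_n}(x_{n-1},y)\,dx_{n-1}\le 1$, while smoothness follows by differentiating under the integral and pushing all derivatives onto the smooth heat kernel $p_{\Delta_n}(\cdot,\cdot)$. Since the $X_j$ have compact support, their formal adjoints preserve boundedness.

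Third, using the representation $\delta_x^{(m)} = g_0^{(m)} + \sum_{j=1}^d X_j g_j^{(m)}$ and integration by parts,
\[
\int_M \delta_x^{(m)}(y)\,h_F(y)\,dy \;=\; \int_M g_0^{(m)}\,h_F\,dy \;-\; \sum_{j=1}^d \int_M g_j^{(m)}\,X_j^{*}h_F\,dy.
\]
Because $g_j^{(m)}\to g_j$ in $L^{d/(d-1)}(M)$ with supports in a common compact set $K$, and because $h_F, X_j^{*}h_F \in L^\infty(K)\subset L^d(K)$, Hölder's inequality lets us pass to the limit termwise, yielding
\[
\lim_{m\to\infty}\int_M \delta_x^{(m)}(y)\,h_F(y)\,dy \;=\; \int_M g_0\,h_F\,dy - \sum_{j=1}^d \int_M g_j\,X_j^{*}h_F\,dy \;=\; h_F(x),
\]
where the last equality is Theorem \ref{thm.1.11} applied to the smooth test function $h_F$. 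Finally, Eq.~(\ref{eq:-37}) identifies $h_F(x) = \int_{W_o(M)} f\,d\nu_x$, completing the proof.

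The only nontrivial point is the verification that $h_F$ and $X_j^{*}h_F$ are globally bounded and smooth; this reduces to standard smoothing properties of the heat semigroup together with the $\mathcal{FC}^1_b$ hypothesis on $f$. Everything else is bookkeeping with the definitions and a single Hölder estimate.
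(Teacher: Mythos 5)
Your proof follows the same route as the paper's: reduce to the finite-dimensional marginal formula from Theorem~\ref{thm Brown}, isolate the function $h_F$ of the endpoint variable, verify its regularity, and conclude by the $\delta$--approximation together with Eq.~(\ref{eq:-37}). You are somewhat more careful than the paper in the regularity step — the paper asserts only continuity of $h_F$, while invoking the construction of Corollary~\ref{col3.7} actually requires smoothness and boundedness of $h_F$ and of $X_j^{*}h_F$, which you spell out explicitly via the heat-kernel smoothing and the Hölder estimate; this is a worthwhile clarification but not a change of method.
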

\begin{proof}
Since $f=F\left(\Sigma_{s_1},\dots,\Sigma_{s_n}\right)$, we have by Markov property,
\[\int_{W_{o}\left(  M\right)  }\delta_{x}^{\left(
m\right)  }\left(  \Sigma_{1}\right)  fd\nu=\int_{M^n}\delta_{x}^{\left(
m\right)  }\left(x_n\right)F\left(x_1,\dots, x_n\right)\Pi_{j=1}^np_{\frac{1}{n}}\left(x_{j-1},x_j\right)dx_1\cdots dx_n.\]
Viewing $\int_{M^{n-1}}F\left(x_1,\dots, x_n\right)\Pi_{j=1}^np_{\frac{1}{n}}\left(x_{j-1},x_j\right)dx_1\cdots dx_{n-1}$ as a function of $x_n$, observe that it is uniformly integrable with respect to $x_n$, therefore it is a continuous function of $x_n$. Thus
\begin{align*}
\lim_{m\rightarrow\infty}\int_{W_{o}\left(  M\right)  }\delta_{x}^{\left(
m\right)  }\left(  \Sigma_{1}\right)  fd\nu&=\lim_{m\rightarrow\infty}\int_M\delta_{x}^{\left(
m\right)  }\left(x_n\right)\int_{M^{n-1}}F\left(x_1,\dots, x_n\right)\Pi_{j=1}^np_{\frac{1}{n}}\left(x_{j-1},x_j\right)dx_1\cdots dx_{n-1}\\&=\int_{M^{n-1}}F\left(x_1,\dots, x_{n-1},x\right)\Pi_{j=1}^{n-1}p_{\frac{1}{n}}\left(x_{j-1},x_j\right)\cdot p_{\frac{1}{n}}\left(x_{n-1},x\right)dx_1\cdots dx_{n-1}\\&=\int_{W_o\left(M\right)}fd\nu_x.
\end{align*}
\end{proof}


\begin{proof}[Proof of Theorem \ref{thm2}] 
Recall from Remark \ref{rem3.1.2} that we can construct an approximate sequence of the delta mass $\delta_x$ on $M$:
\[
\delta_{x}^{(m)}:=g_{0}^{\left(m\right)}+\sum_{j=1}^{d}X_{j}g_{j}^{\left(m\right)}\in C_{0}^{\infty}\left(M\right)
\]
where $\left\{g_j^{\left(m\right)}:0 \leq j\leq d, m\geq 1\right\}\subset C_0^{\infty}\left(M\right)$ and $\left\{X_j:1\leq j\leq d\right\}\subset \Gamma\left(TM\right)$ with compact supports.
Consider their orthogonal lift on $H_{\mathcal{P}}(M)$ (referring to Theorem \ref{the.6.11}) as follows
\[
\tilde{\delta_{x}}^{(m)}:=\tilde{g_{0}}^{\left(m\right)}+\sum_{j=1}^{d}X_{\mathcal{P},j}\tilde{g_{j}}^{\left(m\right)}\in C^{\infty}\left(H_{\mathcal{P}}(M)\right)\label{Eq:-app}
\]
where $\tilde{g}\left(\sigma\right)=g\circ\Ep^{\mathcal{P}}\left(\sigma\right)$ for any $g\in C\left(M\right)$ and $X_{\mathcal{P},i}$ is the orthogonal lift of $X_i$ into $\Gamma\left(TH_\mathcal{P}\left(M\right)\right)$. 

For any $0\leq j\leq d$ (with the convention that $X_{\mathcal{P},0}=I$), using integration by parts, we get:
\begin{equation}
\int_{H_\mathcal{P}\left(M\right)}\left(\tilde{g}_{0}^{\left(m\right)}+\sum_{j=1}^{d}X_{\mathcal{P},j}\tilde{g}_{j}^{\left(m\right)}\right)fd\nu_\mathcal{P}^1=\int_{H_\mathcal{P}\left(M\right)}\left(\tilde{g}_{0}^{\left(m\right)}\cdot f+\sum_{j=1}^{d}X_{\mathcal{P},j}^{tr,\nu_\mathcal{P}^1}f\cdot\tilde{g}_{j}^{\left(m\right)}\right)d\nu_\mathcal{P}^1.\label{Eq:4}
\end{equation}
Now let $m\to \infty$, from Corollary \ref{Col3.2.2} we have:
\[\text{the left--hand side of } (\ref{Eq:4})=\int_{H_{\mathcal{P},x}\left(M\right)}fd\nu_{\mathcal{P},x}^1.\]
Applying Lemma \ref{the.1.3.5} to each pair $\left(\tilde{g}_{j}^{\left(m\right)}, X_{\mathcal{P},j}\right)$, we have:
\begin{equation}
\text{the right-hand side of } (\ref{Eq:4})=\int_{H_\mathcal{P}\left(M\right)}\left(\tilde{g_{0}}\cdot f+\sum_{j=1}^{d}X_{\mathcal{P},j}^{tr,\nu_\mathcal{P}^1}f\cdot\tilde{g_{j}}\right)d\nu_\mathcal{P}^1.
\end{equation}
Then let $\left|\mathcal{P}\right|\to 0$, from Theorem \ref{the.1.3.7}  we have:
\begin{equation}
\lim_{\left|\mathcal{P}\right|\to 0}\int_{H_{\mathcal{P},x}\left(M\right)}fd\nu_{\mathcal{P},x}^1=\int_{W_o\left(M\right)}\left(\tilde{g_{0}}\cdot f+\sum_{j=1}^{d}\tilde{X_{j}}^{tr,\nu}f\cdot\tilde{g_{j}}\right)d\nu.\label{m1}
\end{equation}
According to Lemma \ref{lem7.6}, 
\begin{align}
\int_{W_o\left(M\right)}&\left(\tilde{g_{0}}\cdot f+\sum_{j=1}^{d}\tilde{X_{j}}^{tr,\nu}f\cdot\tilde{g_{j}}\right)d\nu=\lim_{m\to \infty}\int_{W_o\left(M\right)}\left(\tilde{g}_{0}^{\left(m\right)}\cdot f+\sum_{j=1}^{d}\tilde{X_{j}}^{tr,\nu}f\cdot\tilde{g}_{j}^{\left(m\right)}\right)d\nu.
\end{align}
Then using integration by parts formula developed in Theorem $\ref{lem.5.1}$ we have:
\begin{align}
\int_{W_o\left(M\right)}\left(\tilde{g}_{0}^{\left(m\right)}\cdot f+\sum_{j=1}^{d}\tilde{X_{j}}^{tr,\nu}f\cdot\tilde{g}_{j}^{\left(m\right)}\right)d\nu&=\int_{W_o\left(M\right)}\left(\tilde{g}_{0}^{\left(m\right)}+\sum_{j=1}^{d}\tilde{X_{j}}\tilde{g}_{j}^{\left(m\right)}\right)\cdot f d\nu\nonumber\\&=\int_{W_o\left(M\right)}\tilde{\delta}_x^{\left(m\right)}fd\nu.\label{m2}
\end{align}
Lastly, using Proposition \ref{prop7.12} we have
\begin{equation}
\lim_{m\to \infty}\int_{W_o\left(M\right)}\tilde{\delta_x}^{\left(m\right)}fd\nu=\int_{W_o\left(M\right)}fd\nu_x.\label{m3}
\end{equation}
Combining Eq.(\ref{m1}) (\ref{m2}) and (\ref{m3}) we have
\begin{equation*}
\lim_{\left|\mathcal{P}\right|\to 0}\int_{H_{\mathcal{P},x}\left(M\right)}fd\nu_{\mathcal{P},x}^1=\int_{W_o\left(M\right)}fd\nu_x.
\end{equation*}
\end{proof}

\appendix

\section{ODE estimates}\label{App.B}
\begin{lemma}
If X is a normal random variable with mean 0 and variance $t$, then 
\[
\mathbb{E}\left[e^{k\left|X\right|^2}\right]=\begin{cases}
\infty & \text{ if } k\geq \frac{1}{2t}\\
\left(1-2kt\right)^{-\frac{1}{2}}& \text{ if } k< \frac{1}{2t}\\
\end{cases}
\]
\end{lemma}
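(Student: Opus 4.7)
The proof will be a direct Gaussian integral computation with a case split on the sign of $1 - 2kt$. First I would write the expectation as
\[
\mathbb{E}\left[e^{k|X|^2}\right] = \frac{1}{\sqrt{2\pi t}} \int_{-\infty}^{\infty} e^{kx^2} e^{-x^2/(2t)}\, dx = \frac{1}{\sqrt{2\pi t}} \int_{-\infty}^{\infty} e^{-\frac{1-2kt}{2t} x^2}\, dx,
\]
which reduces everything to understanding a single Gaussian-type integral.

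The plan is then to split into two cases based on the sign of $1 - 2kt$. In the subcritical regime $k < \frac{1}{2t}$, the coefficient $\frac{1-2kt}{2t}$ is strictly positive, so the standard Gaussian integration formula $\int_{-\infty}^{\infty} e^{-\alpha x^2}\, dx = \sqrt{\pi/\alpha}$ (applied with $\alpha = \frac{1-2kt}{2t}$) yields
\[
\mathbb{E}\left[e^{k|X|^2}\right] = \frac{1}{\sqrt{2\pi t}} \cdot \sqrt{\frac{2\pi t}{1-2kt}} = (1-2kt)^{-\frac{1}{2}},
\]
which matches the stated formula. In the supercritical or critical regime $k \geq \frac{1}{2t}$, the exponent $-\frac{1-2kt}{2t} x^2$ is nonnegative for all $x$, so the integrand is bounded below by $1$ (strictly positive in the case $k = \frac{1}{2t}$ where it equals $1$ identically, and growing without bound for $k > \frac{1}{2t}$), and integration against Lebesgue measure on $\mathbb{R}$ gives $+\infty$.

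There is no real obstacle here; the only thing to be careful about is the critical case $k = \frac{1}{2t}$, where the integrand collapses to the constant function $1$ rather than diverging at infinity, but the conclusion $\mathbb{E}[e^{k|X|^2}] = +\infty$ is still immediate from the non-integrability of $1$ against Lebesgue measure on $\mathbb{R}$.
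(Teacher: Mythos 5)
Your proof is correct and follows essentially the same route as the paper: rewrite the expectation as a single Gaussian-type integral, then split on the sign of $1-2kt$, using the standard Gaussian integral in the subcritical case and non-integrability of a function bounded below by $1$ in the supercritical/critical case. The only cosmetic difference is that you explicitly flag the critical case $k = \tfrac{1}{2t}$, which the paper handles implicitly via the pointwise bound $e^{(k-\frac{1}{2t})x^2}\geq 1$.
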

\begin{proof}
The result follows from the direct computation below.
\[\mathbb{E}\left[e^{k\left|X\right|^2}\right]=\int_{-\infty}^{\infty}e^{kx^2}\frac{1}{\sqrt{2\pi t}}e^{-\frac{x^2}{2t}}dx=\frac{1}{\sqrt{2\pi t}}\int_{-\infty}^{\infty}e^{\left(k-\frac{1}{2t}\right)x^2}dx.\]
If $k\geq \frac{1}{2t}$, then 
\[\frac{1}{\sqrt{2\pi t}}\int_{-\infty}^{\infty}e^{\left(k-\frac{1}{2t}\right)x^2}dx\geq \frac{1}{\sqrt{2\pi t}}\int_{-\infty}^{\infty}dx=\infty.\]
If $k< \frac{1}{2t}$, then
\[ \frac{1}{\sqrt{2\pi t}}\int_{-\infty}^{\infty}e^{\left(k-\frac{1}{2t}\right)x^2}dx=\frac{1}{\sqrt{2\pi t\left(\frac{1}{2t}-k\right)}}\int_{-\infty}^{\infty}e^{y^2}dy=\left(1-2kt\right)^{-\frac{1}{2}}.
\]
\end{proof}
\begin{lemma}\label{cBM}Let
	$\beta$ be a standard Brownian motion on $\mathbb{R}^d$, $\left\{s_i=\frac{i}{n}\right\}_{i=0}^n$ be an equally spaced partition of $\left[0,1\right]$ and $\Delta_i\beta$ be $\beta_{s_{i}}-\beta_{s_{i-1}}$, then for any $q>0$, we have
	\begin{equation}
	\underset{n:n>2q}{\sup}\mathbb{E}\left[e^{q\sum_{j=1}^{n}\left\vert \Delta_{j}\beta\right\vert ^{2}}\right]<\infty.\label{Gs}
	\end{equation}
\end{lemma}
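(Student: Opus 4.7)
The plan is to reduce the estimate to a direct product computation using independence and then bound the resulting sequence by its limit. First I would note that, since $\beta$ is a standard $\mathbb{R}^d$--valued Brownian motion, the increments $\{\Delta_j\beta\}_{j=1}^n$ are mutually independent, and each $\Delta_j\beta$ is a Gaussian vector with mean $0$ and covariance $\frac{1}{n}I_d$. In particular, the $d$ coordinates of $\Delta_j\beta$ are themselves independent centered Gaussians of variance $\frac{1}{n}$.

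Next I would split the exponential as a product and invoke the preceding lemma coordinate--by--coordinate. Writing $\Delta_j\beta=(\Delta_j\beta^1,\ldots,\Delta_j\beta^d)$, independence gives
\[
\mathbb{E}\left[e^{q\sum_{j=1}^{n}\left\vert \Delta_{j}\beta\right\vert ^{2}}\right]
=\prod_{j=1}^{n}\prod_{k=1}^{d}\mathbb{E}\left[e^{q\left(\Delta_{j}\beta^{k}\right)^{2}}\right].
\]
By the previous lemma applied with variance $t=\frac{1}{n}$, under the assumption $n>2q$ (so that $q<\frac{1}{2t}$) we obtain $\mathbb{E}[e^{q(\Delta_j\beta^k)^2}]=(1-\frac{2q}{n})^{-1/2}$. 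Multiplying over $1\leq j\leq n$ and $1\leq k\leq d$ yields
\[
\mathbb{E}\left[e^{q\sum_{j=1}^{n}\left\vert \Delta_{j}\beta\right\vert ^{2}}\right]
=\left(1-\frac{2q}{n}\right)^{-\frac{nd}{2}}.
\]

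Finally I would take the supremum in $n$. Since $(1-\frac{2q}{n})^{-nd/2}\to e^{qd}$ as $n\to\infty$, the sequence is convergent and therefore bounded for $n>2q$, which gives \eqref{Gs}. There is no serious obstacle here; the only point requiring care is the condition $n>2q$, which is exactly what makes each factor in the product finite and what appears as the range of $n$ in the supremum.
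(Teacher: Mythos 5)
Your proof is correct and follows essentially the same route as the paper's: decompose into independent Gaussian coordinates, apply the preceding single-Gaussian moment lemma with $t=\frac{1}{n}$ to each factor, arrive at $(1-\frac{2q}{n})^{-nd/2}$, and bound the supremum by noting convergence as $n\to\infty$. The only cosmetic difference is that you explicitly identify the limit as $e^{qd}$, which the paper omits.
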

\begin{proof}
	Since for each $j$, $\left\vert \Delta_{j}\beta\right\vert ^{2}=\sum_{l=1}^{d}\left\vert \left(\Delta_{j}\beta\right)_l\right\vert ^{2}$, where $\left\{\left(\Delta_{j}\beta\right)_l\right\}_{l=1}^d$ are coordinates of $\Delta_{j}\beta$, i.e. $\Delta_{j}\beta=\left(\left(\Delta_{j}\beta\right)_1,\dots,\left(\Delta_{j}\beta\right)_d\right)$. Since $\beta$ is a Brownian motion on $\mathbb{R}^d$, $\left\{\left(\Delta_{j}\beta\right)_l\right\}_{l=1}^d$ are i.i.d with Gaussian distribution of mean 0 and variance $\frac{1}{n}$. Using Lemma \ref{cBM} we have
	\[\mathbb{E}\left[e^{q\sum_{j=1}^{n}\left\vert \Delta_{j}\beta\right\vert ^{2}}\right]=\Pi_{j=1}^n\Pi_{l=1}^{d}\mathbb{E}\left[e^{q\left\vert \left(\Delta_{j}\beta\right)_l\right\vert ^{2}}\right]=\left(1-\frac{2q}{n}\right)^{-\frac{nd}{2}}.\]
	Then Eq.$\left(\ref{Gs}\right)$ follows since  $\left(1-\frac{2q}{n}\right)^{-\frac{nd}{2}}$ converges as $n\to \infty$.
	
\end{proof}
\begin{proposition} \label{prop A-1}

Consider an ODE: 
\[
Y^{\prime\prime}\left(s\right)=A\left(s\right)Y\left(s\right)
\]
where $Y\left(s\right),A\left(s\right)\in M_{n\times n}\left(\mathbb{R}\right)$ are real $n\times n$ matrices 
and $A\left(s\right)$ is positive semi-definite.

Denote by $\left\{ C\left(s\right),S\left(s\right)\right\} $the solutions
to this ODE with initial values: 
\[
C\left(0\right)=I,C^{\prime}\left(0\right)=0\text{ and }S\left(0\right)=0,S^{\prime}\left(0\right)=I
\]
Recall that in this paper we use $eig\left(X\right)$ to denote the set of eigenvalues of matrix $X$.
Then 
\begin{itemize}
\item $\text{If }\lambda\in eig\left(C\left(s\right)\right),\text{ then }\left|\lambda\right|\geq1$. 
\item $\text{If }\lambda\in eig\left(S\left(s\right)\right),\text{ then }\left|\lambda\right|\geq s$. 
\end{itemize}
\end{proposition}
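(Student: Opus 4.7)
The plan is to use a convexity/energy argument applied to $h(s):=M(s)v$ for $M\in\{C,S\}$ and an arbitrary (possibly complex) vector $v$, and then to read off the eigenvalue bounds by taking $v$ to be an eigenvector. Since eigenvalues may be complex, I would work throughout on $\mathbb{C}^n$ with the Hermitian inner product, noting that for $v=a+ib\in\mathbb{C}^n$ one has $\langle Av,v\rangle_{\mathbb{C}}=\langle Aa,a\rangle+\langle Ab,b\rangle\ge 0$ because $A(s)$ is real, symmetric, and PSD; this is the only property of $A$ that will be used.

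For the $C$-case, set $h(s)=C(s)v$ and $g(s)=\|h(s)\|^{2}$. Then
\[
g'(s)=2\operatorname{Re}\langle h'(s),h(s)\rangle,\qquad g''(s)=2\|h'(s)\|^{2}+2\langle A(s)h(s),h(s)\rangle\ge 0,
\]
and the initial conditions $h(0)=v$, $h'(0)=0$ give $g'(0)=0$. Convexity plus $g'(0)=0$ yields $g(s)\ge g(0)=\|v\|^{2}$, so $\|C(s)v\|\ge\|v\|$. If $C(s)v=\lambda v$ with $v\ne 0$, then $|\lambda|\|v\|=\|C(s)v\|\ge\|v\|$, giving $|\lambda|\ge 1$.

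The $S$-case is the more delicate one and will be the main obstacle: convexity of $\|h\|^{2}$ alone only gives $\|h(s)\|^{2}\ge 0$, which is too weak. I would instead show convexity of $r(s):=\|h(s)\|$ itself on the set where it is positive. For $h(s)=S(s)v$ we have $h(0)=0$, $h'(0)=v$, and a direct computation at points where $r>0$ gives
\[
r'=\frac{\operatorname{Re}\langle h',h\rangle}{r},\qquad r''=\frac{\|h'\|^{2}-(r')^{2}}{r}+\frac{\langle A h,h\rangle}{r}\ge 0,
\]
where the nonnegativity of the first fraction is Cauchy--Schwarz ($|r'|\le\|h'\|$) and the second uses PSD of $A$. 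Next I would rule out zeros of $h$ on $(0,\infty)$: if $s_{0}>0$ were the first zero, then $r$ would be convex, nonnegative, and vanish at both endpoints of $(0,s_{0})$, forcing $r\equiv 0$ there, which contradicts $r(s)/s\to\|v\|>0$ coming from the Taylor expansion $h(s)=sv+O(s^{2})$. Hence $r$ is convex on $[0,\infty)$ (continuous at $0$), with right derivative $r'_{+}(0)=\lim_{s\downarrow 0}r(s)/s=\|v\|$. The supporting-line inequality then gives $r(s)\ge r(0)+r'_{+}(0)\,s=s\|v\|$, and applying this to an eigenvector $v$ of $S(s)$ with eigenvalue $\lambda$ yields $|\lambda|\ge s$.

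The main technical hurdle is really just the sharper convexity statement for $r=\|h\|$ (rather than for $\|h\|^{2}$) and the companion non-vanishing argument; once these are in place, both eigenvalue bounds follow immediately by plugging in an eigenvector. One small care point is that all scalar products must be Hermitian, since eigenvectors for a real matrix may be complex, but the PSD assumption on the real symmetric $A(s)$ transfers to nonnegativity of $\langle A(s)w,w\rangle_{\mathbb{C}}$ as noted above, so the computations go through verbatim.
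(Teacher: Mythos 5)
Your argument is correct. For the $C$-case your convexity argument for $g(s)=\|C(s)v\|^{2}$ is the same computation the paper uses, merely phrased as convexity of $g$ rather than as monotonicity of $\operatorname{Re}\langle v'(s),v(s)\rangle$; the two bookkeepings are interchangeable. The genuine divergence is in the $S$-case: the paper does not prove the inequality $\|S(s)v\|\ge s\|v\|$ at all but simply cites Appendix E of Laetsch's thesis, whereas you supply a self-contained proof via convexity of $r(s)=\|S(s)v\|$ (using Cauchy--Schwarz to show $r''\ge 0$ where $r>0$), the ``no interior zeros'' argument, and the one-sided derivative $r'_{+}(0)=\|v\|$ to produce the supporting line $r(s)\ge s\|v\|$. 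This is the standard comparison argument for Jacobi fields under nonpositive curvature; your route buys self-containedness at the cost of a few extra lines, while the paper's route is shorter but leans on an external reference. You are also right to remark that both arguments must be run over $\mathbb{C}^{n}$ and that the positive-semidefiniteness of the real symmetric $A(s)$ transfers to the Hermitian form $\langle A(s)w,w\rangle_{\mathbb{C}}\ge 0$; the paper uses this silently when it writes ``for all $v\in\mathbb{C}^{d}$.''
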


\begin{proof} For all $v\in\mathbb{C}^{d}$, define $v\left(s\right):=C\left(s\right)v$,
then: 
\[
\left\langle v^{\prime\prime}\left(s\right),v\left(s\right)\right\rangle =\left\langle A\left(s\right)v\left(s\right),v\left(s\right)\right\rangle \geq0.
\]
Therefore, 
\[
\frac{d}{ds}\left\langle v^{\prime}\left(s\right),v\left(s\right)\right\rangle =\left\langle v^{\prime\prime}\left(s\right),v\left(s\right)\right\rangle +\left\Vert v^{\prime}\left(s\right)\right\Vert ^{2}\geq0.
\]
Since $\left\langle v^{\prime}\left(0\right),v\left(0\right)\right\rangle =0$,
so $\left\langle v^{\prime}\left(s\right),v\left(s\right)\right\rangle \geq0$.
Therefore 
\[
\frac{d}{ds}\left\Vert v\left(s\right)\right\Vert ^{2}=2Re\left\langle v^{\prime}\left(s\right),v\left(s\right)\right\rangle \geq0.
\]
Notice that $\left\Vert v\left(0\right)\right\Vert ^{2}=\left\Vert v\right\Vert ^{2}$,
so 
\[
\left\Vert v\left(s\right)\right\Vert ^{2}\geq\left\Vert v\right\Vert ^{2}.\]
Therefore if $\lambda\in eig\left(C\left(s\right)\right)$, choose $v\in \mathbb{C}^d$ to be an eigenvector associated to $\lambda$, then 
\[ 
\left\Vert \lambda v\right\Vert ^{2}=\left\Vert C\left(s\right)v\right\Vert ^{2}\geq \left\Vert v\right\Vert^{2}.
\]
So 
\[\left|\lambda\right|\geq 1.\]
Therefore $C\left(s\right)$ is invertible and 
\[
\left\Vert C\left(s\right)\right\Vert =\max_{\lambda\in eig\left(C\left(s\right)\right)}\left|\lambda\right|\geq1.
\]
A lower bound result for $\left\Vert S(s)v\right\Vert $ can be found
in \cite[Appendix E]{Laetsch2013}: 
\[
\left\Vert S(s)v\right\Vert \geq s\left\Vert v\right\Vert .
\]
From there it follows 
\[
\text{If }\lambda\in eig\left(S\left(s\right)\right),\text{ then }\left|\lambda\right|\geq s
\]
and $S\left(s\right)$ is invertible with 
\[
\left\Vert S\left(s\right)\right\Vert =\max_{\lambda\in eig\left(S\left(s\right)\right)}\left|\lambda\right|\geq s.
\]

\end{proof}

\begin{definition} Fix $\xi\in \mathbb{R}^d$, $\sigma\in H(M)$, denote $R_{u\left(s\right)}\left(\xi,\cdot\right)\xi$
by $A_{\xi}\left(s\right)$, and let $C_{\xi}\left(s\right),S_{\xi}\left(s\right)\in M_{d\times d}$ be the solutions to
$V^{\prime\prime}\left(s\right)= A_{\xi_{x}}(s)V\left(s\right)$ with initial values $C_\xi(0)=I, C^\prime_\xi(0)=0$ and $S_\xi(0)=0, S^\prime_\xi(0)=I$.

\end{definition}

\begin{proposition} \label{prop A-2}If $R$ is bounded by a constant
$N$, i.e. $\left|R\left(\xi,\cdot\right)\xi\right|\leq N\left|\xi\right|^{2},$
then 
\begin{equation}
\left|C_{\xi}\left(s\right)\right|\leq\cosh\left(\sqrt{N}\left\vert \xi\right\vert s\right)\leq e^{\frac{1}{2}N\left|\xi\right|^{2}s^{2}},\label{eq:-10}
\end{equation}
\begin{align}
\left\vert S_{\xi}(s)\right\vert  & \leq\sqrt{N}\left\vert \xi\right\vert s\frac{\sinh\left(\sqrt{N}\left\vert \xi\right\vert s\right)}{\sqrt{N}\left\vert \xi\right\vert s}\leq\cosh\left(\sqrt{N}\left\vert \xi\right\vert s\right)\sqrt{N}\left\vert \xi\right\vert s\leq\sqrt{N}\left\vert \xi\right\vert se^{\frac{1}{2}N\left\vert \xi\right\vert ^{2}s^{2}},\label{eq:-14}
\end{align}
\begin{equation}
\left\vert S_{\xi}\left(s\right)-sI\right\vert \leq\frac{N\left\vert \xi\right\vert ^{2}s^{3}}{6}e^{\frac{1}{2}N\left\vert \xi\right\vert ^{2}s^{2}},\label{eq:-15}
\end{equation}
and 
\begin{equation}
\left\vert C_{\xi}\left(s\right)-I\right\vert \leq\frac{N\left\vert \xi\right\vert ^{2}s^{2}}{2}e^{\frac{1}{2}N\left\vert \xi\right\vert ^{2}s^{2}}.\label{eq:-16}
\end{equation}

\end{proposition}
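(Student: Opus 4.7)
The plan is to convert each second-order initial value problem into a Volterra integral equation by integrating twice, then compare the resulting scalar estimate against the analog equation with constant coefficient $K^2 := N|\xi|^2$, whose fundamental solutions are $\cosh(Ks)$ and $\sinh(Ks)/K$. Everything will reduce to repeated applications of Gronwall (or equivalently Picard iteration) in operator norm together with a handful of elementary Taylor-series inequalities.

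For the first bound I would start from
\[
C_\xi(s) = I + \int_0^s (s-r)\, A_\xi(r)\, C_\xi(r)\, dr,
\]
obtained by integrating $C_\xi'' = A_\xi C_\xi$ twice against $C_\xi(0)=I$, $C_\xi'(0)=0$. Taking operator norms and using $\|A_\xi(r)\| \le K^2$ gives $\|C_\xi(s)\| \le 1 + K^2 \int_0^s (s-r)\|C_\xi(r)\|\,dr$. Comparing Picard iterates term-by-term with the scalar problem $c'' = K^2 c$, $c(0)=1$, $c'(0)=0$ (whose iterates reconstruct $\cosh(Ks)$) gives $\|C_\xi(s)\|\le \cosh(Ks)$. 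The outer inequality $\cosh(Ks)\le e^{K^2s^2/2}$ is the standard Taylor-coefficient comparison $(2k)!\ge 2^k k!$. The $S_\xi$ bound proceeds identically from $S_\xi(s) = sI + \int_0^s (s-r) A_\xi(r) S_\xi(r)\, dr$, yielding majorization by the scalar solution $\sinh(Ks)/K$; the remaining inequalities in the chain then come from $\sinh(x)/x\le\cosh(x)\le e^{x^2/2}$, each of which is coefficient-by-coefficient obvious from the series.

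For the remainder estimates on $C_\xi(s) - I$ and $S_\xi(s) - sI$ I would simply re-read the same integral equations in the form
\[
C_\xi(s) - I = \int_0^s (s-r)\, A_\xi(r)\, C_\xi(r)\, dr,\qquad
S_\xi(s) - sI = \int_0^s (s-r)\, A_\xi(r)\, S_\xi(r)\, dr,
\]
and feed back the bounds just established. Using $\|C_\xi(r)\|\le \cosh(Kr)\le e^{K^2s^2/2}$ and $\|S_\xi(r)\|\le \sinh(Kr)/K \le e^{K^2 s^2/2}\cdot r$ on the integration interval $[0,s]$, one pulls the exponential out of the integral and is left only with the elementary polynomial integrals $\int_0^s (s-r)\,dr = s^2/2$ and $\int_0^s (s-r)\,r\,dr = s^3/6$. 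These directly give $\|C_\xi(s)-I\|\le \tfrac{1}{2}K^2 s^2 e^{K^2 s^2/2}$ and $\|S_\xi(s)-sI\|\le \tfrac{1}{6}K^2 s^3 e^{K^2 s^2/2}$, which upon substituting $K^2 = N|\xi|^2$ are exactly the claimed bounds.

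There is no real obstacle here; the proposition is a self-contained package of classical linear ODE comparison estimates. The only point requiring mild care is the bookkeeping needed to majorize a matrix-valued Volterra equation by its scalar model — i.e. the justification that taking operator norms and invoking $\|A_\xi\|\le K^2$ preserves the Picard ordering — after which every inequality is an elementary Taylor-series comparison.
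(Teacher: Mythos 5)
Your proof is correct. It differs from the paper's in two minor but genuine ways. First, the paper dismisses the size bounds \eqref{eq:-10} and \eqref{eq:-14} as ``elementary'' and proves only the remainder estimates \eqref{eq:-15}, \eqref{eq:-16}; you supply a Picard/Volterra comparison with the scalar model $c''=K^2c$ for them, which makes the whole proposition self-contained. Second, for the remainder estimates the paper does \emph{not} re-use \eqref{eq:-10}--\eqref{eq:-14}: it bounds $\left\vert S_\xi(r)\right\vert \leq \left\vert S_\xi(r)-rI\right\vert + r$ inside the integrand, arrives at the Volterra inequality $f(s)\leq \int_0^s N\left\vert\xi\right\vert^2(s-r)f(r)\,dr + N\left\vert\xi\right\vert^2 s^3/6$ for $f(s):=\left\vert S_\xi(s)-sI\right\vert$, and invokes Gronwall (similarly for $C_\xi$). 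You instead substitute the already-proved bounds $\left\Vert C_\xi(r)\right\Vert\leq\cosh(Kr)\leq e^{K^2s^2/2}$ and $\left\Vert S_\xi(r)\right\Vert\leq r\,e^{K^2s^2/2}$ directly into the integral and evaluate the elementary polynomial integrals, avoiding a second Gronwall pass entirely. Both routes land on the same constants; yours trades a logically independent Gronwall argument for a dependence on the size bounds, which is a fair exchange since you prove those anyway. The one point worth articulating fully if you wrote this up would be the majorization step that passes from the matrix Volterra equation to its scalar model (you flag this yourself); a clean way is to induct on Picard iterates, showing each $\left\Vert C^{(n)}(s)\right\Vert$ is dominated by the corresponding iterate of the scalar equation.
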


\begin{proof} \ref{eq:-10} and \ref{eq:-14} are quite elementary,
so here we only present the proof of \ref{eq:-15} and \ref{eq:-16}.

By Taylor's expansion, 
\[
S_{\xi}\left(s\right)=sI+\int_{0}^{s}R_{u_r}\left(\xi,S_{\xi}\left(r\right)\right)\xi\left(s-r\right)dr.
\]
\begin{align*}
\left\vert S_{\xi}\left(s\right)-sI\right\vert  & \leq N\left\vert \xi\right\vert ^{2}\int_{0}^{s}\left\vert S_{\xi}\left(r\right)\right\vert \left(s-r\right)dr\leq N\left\vert \xi\right\vert ^{2}\int_{0}^{s}\left[\left\vert S_{\xi}\left(r\right)-rI\right\vert +r\right]\left(s-r\right)dr
\end{align*}
Define $f\left(s\right):=\left\vert S_{\xi}\left(s\right)-sI\right\vert ,$
then we have: 
\[
f\left(s\right)\leq\int_{0}^{s}N\left\vert \xi\right\vert ^{2}\left(s-r\right)f\left(r\right)dr+N\left\vert \xi\right\vert ^{2}\frac{s^{3}}{6}
\]
By Gronwall's inequality: 
\[
f\left(s\right)\leq N\left\vert \text{\ensuremath{\xi}}\right\vert ^{2}\frac{s{}^{3}}{6}e^{\frac{1}{2}N\left\vert \xi\right\vert ^{2}s^{2}}
\]
Then we consider $C_{\xi}\left(s\right):$ 
\[
C_{\xi}\left(s\right)=I+\int_{0}^{s}R_{\tilde{u}_r}\left(\xi,C_{\xi}\left(r\right)\right)\xi\left(s-r\right)dr.
\]
So 
\begin{align*}
\left\vert C_{\xi}\left(s\right)-I\right\vert  & \leq N\left\vert \xi\right\vert ^{2}\int_{0}^{s}\left\vert C_{\xi}\left(r\right)\right\vert \left(s-r\right)dr\leq N\left\vert \xi\right\vert ^{2}\int_{0}^{s}\left[\left\vert C_{\xi}\left(r\right)-I\right\vert +1\right]\left(s-r\right)dr.
\end{align*}
Define $f\left(s\right):=\left\vert C_{\xi}\left(s\right)-I\right\vert ,$
then we have: 
\[
f\left(s\right)\leq\int_{0}^{s}N\left\vert \xi\right\vert ^{2}\left(s-r\right)f\left(r\right)dr+N\left\vert \xi\right\vert ^{2}\frac{s^{2}}{2}.
\]
By Gronwall's inequality: 
\[
f\left(s\right)\leq N\left\vert \xi\right\vert ^{2}\frac{s^{2}}{2}e^{\frac{1}{2}N\left\vert \xi\right\vert ^{2}s^{2}}.
\]
\end{proof}
\section{Matrix Analysis\label{app.C}}
\begin{theorem}\label{Conje1}
Suppose that $V$ is a finite
dimensional inner product space, $A:V^{n}\rightarrow V$ is a linear
map, and 
\[
S:=\left[\begin{array}{c}
I_{V^{n}\times V^{n}}\\
A
\end{array}\right]:V^{n}\rightarrow V^{n+1}.
\]
Then 
\[
\det\left[S^{\operatorname{tr}}S\right]=\det\left[I_{V}+AA^{\operatorname{tr}}\right].
\]
\end{theorem}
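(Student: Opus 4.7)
The plan is to reduce the statement to the standard Sylvester determinant identity, which asserts $\det(I_m + XY) = \det(I_n + YX)$ for an $m\times n$ matrix $X$ and an $n\times m$ matrix $Y$. The only genuine computation needed is to recognize that $S^{\operatorname{tr}}S$ simplifies to $I_{V^n} + A^{\operatorname{tr}}A$.

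First, I would compute $S^{\operatorname{tr}}S$ explicitly. Since $S^{\operatorname{tr}} = [\,I_{V^n}\;\;A^{\operatorname{tr}}\,]$ when written as a block row, multiplication gives
\[
S^{\operatorname{tr}}S = [\,I_{V^n}\;\;A^{\operatorname{tr}}\,]\begin{bmatrix} I_{V^n} \\ A \end{bmatrix} = I_{V^n} + A^{\operatorname{tr}}A,
\]
so the identity to be proved reduces to $\det(I_{V^n} + A^{\operatorname{tr}}A) = \det(I_V + AA^{\operatorname{tr}})$.

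Next, I would establish this reduced equality via a block matrix Schur complement argument, which is the slickest self-contained proof. Consider the $(V \oplus V^n) \times (V \oplus V^n)$ block matrix
\[
M := \begin{pmatrix} I_V & -A \\ A^{\operatorname{tr}} & I_{V^n} \end{pmatrix}.
\]
Eliminating the top-left block gives the Schur complement $I_{V^n} - A^{\operatorname{tr}}(I_V)^{-1}(-A) = I_{V^n} + A^{\operatorname{tr}}A$, hence $\det M = \det(I_V)\cdot\det(I_{V^n} + A^{\operatorname{tr}}A)$. Alternatively, eliminating the bottom-right block gives the Schur complement $I_V - (-A)(I_{V^n})^{-1}A^{\operatorname{tr}} = I_V + AA^{\operatorname{tr}}$, so $\det M = \det(I_{V^n})\cdot\det(I_V + AA^{\operatorname{tr}})$. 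Equating the two expressions yields the desired identity.

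There is no real obstacle here; the only mild subtlety is that $V^n$ may have larger dimension than $V$, so $I_{V^n} + A^{\operatorname{tr}}A$ and $I_V + AA^{\operatorname{tr}}$ are matrices of different sizes, but this is precisely the content that the Schur complement approach (or equivalently the singular value decomposition) handles cleanly: the nonzero eigenvalues of $A^{\operatorname{tr}}A$ and $AA^{\operatorname{tr}}$ agree with multiplicity, and the extra eigenvalues on the larger side are zero and contribute a factor of $1$ to each $\det(I + \cdot)$. If preferred, I could mention SVD as an alternative route, but the Schur complement proof is cleaner and avoids passing through coordinates.
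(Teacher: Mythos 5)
Your proof is correct. Both you and the paper begin by computing $S^{\operatorname{tr}}S = I_{V^n} + A^{\operatorname{tr}}A$, so the real content is the identity $\det(I_{V^n} + A^{\operatorname{tr}}A) = \det(I_V + AA^{\operatorname{tr}})$, and here the routes diverge. The paper proceeds spectrally: it diagonalizes $AA^{\operatorname{tr}}$ on $V$ with eigenvalues $\lambda_j$, transports the nonzero-eigenvalue eigenvectors to $V^n$ via $v_j := A^{\operatorname{tr}}u_j$ to get eigenvectors of $A^{\operatorname{tr}}A$ with the same eigenvalues, extends to a basis of $V^n$, and reads off the determinant as $\prod_j(1+\lambda_j)$. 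You instead invoke the standard Schur-complement proof of the Sylvester determinant identity, evaluating the determinant of the $(V\oplus V^n)$-block matrix $M = \left(\begin{smallmatrix} I_V & -A \\ A^{\operatorname{tr}} & I_{V^n} \end{smallmatrix}\right)$ in two ways. The Schur-complement route is slicker and sidesteps the minor bookkeeping the eigenvalue argument requires (in the paper's phrasing one must note that the $v_j$ with $\lambda_j \ne 0$ are orthogonal and nonzero before extending them to a basis, and that the zero eigenvalues only contribute trivial factors). The paper's spectral argument has the mild virtue of making visible exactly where the factors $(1+\lambda_j)$ come from, which matches how these determinants are used elsewhere in the paper as products over eigenvalues, but your approach is the more standard and self-contained one for the statement as written.
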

\begin{proof}
First observe that 
\[
S^{\operatorname{tr}}S=\left[\begin{array}{cc}
I & A^{\operatorname{tr}}\end{array}\right]\left[\begin{array}{c}
I\\
A
\end{array}\right]=I+A^{\operatorname{tr}}A.
\]
We denote $\dim(V)=d$ and let $\left\{ u_{j}\right\} _{j=1}^{d}\subset V$ be an orthonormal
basis of eigenvectors for $AA^{\operatorname{tr}}:V\rightarrow V$
so that $AA^{\operatorname{tr}}u_{j}=\lambda_{j}u_{j}$ and then let
$v_{j}:=A^{\operatorname{tr}}u_{j}.$ Then it follows that 
\[
A^{\operatorname{tr}}Av_{j}=A^{\operatorname{tr}}AA^{\operatorname{tr}}u_{j}=A^{\operatorname{tr}}\lambda_{j}u_{j}=\lambda_{j}A^{\operatorname{tr}}u_{j}=\lambda_{j}v_{j}.
\]
Now extend $\left\{ v_{j}\right\} _{j=1}^{d}$ to a basis for all
$V^{n}.$ From this we will find that $S^{\operatorname{tr}}S$ has
eigenvalues $\left\{ 1\right\} \cup\left\{ 1+\lambda_{j}\right\} _{j=1}^{d}$
and therefore 
\[
\det\left(S^{\operatorname{tr}}S\right)=\prod_{j=1}^{d}\left(1+\lambda_{j}\right)=\det\left(I+AA^{\operatorname{tr}}\right).
\]

\end{proof}

\bibliographystyle{amsplain}
\bibliography{references}

\providecommand{\bysame}{\leavevmode\hbox to3em{\hrulefill}\thinspace}
\providecommand{\MR}{\relax\ifhmode\unskip\space\fi MR }
\providecommand{\MRhref}[2]{%
  \href{http://www.ams.org/mathscinet-getitem?mr=#1}{#2}
}
\providecommand{\href}[2]{#2}
\begin{thebibliography}{10}

\bibitem{Andersson1999}
Lars Andersson and Bruce~K. Driver, \emph{Finite-dimensional approximations to
  {W}iener measure and path integral formulas on manifolds}, J. Funct. Anal.
  \textbf{165} (1999), no.~2, 430--498. \MR{2000j:58059}

\bibitem{Atiyah1983}
M.~F. Atiyah, \emph{Circular symmetry and stationary-phase approximation},
  Ast\'erisque (1985), no.~131, 43--59, Colloquium in honor of Laurent
  Schwartz, Vol. 1 (Palaiseau, 1983). \MR{816738 (87h:58206)}

\bibitem{Bismut1985}
Jean-Michel Bismut, \emph{Index theorem and equivariant cohomology on the loop
  space}, Comm. Math. Phys. \textbf{98} (1985), no.~2, 213--237. \MR{786574
  (86h:58129)}

\bibitem{Cruzeiro1996}
Ana-Bela Cruzeiro and Paul Malliavin, \emph{Renormalized differential geometry
  on path space: structural equation, curvature}, J. Funct. Anal. \textbf{139}
  (1996), no.~1, 119--181. \MR{1399688 (97h:58175)}

\bibitem{DoCarmoRie}
Manfredo~Perdig{\~a}o do~Carmo, \emph{Riemannian geometry}, Mathematics: Theory
  \& Applications, Birkh\"auser Boston, Inc., Boston, MA, 1992, Translated from
  the second Portuguese edition by Francis Flaherty. \MR{1138207}

\bibitem{Driver1999}
B.~K. Driver, \emph{The {L}ie bracket of adapted vector fields on {W}iener
  spaces}, Appl. Math. Optim. \textbf{39} (1999), no.~2, 179--210.
  \MR{2000b:58063}

\bibitem{Driver1992}
Bruce~K. Driver, \emph{A {C}ameron-{M}artin type quasi-invariance theorem for
  {B}rownian motion on a compact {R}iemannian manifold}, J. Funct. Anal.
  \textbf{110} (1992), no.~2, 272--376.

\bibitem{Driver1995b}
\bysame, \emph{A primer on riemannian geometry and stochastic analysis on path
  spaces}, ETH (Z\"urich, Switzerland) preprint series. This may be retrieved
  at http://math.ucsd.edu/\~\,driver/prgsaps.html, 1995.

\bibitem{Driver2003}
\bysame, \emph{Analysis of {W}iener measure on path and loop groups}, Finite
  and infinite dimensional analysis in honor of Leonard Gross (New Orleans, LA,
  2001), Contemp. Math., vol. 317, Amer. Math. Soc., Providence, RI, 2003,
  pp.~57--85. \MR{2003m:58055}

\bibitem{Elworthy82}
K.~D. Elworthy, \emph{Stochastic differential equations on manifolds}, London
  Mathematical Society Lecture Note Series, vol.~70, Cambridge University
  Press, Cambridge-New York, 1982. \MR{675100}

\bibitem{Federer1969}
Herbert Federer, \emph{Geometric measure theory}, Die Grundlehren der
  mathematischen Wissenschaften, Band 153, Springer-Verlag New York Inc., New
  York, 1969. \MR{0257325 (41 \#1976)}

\bibitem{Fuj79}
Daisuke Fujiwara, \emph{A construction of the fundamental solution for the
  {S}chr\"odinger equation}, J. Analyse Math. \textbf{35} (1979), 41--96.
  \MR{555300}

\bibitem{Glimm1987}
James Glimm and Arthur Jaffe, \emph{Quantum physics}, second ed.,
  Springer-Verlag, New York, 1987, A functional integral point of view.
  \MR{887102 (89k:81001)}

\bibitem{Hsu01}
Elton~P. Hsu, \emph{Stochastic analysis on manifolds}, Graduate Studies in
  Mathematics, vol.~38, American Mathematical Society, Providence, RI, 2002.
  \MR{1882015}

\bibitem{Ichinose97}
Wataru Ichinose, \emph{On the formulation of the {F}eynman path integral
  through broken line paths}, Comm. Math. Phys. \textbf{189} (1997), no.~1,
  17--33. \MR{1478529}

\bibitem{Klingenberg77}
Wilhelm Klingenberg, \emph{Lectures on closed geodesics}, third ed.,
  Mathematisches Institut der Universit\"at Bonn, Bonn, 1977. \MR{0461361}

\bibitem{Kuo1975}
Hui~Hsiung Kuo, \emph{Gaussian measures in {B}anach spaces}, Springer-Verlag,
  Berlin, 1975, Lecture Notes in Mathematics, Vol. 463. \MR{MR0461643 (57
  \#1628)}

\bibitem{Laetsch2013}
Thomas Laetsch, \emph{An approximation to {W}iener measure and quantization of
  the {H}amiltonian on manifolds with non-positive sectional curvature}, J.
  Funct. Anal. \textbf{265} (2013), no.~8, 1667--1727. \MR{3079232}

\bibitem{Lim2007}
Adrian P.~C. Lim, \emph{Path integrals on a compact manifold with non-negative
  curvature}, Rev. Math. Phys. \textbf{19} (2007), no.~9, 967--1044.
  \MR{2355569 (2008m:58080)}

\bibitem{Pinsky1976}
Mark~A. Pinsky, \emph{Isotropic transport process on a {R}iemannian manifold},
  Trans. Amer. Math. Soc. \textbf{218} (1976), 353--360. \MR{0402957}

\bibitem{Simon2005}
Barry Simon, \emph{Functional integration and quantum physics}, second ed., AMS
  Chelsea Publishing, Providence, RI, 2005. \MR{2105995 (2005f:81003)}

\bibitem{Strichartz1979}
Robert Strichartz, \emph{Integration theory and functional analysis}, 1979,
  pp.~xvi+555. \MR{MR1011252 (90m:60069)}

\bibitem{Sturm1992}
Karl-Theodor Sturm, \emph{Heat kernel bounds on manifolds}, Math. Ann.
  \textbf{292} (1992), no.~1, 149--162. \MR{1141790 (93c:58211)}

\end{thebibliography}

\end{document}